\definecolor{allrefcolors}{rgb}{0,0.2,0.5}
\newcommand{\Hom}{\operatorname{Hom}}
\def\Z{{\mathbb Z}}
\def\R{{\mathbb R}}
\def\C{{\mathbb C}}
\def\P{{\mathbb P}}
\def\L{{\mathbb L}}
\def\K{{\mathbf{k}}}
\def\w{\mathcal{W}(E)}
\def\A{\mathcal{A}}
\def\B{\mathcal{B}}
\def\n{\mathcal{N}}
\def\o{\mathcal{O}}
\def\Y{\mathcal{Y}}
\def\cP{\mathcal{P}}
\def\cQ{\mathcal{Q}}
\def\cR{\mathcal{R}}
\def\cZ{\mathcal{Z}}
\def\e{\epsilon}
\def\cc{\mathcal{C}}
\def\dd{\mathcal{D}}
\def\ee{\mathcal{E}}
\def\oc{\mathcal{OC}}
\def\mod{\mathrm{\!-\! mod}}
\def\rmod{\mathrm{mod\!-\!}}
\def\bimod{\mathrm{\!-\! mod\!-\!}}
\def\e{\epsilon}
\def\r#1{\mathrm{#1}}
\def\mc#1{\mathcal{#1}}
\def\w{\mathcal{W}}
\def\rw{\mathcal{RW}}
\def\k{\kappa}
\def\d{\delta}
\def\D{\Delta}
\def\M{\mathcal{M}}
\def\ainf{A_\infty}
\def\z2{\Z / 2\Z}
\def\id{\mathrm{id}}
\def\p{\partial}
\def\A{\mc{A}}
\def\ob{\mathrm{ob\ }}
\def\ch{\mathrm{Ch}}
\def\perf{\mathrm{Perf}}
\def\calk{\mathrm{Calk}}
\def\fun{\mathrm{Fun}}
\def\cone{\mathrm{cone}}
\def\cinf{\widehat{\cc}_{\infty}}
\def\winf{\widehat{\w}_{\infty}}
\def\n{\mathcal{N}}
\def\ck{\mathfrak{c}}
\def\sk{\mathfrak{s}}
\newtheorem{lem}{Lemma}[section]
\newtheorem{prop}[lem]{Proposition}
\newtheorem{thm}[lem]{Theorem}
\newtheorem{cor}[lem]{Corollary}
\newtheorem{conj}[lem]{Conjecture}
\newtheorem{defn}[lem]{Definition}
\newtheorem{rem}[lem]{Remark}
\def\e{\epsilon}
\theoremstyle{remark}
\newtheorem{example}{Example}[section]
\numberwithin{equation}{section}
\begin{document}
\begin{abstract}
Inspired by the simple fact that a compact $n$-dimensional manifold-with-boundary which satisfies Poincar\'{e}-Lefschetz duality of dimension $n$ has a boundary which itself satisfies Poincar\'{e} duality of dimension $n-1$,
we show that the categorical formal punctured neighborhood of infinity,
a canonical categorical construction associated to every $\ainf$-category,
has a weak proper Calabi-Yau structure of dimension $n-1$ whenever the original $\ainf$-category admits a weak smooth Calabi-Yau structure of dimension $n$.
Applications include proper Calabi-Yau structures on Rabinowitz Fukaya category of a Liouville manifold and Orlov's singularity category of a proper singular Gorenstein scheme of finite type.
\end{abstract}

\title{Abstract categorical residues and Calabi-Yau structures}
\author{Yuan Gao} \thanks{University of Georgia, Athens, GA 30602, USA. Email: \url{yuangaohl@gmail.com}}
\email{yuangaohl@gmail.com}
\address{Department of Mathematics, University of Georgia, Athens, GA 30602}

\maketitle
\tableofcontents

\section{Introduction}

\subsection{Rabinowitz Fukaya categories}

The Rabinowitz Fukaya category $\rw(X)$ of a Liouville manifold $X$,
introduced by \cite{GGV} and appearing in \cite{BJK},
 is an $\ainf$-category which is geometrically constructed using methods of Floer theory and measures the failure of Poincar\'{e} duality to hold for the wrapped Fukaya category $\w(X)$ \cites{abouzaid-seidel, abouzaid_gc, ganatra}.
This measurement originates from a classical analogy in topology, 
which says cochains on the boundary of a compact oriented manifold-with-boundary measures the failure of Poincar\'{e} duality between chains and cochains on the compact manifold-with-boundary.
As noted, the boundary of a compact manifold-with-boundary is itself a closed manifold,
on which Poincar\'{e} duality always holds.
Since the Rabinowitz Fukaya category behaves in a manner as an invariant of the boundary-at-infinity and is also a measurement for failure of Poincar\'{e} duality,
 a natural question arises: does the Rabinowitz Fukaya category have a similar property, i.e., satisfy Poincar\'{e} duality itself?

The construction of the chain-level $\ainf$-operations on the morphism spaces in $\rw(X)$,
named the Lagrangian Rabinowitz Floer complexes defined by mapping cones of Floer continuation maps from a very negative Hamiltonian to a very positive one,
was realized by \cite{GGV} using variants of popsicles originating from \cite{abouzaid-seidel, seidel6}.
Algebraically, compared to works of \cites{COcone, CHOduality} which study product and coproduct structures on homology of mapping cones aiming at understanding algebraic structures on the closed-string Rabinowitz Floer cohomology,
the algebraic structures in the open-string theory tend to have a particularly simple construction realizing all of them simultaneously on the chain level.

Return to the question about Poincar\'{e} duality.
Numerous important works concerning this question on higher-algebra levels are ultimately related to geometry,
including but not limited to \cites{fukaya_cyclic, cho, tradler, tradler-zeinalian, kontsevich-soibelman} and many more for which one cannot name all,
lead us to consider the notion of a {\it weak proper Calabi-Yau structure} on an $\ainf$-category,
a homotopy version of Poincar\'{e} duality for $\ainf$-categories that are mostly essentially well-defined up to homotopy.

\begin{defn}[Definition \ref{def: cy structures}]\label{def: weak proper cy}
A weak proper Calabi-Yau structure on an $\ainf$-category $\cc$ of dimension $n$ is a chain map of degree $-n$
\begin{equation}\label{weak proper cy map}
\phi: \r{CC}_{*}(\cc, \cc) \to \K[-n],
\end{equation}
or equivalently a cycle in $\r{CC}_{*}(\cc, \cc)^{\vee}[-n]$,
such that the induced pairing
\begin{equation}\label{induced pairing}
H^{*}(\hom_{\cc}(X, Y)) \otimes H^{n-*}(\hom_{\cc}(Y, X)) \stackrel{[\mu_{\cc}^{2}]}\to H^{n}(\hom_{\cc}(Y, Y)) \stackrel{[i]}\to \r{HH}_{n}(\cc) \stackrel{[\phi]}\to \K
\end{equation}
is nondegenerate,
where $\mu_{\cc}^{2}$ is the product, as part of the $\ainf$-structure maps of $\cc$,
and $i: \hom_{\cc}(Y, Y) \to \r{CC}_{*}(\cc)$ is the canonical inclusion of chain complexes.
\end{defn}

In this paper, we study further categorical structures on the Rabinowitz Fukaya category of a Liouville manifold and prove that the Rabinowitz Fukaya category, as an $\ainf$-category, admits a chain-level weak proper Calabi-Yau structure $\phi$.
Recall from \cite{ganatra} that a Liouville manifold is said to be {\it nondegenenrate}, 
if it admits a finite collection $\B$ of exact cylindrical Lagrangians satisfying Abouzaid's generation criterion \cite{abouzaid_gc},
meaning that the {\it open-closed map} (a canonical geometrically defined map for Fukaya categories which will be recalled in \S\ref{section: open-closed})
\begin{equation}
\oc: \r{HH}_{*-n}(\B, \B) \to SH^{*}(X)
\end{equation}
hits the unit in symplectic cohomology.
We note that Definition \ref{def: weak proper cy} is meaningful even for non-proper $\ainf$-categories,
although it does not say in the usual case of a proper $\ainf$-category that the weak smooth Calabi-Yau structure induces a quasi-isomorphism from the diagonal bimodule $\cc_{\D}$ to its linear dual $\cc^{\vee}$.

\begin{thm}\label{thm: rw cy}
Let $\K$ be a field of arbitrary characteristic. 
Let $X$ be a non-degenerate Liouville manifold with $c_{1}(X) = 0$.
Then the Rabinowitz Fukaya category $\rw(X)$ admits a weak proper Calabi-Yau structure of dimension $n-1$.
\end{thm}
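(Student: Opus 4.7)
The plan is to deduce Theorem \ref{thm: rw cy} from the abstract categorical principle advertised in the introduction: the categorical formal punctured neighborhood of infinity of any $\ainf$-category equipped with a weak smooth Calabi-Yau structure of dimension $n$ acquires an induced weak proper Calabi-Yau structure of dimension $n-1$. Concretely, I would need two geometric ingredients feeding into this abstract machine: a weak smooth Calabi-Yau structure of dimension $n$ on the wrapped Fukaya category $\w(X)$, and a chain-level identification of $\rw(X)$ with the formal punctured neighborhood of infinity $\winf$ that is compatible with the functoriality of Hochschild invariants.

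First, I would fix a finite collection $\B$ of exact cylindrical Lagrangians satisfying Abouzaid's generation criterion, whose existence is guaranteed by the nondegeneracy hypothesis. Since $\oc$ hits the unit in $SH^{*}(X)$ and $c_{1}(X) = 0$, Ganatra's theorem produces a weak smooth Calabi-Yau structure of dimension $n$ on $\w_{\B}(X)$, and hence on $\w(X)$ after passing to split-closure. This smooth Calabi-Yau class is the input to the abstract machine.

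Second, I would verify that the geometric Rabinowitz Fukaya category $\rw(X)$, whose morphism complexes are built as mapping cones of Floer continuation maps via the popsicle techniques of \cite{GGV}, is quasi-equivalent as an $\ainf$-category to $\winf$. Morally, both sides encode the failure of properness: the first through the geometry at the cylindrical end of $X$, the second through an algebraic two-pointed complex detecting the obstruction to perfectness of the diagonal bimodule. I expect this chain-level identification to be the main obstacle, since one must upgrade a cohomology-level correspondence to an honest $\ainf$-functor; the most plausible route is to construct an auxiliary moduli space of popsicles interpolating between the geometric and algebraic models and then to match the induced higher structure maps order by order.

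With these ingredients in hand, the abstract theorem transports the smooth Calabi-Yau class of dimension $n$ on $\w(X)$ to a weak proper Calabi-Yau class $\phi$ of dimension $n-1$ on $\winf \simeq \rw(X)$; nondegeneracy of the induced pairing in Definition \ref{def: weak proper cy} is then a formal consequence, reflecting the topological fact that the boundary of a Poincar\'{e}-Lefschetz duality space of dimension $n$ is a Poincar\'{e} duality space of dimension $n-1$. A final routine step propagates the proper Calabi-Yau class from $\rw_{\B}(X)$ to all of $\rw(X)$ using compatibility of both the formal-neighborhood construction and the Hochschild chain complex with idempotent completion.
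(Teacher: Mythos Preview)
Your proposal is correct and follows essentially the same route as the paper: obtain a weak smooth Calabi-Yau structure on $\w(X)$ from nondegeneracy via \cite{ganatra}, apply the abstract residue construction (Theorem~\ref{thm: cinf cy}) to endow $\winf$ with a weak proper Calabi-Yau structure of dimension $n-1$, and then pull this back along the quasi-equivalence $\Phi:\rw(X)\to\winf$. Two minor remarks: the quasi-equivalence $\Phi$ is not an obstacle to be constructed here---it is precisely the content of \cite[Theorem 1.1]{GGV}, quoted in the paper as Theorem~\ref{thm: rw=winf}---and the nondegeneracy check, while indeed short, is not purely formal: one uses that $\Phi$ is an $\ainf$-functor so that $\Phi^{1}\circ\mu^{2}_{\rw}$ and $\mu^{2}_{\winf}\circ(\Phi^{1}\otimes\Phi^{1})$ agree up to the homotopy $\Phi^{2}$, whence the induced cohomology-level pairing on $\rw$ coincides with the (already nondegenerate) residue pairing on $\winf$.
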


Some comments on Theorem \ref{thm: rw cy} are in order.
This theorem seems quite puzzling at the first glance,
because the Rabinowitz Fukaya category $\rw(X)$ is almost always non-proper,
as is the wrapped Fukaya category $\w(X)$ \cite{Gnonproper, GGV}.
But as is already mentioned above, 
nondegeneracy of the pairing \eqref{induced pairing} does not rely on properness of the category,
yet for a non-proper category it might not be equivalent to the data of a quasi-isomorphism between the diagonal bimodule and the linear dual bimodule in the usual sense.
A related issue was addressed in \cite{CO_tate}, 
where it is shown that the (closed-string) Rabinowitz Floer cohomology of any Liouville domain is a self-dual locally linearly compact vector space in the sense of Lefschetz \cite{lefschetz}, or equivalently a Tate vector space in the sense of Beilinson-Feigin-Mazur \cite{Beilinson-Feigin-Mazur}.
However, our result is independent of the use of the theory of topological vector spaces,
but is expected to reflect this structure in an appropriate sense.
For example, we include a discussion on the local linear compactness of the Rabinowitz Floer complex that we considered in this paper (Proposition \ref{prop: rab tate}).

In \cite{BJK}, it is proved that if the wrapped Floer cohomology for every pair of Lagrangians is finite dimensional in each degree,
then the derived Rabinowitz Fukaya category, as an ordinary category, 
is Calabi-Yau of dimension $n-1$ in the sense that there exists a non-degenerate pairing of degree $1-n$ between the morphism spaces.
They constructed a pairing on the Rabinowitz Floer complexes object-wise,
and showed that the induced maps are quasi-isomorphisms on cohomology;
however, this type of duality statement is irrelevant to the $\ainf$-structure of $\rw(X)$.
It is not clear from the construction of the object-wise map whether there are higher order pieces of information that would yield a closed element in the Hochschild chain complex.
In this sense, Theorem \ref{thm: rw cy} can be regarded as a chain-level improvement that is compatible with the $\ainf$-structure.
But as it turns out, a direct geometric construction of the desired trace map \eqref{weak proper cy map} using holomorphic disk counts seems to be out of reach of current techniques.

Theorem \ref{thm: rw cy} is in fact much stronger than a cohomology-level statement,
as the induced pairing on $\rw(X)$ has to be compatible with the $\ainf$-structure in a very strong way.
In addition, the existence of a Calabi-Yau structure implies non-trivial relations between Hochchild invariants of the category, 
on which there are TQFT operations \cite{kontsevich-takeda-vlassopoulos}.
In addition, a proper Calabi-Yau structure on the Rabinowitz Fukaya category appears as a first piece of evidence of the existence of a more general type of algebraic structures on the wrapped Fukaya category,
namely the pre-Calabi-Yau structures introduced by \cite{kontsevich-takeda-vlassopoulos}.

\begin{rem}
We expect that Theorem \ref{thm: rw cy} holds even over $\Z$.
But since the duality issues are more delicate, 
and since we also follow the construction of the weak smooth Calabi-Yau structure for the wrapped Fukaya category in \cite{ganatra} which only did it over a field (though seems valid over $\Z$),
we will restrict ourselves to the case where $\K$ is a field.
\end{rem}

Going in another direction, we ask if there is a lift of the weak proper Calabi-Yau structure in Theorem \ref{thm: rw cy} to a {\it strong} proper Calabi-Yau structure.
Recall that 

\begin{defn}
A strong proper Calabi-Yau structure of dimension $n$ on $\cc$ is a chain map
\begin{equation}
\tilde{\phi}: \r{CC}_{*}(\cc, \cc)_{hS^{1}} \to \K[-n]
\end{equation}
such that the composition $\tilde{\phi} \circ pr$ is a weak proper Calabi-Yau structure,
where 
\begin{equation}
pr: \r{CC}_{*}(\cc, \cc) \to \r{CC}_{*}(\cc, \cc)_{hS^{1}}
\end{equation}
is the canonical projection to homotopy orbits.
\end{defn}

\begin{conj}\label{lift to strong cy}
The weak proper Calabi-Yau structure from Theorem \ref{thm: rw cy} admits a lift to a strong proper Calabi-Yau structure on $\rw(X)$.
\end{conj}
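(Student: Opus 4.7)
The plan is to upgrade each ingredient of the proof of Theorem \ref{thm: rw cy} to the $S^1$-equivariant (cyclic) setting. That theorem is established by combining (i) an abstract categorical lifting, producing a weak proper Calabi-Yau structure of dimension $n-1$ on the formal punctured neighborhood of infinity $\cinf$ of any $\ainf$-category $\cc$ carrying a weak smooth Calabi-Yau structure of dimension $n$; (ii) the weak smooth Calabi-Yau structure on $\w(X)$ from the literature; and (iii) the identification $\cinf \simeq \rw(X)$ through the functoriality of Rabinowitz Floer theory.

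Step one (abstract cyclic lifting): prove that if $\cc$ admits a \emph{strong} smooth Calabi-Yau structure of dimension $n$, i.e.\ a negative-cyclic lift of the smooth Calabi-Yau cocycle, then $\cinf$ carries a strong proper Calabi-Yau structure of dimension $n-1$. The weak proper Calabi-Yau map \eqref{weak proper cy map} produced in the proof of Theorem \ref{thm: rw cy} should be exhibited as a morphism of mixed complexes relative to the Connes $B$-operator on $\r{CC}_*(\cinf, \cinf)$ and the cyclic structure on the smooth Calabi-Yau side of $\cc$. The natural approach is to rerun the bar-cobar construction underlying the weak case while tracking $B$-operators throughout, using an explicit chain homotopy for the intertwining between the Connes operators on $\r{CC}_*(\cc)$ and $\r{CC}_*(\cinf)$ induced by the formal punctured neighborhood construction.

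Step two inputs the corresponding strong smooth Calabi-Yau structure on $\w(X)$, the cyclic refinement of the weak structure constructed by Ganatra (under the nondegeneracy hypothesis, the cyclic open-closed map hits an $S^1$-equivariant lift of the unit in $S^1$-equivariant symplectic cohomology). Step three uses functoriality of Rabinowitz Floer theory, which should intertwine the Connes rotations underlying the cyclic structure on Hochschild chains with the $S^1$-action on loops underlying $S^1$-equivariant Rabinowitz Floer theory. Combining these three steps produces a candidate $\tilde{\phi}: \r{CC}_*(\rw(X), \rw(X))_{hS^1} \to \K[-(n-1)]$, and the identity $\tilde{\phi} \circ pr = \phi$ will follow by construction from the cyclic compatibility established in step one.

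The main obstacle is step one: the abstract cyclic lifting. Tracking the interplay of $B$-operators through the formal punctured neighborhood construction is delicate, since a mere quasi-isomorphism of mixed complexes is insufficient---one needs an explicit and coherent chain homotopy in order for the enhanced pairing to descend to homotopy $S^1$-orbits on the nose and not merely up to some unresolved higher obstruction. Producing such an explicit intertwiner, likely through a careful combinatorial analysis of the construction of $\cinf$ as a cyclically-structured object, is the essential new technical input required to confirm Conjecture \ref{lift to strong cy}.
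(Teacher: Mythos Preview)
The statement you are attempting to prove is explicitly a \emph{conjecture} in the paper, not a theorem; the paper does not supply a proof. Immediately following the statement, the author only remarks that ``some careful treatment of the chain-level circle actions on the wrapped and Rabinowitz Fukaya categories and also on their closed-string counterparts, using ideas and techniques of e.g.\ \cite{ganatra_cyclic} and/or even \cite{AGV}, should lead to an answer to Conjecture~\ref{lift to strong cy}.'' There is therefore no ``paper's own proof'' against which to compare your proposal.

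That said, your proposed strategy is broadly consonant with the author's informal suggestion: upgrade the weak smooth Calabi--Yau structure on $\w(X)$ to Ganatra's strong (negative-cyclic) version, and then attempt to push the $S^1$-equivariance through the categorical construction of the residue on $\cinf$ and through the functor $\Phi$. You correctly identify the substantive gap yourself: the residue map $\r{res}$ of \S\ref{section:res} is built from the operations $\sqcup$, $\sqcap$, the collapse maps, and the pairing $\pi^{\cZ}$, none of which are shown in the paper to be compatible with Connes' $B$-operator in any controlled way. Establishing that $\r{res}$ (or $\r{res}_{pre}$) descends to homotopy $S^1$-orbits would require constructing explicit homotopies intertwining $B$ across each of these steps, and the paper provides no such homotopies. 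Your proposal is thus an honest outline of what a proof would need, with the key missing ingredient clearly flagged; it is not a proof, and the paper does not claim one either.
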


Although it is not a priori guaranteed that the compatibility between the $\ainf$-structure and the pairing on $\rw(X)$ as strong as the conditions required by a {\it cyclic $A_{\infty}$-category} (\cites{costello, fukaya_cyclic, cho-lee},
if Conjecture \ref{lift to strong cy} is proved, over a field of characteristic zero there is not much essential difference due to \cite{kontsevich-soibelman},
which roughly says that the $\ainf$-structure can be made strictly compatible with the pairing up to some canonical quasi-equivalence.
In addition, it is shown in \cite{kontsevich-takeda-vlassopoulos} that an $n$-dimensional pre-Calabi-Yau structure on an $\ainf$-algebra $A$ is equivalent to a cyclic $\ainf$-algebra structure on $A^{\vee}[1-n] \oplus A$ of dimension $n-1$.
The Rabinowitz Fukaya category behaves in a way similar to $A^{\vee}[1-n] \oplus A$ (as we will see through \S\ref{section: rw relation to w}),
but it is too much to ask for strict cyclic symmetry as well as a full pre-Calabi-Yau structure in the relevant setups of Floer theory in the non-compact case.
In this sense Theorem \ref{thm: rw cy} provides a weak version, even in the infinite-dimensional case,
of the existence of an additional structure on the wrapped Fukaya category $\w(X)$ that would be close to an honest pre-Calabi-Yau structure (which is expected to exist but hard to be implemented due to lack of symmetries).

Some careful treatment of the chain-level circle actions on the wrapped and Rabinowitz Fukaya categories and also on their closed-string counterparts, using ideas and techniques of e.g \cite{ganatra_cyclic} and/or even \cite{AGV},
should lead to an answer to Conjecture \ref{lift to strong cy}.

\subsection{Orlov's singularity category}

Somewhat surprisingly, our method for proving Theorem \ref{thm: rw cy} is very general,
and will lead to other important examples from algebraic geometry,
among which we'd like to address the Orlov's category of singularities \cite{orlov}.
Recall that for a separated Noetherian scheme $X$ over a field $\K$,
the Orlov's singularity category is defined to be the Verdier quotient
\[
D^{b}_{sg}(X):= D^{b}Coh(X) / \mathrm{perf}(X),
\]
the bounded derived category of coherent sheaves on $X$ modulo the full subcategory of perfect complexes (in the derived category).
This has a natural dg enhancement
\begin{equation}
\dd^{b}_{sg}(X) = Coh(X)/ \perf(X)
\end{equation}
given by the Drinfeld quotient of the dg enhancement $Coh(X)$ by the full dg subcategory of perfect complexes.

It is known by \cite{murfet} that if $X$ is finite type over $\K$, {\it Gorenstein} and has isolated singularities,
the bounded derived category $D^{b}_{sg}(X)$ has a Serre functor given by a shift $(-)[n-1]$,
where $n$ is the (Krull) dimension of $X$,
and moreover provides a closed formula for the non-degenerate pairing.
The existence of duality actually reduces to a local computation dating back to \cite{auslander}.
It is therefore an interesting question whether this can be promoted to a proper Calabi-Yau structure on the dg enhancement.
In fact, we deduce the following result, without assuming singularities being isolated,
but will require a global trivialization of the canonical bundle:

\begin{thm}\label{thm: sing cy}
Let $\K$ be a perfect field,
and $X$ a Gorenstein scheme of finite type over $\K$, of dimension $n$.
Suppose the canonical bundle of $X$ admits a trivialization. 
Then the natural dg enhancement $\dd^{b}_{sg}(X)$ has a proper Calabi-Yau structure of dimension $n-1$.
\end{thm}

The existence of such a proper Calabi-Yau structure can be seen as a dg enhancement of a Serre functor on the derived category of singularities,
and will follow from the general abstract machinery Theorem \ref{thm: cinf cy},
to be discussed in the following subsection,
as well as the feature that the Orlov's category is in the form of a dg quotient.
Importantly, we will find a chain-level graded symmetric non-degenerate pairing on $\Hom$ spaces of some other category,
quasi-equivalent to the dg category $\dd^{b}_{sg}(X)^{op}$, 
the opposite of Orlov's singularity category.
More details will be given in \S\ref{section: singularity category}.

On the other hand, Orlov's singularity category is also closely related to matrix factorization category.
In fact, Orlov \cite{orlov} proved that for $W: U \to \mathbb{A}^{1}_{\K}$ a regular function from an affine scheme $U$ over a field $\K$ of characteristic zero,
the matrix factorization category $MF_{w}(U, W)$,
as a triangulated category,
is exact equivalent to the singularity category $D^{b}_{sg}(X_{w})$,
where $X_{w} = W^{-1}(w)$,
for any critical value $w$ of $W$ (in fact, $MF_{w}(U, W)$ is trivial if $w$ is a regular value).
It is proved in \cite{shklyarov_mf} that over $\K = \C$,
the $\Z/2$-graded dg category of matrix factorizations has a proper Calabi-Yau structure in $\Z/2$-graded sense,
by constructing an explicit cyclically symmetric pairing.
It would be interesting to understand this from a conceptual point of view using Theorem \ref{thm: sing cy} and Orlov's equivalence.
Although our method does not apply immediately because the category of matrix factorizations is $\Z/2$-graded,
and Orlov's equivalence also breaks the $\Z$-grading down to $\Z/2$-grading,
we expect that the method (i.e., Theorem \ref{thm: cinf cy}) based on which Theorem \ref{thm: sing cy} is proved,
can be extended to the $\Z/2$-graded setting.

\subsection{The categorical residue}

As already mentioned previously, 
such a Calabi-Yau structure in Theorem \ref{thm: rw cy} does not seem to be reachable by a direct Floer-theoretic construction via holomorphic curves.
It is therefore inevitable to reinvestigate the problem in a more general and abstract framework,
where the duality result is independent of the presentation of the relevant $\ainf$-category as a Fukaya-type category.
The strategy of proof of Theorem \ref{thm: rw cy} is therefore to appeal to a purely categorical structure that may be interpreted as the `boundary' of an $\ainf$-category with a weak smooth Calabi-Yau structure.
The nature of the construction,
for which details are carried out throughout \S\ref{section: neighborhood of infinity} also based on results in \S\ref{section: ainf},
 suggests an appropriate name for this invariant - {\it the residue}.
One reason for this is that the chain level maps are defined in a way similar to Tate's construction of abstract residues \cite{tate_residue} using traces of certain operators on infinite-dimensional spaces,
where the theory is further generalized by Beilinson \cite{beilinson}.

The {\it residue} on the {\it categorical formal punctured neighborhood of infinity}, 
is a canonical Hochschild invariant associated with any $\ainf$-category equipped with a weak smooth Calabi-Yau structure.
The notion of the categorical formal punctured neighborhood of infinity is first introduced by \cite{efimov2} in the case of dg categories,
which has a straightforward generalization to $\ainf$-categories further discussed in \cite{GGV}.
Recall that for an $\ainf$-category $\cc$, 
its {\it (algebraizable) categorical formal punctured neighborhood of infinity}, $\cinf$,
is defined to be the essential image of the induced Yoneda functor
\begin{equation}\label{calkin yoneda}
\bar{y}: \cc \to \rmod \cc = \fun(\cc^{op}, \ch_{\K}) \to \fun(\cc^{op}, \ch_{\K} / \perf_{\K}),
\end{equation}
where $\perf_{k}$ is the subcategory of $\ch_{\K}$ consisting of perfect chain complexes.
The target $\ch_{\K}/\perf_{\K}$ is an algebraic analogue of the notion of the Calkin algebra \cite{calkin},
measuring in the non-proper case the difference between all `bounded linear operators' and `compact operators'.
In this sense, the non-triviality of $\cinf$ measures the failure of $\cc$ to be proper.

The categorical formal punctured neighborhood of infinity has received recent attentions in both symplectic geometry \cite{GGV} and string topology \cite{rivera-takeda-wang}.
The general philosophy along the lines of these results says that Calabi-Yau structures on $\cc$ induce product structures on $\r{HH}_{*-n}(\cc, \cinf)$, or equivalently $\r{HH}^{*}(\cc, \cinf)$
which in turn induce products on Hochschild homology of $\cc$ and coproducts on reduced Hochschild homology.
This naturally leads to algebraic approaches to understanding structures on and relations between symplectic cohomology, symplectic homology, Rabinowitz Floer cohomology v.s. loop homology, loop cohomology, Rabinowitz loop cohomology (\cite{CHOduality}),
where the two sides of the theories are related by Viterbo's theorem \cites{Viterbo1, abouzaid_sh, BDHO_dg}.

The definition $\cinf$ as the essential image of \eqref{calkin yoneda} makes sense for any $\ainf$-category, smooth or not.
However, the structure of $\cinf$ is not always well behaved for arbitrary $\cc$.
We will explore its properties and find, when $\cc$ is a weak smooth Calabi-Yau category of dimension $n$,
that there is a canonical map
\begin{equation}\label{cinf res}
\r{res}: \r{CC}_{n-1}(\cinf) \to \K
\end{equation}
defined in \S\ref{section:res}.

The following theorem gives a purely categorical account for the philosophy that the measurement of the failure of Poincar\'{e} duality for Poincar\'{e}-Lefschetz duality must itself satisfy Poincar\'{e} duality of one dimension less.

\begin{thm}\label{thm: cinf cy}
Suppose $\cc$ is a smooth $\ainf$-category having a weak smooth Calabi-Yau structure of dimension $n$.
Then its categorical formal punctured neighborhood $\cinf$ carries a weak proper Calabi-Yau structure of dimension $n-1$,
given by the residue $\r{res}$ \eqref{cinf res}.
\end{thm}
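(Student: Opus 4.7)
The plan is to exhibit the residue $\r{res}$ as arising from a Leray-type boundary construction driven by the weak smooth Calabi-Yau class on $\cc$, and then to check the two conditions of Definition \ref{def: weak proper cy}: (a) $\r{res}$ is a chain map $\r{CC}_*(\cinf) \to \K[-(n-1)]$, and (b) the induced pairing on the morphism complexes of $\cinf$ is cohomologically nondegenerate. The key input is that, by construction, $\cinf$ is the Yoneda image of $\cc$ inside the Calkin-style quotient $\fun(\cc^{op}, \ch_\K / \perf_\K)$, so its morphism complexes sit in a natural triangle with those of $\cc$ whose third term is controlled by perfect modules; accordingly $\r{CC}_*(\cinf)$ fits into a triangle involving $\r{CC}_*(\cc)$ and a ``perfect'' Hochschild piece. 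Smoothness of $\cc$ ensures that the smooth Calabi-Yau copairing is supported on this perfect Hochschild piece, and pairing against this support lowers the homological degree by one, accounting for the expected shift from $n$ to $n-1$.

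First I would set up an explicit model for $\cinf$ in which the comparison triangle between $\r{CC}_*(\cinf)$ and $\r{CC}_*(\cc)$ is realized on the nose --- for instance, via a colimit model exhausting $\cc$ by perfect subcategories, so that every Hochschild cycle of $\cinf$ admits a canonical lift to a Hochschild chain of $\cc$ whose differential is concentrated in the perfect Hochschild piece. In this model, the residue of a Hochschild cycle of $\cinf$ is defined by lifting it to $\cc$, taking its Hochschild differential (which lies in the perfect Hochschild piece by construction), and pairing the resulting chain against the smooth Calabi-Yau copairing via the trace on perfect objects. The chain-map property of $\r{res}$ then follows from two computations in this model: independence from the choice of lift (equivalent to the smooth Calabi-Yau copairing being a Hochschild cycle for the bimodule $\cc \otimes \cc$), and a graded-Leibniz identity between the lift and the differential.

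For nondegeneracy of the induced pairing on $H^*\hom_\cinf(X,Y) \otimes H^{n-1-*}\hom_\cinf(Y,X)$, I would factor it through the smooth Calabi-Yau pairing on $\cc$. The product of two classes in $\cinf$ admits a lift to $\hom_\cc(Y,Y)$ whose Hochschild differential lies entirely in the perfect subcomplex, and pairing this lifted product against the smooth Calabi-Yau copairing recovers the residue. If a class $[\alpha] \in H^*\hom_\cinf(X,Y)$ were orthogonal to every class in $H^{n-1-*}\hom_\cinf(Y,X)$ under this pairing, nondegeneracy of the smooth Calabi-Yau pairing on $\cc$ would force any lift of $[\alpha]$ to be cohomologous in $\cc$ to a perfect representative, and hence $[\alpha] = 0$ in $\cinf$.

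The main obstacle is precisely this last step: demonstrating that the Leray-type residue exactly captures the failure of properness in the smooth Calabi-Yau pairing on $\cc$. Formally this amounts to showing that the smooth Calabi-Yau quasi-isomorphism $\cc_\Delta \to \cc^![n]$ of bimodules, when composed with the projection onto the quotient defining $\cinf$, identifies the sought-after proper pairing on $\cinf$ with the boundary pairing induced by the cofiber sequence. Carrying this out requires aligning the bar/telescope models of the Hochschild complex and of the morphism complexes in $\cinf$ in a way that is simultaneously compatible with the Hochschild differential and with the smooth Calabi-Yau bimodule structure, which is the most delicate bookkeeping step of the proof.
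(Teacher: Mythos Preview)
Your proposal sketches an intuitively appealing ``Leray residue via connecting homomorphism'' picture, but it has two genuine gaps that prevent it from going through as stated.

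First, the triangle you invoke does not exist in the form you need. The paper does establish (Corollary~\ref{cor: cinf smooth cone}) a triangle of $\cc$-\emph{bimodules}
\[
\cc^{!}\otimes_{\cc}\cc^{\vee} \to \cc_{\D} \to (\bar{y},\bar{y})^{*}(\cinf)_{\D},
\]
which upon taking Hochschild homology yields a triangle relating $\r{CC}_{*}(\cc,\cc)$ to $\r{CC}_{*}(\cc,\cinf)$, not to $\r{CC}_{*}(\cinf,\cinf)$. The weak proper Calabi--Yau structure, however, must be a functional on $\r{CC}_{*}(\cinf)$ itself. The functor $\bar{y}\colon \cc\to\cinf$ gives a comparison map $\r{CC}_{*}(\cc,\cinf)\to\r{CC}_{*}(\cinf)$, but there is no reason for this to be a quasi-isomorphism, and without it your lift-and-take-boundary construction produces a functional on the wrong complex.

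Second, and more seriously, your nondegeneracy argument appeals to ``nondegeneracy of the smooth Calabi--Yau pairing on $\cc$'', but no such pairing exists. A weak smooth Calabi--Yau structure is a Hochschild cycle $\sigma\in\r{CC}_{-n}(\cc)$ inducing a quasi-isomorphism $\cc^{!}\simeq\cc_{\D}[-n]$; it is a \emph{copairing}, not a pairing, and in particular does not furnish a nondegenerate bilinear form on $H^{*}\hom_{\cc}(X,Y)\otimes H^{*}\hom_{\cc}(Y,X)$ (that would be a proper Calabi--Yau structure, and $\cc$ is typically not proper). So the step ``orthogonality forces the lift to be perfect'' has no input to draw on.

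The paper's route is quite different: it exploits the explicit description $\cinf(X_{0},X_{1})=\r{CC}^{*}(\cc^{op},\cZ_{X_{0}}^{X_{1}})$ with $\cZ$ the Calkin-cone bimodule, builds a concrete trace pairing $\pi^{\cZ}$ on the $\cZ$-bimodules (Definition~\ref{def: trace pairing}, Lemma~\ref{lem: hom pairing canonical}), and then defines $\pi_{\infty}$ as the composite $\pi^{\cZ}_{*}\circ(-\cap\sigma)\circ\sqcup$. The capping with $\sigma$ is what uses the smooth Calabi--Yau structure, and it enters as the quasi-isomorphism $\r{CC}^{*}(\cc^{op},\cP)\simeq\r{CC}_{*-n}(\cc^{op},\cP)$ (Lemma~\ref{lem: capping with cy is iso}) rather than as a pairing on hom-spaces. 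Nondegeneracy then follows because $\pi^{\cZ}$ is the manifestly nondegenerate composition-trace pairing and $-\cap\sigma$ is a quasi-isomorphism.
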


\begin{rem}
Despite the similarity of the names,
the reader should not confuse the induced pairing (which would naturally be called the residue pairing) with K. Saito's higher residue pairing \cite{saito} or Shklyarov's categorical analogue on negative/period cyclic homology \cite{shklyarov2}.
\end{rem}

To relate this algebraic invariant to geometry, in particular the situation of Theorem \ref{thm: rw cy},
we introduce the following definition.

\begin{defn}[Definition \ref{def: weak proper cy functor}]\label{def: cy fun}
Let $\cc, \dd$ be $\ainf$-categories with weak proper Calabi-Yau structures $\phi, \psi$ of the same dimension $n$.
An $\ainf$-functor $F: \cc \to \dd$ is called a weak proper Calabi-Yau functor if $[\psi \circ F] = [\phi] \in \r{HH}_{n}(\cc)$.
$F$ is called weak proper Calabi-Yau equivalence if there exists a weak Calabi-Yau functor $G: \dd \to \cc$ such that the two way compositions are both homotopic to the identity.
\end{defn}

Now let us go back from homological algebra to geometry.
Another main result of \cite[Theorem 1.1]{GGV} is to give an algebraic interpretation of the Rabinowitz Fukaya category, 
which says there is a canonical $\ainf$-functor
\begin{equation}\label{rw to winf}
\Phi: \rw(X) \to \widehat{\w(X)}_{\infty},
\end{equation}
which is a quasi-equivalence whenever the Liouville manifold $X$ has $c_{1}(X)=0$ and is nondegenerate in the sense of \cite{ganatra}; see Theorem \ref{thm: rw=winf}.
We shall use this quasi-equivalence to produce a weak proper Calabi-Yau structure on $\rw(X)$ by pulling back the residue from Theorem \ref{thm: cinf cy}.
In \S\ref{section: residue on rw} we verify that the induced pairing agrees with the tautological pairing, 
and is therefore nondegenerate.

By construction, the functor $\Phi$ will be a weak proper Calabi-Yau equivalence in the sense of Definition \ref{def: cy fun} (Definition \ref{def: weak proper cy functor}). 
Nonetheless, we expect there is a possible way of constructing a proper Calabi-Yau structure purely in terms of Floer theory,
which would potentially lead and/or be related to a geometric pre-Calabi-Yau structures on the wrapped Fukaya category.
This geometric proper Calabi-Yau structure should induce the same pairing, 
such that the quasi-equivalence $\Phi$ is a weak proper Calabi-Yau functor between these weak proper Calabi-Yau structures.
We leave this to a future research as the current method of using popsicles (\S\ref{section: popsicles}) to construct the $\ainf$-structures does not seem to be applicable to disks with more than two outputs.

Regarding the algebro-geometric result Theorem \ref{thm: sing cy},
it is shown by \cite[Theorem 9.2]{efimov2} that for a separated scheme $X$ over finite type over a perfect field $\K$,
$\dd^{b}_{sg}(X)^{op}$ is quasi-equivalent to $\widehat{\dd^{b}Coh(X)}_{\infty}$.
For $X$ Gorenstein of dimension $n$ with a trivialization of the canonical bundle,
the dg category $\dd^{b}Coh(X)$ admits a smooth Calabi-Yau structure of dimension $n$.
So by Theorem \ref{thm: cinf cy}, we get a proper Calabi-Yau structure of dimension $n-1$ on $\dd^{b}_{sg}(X)$.
More details will be given in \S\ref{section: singularity category}

Theorem \ref{thm: cinf cy} will follow from a more general study of a system of product structures on Hochschild cochain complexes with arbitrary bimodule coefficients,
as well as pairings and their induced maps on Hochschild chain complexes,
We carry these out throughout \S\ref{section: ainf} and \S\ref{section: neighborhood of infinity},
especially \S\ref{section: products on Hochschild}, \S\ref{section: cy structures}, \S\ref{section: taut pairing}, \S\ref{section: bilinear pairings}, \S\ref{section:res}, \S\ref{section: pairing nondegenerate}.

\subsection{Relation to closed-string theory}

Rabinowitz Floer theory originates from a closed-string theory \cite{CF, CFO},
which assigns to every Liouville manifold $X$ a (co)chain complex
\begin{equation}
RFC^{*}(X)
\end{equation}
called the {\it Rabinowitz Floer complex},
whose cohomology is called the {\it Rabinowitz Floer cohomology} and denoted by
\begin{equation}
RFH^{*}(X).
\end{equation}
The definitions will be given in \S\ref{section: closed-string rfc}.
On the Rabinowitz Floer complex there is also a tautological pairing 
\begin{equation}\label{pairing on rfc}
\langle \cdot, \cdot \rangle_{taut}: RFC^{*}(X) \otimes RFC^{2n-1-*}(X) \to \K
\end{equation}
defined in \eqref{taut pairing on rfc},
which is nondegenerate.

This is also closely related to our theory of Rabinowitz Fukaya categories.
In \cite[Theorem 1.6]{GGV} we showed that when the Liouville manifold $X$ is nondegenerate and has $c_{1}(X)=0$,
then there is an isomorphism
\begin{equation}
\r{HH}_{*-n}(\w(X), \rw(X)) \cong RFH^{*}(X),
\end{equation}
whenever the usual open-closed map to symplectic cohomology
\[
\oc: \r{HH}_{*-n}(\w(X), \w(X)) \to SH^{*}(X)
\]
is an isomorphism.
But our first approach in \cite{GGV} is somewhat indirect; 
in particular, we did not construct a geometric map realizing this isomorphism, 
and therefore could not discuss its further properties.
One additional structure, in comparison to \eqref{pairing on rfc}, is that the Hochschild chain complex $\r{CC}_{*-n}(\w(X), \rw(X))$ also has a canonical pairing
\begin{equation}\label{pairing on wrw}
\langle \cdot, \cdot \rangle_{\sigma}: \r{CC}_{*-n}(\w(X), \rw(X)) \otimes \r{CC}_{(2n-1-*)-n}(\w(X), \rw(X)) \to \K,
\end{equation}
defined in terms of the weak smooth Calabi-Yau structure $\sigma$ on $\w(X)$;
see \S\ref{section: relating pairings} for the detailed construction.
For every exact cylindrical submanifold $L$, 
this induces a pairing
\begin{equation}\label{pairing on wrw for L}
\langle \cdot, \cdot \rangle_{\sigma, L}: \rw^{*-n}(L, L) \otimes \rw^{n-1-*}(L, L) \to \K
\end{equation}
via the canonical inclusion map of chain complexes $i: \rw^{*}(L, L) \to \r{CC}_{*}(\w(X), \rw(X))$.
A priori, this could be different from the pairing on $\rw$ induced from the weak proper Calabi-Yau structure on $\rw$ from Theorem \ref{thm: rw cy}.
To better understand the relationship between the Rabinowitz Fukaya category and the Rabinowitz Floer cohomology,
especially the fruitful algebraic structures on both sides, 
we have the following:

\begin{thm}\label{thm: oc respects pairing}
Floer theory gives rise to a geometrically defined chain map
\begin{equation}\label{wrw oc1}
\oc_{R}: \r{CC}_{*-n}(\w(X), \rw(X)) \to RFC^{*}(X).
\end{equation}
If $X$ is a nondegenerate Liouville manifold with $c_{1}(X) = 0$,
then $\oc_{R}$ is a quasi-isomorphism and respects the pairing \eqref{pairing on wrw} on $\r{CC}_{*-n}(\w(X), \rw(X))$ and the pairing on Rabinowitz Floer cohomology $RFH^{*}(X)$ induced by the tautological pairing $\langle \cdot, \cdot \rangle_{taut}$ \eqref{pairing on rfc}.
\end{thm}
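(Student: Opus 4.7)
The plan is to construct $\oc_R$ as a Rabinowitz analogue of the usual open-closed map, reduce the quasi-isomorphism to the known wrapped-Fukaya open-closed isomorphism of \cite{ganatra} via a five-lemma on mapping cones, and finally match the two pairings via an interpolating parametric moduli space.

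\textbf{Construction of $\oc_R$.} I would define $\oc_R$ by counting rigid popsicle-type disks with $d$ cyclically ordered boundary punctures (one of which is distinguished and labelled by a Lagrangian Rabinowitz Floer generator, the others carrying wrapped generators read off the Hochschild chain) and one interior puncture that is a cylindrical output. The interior cylindrical end is equipped with continuation Floer data jumping from a very negative to a very positive Hamiltonian, so that its asymptotic generator lies in the mapping cone presentation of $RFC^{*}(X)$; the boundary Floer data extend the popsicle framework of \cite{GGV} used to build $\rw(X)$. Standard Gromov compactness plus codimension-one boundary analysis, combined with the $\ainf$-bimodule relations of $\rw(X)$ over $\w(X)$, shows that the resulting operation descends to a chain map $\oc_R : \r{CC}_{*-n}(\w(X), \rw(X)) \to RFC^{*}(X)$.

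\textbf{Quasi-isomorphism.} Both source and target split naturally as mapping cones in a way compatible with $\oc_R$: on the target, $RFC^{*}(X)$ is by definition the cone on a continuation $SC^{*}_{pos}(X) \to SC^{*}_{neg}(X)$, while on the source, $\rw(X)$ is a cone of bimodules over $\w(X)$ whose Hochschild complexes present $\r{CC}_{*-n}(\w(X), \rw(X))$ itself as a cone. A neck-stretching argument on the interior cylindrical end splits $\oc_R$ into the usual open-closed map $\oc : \r{HH}_{*-n}(\w(X), \w(X)) \to SH^{*}(X)$ (an isomorphism under nondegeneracy by \cite{ganatra}) and its negative-Hamiltonian analogue, handled by the same generation criterion since the collection $\B$ hits the unit. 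The five-lemma then yields the desired quasi-isomorphism, refining the abstract isomorphism of \cite[Theorem 1.6]{GGV} to a geometric chain-level statement.

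\textbf{Pairing compatibility.} The hardest step is the identification of pairings. I would introduce a one-parameter family of moduli spaces of disks with two interior punctures and two groups of boundary punctures, each group feeding in one of the Hochschild cycles $\xi, \eta$, with popsicle data on the boundary and coupled continuation Floer data on the two interior cylinders. As the parameter stretches the neck between the two interior punctures, the limit glues two copies of $\oc_R$ across the tautological Floer pairing and produces $\langle \oc_R(\xi), \oc_R(\eta) \rangle_{taut}$; as the parameter degenerates to collapse the two interior punctures into a single boundary circle with a distinguished $\sigma$-marked point, the limit reproduces the Hochschild pairing $\langle \xi, \eta \rangle_{\sigma}$ from \S\ref{section: relating pairings} induced by the weak smooth Calabi-Yau structure $\sigma$ on $\w(X)$. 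Counting the remaining codimension-one boundary strata of this family gives the chain homotopy expressing pairing compatibility. The main obstacle is to design the parametric Floer data so that (i) it is simultaneously compatible with the popsicle framework, the bimodule structure of $\rw(X)$, and the two interior continuation cylinders; and (ii) the compactification of the parametric moduli space has only the two targeted degenerations in codimension one, together with Hochschild-differential boundaries, and no exotic bubbling or breaking. Achieving this requires a careful inductive choice of perturbation data in the spirit of the constructions in \S\ref{section: popsicles}, and can plausibly be cross-checked against the categorical residue viewpoint of Theorem \ref{thm: cinf cy} pulled back along \eqref{rw to winf}.
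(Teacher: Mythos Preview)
Your construction of $\oc_R$ and your quasi-isomorphism argument match the paper's approach closely: the map is defined in \S7.4 by counting rigid popsicle open-closed disks with one interior puncture and a distinguished boundary input from $\rw$, and Proposition~\ref{prop: wrw oc iso} proves the quasi-isomorphism exactly as you describe, by presenting both sides as mapping cones and reducing via a filtration argument to the ordinary $\oc$ (an isomorphism by hypothesis) and a negative-Hamiltonian piece $\oc_{-}$, which is identified with the inverse of $\oc^{\vee}$ through Poincar\'e duality and capping with $\sigma$.

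The real divergence is in the pairing step. The paper does \emph{not} build any parametric moduli space with two interior punctures. Instead, the pairing $\langle\cdot,\cdot\rangle_\sigma$ on $\r{CC}_{*-n}(\w,\rw)$ is \emph{defined} (in \S\ref{section: relating pairings}) so that compatibility is nearly tautological: one uses the cone presentation \eqref{cc wrw as cone} together with the identification $\mathcal{K}_*\circ(-\cap\sigma):\r{CC}_*(\w,\w)^\vee\stackrel{\sim}\to\r{CC}_*(\w,\w_-^\top)$ to rewrite the source as $\cone(\r{CC}_*(\w,\w)^\vee\to\r{CC}_*(\w,\w))$, and then imposes the tautological pairing \eqref{taut pairing}. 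After these identifications, $\oc_R$ becomes a map of cones whose two filtered pieces are $\oc$ and (the inverse of) $\oc^\vee$, with $\oc_{-,+}$ supplying the homotopy; since both pairings are tautological on $V^\vee[1]\oplus V$, compatibility on cohomology follows immediately from the diagram \eqref{cd for oc homology}. No new moduli spaces are needed.

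Your proposed two-interior-puncture family is not just more laborious, it runs into a concrete obstacle: the paper explicitly warns (after Definition~\ref{def: popsicles} in the closed-string setting, and again in the remark at the end of the introduction) that the popsicle formalism does not extend cleanly to domains with multiple outputs or multiple interior punctures, which is precisely why higher closed-string structures are not treated here. So the analytic machinery you would need for your parametric family is not available in this framework, and you would have to develop a substantially different perturbation scheme. The paper sidesteps all of this by making the pairing compatibility an algebraic consequence of how $\langle\cdot,\cdot\rangle_\sigma$ is set up.
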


We expect that the induced map on homology 
\begin{equation}
\oc_{R}: \r{HH}_{*-n}(\w(X), \rw(X)) \to RFH^{*}(X),
\end{equation}
is an isomorphism of unital rings,
where the left hand side carries a product dual to the cup product on $\r{HH}^{*}(\w(X), \rw(X))$ defined in \eqref{cup product on cinf} in \S\ref{section: cup product}.
In addition, the full open-closed map \eqref{wrw oc1} should respect the pairings on the two-sides,
where the pairing on the left hand side is induced by the product structure as well as the weak proper Calabi-Yau structure on $\rw$.
There is a chain-level construction on a product on $\r{CC}_{*-n}(\cc, \cinf)$ in \cite{rivera-takeda-wang} which nonetheless requires that $\cc$ have a pre-Calabi-Yau structure (at least up to $4$-truncation),
but that has not been fully established on the wrapped Fukaya category.

\subsection*{Overview of the paper}

In \S\ref{section: ainf}, we provide the necessary homological algebra background for $\ainf$-categories and multi-modules that will be relevant to various key notions throughout the paper.
In particular, we include a discussion of multi-tensor products and Hochschild homology/cohomology with coefficients in multi-modules,
and provide a general framework for product structures on Hochschild cochain complexes with various bimodule coefficients.
In \S\ref{section: neighborhood of infinity}, we recall the definition of the categorical formal punctured neighborhood of infinity, 
and construct our main invariant - the residue on Hochschild chain complex of the categorical formal punctured neighborhood of infinity,
and prove Theorem \ref{thm: cinf cy}.
In \S\ref{section: singularity category}, we include a concise discussion on the Calabi-Yau structure on Orlov's singularity category.
In \S\ref{section: rabinowitz fukaya category}, we review the construction of the Rabinowitz Fukaya category of a Liouville manifold,
as well as some key features of it, among which in particular is the quasi-equivalence between $\rw(X)$ and $\widehat{\w(X)}_{\infty}$. 
In \S\ref{section: residue on rw}, we use the results developed earlier to endow $\rw(X)$ a residue which induces a nondegenerate pairing on cohomology,
and therefore complete the proof of Theorem \ref{thm: rw cy}.
In \S\ref{section: closed-string rfc}, we provide a chain-level definition of closed-string Rabinowitz Floer cohomology compatible with our geometric framework of the Rabinowitz Fukaya category.
Based on this,  in \S\ref{section: open-closed} we explore an open-closed relationship between the Rabinowitz Fukaya category and the closed-string Rabinowitz Floer cohomology,
and define an extension of the usual open-closed map.

\subsection*{Acknowledgements} 

The author wishes to thank Sheel Ganatra for helpful discussions about Calabi-Yau structures on $\ainf$-categories and the relevant notions of residues in complex and algebraic geometry,
and Manuel Rivera for answering some questions related to pre-Calabi-Yau structures and trivializations of the Chern character of the diagonal.

\section{Homological algebra related to $\ainf$-categories}\label{section: ainf}

The goal of this section is to review several important definitions and results surrounding $\ainf$-categories,
and develop algebraic tools using which we will prove our main results Theorem \ref{thm: rw cy} and Theorem \ref{thm: cinf cy}.
In addition, we discuss a system of operations on Hochschild chain and cochain complexes with different bimodule coefficients,
and relate such to pairings between bimodules.
We shall be very careful about signs and the opposite categories.

\subsection{$\ainf$-categories, functors and modules}

We begin by recalling some basic definition about $\ainf$-categories, functors, modules and bimodules,
for which references are by now relatively standard, 
e.g. \cites{lefevre, seidel_book, seidel_ainf, kontsevich-soibelman, FOOO1, fukaya_fun, ganatra},
while mention that there are many more references.

Fix a ground field $\K$ of arbitrary characteristic. 
In this paper all $\ainf$-categories will be small, linear over $\K$, graded by $\Z$, and cohomologically unital.
All unspecified tensor products are understood to be taken over $\K$.
The notation for the grading of an element $x$ is
\[
\deg(x) = |x| \in \Z.
\]

\begin{rem}
Many of the constructions work for $\Z/2$-graded $\ainf$-categories,
including the construction of the Rabinowitz Fukaya category (\S\ref{section: rabinowitz fukaya category}),
the closed-string Rabinowitz Floer cohomology (\S\ref{section: closed-string rfc}),
and the open-closed map (\S\ref{section: open-closed}).
However, the statements on quasi-equivalence/quasi-isomorphisms are not valid without a Calabi-Yau structure on the wrapped Fukaya category,
which only exists when $c_{1}(X) = 0$ and the category is $\Z$-graded.
\end{rem}

Let $\cc$ be a small $\ainf$-category, 
which has a set of objects $\ob \cc$,
and composition maps of degree $2-k$
\begin{equation}\label{ainf maps}
\mu_{\cc}^{k}: \cc(X_{k-1}, X_{k}) \otimes \cdots \otimes \cc(X_{0}, X_{1}) \to \cc(X_{0}, X_{k})[2-k],
\end{equation}
or equivalently the following maps of degree $0$
\begin{equation}\label{ainf maps shifted}
\mu_{\cc}^{k}: \cc[1](X_{k-1}, X_{k}) \otimes \cdots \otimes \cc[1](X_{0}, X_{1}) \to \cc[1](X_{0}, X_{k}),
\end{equation}
satisfying the $\ainf$-equations 
\begin{equation}\label{a-infinity relation}
\sum_{i, j} (-1)^{\maltese_{1;i}} \mu_{\cc}^{k-j+1}(x_{k}, \ldots, x_{i+j+1}, \mu_{\cc}^{j}(x_{i+j}, \ldots, x_{i+1}), x_{i}, \ldots, x_{1}),
\end{equation}
where 
\begin{equation}\label{koszul sign}
\maltese_{1;i} = |x_{1}| + \cdots + |x_{i}| - i
\end{equation}
is the Koszul sign obtained by commuting $\mu_{\cc}^{j}$ to the right end of the expression \eqref{a-infinity relation}.

$\cc$ is said to be {\it strictly unital} if for every $X \in \ob \cc$ there exists $e_{X} \in \cc(X, X)$ such that 
\begin{equation}
\begin{split}
\mu^{1}_{\cc}(e_{X}) &= 0,\\
(-1)^{|x|} \mu^{2}_{\cc}(e_{X}, x) &= \mu^{2}_{\cc}(x, e_{X}) = x, \\
\mu^{k}(\cdots, e_{X}, \cdots) &= 0 \text{ for all } k >2.
\end{split}
\end{equation}
$\cc$ is said to be {\it cohomologically unital} if the cohomology category $H(\cc)$ is unital as an ordinary graded category.
In \S\ref{section: ainf}, we shall assume all $\ainf$-categories in consideration are cohomologically unital.
For notational simplicity, we define
\begin{equation}
\cc(X_{0}, \ldots, X_{k}) := \cc(X_{k-1}, X_{k}) \otimes \cdots \otimes \cc(X_{0}, X_{1}).
\end{equation}

\begin{defn}\label{def: opposite category}
For an $\ainf$-category $\cc$, its {\it opposite category} 
\begin{equation}
\cc^{op}
\end{equation}
is defined to be the $\ainf$-category with $\ob \cc^{op} = \ob \cc$,
morphism spaces
\begin{equation}
\cc^{op}(X, Y) = \cc(Y, X),
\end{equation}
and $\ainf$-structure maps
\begin{equation}
\mu^{k}_{\cc^{op}}(x_{1}, \ldots, x_{k}) = (-1)^{\maltese_{1;k}} \mu^{k}_{\cc}(x_{k}, \ldots, x_{1}).
\end{equation}
\end{defn}

Let $\cc, \dd$ be $\ainf$-categories. 
An $\ainf$-functor $F: \cc \to \dd$ consists of a function $F: \ob \cc \to \ob \dd$,
together with maps
\begin{equation}
F^{k}: \cc(X_{k-1}, X_{k}) \otimes \cdots \otimes \cc(X_{0}, X_{1}) \to \dd(F(X_{0}), F(X_{k}))[1-k]
\end{equation} 
for all $X_{0}, \ldots, X_{k} \in \ob \cc$,
satisfying the $\ainf$-functor equations
\begin{equation}
\begin{split}
& \sum_{s \ge 1} \sum_{i_{1} + \cdots + i_{s} = d} \mu_{\dd}^{s}(F^{i_{s}}(x_{d}, \cdots, x_{i_{1} + \cdots + i_{s-1} + 1}), \cdots, F^{i_{1}}(x_{i_{1}}, \cdots, x_{1})) \\
= & \sum_{i, j} (-1)^{\maltese_{1;i}} F^{d - j + 1}(x_{d}, \cdots, x_{i + j + 1}, \mu_{\cc}^{j}(x_{i + j}, \cdots, x_{i+1}), x_{i}, \cdots, x_{1}).
\end{split}
\end{equation}
The functor $F: \cc \to \dd$ is said to be {\it cohomological unital}, 
if the induced functor on cohomology categories $H(F): H(\cc) \to H(\dd)$ is unital.
In \S\ref{section: ainf}, we shall only consider cohomologically unital $\ainf$-functors.

The set of all $\ainf$-functors from $\cc$ to $\dd$ is denoted by $\fun(\cc, \dd)$, which also forms an $\ainf$-category (\cite{seidel_book}),
in which the degree $g$ part of the morphism space is
\begin{equation}\label{hom in fun}
\hom_{\fun(\cc, \dd)}^{g}(F, G) = \prod_{X_0, \ldots, X_k \in \ob \cc} \hom_{\K}(\cc(X_{k-1}, X_{k}) \otimes \cdots \otimes \cc(X_{0}, X_{1}), \dd(F(X_{0}), G(X_{k}))[g - k]).
\end{equation}
Elements in $\hom_{\fun(\cc, \dd)}(F, G)$ are called $\ainf$-pre-natural transformations from $F$ to $G$.
The $\ainf$-structure maps are
\begin{equation}\label{mu1 in fun}
    \begin{split}
   & \mu^{1}_{\fun(\cc, \dd)}(T)^{k}(x_k, \ldots, x_1)\\
   = &\sum_{r, i} \sum_{\substack{s_{1}, \ldots, s_{r} \ge 0\\s_{1}+\cdots+s_{r}=k}} (-1)^{\dagger} \mu_{\dd}^{r}(G^{s_{r}}(x_{k}, \ldots, x_{s_{1}+\cdots+s_{r-1}+1}), \ldots, \\
    &G^{s_{i+1}}(x_{s_{1}+\cdots+s_{i+1}}, \ldots, x_{s_{1}+\cdots+s_{i}+1}), T^{s_{i}}(x_{s_{1}+\cdots+s_{i}}, \ldots, x_{s_{1}+\cdots+s_{i-1}+1}),\\
    & F^{s_{i-1}}(x_{s_{1}+\cdots+s_{i-1}}, \ldots, x_{s_{1}+\cdots+s_{i-2}+1}) , \ldots, F^{s_{1}}(x_{s_{1}}, \ldots, x_{1})) \\
    &-\sum_{m,n} (-1)^{*_{n}+|T|-1}T^{k-m+1}(x_{k},\ldots, x_{n+m+1}, \mu_{\cc}^{m}(x_{n+m},\ldots,x_{n+1}) x_{n}, \ldots, x_{1}),
    \end{split}
\end{equation}
where
\begin{equation}
\dagger = (|T|-1)(|x_{1}|+\cdots+|x_{s_{1}+\cdots+s_{i-1}}|-s_{1}-\cdots-s_{i-1}),
\end{equation}
and 
\begin{equation}\label{muk in fun}
    \begin{split}
    & \mu^{k}_{\fun(\cc,\dd)}(T_k, \ldots, T_1)^{k}(x_k, \ldots, x_1) \\
    = &\sum_{r, i_{1},\ldots,i_{d}} \sum_{\substack{s_{1},\ldots,s_{r} \ge 0\\s_{1}+\cdots+s_{r}=k}} (-1)^{\circ} \mu^{r}_{\dd}( F_{k}^{s_{r}}(x_{k}, \ldots, x_{k-s_{r}+1}), \ldots, F_{k}^{s_{i_{k}+1}}(\ldots),\\
       &T_{k}^{s_{i_{k}}}(\ldots), \ldots, T_{2}^{s_{i_{2}}}(\ldots), F_1^{s_{i_{2}-1}}(\ldots), \ldots, F_{1}^{s_{i_{1}+1}}(\ldots), \\
    &T_{1}^{s_{i_{1}}}(x_{s_{1}+\cdots+s_{i_{1}}}, \ldots, x_{s_{1}+\cdots+s_{i_{1}-1}+1}), F_0^{s_{i_{1}-1}}(\ldots), \ldots, F_0^{s_{1}}(x_{s_{1}},\ldots,x_{1}))
\end{split}
\end{equation}
where
\begin{equation}\label{signforfunctorcomp}
\circ = \sum_{p=1}^{k}(|T_{p}|-1) \cdot \maltese_{1; s_{1}+\cdots+s_{i_{p-1}}}
\end{equation}
and $1 \le i_{1} \le i_{2} \le \cdots \le i_{k} \le r$.

A left (resp. right) $\ainf$ $\cc$-module $\n$ is an $\ainf$-functor $\cc \to \ch_{\K}$ (resp. $\cc^{op} \to \ch_{\K}$).
The category $\cc \mod = \fun(\cc, \ch_{\K})$ (resp. $\rmod \cc = \fun(\cc^{op}, \ch_{\K})$) is called the category of left (resp. right) modules over $\cc$, or simply $\cc$-modules.
There are Yoneda functors
\begin{equation}
\begin{split}
y^{l}: \cc & \to \cc \mod, \\
y^{r}: \cc & \to \rmod \cc,
\end{split}
\end{equation}
which are both cohomologically fully faithful (since everything is assumed to be cohomologically unital);
see e.g. \cite[\S(2g)]{seidel_book}.
The image of an object $X \in \ob \cc$ under these are denoted by $\Y^{l}_{X}$ and resp. $\Y^{l}_{X}$,
called the left and resp. right Yoneda module of $X$.

\subsection{$A_{\infty}$ mult-functors and multi-modules}

An $\ainf$ $\cc-\dd$-bimodule $\cP$ is an $\ainf$-bilinear functor (or a bi-functor) $\cc \times \dd^{op} \to \ch_{\K}$ ($A_{\infty}$-morphism with two entries in the sense of \cite{lyubashenkomulti}).
To give a systematic treatment of bi-functors, bimodules and even more general objects, 
we recall the notion of $\ainf$ multi-functors by \cite{lyubashenkomulti}; see also \cites{sheridan, fukaya_fun}.

\begin{defn}
Let $m \ge 1$ be a positive integer and $\cc_{1}, \ldots, \cc_{m}$ and $\dd$ be $\ainf$-categories.
An $\ainf$ multi-functor $F: \cc_{1} \times \cdots \times \cc_{m} \to \dd$ consists of a map $F: \prod_{i=1}^{m} \ob \cc_{i} \to \ob \dd$, 
together with a collection of linear maps
\begin{equation}\label{maps of multi-functor}
F^{k_{1}, \ldots, k_{m}}: \bigotimes_{i=1}^{m} \cc_{i}(X_{i, 1}, \ldots, X_{i, k_{i}}) \to \dd(F(X_{1, 1}, \ldots, X_{m, 1}), F(X_{1, k_{1}}, \ldots, X_{m, k_{m}})),
\end{equation}
satisfying the following equations
\begin{equation}
\begin{split}
& \sum_{\substack{s \ge 1\\ i_{j, 1} + \cdots + i_{j, s} = k_{j}}} (-1)^{*}  \mu_{\dd}^{s}(F(x_{1, i_{1,1} + \cdots + i_{1, s}}, \ldots, x_{1, i_{1, 1} + \cdots + i_{1, s-1}+1}; \ldots; x_{m, i_{m,1} + \cdots + i_{m, s}}, \ldots, x_{m, i_{m,1} + \cdots + i_{m, s-1}+1}), \\
& \ldots, F(x_{1, i_{1,1}}, \ldots, x_{1,1}; x_{2, i_{2,1}}, \ldots, x_{2, 1}; \ldots; x_{m, i_{m,1}}, \ldots, x_{m, 1})) \\
= & \sum_{i, j, r}  (-1)^{\maltese} F^{k_{1}, \ldots, k_{i} - j + 1, \ldots, k_{m}}(x_{1, k_{1}}, \ldots; x_{i, k_{i}}, \ldots, \mu_{\cc_{i}}^{j}(x_{i, r+j}, \ldots, x_{i, r+1}), x_{i, r}, \ldots, x_{i, 1}; \ldots; \ldots, x_{m, 1}).
\end{split}
\end{equation}
Here the sign $*$ is the Koszul sign associated to re-ordering the elements in the expression to appear in the standard order
\[
x_{1, k_{1}}, \ldots, x_{1, 1}; \ldots; x_{m, k_{m}}, \ldots, x_{m, 1},
\]
and the sign $\maltese$ is the Koszul sign similar to \eqref{koszul sign}, 
but obtained by commuting $\mu_{\cc_{i}}^{j}$ all the way to the right end of the expression.
\end{defn}

Let $\cc_{1}, \ldots, \cc_{r}$ and $\dd_{1}, \ldots, \dd_{s}$ be $\ainf$-categories.
For notational simplicity we denote them by ordered tuples of $\ainf$-categories
\begin{align}
\overrightarrow{\cc} & = (\cc_{1}, \ldots, \cc_{r}), \\
\overrightarrow{\dd} & = (\dd_{1}, \ldots, \dd_{s}). 
\end{align}

\begin{defn}
An $\ainf$ $\overrightarrow{\cc} - \overrightarrow{\dd}$ multi-module is an $\ainf$ multi-linear functor
\begin{equation}
\A: \prod_{i=1}^{r} \cc_{i} \times \prod_{j=1}^{s} \dd^{op}_{i} \to \ch_{\K}.
\end{equation}
\end{defn}

To make formulas for bimodule structure maps look concise and clean, we introduce some notations.
Let $k, l \ge 0$ be non-negative integers.
For $x_{i} \in \cc(X_{i-1}, X_{i}), x'_{s} \in \dd(Y_{s}, Y_{s-1})$, 
define
\begin{align}
\mathbf{x}_{i; j} & = x_{j} \otimes \cdots \otimes x_{i}, \vspace{0.5cm} 1 \le i \le j \le k, \label{tensor symbol 1} \\
\mathbf{x}'_{s; t} & = x'_{s} \otimes \cdots \otimes x'_{t}, \vspace{0.5cm} 1 \le s \le t \le l, \label{tensor symbol 2}
\end{align}
where if $i > j$ or $k=0$, $\mathbf{x}_{i; j}$ is understood to be empty,
if $s>t$ or $t=0$, $\mathbf{x}'_{s; t}$ is understood to be empty.

\begin{defn}\label{def: diagonal bimodule}
The {\it diagonal bimodule} $\cc_{\D}$ is the canonical $\cc-\cc$ bimodule associate with an $\ainf$-category $\cc$,
with underlying complex $\cc_{\D}(X, Y) = \cc(Y, X)$ and bimodule structure maps,
\begin{equation}
\mu^{k, l}_{\cc_{\D}}: \cc(X_{0}, \ldots, X_{k}) \otimes \cc_{\D}(X_{0}, Y_{0}) \otimes \cc(Y_{l}, \ldots, Y_{0}) \to \cc_{\D}(X_{k}, Y_{l})
\end{equation}
\begin{equation}\label{structure maps for diagonal}
\mu^{k, l}_{\cc_{\D}}(\mathbf{x}_{1;k}, c, \mathbf{x}'_{1;l}) = (-1)^{\maltese'_{1;l}+1} \mu_{\cc}^{k+l+1}(\mathbf{x}_{1;k}, c, \mathbf{x}'_{1;l}),
\end{equation}
where $x_{i} \in \cc(X_{i-1}, X_{i}), y'_{j} \in \cc(Y_{j}, Y_{j-1}) = \cc^{op}(Y_{j-1}, Y_{j})$, and 
\begin{equation}
\maltese'_{1;l} = \sum_{j=1}^{l} |x'_{j}| - l.
\end{equation}
\end{defn}

\begin{defn}\label{def: linear dual bimodule}
The {\it linear dual bimodule} $\cc^{\vee}$ is the $\cc-\cc$ bimodule whose underlying complex is
\begin{equation}
\cc^{\vee}(X, Y) = \hom_{\K}(\cc^{-*}(X, Y), \K)
\end{equation}
with the opposite grading on $\cc(X, Y) = \cc_{\D}(Y, X)$,
and bimodule structure maps induced by the $\ainf$-structure maps of $\cc$ in the following way
\begin{equation}
\mu^{k, l}_{\cc^{\vee}}: \cc(X_{0}, \ldots, X_{k}) \otimes \cc^{\vee}(X_{0}, Y_{0}) \otimes \cc(Y_{l}, \ldots, Y_{0}) \to \cc^{\vee}(X_{k}, Y_{l})
\end{equation}
\begin{equation}\label{structure maps for linear dual}
\mu^{k, l}_{\cc^{\vee}}(\mathbf{x}_{1;k}, f, \mathbf{x}'_{1;l})(w) = (-1)^{\maltese_{1;k} \cdot (|f| + \maltese'_{1;l} + |w| - 1) + |w| -1}f(\mu_{\cc}^{k+l+1}(\mathbf{x}'_{1;l}, w, \mathbf{x}_{1;k})).
\end{equation}
The sign comes from the Koszul sign by moving $\mathbf{x}_{1;k}$ on the left hand side of \eqref{structure maps for linear dual} all the way to the right of the expression, 
where one can apply the action of $\mu^{k+l+1}_{\cc}$ as done on the right hand side of \eqref{structure maps for linear dual}.
\end{defn}

\begin{defn}\label{def: yoneda modules}
For an object $X \in \ob \cc$, its left Yoneda module is $\Y^{l}_{X}$ is the left $\cc$-module with chain complexes $\Y^{l}_{X}(Y) = \cc(X, Y)$ and structure maps
\begin{equation}\label{structure maps for left yoneda}
\mu^{k}_{\Y^{l}_{X}}(\mathbf{x}_{1;k}, w) = \mu_{\cc}^{k+1}(\mathbf{x}_{1;k}, w).
\end{equation}
Its right Yoneda module $\Y^{r}_{X}$ is the right $\cc$-module with chain complexes $\Y^{r}_{X}(Y) = \cc(Y, X)$ and structure maps 
\begin{equation}
\mu^{l}_{\Y^{r}_{X}}(w, \mathbf{x}'_{1;l}) = (-1)^{\maltese'_{1;l}} \mu_{\cc}^{l+1}(w, \mathbf{x}'_{1;l}).
\end{equation}
\end{defn}

\begin{defn}
For a $\cc_{1}-\dd_{1}$ bimodule $\cP$ and a $\cc_{2}-\dd_{2}$ bimodule $\cQ$,
the {\it linear tensor product}
\begin{equation}
\cP \otimes_{\K} \cQ
\end{equation}
is a $(\cc_{1}, \dd_{1}^{op})-(\cc_{2}^{op}, \dd_{2})$ quadmodule,
whose underlying chain complex is 
\begin{equation}
(\cP \otimes_{\K} \cQ)(X_{0}, Y_{0}; X_{1}, Y_{1}) = \cP(X_{0}, Y_{0}) \otimes_{\K} \cQ(X_{1}, Y_{1}),
\end{equation}
where $X_{i} \in \ob \cc_{i}, Y_{i} \in \ob \dd_{i}$,
with multi-module structure maps twisted by the usual rule for Koszul signs.
\end{defn}

\begin{defn}\label{example: linear tensor bimodule}
Let $\cP = \cc^{op}_{\D}$ and $\cQ=(\cc^{op})^{\vee}$,
both $(\cc^{op}, \cc^{op})$-bimodules where $\cc^{op}$ is the opposite category of $\cc$.
We get a $(\cc^{op}, \cc^{op})-(\cc^{op}, \cc^{op})$ quadmodule $\cc^{op}_{\D} \otimes_{\K}(\cc^{op})^{\vee}$,
which by insertion specializes to a $\cc^{op}-\cc^{op}$ bimodule 
\begin{equation}\label{finite hom bimodule}
\Y^{l}_{X_{1}} \otimes_{\K} (\Y^{l}_{X_{0}})^{\vee} = (\cc^{op}_{\D} \otimes_{\K}(\cc^{op})^{\vee}) (X_{1}, -; X_{0}, -),
\end{equation}
whose underlying chain complex, for $Y_{1}, Y_{0} \in \ob \cc^{op}$, is
\begin{equation}
(\Y^{l}_{X_{1}} \otimes_{\K} (\Y^{l}_{X_{0}})^{\vee} ) (Y_{1}, Y_{0}) = \Y^{l}_{X_{1}}(Y_{1}) \otimes_{\K} (\Y^{l}_{X_{0}})^{\vee} (Y_{0}) = \cc^{op}(X_{1}, Y_{1}) \otimes_{\K} \cc^{op}(X_{0}, Y_{0}) = \cc(Y_{1}, X_{1}) \otimes_{\K} \cc(Y_{0}, X_{0}).
\end{equation}
Here the notation $\Y^{l}_{X}$ denotes the left Yoneda $\cc^{op}$-module $\Y^{l}_{X}(K) = \cc^{op}(X, K) = \cc(K, X)$.
The bimodule structure maps are induced from the structure maps on the linear tensor product over $\K$ of Yoneda modules, 
which are
\begin{equation}\label{bimodule maps for finite linear hom}
\mu^{k,l}_{\Y^{l}_{X_{1}} \otimes (\Y^{l}_{X_{0}})^{\vee}}(\mathbf{x}_{1;k}, z \otimes f, \mathbf{x}'_{1;l})(w) = 
\begin{cases}
 (-1)^{|f| - 1} \mu^{1}_{\cc^{op}}(z) \otimes f(w) + (-1)^{|w| - 1} z \otimes f(\mu^{1}_{\cc^{op}}(w))  , & \text{ if } k = l = 0, \\
(-1)^{|f| - 1} \mu^{k+1}_{\cc^{op}}(\mathbf{x}_{1;k}, z) \otimes f(w) , & \text{ if } k>0, l=0,  \\
(-1)^{|w|-1} z \otimes f(\mu^{l+1}_{\cc^{op}}(\mathbf{x}'_{1;l}, w)) , & \text{ if } k=0, l>0, \\
0, & \text{ if } k > 0, l > 0.
\end{cases}
\end{equation}
Here $\mathbf{x}_{1;k}$ are tensors of composable elements in $\cc^{op}$, 
and $\mathbf{x}'_{1;l}$ are tensors of composable elements in $(\cc^{op})^{op} = \cc$.
\end{defn}

\begin{defn}
For a $\cc_{0}-\dd_{0}$ bimodule $\cP$ and a $\cc_{1}-\dd_{1}$ bimodule $\cQ$,
the space of {\it $\K$-linear homomorphisms} from $\cP$ to $\cQ$ is a $(\dd_{1}, \dd_{0}^{op})-(\cc_{0}, \cc_{1}^{op})$ quadmodule,
whose underlying chain complex is 
\begin{equation}
\hom_{\K}(\cP, \cQ)(Y_{1}, Y_{0}; X_{0}, X_{1}) = \hom_{\K}(\cP(X_{0}, Y_{0}), \cQ(X_{1}, Y_{1})).
\end{equation}
\end{defn}

\begin{defn}\label{example: linear hom bimodule}
Let $\cP = \cQ = \cc^{op}_{\D}$, both $(\cc^{op}, \cc^{op})$-bimodules.
We get  a $(\cc, \cc^{op})-(\cc, \cc^{op})$ quadmodule $\hom_{\K}(\cc^{op}_{\D}, \cc^{op}_{\D})$,
which by insertion specializes to a $\cc^{op} - \cc^{op}$-bimodule
\begin{equation}\label{hom bimodule}
\hom_{\K}(\Y^{l}_{X_{0}}, \Y^{l}_{X_{1}}) = \hom_{\K}(\cc^{op}_{\D}, \cc^{op}_{\D}) (-, X_{0}; -, X_{1}).
\end{equation}
Here the right $\cc^{op}$-action acts on the domain of $\hom_{\K}$, i.e., on $\Y^{l}_{X_{0}}$.
In particular, the chain complex underlying this bimodule is defined, for each $Y_{1}, Y_{0} \in \cc^{op}$, as
\begin{equation}
\hom_{\K}(\Y^{l}_{X_{0}}, \Y^{l}_{X_{1}}) (Y_{1}, Y_{0}) = \hom_{\K}(\Y^{l}_{X_{0}}(Y_{0}), \Y^{l}_{X_{1}}(Y_{1})) = \hom_{\K}(\cc^{op}(X_{0}, Y_{0}), \cc^{op}(X_{1}, Y_{1})).
\end{equation}
The bimodule structure maps are
\begin{equation}\label{bimodule maps for linear hom}
\mu^{k, l}_{\hom_{\K}(\Y^{l}_{X_{0}}, \Y^{l}_{X_{1}})}(\mathbf{x}_{1;k}, \phi, \mathbf{x}'_{1;l})(w) = 
\begin{cases}
(-1)^{|w|-1} \phi(\mu^{1}_{\cc^{op}}(w)) + (-1)^{ |w| - 1} \mu^{1}_{\cc^{op}}(\phi(w)), & \text{ if } k=l=0, \\
(-1)^{|w|-1} \mu^{k+1}_{\cc^{op}}(\mathbf{x}_{1;k}, \phi(w)), & \text{ if } k >0, l = 0. \\
(-1)^{|w|-1} \phi(\mu^{l+1}_{\cc^{op}}(\mathbf{x}'_{1;l}, w)) , & \text{ if } k=0, l >0. \\
0, & \text{ if } k > 0, l > 0.
\end{cases}
\end{equation}
 \end{defn}
 
 \begin{rem}
 The signs are particularly simple because the bimodule structures are induced from the left Yoneda module structures,
 whose structure maps are given by \eqref{structure maps for left yoneda} without any signs.
 \end{rem}
 
 An elementary but important property is that there is a canonical map of quadmodules
\begin{equation}\label{quadmodule i map}
i: \cc^{op}_{\D} \otimes_{\K} (\cc^{op})^{\vee} \to \hom_{\K}(\cc^{op}_{\D}, \cc^{op}_{\D}),
\end{equation}
which specializes to a map of bimodules
\begin{equation}\label{bimodule i map}
i:  \Y^{l}_{X_{1}} \otimes(\Y^{l}_{X_{0}})^{\vee} \to \hom_{\K}(\Y^{l}_{X_{0}}, \Y^{l}_{X_{1}}),
\end{equation}
such that for every pair of objects $Y_{1}, Y_{0} \in \ob \cc^{op}$, the map on chain complexes is
\begin{equation}\label{bimodule i map 00}
i^{0, 0}: (\Y^{l}_{X_{1}} \otimes (\Y^{l}_{X_{0}})^{\vee})(Y_{1}, Y_{0}) \to (\hom_{\K}(\Y^{l}_{X_{0}}, \Y^{l}_{X_{1}}))(Y_{1}, Y_{0})
\end{equation}
\begin{equation}\label{sign for i map}
z \otimes f \mapsto i(z \otimes f) = \phi_{z, f}, \text{ where } \phi_{z, f}(w) = f(w) z.
\end{equation}
The higher order terms of \eqref{quadmodule i map} and \eqref{bimodule i map} are all zero.

\begin{lem}\label{lem: bimodule i map}
The maps $i^{r, s}$ with $i^{0, 0}$ being \eqref{bimodule i map 00} and $i^{r, s} = 0$ for $r>0$ or $s>0$ form an $\ainf$-bimodule morphism \eqref{bimodule i map}.
\end{lem}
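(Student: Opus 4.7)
The plan is to verify the $\ainf$-bimodule morphism equations directly, exploiting the fact that $i^{r,s} = 0$ for $(r, s) \neq (0, 0)$, which causes almost all terms to collapse. The full morphism equation for $i: \Y^{l}_{X_1} \otimes_{\K} (\Y^{l}_{X_0})^{\vee} \to \hom_{\K}(\Y^{l}_{X_0}, \Y^{l}_{X_1})$ is a sum of three kinds of contributions on each side: a term where the target bimodule structure $\mu^{k,l}_{\cQ}$ acts after some $i^{r,s}$, a term where the source bimodule structure $\mu^{r,s}_{\cP}$ acts before some $i^{k,l}$, and boundary terms where a substring of $\cc^{op}$-morphisms on one side is composed by some $\mu^{j}_{\cc^{op}}$ before $i$ is applied. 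Since $i$ is only nonzero in its $(0, 0)$ component, all the boundary terms drop out, and the equation collapses to the single identity
\[
\mu^{k,l}_{\hom_{\K}(\Y^{l}_{X_0}, \Y^{l}_{X_1})}(\mathbf{x}_{1;k}, i^{0,0}(z \otimes f), \mathbf{x}'_{1;l}) = i^{0,0}\!\left(\mu^{k,l}_{\Y^{l}_{X_1} \otimes (\Y^{l}_{X_0})^{\vee}}(\mathbf{x}_{1;k}, z \otimes f, \mathbf{x}'_{1;l})\right)
\]
for each $k, l \geq 0$; that is, the claim reduces to showing that $i^{0,0}$ strictly intertwines the two bimodule structure maps.

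I would then verify this by direct substitution in four cases. The case $k > 0, l > 0$ is immediate, since both structure maps vanish by the last lines of \eqref{bimodule maps for finite linear hom} and \eqref{bimodule maps for linear hom}. In the case $k > 0, l = 0$, both sides evaluated at a test input $w$ yield a scalar multiple of $\mu^{k+1}_{\cc^{op}}(\mathbf{x}_{1;k}, z)$, with the scalar being $f(w)$ up to Koszul signs. The case $k = 0, l > 0$ is symmetric, with both sides yielding a scalar multiple of $z$, the scalar being $f(\mu^{l+1}_{\cc^{op}}(\mathbf{x}'_{1;l}, w))$. The case $k = l = 0$ decomposes by the Leibniz-type structure in \eqref{bimodule maps for finite linear hom} into two summands, matching the two differential terms in \eqref{bimodule maps for linear hom} that act on the output and input slots respectively.

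Conceptually, the identity holds because both bimodules are functorially constructed from the same pair of left Yoneda modules $\Y^{l}_{X_1}$ and $\Y^{l}_{X_0}$ — one acting on the output slot, the other through its dual on the input slot — and $i^{0,0}$ is the canonical natural transformation from the linear tensor product construction to the internal-hom construction applied to this common data. The main obstacle, such as it is, will be the manual Koszul sign bookkeeping: confirming that the absence of an explicit sign factor in the definition $\phi_{z, f}(w) = f(w) z$ from \eqref{sign for i map} is consistent with the sign conventions in \eqref{bimodule maps for finite linear hom} and \eqref{bimodule maps for linear hom}. Beyond that, no higher-order combinatorics or deeper structural input is required.
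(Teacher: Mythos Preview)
Your proposal is correct and follows essentially the same approach as the paper's proof: both reduce the bimodule morphism equations to the single statement that $i^{0,0}$ strictly intertwines the structure maps \eqref{bimodule maps for finite linear hom} and \eqref{bimodule maps for linear hom}, and then verify this by direct inspection using $|\phi_{z,f}| = |z| + |f|$. Your four-case breakdown is exactly the computation the paper alludes to when it says ``one sees that the assignment $z \otimes f \mapsto \phi_{z,f}$ intertwines all the bimodule structure maps.''
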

\begin{proof}
This is a straightforward computation based on the formulas \eqref{bimodule maps for finite linear hom} and \eqref{bimodule maps for linear hom}. 
One sees that the assignment $z \otimes f \mapsto \phi_{z, f}$ intertwines all the bimodule structure maps,
by keeping track of the signs following $|\phi_{z, f}| = |z| + |f|$.
\end{proof}

\begin{rem}
The map $i^{0, 0}$ is the canonical inclusion of subspace of finite rank linear homomorphisms to the space of all linear homomorphisms. 
Under this identification, the bimodule structures for $\Y^{l}_{X_{1}} \otimes(\Y^{l}_{X_{0}})^{\vee}$ and $\hom_{\K}(\Y^{l}_{X_{0}}, \Y^{l}_{X_{1}})$ are in fact exactly the same, 
up to the sign taken care by the sign twist of the map $i$.
In other words, $i$ is the strict inclusion of a subspace which is closed under bimodule structure maps of $\hom_{\K}(\Y^{l}_{X_{0}}, \Y^{l}_{X_{1}})$.
This is why all higher order terms vanish.
\end{rem}

\subsection{Tensor products and duality}

Various notions of tensor products play important roles in studying $\ainf$-bimodules and mult-modules.
The first one is tensor product of two multi-modules over a single side, known as the {\it convolution tensor product}.
To write signs in a consistent manner, we introduce the following notations
\begin{align}\label{koszul sign for cc}
\maltese_{i; j} & = \sum_{s=i}^{j} |x_{s}| - (j-i+1), \\
\maltese'_{i;j} & = \sum_{t=i}^{j} |x'_{t}| - (j-i+1),
\end{align}
where we notice that $\maltese_{1; i}$ is the sign \eqref{koszul sign}.

\begin{defn}\label{def: convolution tensor product}
Define the convolution tensor product of a $\cc-\dd$-bimodule $\cP$ and $\dd-\ee$-bimodule $\cQ$
\begin{equation}
\cP \otimes_{\dd} \cQ
\end{equation}
to be a $\cc-\ee$-bimodule whose underlying chain complex is
\begin{equation}
(\cP \otimes_{\dd} \cQ)(X, Z) = \bigoplus_{\substack{l \\ Y_{0}, \ldots, Y_{l} \in \ob \dd}} \cP(X, Y_{0}) \otimes \dd(Y_{0}, Y_{1}) \otimes \cdots \otimes \dd(Y_{l-1}, Y_{l}) \otimes \B(Y_{l}, Z),
\end{equation}
where $X \in \ob \cc, Y_{0}, \ldots, Y_{l} \in \ob \dd, Z \in \ob \ee$.
The grading of an element $p \otimes \mathbf{x}'_{1;l} \otimes q$ is 
\begin{equation}\label{grading on convolution tensor product}
|p \otimes \mathbf{x}'_{1;l} \otimes q| = |p| + \maltese'_{1;l} + |q|.
\end{equation}

The structure maps $\mu_{\cP \otimes_{\cc} \cQ}^{0, 0}$ are defined as follows.
The $(0, 0)$-th order structure map is
\begin{equation}\label{structure maps for convolution of bimodules 0}
\begin{split}
\mu_{\cP \otimes_{\dd} \cQ}^{0, 0} = & \sum (-1)^{\maltese'_{j+1;m} + |q| - 1} \mu_{\cP}^{0, j}(p, \mathbf{x}'_{1;j}) \otimes \mathbf{x}'_{j+1; m} \otimes q \\
+ & \sum p \otimes \mathbf{x}'_{1; m-i} \otimes \mu_{\cQ}^{i, 0}(\mathbf{x}'_{m-i+1;m}, q) \\
+ & \sum (-1)^{\maltese'_{i+j+1;m} +|q| - 1} p \otimes \mathbf{x}'_{1;i} \otimes \mu_{\dd}^{j}(\mathbf{x}'_{i+1;i+j}) \otimes \mathbf{x}'_{i+j+1;m} \otimes q,
\end{split}
\end{equation}
\begin{equation}\label{structure maps for convolution of bimodules 1}
\mu_{\cP \otimes_{\dd} \cQ}^{k, 0}(\mathbf{x}_{1;k}, p, \mathbf{x}'_{1;m}, q) = \sum (-1)^{\maltese'_{j+1;m} + |q| - 1} \mu_{\cP}^{k, j}(\mathbf{x}_{1;k}, p, \mathbf{x}'_{1;j}) \otimes \mathbf{x}'_{j+1;m} \otimes q.
\end{equation}
\begin{equation}\label{structure maps for convolution of bimodules 2}
\mu_{\cP \otimes_{\dd} \cQ}^{0, l}(p, \mathbf{x}'_{1;m}, q, \mathbf{y}'_{1; l})= \sum p \otimes \mathbf{x}'_{1;m-i} \otimes \mu_{\cQ}^{i, l}(\mathbf{x}'_{m-i+1;m}, q, \mathbf{y}'_{1;l})
\end{equation}
and $\mu_{\cP \otimes_{\dd} \cQ}^{k, l} = 0$ for $k>0, l>0$.
\end{defn}

There is another way of forming a two-sided tensor product,
which gives rise to a chain complex, defined as follows.

\begin{defn}\label{def: bimodule tensor product}
For a $\cc-\dd$-bimodule $\cP$ and a $\dd-\cc$-bimodule $\cQ$, 
their bimodule tensor product
\begin{equation}
\cP \otimes_{\cc-\dd} \cQ
\end{equation}
is defined to be the chain complex whose underlying graded vector space is
\begin{equation}
\begin{split}
 \cP \otimes_{\cc-\dd} \cQ 
 = \bigoplus_{\substack{k \ge 0\\ X_{0}, \ldots, X_{k} \in \ob \cc}} \bigoplus_{\substack{l \ge 0\\ Y_{0}, \ldots, Y_{l} \in \ob \dd}} & \cP(X_{0}, Y_{0}) \otimes \dd(Y_{0}, Y_{1}) \otimes \cdots \otimes \dd(Y_{l-1}, Y_{l}) \\
& \otimes \cQ(Y_{l}, X_{k}) \otimes \cc(X_{k-1}, X_{k}) \otimes \cdots \otimes \cc(X_{0}, X_{1}),
\end{split}
\end{equation}
and whose differential is
\begin{equation}\label{bimodule tensor product differential}
\begin{split}
& d_{\cP \otimes_{\cc-\dd} \cQ}(p \otimes \mathbf{x}'_{1;l} \otimes q \otimes \mathbf{x}_{1;k}) \\
=  & \sum (-1)^{\#_{r,s}} \mu_{\cP}^{k-r, l-s}(\mathbf{x}_{1; k-r}, p, \mathbf{x}'_{1; l-s}) \otimes \mathbf{x}'_{l-s+1; l} \otimes q \otimes \mathbf{x}_{k-r+1; k} \\
+ & \sum (-1)^{\maltese_{1;k} + |q| + \maltese'_{i+j+1; l}} p \otimes \mathbf{x}'_{1; i} \otimes \mu_{\dd}^{j}(\mathbf{x}'_{i+1; i+j}) \otimes \mathbf{x}'_{i+j+1; l} \otimes q \otimes \mathbf{x}_{1; k} \\
+ & \sum (-1)^{\maltese_{1; k-r}} p \otimes \mathbf{x}'_{1; l-s} \otimes \mu_{\cQ}^{s, r}(\mathbf{x}'_{l-s+1; l}, q, \mathbf{x}_{k-r+1; k}) \otimes \mathbf{x}_{1; k-r} \\
+ & \sum (-1)^{\maltese_{1; i}} p \otimes \mathbf{x}'_{1; l} \otimes q \otimes \mathbf{x}_{i+j+1; k} \otimes \mu^{j}_{\cc}(\mathbf{x}_{i+1; i+j}) \otimes \mathbf{x}_{1; i},
\end{split}
\end{equation}
where
\begin{equation}
\#_{r, s} = \maltese_{1; k-r} \cdot (|p| + \maltese'_{1; l} + |q| + \maltese_{k-r+1; k}) + \maltese'_{l-s+1; l} + |q| - 1 + \maltese_{k-r+1; k}.
\end{equation}

\end{defn}

For any $\cc$-bimodule $\cP$, there is a canonical collapse map
\begin{equation}\label{bimodule collapse map}
\mu_{\D, \cP}: \cc_{\D} \otimes_{\cc} \cP \stackrel{\sim}\to \cP,
\end{equation}
defined by 
\begin{equation}\label{bimodule collapse map formula}
\mu^{s; k, l}_{\D, \cP}(x_{k}, \ldots, x_{1}, c, z_{1}, \ldots, z_{s}, p, y_{1}, \ldots, y_{l}) = (-1)^{\circ^{s}_{-l}}\mu_{\cP}^{k+s+1, l}(x_{k}, \ldots, x_{1}, c, z_{1}, \ldots, z_{s}, p, y_{1}, \ldots, y_{l}),
\end{equation}
where
\begin{equation}
\circ^{s}_{-l} = \sum_{i=1}^{s} |z_{i}| - s + |p| - 1+ \sum_{j=1}^{l} |y_{j}| - l.
\end{equation}
There is a similar collapse map
\begin{equation}\label{bimodule collapse map2}
\nu_{\D, \cP}: \cP \otimes_{\cc} \cc_{\D} \stackrel{\sim}\to \cP
\end{equation}
with the Koszul sign from elements to the right hand side of the input from $\cc_{\D}$.

\begin{lem}[{\cite[Proposition 2.2]{ganatra}, \cite[Lemma 2.6]{GGV}}]\label{lem: collapse iso}
The collapse maps $\mu_{\D, \cP}$ \eqref{bimodule collapse map} and $\nu_{\D, \cP}$ \eqref{bimodule collapse map2} are quasi-isomorphisms.
\end{lem}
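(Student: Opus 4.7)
The plan is to prove the quasi-isomorphism via a length filtration on the convolution tensor product and a spectral sequence comparison. Equip $\cc_{\D} \otimes_{\cc} \cP$ with the exhaustive, bounded-below increasing filtration $F_{k}$ whose $k$-th piece consists of tensors $c \otimes \mathbf{x}'_{1;l} \otimes p$ of bar length $l \le k$, and give the target $\cP$ the trivial filtration placing everything in $F_{0}$. Inspection of the three summations in \eqref{structure maps for convolution of bimodules 0} shows that every term in the differential either preserves bar length (namely the unary differentials $\mu^{1}_{\cc}$ acting internally on $c$ and on each individual bar factor, together with $\mu^{0,0}_{\cP}$ acting on $p$) or strictly reduces it. Formula \eqref{bimodule collapse map formula} shows that $\mu_{\D, \cP}$ sends all of $\cc_{\D} \otimes_{\cc} \cP$ into $F_{0} \cong \cP$ and is therefore filtration-preserving.

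Next I would analyze the induced spectral sequence. Working over a field, the $E_{1}$ page of the source is
\[
E_{1}^{k, *} = H^{*}(\cc_{\D}) \otimes H^{*}(\cc)^{\otimes k} \otimes H^{*}(\cP),
\]
while the $d_{1}$ differential picks up exactly the length-reducing-by-one components, namely $\mu^{0,1}_{\cc_{\D}}$ on the left, the binary products $\mu^{2}_{\cc}$ contracting two adjacent bar factors, and $\mu^{1,0}_{\cP}$ on the right. This is tautologically the two-sided bar differential of the graded unital category $H^{*}(\cc)$ with coefficients in the diagonal bimodule $H^{*}(\cc)_{\D}$ on the left and $H^{*}(\cP)$ on the right. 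Since $\cc$ is cohomologically unital, $H^{*}(\cc)_{\D}$ is the unit for the tensor product over $H^{*}(\cc)$, so
\[
E_{2}^{k,*} = \operatorname{Tor}^{H^{*}(\cc)}_{k}\bigl(H^{*}(\cc)_{\D}, H^{*}(\cP)\bigr) = \begin{cases} H^{*}(\cP) & \text{if } k = 0, \\ 0 & \text{if } k \ge 1. \end{cases}
\]
The target spectral sequence is trivially concentrated in column $0$ with $E_{2}^{0, *} = H^{*}(\cP)$, and by inspection the induced map $E_{2}(\mu_{\D, \cP})$ is the tautological identification $H^{*}(\cc)_{\D} \otimes_{H^{*}(\cc)} H^{*}(\cP) \cong H^{*}(\cP)$ arising from the action $\mu^{1,0}_{\cP}$. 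Bounded-below convergence of both spectral sequences then yields a quasi-isomorphism on the abutments.

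The main obstacle is the sign bookkeeping needed to match the $d_{1}$-differential with the classical two-sided bar differential: the Koszul signs in \eqref{structure maps for convolution of bimodules 0} and the prefactor $\circ^{s}_{-l}$ in \eqref{bimodule collapse map formula} are engineered precisely so this identification is strict at the chain level, but verifying it demands some care. The argument for $\nu_{\D, \cP}$ is entirely symmetric, with the bar resolution collapsing $\cP \otimes_{\cc} \cc_{\D}$ on the right instead of the left.
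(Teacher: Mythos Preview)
Your argument is correct. Note, however, that the paper does not actually give a proof of this lemma: it is stated with a citation to \cite{ganatra} and \cite{GGV} and closed with a \qed. The standard proof in those references proceeds differently: for a strictly unital $\cc$ one writes down an explicit contracting homotopy for the bar complex by inserting the unit, and the cohomologically unital case is reduced to the strictly unital one via the standard quasi-equivalence. Your spectral sequence argument is a legitimate alternative that avoids the strictification step and instead uses unitality only at the level of $H^{*}(\cc)$; the tradeoff is that it relies on K\"unneth (hence on $\K$ being a field, which the paper assumes anyway) and on the projectivity of representable $H^{*}(\cc)$-modules to kill the higher $\operatorname{Tor}$ groups, whereas the contracting-homotopy proof works over any ground ring once strict units are available. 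Both approaches are standard; yours is arguably cleaner here since the paper has already committed to working over a field.
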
 \qed

The {\it bimodule dual} $\cP^{!}$ is defined to be
\begin{equation}
\cP^{!} = \hom_{\cc \bimod \cc}(\cP^{!}, \cc_{\D} \otimes_{\K} \cc_{\D}).
\end{equation}
If $\cP$ is perfect so is $\cP^{!}$, and there is a natural quasi-isomorphism
\begin{equation}
\cP^{!} \otimes_{\cc-\cc} \cQ \simeq \hom_{\cc \bimod \cc}(\cP, \cQ).
\end{equation}
For an $\ainf$-category $\cc$, its {\it inverse dualizing bimodule} is defined to be
\begin{equation}\label{inverse dualizing bimod}
\cc^{!}:= \hom_{\cc \bimod \cc}(\cc_{\D}, \cc_{\D} \otimes_{\K} \cc_{\D}).
\end{equation}

\subsection{Hochschild invariants} 

Let $\cc$ be an $\ainf$-category, and $\cP$ an $\ainf$ $\cc$-bimodule.
The Hochschild chain complex of $\cc$ with coefficients in $\cP$ is defined to be 
\begin{equation}
\r{CC}_{*}(\cc, \cP) = \bigoplus_{\substack{k \ge 0\\X_{0}, \ldots, X_{k} \in \ob \cc}} \cP(X_{0}, X_{k}) \otimes \cc(X_{k-1}, X_{k}) \otimes \cdots \otimes \cc(X_{0}, X_{1})
\end{equation}
with grading
\begin{equation}
|p \otimes \mathbf{x}_{1; k}| = |p| + \maltese_{1; k},
\end{equation}
where $\mathbf{x}_{1; k}$ is \eqref{tensor symbol 1}.
For the ease in keeping track of grading,
we may also write it using a shifted complex:
\begin{equation}\label{cc shifted form}
\r{CC}_{*}(\cc, \cP) = \bigoplus_{\substack{k \ge 0\\X_{0}, \ldots, X_{k} \in \ob \cc}} \cP(X_{0}, X_{k}) \otimes \cc[1](X_{k-1}, X_{k}) \otimes \cdots \otimes \cc[1](X_{0}, X_{1}).
\end{equation}
The differential on $\r{CC}_{*}(\cc, \cP)$ is called the {\it Hochschild chain differential}, 
defined by
\begin{equation}\label{hochschild chain differential}
\begin{split}
d_{\r{CC}_{*}}(p \otimes \mathbf{x}_{1; k}) = & \sum (-1)^{\star_{i}^{j}} \mu^{i, j}_{\cP}(\mathbf{x}_{1; i}, p, \mathbf{x}_{k-j+1; k})\otimes \mathbf{x}_{i+1; k-j} \\
+ & \sum (-1)^{\maltese_{1}^{i}} p \otimes \mathbf{x}_{i+j+1; k} \otimes \mu^{j}_{\cc}(\mathbf{x}_{i+1; i+j}) \otimes \mathbf{x}_{1; i},
\end{split}
\end{equation}
where 
\begin{equation}
\star_{i}^{j} = \maltese_{1;i} \cdot (|p| + \maltese_{i+1; k}) + \maltese_{i+1; k-j}.
\end{equation}

The Hochschild cochain complex of $\cc$ with coefficients in $\cP$ is defined to be
\begin{equation}\label{hochschild cochains}
\r{CC}^{*}(\cc, \cP) = \prod_{\substack{k \ge 0\\X_{0}, \ldots, X_{k} \in \ob \cc}} \hom_{\K}(\cc(X_{k-1}, X_{k}) \otimes \cdots \otimes \cc(X_{0}, X_{1}), \cP(X_{k}, X_{0}))
\end{equation}
with grading
\begin{equation}
\r{CC}^{s}(\cc, \cP) = \prod_{\substack{k \ge 0\\X_{0}, \ldots, X_{k} \in \ob \cc}} \hom_{\K}(\cc(X_{k-1}, X_{k}) \otimes \cdots \otimes \cc(X_{0}, X_{1}), \cP(X_{k}, X_{0})[k-s])
\end{equation}
where $\hom^{0}_{\K}(\cdot, \cdot)$ denotes the subspace of degree zero maps between graded modules.
The differential on $\r{CC}^{*}(\cc, \cP)$ is called the {\it Hochschild cochain differential} is defined as follows.
\begin{equation}\label{hochschild cochain differential}
\begin{split}
d_{\r{CC}^{*}}(\phi)^{k}(\mathbf{x}_{1; k}) = & \sum (-1)^{\dagger_{s}} \mu_{\cP}^{r, s}(\mathbf{x}_{k-r+1; k}, \phi^{k-r-s}(\mathbf{x}_{s+1; k-r}), \mathbf{x}_{1;s}) \\
- & \sum (-1)^{\maltese_{1;i}} \phi^{k-j+1}(\mathbf{x}_{i+j+1; k}, \mu_{\cc}^{j}(\mathbf{x}_{i+1; i+j}), \mathbf{x}_{1; i}).
\end{split}
\end{equation}
where 
\begin{equation}
\dagger_{s} = (|\phi|-1)\maltese_{1; s}.
\end{equation}

\begin{lem}\label{lem: cc computes hom in fun}
Let $F, G: \cc \to \dd$ be $\ainf$-functors. Then we have a strict equality of chain complexes
\begin{equation}
\r{CC}^{*}(\cc, (F, G)^{*}\dd_{\D}) = \hom_{\fun(\cc, \dd)}(F, G),
\end{equation}
where the right hand side is the morphism space in the functor category $\fun(\cc, \dd)$ carrying the differential $\mu^{1}_{\fun(\cc, \dd)}$ \eqref{mu1 in fun}.
\end{lem}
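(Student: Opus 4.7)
The plan is to establish the strict equality by first identifying the underlying graded vector spaces and then verifying that the Hochschild cochain differential on the left agrees termwise with $\mu^{1}_{\fun(\cc, \dd)}$ on the right.

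First I would unpack the pullback bimodule $(F, G)^{*}\dd_{\D}$. Under the convention dictated by the requirement that the equality hold, this is the $\cc$-bimodule whose underlying chain complex is
\[
(F, G)^{*}\dd_{\D}(X, Y) = \dd(F(Y), G(X)),
\]
so that in particular $(F, G)^{*}\dd_{\D}(X_{k}, X_{0}) = \dd(F(X_{0}), G(X_{k}))$. Its bimodule structure map $\mu^{k, l}_{(F, G)^{*}\dd_{\D}}(\mathbf{x}_{1; k}, \phi, \mathbf{x}'_{1; l})$ is defined by summing, over all ways of partitioning $\mathbf{x}_{1; k}$ into consecutive blocks fed into the higher order components $G^{s_{r}}, \ldots, G^{s_{i+1}}$ of $G$ and partitioning $\mathbf{x}'_{1; l}$ similarly into blocks fed into $F^{s_{i-1}}, \ldots, F^{s_{1}}$, and then composing the resulting $\dd$-morphisms via $\mu^{r}_{\dd}$ with the sign from \eqref{structure maps for diagonal}. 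With this convention, the identification of underlying graded vector spaces between \eqref{hochschild cochains} applied to $\cP = (F, G)^{*}\dd_{\D}$ and \eqref{hom in fun} is tautological: both are the product over tuples $X_{0}, \ldots, X_{k} \in \ob \cc$ of $\hom_{\K}\bigl(\cc(X_{k-1}, X_{k}) \otimes \cdots \otimes \cc(X_{0}, X_{1}), \dd(F(X_{0}), G(X_{k}))[g - k]\bigr)$.

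Next I would substitute this expanded bimodule structure into the Hochschild cochain differential \eqref{hochschild cochain differential}. The first sum, involving $\mu^{r, s}_{\cP}$, expands into a triple sum indexed by the number $r$ of arguments passed to $\mu^{r}_{\dd}$, a partition of $\mathbf{x}_{k - r + 1; k}$ into blocks fed into the $G^{s_{j}}$'s, and a partition of $\mathbf{x}_{1; s}$ into blocks fed into the $F^{s_{j}}$'s. This reproduces precisely the first multisum in \eqref{mu1 in fun} defining $\mu^{1}_{\fun(\cc, \dd)}(\phi)$. The second sum of \eqref{hochschild cochain differential}, corresponding to the insertion of $\mu^{j}_{\cc}$ inside $\phi$, matches the second line of \eqref{mu1 in fun} verbatim under the substitution $T = \phi$.

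The main obstacle is sign bookkeeping. The sign $\dagger_{s} = (|\phi| - 1)\maltese_{1; s}$ in \eqref{hochschild cochain differential} is identified with $\dagger = (|T| - 1)(|x_{1}| + \cdots + |x_{s_{1} + \cdots + s_{i-1}}| - (s_{1} + \cdots + s_{i-1}))$ from \eqref{mu1 in fun} once we set $T = \phi$ and $s = s_{1} + \cdots + s_{i-1}$. The additional sign $(-1)^{\maltese'_{1; l} + 1}$ coming from the diagonal bimodule convention \eqref{structure maps for diagonal}, together with the Koszul signs from permuting the outputs of the $F^{s_{j}}$ past $\phi$ into the standard order used in \eqref{mu1 in fun}, must cancel exactly. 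I would verify this by fixing a model partition and propagating the Koszul rule step-by-step; the consistency is forced by the requirement that the pullback bimodule structure maps satisfy the bimodule $\ainf$-equations, which is itself a consequence of the $\ainf$-functor equations for $F$ and $G$. Once the signs are reconciled, the identity $d_{\r{CC}^{*}} = \mu^{1}_{\fun(\cc, \dd)}$ holds on the common underlying graded vector space, yielding the claimed strict equality of chain complexes.
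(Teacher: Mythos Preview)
Your proposal is correct and follows essentially the same approach as the paper, which simply states that the equality follows immediately from the definitions of $\hom_{\fun(\cc,\dd)}(F,G)$ and of the Hochschild cochain complex applied to $(F,G)^{*}\dd_{\D}$. You have merely spelled out the sign verification that the paper leaves implicit.
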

\begin{proof}
This follows immediately from the definition of the morphism space in $\fun(\cc, \dd)$ and the definition of the Hochschild cochain complex \eqref{hochschild cochains} applied to the pullback diagonal bimodule $(F, G)^{*}\dd_{\D}$.
\end{proof}

There is a canonical quasi-isomorphism of chain complexes
\begin{equation}\label{2pt to 1pt}
\cc_{\D} \otimes_{\cc-\cc} \cP \stackrel{\sim}\to \r{CC}_{*}(\cc, \cP) 
\end{equation}
induced by the collapse quasi-isomorphism \eqref{bimodule collapse map}.
There is also a canonical chain map
\begin{equation}\label{1pt to 2pt}
\r{CC}^{*}(\cc, \cP) \to \hom_{\cc \bimod \cc}(\cc_{\D}, \cP)
\end{equation}
This map is a quasi-isomorphism if $\cc$ is cohomologically unital (\cite[Proposition 2.5]{ganatra}).

\begin{lem}\label{lem: cc is bimodule tensor product}
Let $\cP, \cQ$ be $\cc-\cc$-bimodules. 
Then the differential $d_{\r{CC}_{*}(\cc, \cP \otimes_{\cc} \cQ)}$ on $\r{CC}_{*}(\cc, \cP \otimes_{\cc} \cQ)$ defined by \eqref{hochschild chain differential} applied to the bimodule $\cP \otimes_{\cc} \cQ$ with bimodule structure maps \eqref{structure maps for convolution of bimodules 0}, \eqref{structure maps for convolution of bimodules 1}, \eqref{structure maps for convolution of bimodules 2} exactly agrees with the differential $d_{\cP \otimes_{\cc - \cc} \cQ}$ \eqref{bimodule tensor product differential}.
\end{lem}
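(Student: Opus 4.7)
The plan is to exhibit a termwise bijection between the summands of $d_{\r{CC}_{*}(\cc, \cP\otimes_{\cc}\cQ)}$ and those of $d_{\cP\otimes_{\cc-\cc}\cQ}$ on the common underlying graded vector space and then check that the signs match. To begin, one unfolds Definition \ref{def: convolution tensor product} together with the definition of the Hochschild chain complex and observes that a typical generator of $\r{CC}_{*}(\cc, \cP\otimes_{\cc}\cQ)$ takes the form $P\otimes \mathbf{x}_{1;k}$ with $P = p\otimes \mathbf{x}'_{1;l}\otimes q$, which is tautologically the same as a generator of $\cP\otimes_{\cc-\cc}\cQ$; moreover by \eqref{grading on convolution tensor product} the total degree $|P\otimes\mathbf{x}_{1;k}| = |p|+\maltese'_{1;l}+|q|+\maltese_{1;k}$ coincides with the natural grading on the bimodule tensor product.

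The next step is to substitute the three explicit formulas \eqref{structure maps for convolution of bimodules 0}, \eqref{structure maps for convolution of bimodules 1}, \eqref{structure maps for convolution of bimodules 2} for the bimodule structure maps of $\cP\otimes_{\cc}\cQ$ into the outer (bimodule-action) part of the Hochschild differential \eqref{hochschild chain differential}, using the vanishing $\mu^{k,l}_{\cP\otimes_{\cc}\cQ}=0$ whenever both $k,l\ge 1$. Four kinds of surviving contributions arise, and they are matched one-to-one with the four lines of \eqref{bimodule tensor product differential}. First, the $\mu^{k,0}_{\cP\otimes_{\cc}\cQ}$-terms of the Hochschild differential together with the $\mu^{0,j}_{\cP}$-summand of $\mu^{0,0}_{\cP\otimes_{\cc}\cQ}$ amount to applying $\mu^{k-r,l-s}_{\cP}$ to $\mathbf{x}_{1;k-r}$, $p$, and the left block of $\mathbf{x}'_{1;l}$; this is the first line of \eqref{bimodule tensor product differential}. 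Second, the $\mu^{0,l}_{\cP\otimes_{\cc}\cQ}$-terms together with the $\mu^{i,0}_{\cQ}$-summand of $\mu^{0,0}_{\cP\otimes_{\cc}\cQ}$ produce the third line. Third, the $\mu^{j}_{\cc}$-summand of $\mu^{0,0}_{\cP\otimes_{\cc}\cQ}$ produces the second line, in which an inner $\mu^{j}_{\cc}$ acts on a consecutive block of $\mathbf{x}'_{1;l}$. Fourth, the inner part of \eqref{hochschild chain differential} is literally the fourth line.

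The only nontrivial piece of work, and the one place I expect any real obstacle, is the combinatorial bookkeeping of signs. In each of the four cases one combines the Hochschild outer sign
\[
(-1)^{\star_{i}^{j}} = (-1)^{\maltese_{1;i}(|P|+\maltese_{i+1;k})+\maltese_{i+1;k-j}},\qquad P = p\otimes\mathbf{x}'_{1;l}\otimes q,
\]
with the internal Koszul signs coming from \eqref{structure maps for convolution of bimodules 0}, \eqref{structure maps for convolution of bimodules 1}, \eqref{structure maps for convolution of bimodules 2}, uses the identity $|P| = |p|+\maltese'_{1;l}+|q|$ together with the additivities $\maltese_{1;k-r}+\maltese_{k-r+1;k}=\maltese_{1;k}$ and $\maltese'_{1;l-s}+\maltese'_{l-s+1;l}=\maltese'_{1;l}$, and checks that the resulting exponent equals mod $2$ the corresponding exponent in \eqref{bimodule tensor product differential}, namely $\#_{r,s}$ on the first line, $\maltese_{1;k}+|q|+\maltese'_{i+j+1;l}$ on the second, $\maltese_{1;k-r}$ on the third, and $\maltese_{1;i}$ on the fourth. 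Once this termwise sign verification is completed in each case, the two differentials coincide on the common underlying graded vector space, which proves the lemma.
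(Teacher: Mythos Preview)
Your proposal is correct and follows essentially the same approach as the paper's own proof: both unfold the Hochschild chain differential \eqref{hochschild chain differential} on $P\otimes\mathbf{x}_{1;k}$ with $P=p\otimes\mathbf{x}'_{1;l}\otimes q$, use the vanishing $\mu^{k,l}_{\cP\otimes_{\cc}\cQ}=0$ for $k,l>0$ to reduce the outer (bimodule-action) part to the cases $(0,0)$, $(r,0)$, $(0,s)$, substitute the explicit formulas \eqref{structure maps for convolution of bimodules 0}--\eqref{structure maps for convolution of bimodules 2}, and match the resulting terms with the four lines of \eqref{bimodule tensor product differential}. The paper carries out the expansion slightly more explicitly by writing out each of the three bimodule-action cases separately before summing, but the structure of the argument and the sign bookkeeping are identical to what you outline.
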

\begin{proof}
This follows from the definition of the bimodules structure maps for $\cP \otimes_{\cc} \cQ$ in \eqref{structure maps for convolution of bimodules 0}, \eqref{structure maps for convolution of bimodules 1}, \eqref{structure maps for convolution of bimodules 2},
and the general formula \eqref{hochschild chain differential} for differential on the Hochschild chain complex.

First recall from \eqref{grading on convolution tensor product} that an element $p \otimes \mathbf{x}'_{1;l} \otimes q \in \cP \otimes_{\cc} \cQ$ has grading
\[
|p \otimes \mathbf{x}'_{1;l} \otimes q| = |p| + \maltese'_{1;l} + |q|.
\]
Since the bimodule structure maps $\mu^{r, s}$ for $\cP \otimes_{\cc} \cQ$ with $r>0, s>0$ are all zero, 
we find that the first term on the right hand side of \eqref{hochschild chain differential} becomes
\begin{equation}
\begin{split}
& (-1)^{\maltese_{1;k}} \mu_{\cP \otimes_{\cc} \cQ}^{0, 0}(p \otimes \mathbf{x}'_{1;l} \otimes q) \otimes \mathbf{x}_{1;k} \\
+ & \sum_{r>0} (-1)^{\maltese_{1;r} \cdot (|p| + \maltese'_{1, l} + |q| + \maltese_{r+1;k}) + \maltese_{r+1;k}} \mu_{\cP \otimes_{\cc} \cQ}^{r, 0}(\mathbf{x}_{1;r}, p \otimes \mathbf{x}'_{1;l} \otimes q) \otimes \mathbf{x}_{r+1;k} \\
+ & \sum_{s>0} (-1)^{\maltese_{1; k-s}} \mu_{\cP \otimes_{\cc} \cQ}^{0, s}(p \otimes \mathbf{x}'_{1;l} \otimes q, \mathbf{x}_{k-s+1;k}) \otimes \mathbf{x}_{1; k-s}.
\end{split}
\end{equation}
Now using \eqref{structure maps for convolution of bimodules 0}, \eqref{structure maps for convolution of bimodules 1}, \eqref{structure maps for convolution of bimodules 2}, 
we may further write these terms as
\begin{equation}\label{mu 00 on convolution}
\begin{split}
& (-1)^{\maltese_{1;k}} \mu_{\cP \otimes_{\cc} \cQ}^{0, 0}(p \otimes \mathbf{x}'_{1;l} \otimes q) \otimes \mathbf{x}_{1;k} \\
= & \sum (-1)^{\maltese_{1;k} + \maltese'_{j+1;l} + |q| - 1} \mu_{\cP}^{0, j}(p, \mathbf{x}'_{1;j}) \otimes \mathbf{x}'_{j+1; l} \otimes q \\
+ & \sum (-1)^{\maltese_{1;k}}  p \otimes \mathbf{x}'_{1; l-i} \otimes \mu_{\cQ}^{i, 0}(\mathbf{x}'_{l-i+1;l}, q) \\
+ & \sum (-1)^{\maltese_{1;k} + \maltese'_{i+j+1;l}} p \otimes \mathbf{x}'_{1;i} \otimes \mu_{\dd}^{j}(\mathbf{x}'_{i+1;i+j}) \otimes \mathbf{x}'_{i+j+1;m} \otimes q,
\end{split}
\end{equation}
\begin{equation}\label{mu r0 on convolution}
\begin{split}
& \sum_{r>0} (-1)^{\maltese_{1;r} \cdot (|p|  + \maltese'_{1, l} + |q| + \maltese_{r+1;k}) + \maltese_{r+1;k}} \mu_{\cP \otimes_{\cc} \cQ}^{r, 0}(\mathbf{x}_{1;r}, p \otimes \mathbf{x}'_{1;l} \otimes q) \otimes \mathbf{x}_{r+1;k} \\
= & \sum_{r>0, j} (-1)^{\maltese_{1;r} \cdot (|p| + \maltese'_{1, l} + |q| + \maltese_{r+1;k}) + \maltese_{r+1;k} + \maltese'_{j+1;l} + |q| - 1} \mu_{\cP}^{r, j}(\mathbf{x}_{1;r}, p, \mathbf{x}'_{1; j}) \otimes \mathbf{x}'_{j+1;l} \otimes q
\end{split}
\end{equation}
and
\begin{equation}\label{mu 0s on convolution}
\begin{split}
 & \sum_{s>0} (-1)^{\maltese_{1; k-s}} \mu_{\cP \otimes_{\cc} \cQ}^{0, s}(p \otimes \mathbf{x}'_{1;l} \otimes q, \mathbf{x}_{k-s+1;k}) \otimes \mathbf{x}_{1; k-s} \\
 = & \sum_{s>0, i} (-1)^{\maltese_{1; k-s}} p \otimes \mathbf{x}'_{1;l-i} \otimes \mu_{\cQ}^{i, s}(\mathbf{x}'_{l-i+1;l}, q, \mathbf{x}_{k-s+1;k}) \otimes \mathbf{x}_{k-s; 1}.
 \end{split}
 \end{equation}
 
 The second term on the right hand side of \eqref{hochschild chain differential} applied to the bimodule $\cP \otimes_{\cc} \cQ$ is 
 \begin{equation}\label{mu others on convolution}
  \sum (-1)^{\maltese_{1}^{i}} p \otimes \mathbf{x}'_{1;l} \otimes q \otimes \mathbf{x}_{i+j+1; k} \otimes \mu^{j}_{\cc}(\mathbf{x}_{i+1; i+j}) \otimes \mathbf{x}_{1; i}.
 \end{equation}
 Now add the terms \eqref{mu 00 on convolution}, \eqref{mu r0 on convolution}, \eqref{mu 0s on convolution} and \eqref{mu others on convolution}, 
 and we get exactly \eqref{bimodule tensor product differential}.
\end{proof}

\begin{cor}
As chain complexes, $\r{CC}_{*}(\cc, \cP \otimes_{\cc} \cQ) = \cP \otimes_{\cc-\cc} \cQ$,
with differentials carrying the same signs.
\end{cor}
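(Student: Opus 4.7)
The plan is to observe that this corollary follows essentially immediately from Lemma \ref{lem: cc is bimodule tensor product} by matching the underlying graded vector spaces term-by-term and then invoking the lemma for the differentials.

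First I would unpack the Hochschild chain complex $\r{CC}_{*}(\cc, \cP \otimes_{\cc} \cQ)$ using the shifted presentation \eqref{cc shifted form}, which writes it as
\begin{equation*}
\bigoplus_{k \ge 0,\, X_0, \ldots, X_k \in \ob\cc} (\cP \otimes_\cc \cQ)(X_0, X_k) \otimes \cc[1](X_{k-1}, X_k) \otimes \cdots \otimes \cc[1](X_0, X_1).
\end{equation*}
Expanding the factor $(\cP \otimes_\cc \cQ)(X_0, X_k)$ via Definition \ref{def: convolution tensor product} as a further direct sum over $l \ge 0$ and $Y_0, \ldots, Y_l \in \ob\cc$ of tensor products $\cP(X_0, Y_0) \otimes \cc(Y_0, Y_1) \otimes \cdots \otimes \cc(Y_{l-1}, Y_l) \otimes \cQ(Y_l, X_k)$ yields precisely the underlying graded vector space of $\cP \otimes_{\cc-\cc} \cQ$ from Definition \ref{def: bimodule tensor product}, with the grading convention \eqref{grading on convolution tensor product} on the convolution factor combining with the $[1]$ shifts to produce the same total degree on both sides.

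The equality of differentials is then exactly the content of Lemma \ref{lem: cc is bimodule tensor product}, which verifies term-for-term that the Hochschild chain differential \eqref{hochschild chain differential} applied to the bimodule structure maps \eqref{structure maps for convolution of bimodules 0}, \eqref{structure maps for convolution of bimodules 1}, \eqref{structure maps for convolution of bimodules 2} on $\cP \otimes_\cc \cQ$ reproduces \eqref{bimodule tensor product differential}. I do not anticipate any real obstacle here; the only point requiring care is the bookkeeping of Koszul signs through the $[1]$ shifts and the interaction between the degree of the convolution element \eqref{grading on convolution tensor product} and the signs $\star_i^j$ appearing in \eqref{hochschild chain differential}, and this has already been handled in the proof of the lemma.
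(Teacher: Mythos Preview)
Your proposal is correct and matches the paper's approach: the paper states this corollary immediately after Lemma \ref{lem: cc is bimodule tensor product} with no separate proof, treating it as an immediate consequence, which is exactly what you do by first identifying the underlying graded vector spaces via Definitions \ref{def: convolution tensor product} and \ref{def: bimodule tensor product} and then invoking the lemma for the differentials.
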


\begin{defn}\label{def: length zero tensors}
We say that a tensor $p \otimes \mathbf{x}'_{1;l} \otimes q \otimes \mathbf{x}_{1;k}$ in the two-sided bar complex $\cP \otimes_{\cc-\cc} \cQ$ has length zero in the $\mathbf{x}$-entries and the $\mathbf{x}'$-entries, if $k = l = 0$.
The following statement is helpful in clarifying some important terms in the formula \eqref{bimodule tensor product differential}.
\end{defn}

\begin{cor}\label{cor: length zero tensors}
The terms in the output of the differential $d_{\r{CC}_{*}(\cc, \cP \otimes_{\cc} \cQ)}$ that have length zero in the $\mathbf{x}$-entries and the $\mathbf{x}'$-entries are
\begin{equation}\label{length zero tensors}
 (-1)^{\maltese_{1;k} \cdot (|p| + \maltese'_{1;l} + |q|) + |q| - 1} \mu_{\cP}^{k, l}(\mathbf{x}_{1;k}, p, \mathbf{x}'_{1;l}) \otimes q 
+  p \otimes \mu_{\cQ}^{l, k}(\mathbf{x}'_{1;l}, q, \mathbf{x}_{1;k}).
\end{equation}
\end{cor}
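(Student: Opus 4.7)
The plan is to identify, among the four families of summands in the bimodule tensor product differential \eqref{bimodule tensor product differential}, precisely those whose output tensors have no residual $\mathbf{x}$- or $\mathbf{x}'$-factors. By Lemma \ref{lem: cc is bimodule tensor product}, $d_{\r{CC}_{*}(\cc, \cP \otimes_{\cc} \cQ)}$ agrees strictly with $d_{\cP \otimes_{\cc-\cc} \cQ}$, so it suffices to analyze \eqref{bimodule tensor product differential} term by term.

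The first family applies $\mu_{\cP}^{k-r,\, l-s}$ and leaves $\mathbf{x}'_{l-s+1;\, l}$ in the middle slot and $\mathbf{x}_{k-r+1;\, k}$ on the right, so it contributes a length-zero output precisely when $r = s = 0$, producing $\mu_{\cP}^{k, l}(\mathbf{x}_{1;k}, p, \mathbf{x}'_{1;l}) \otimes q$. Symmetrically, the third family applies $\mu_{\cQ}^{s, r}$ and leaves $\mathbf{x}'_{1;\, l-s}$ and $\mathbf{x}_{1;\, k-r}$ behind, so it contributes precisely when $r = k$ and $s = l$, producing $p \otimes \mu_{\cQ}^{l, k}(\mathbf{x}'_{1;l}, q, \mathbf{x}_{1;k})$. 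The remaining two families insert $\mu_{\dd}^{j}$ or $\mu_{\cc}^{j}$ as a single tensor factor into the middle or right slot; since the admissible indices obey $i + j \le l$ or $i + j \le k$, the corresponding slot retains $l - j + 1 \ge 1$ or $k - j + 1 \ge 1$ factors, so these families never yield length-zero outputs.

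It remains to match the signs. For the $\mu_{\cP}$-contribution, I substitute $r = s = 0$ into the formula for $\#_{r, s}$ and use the convention that $\maltese_{i; j}$ vanishes on the empty ranges $\maltese_{k+1; k} = \maltese'_{l+1; l} = 0$, obtaining
\begin{equation*}
\#_{0, 0} = \maltese_{1; k} \cdot (|p| + \maltese'_{1; l} + |q|) + |q| - 1,
\end{equation*}
which reproduces the first summand of \eqref{length zero tensors}. For the $\mu_{\cQ}$-contribution, the prefactor $(-1)^{\maltese_{1;\, k-r}}$ specializes at $r = k$ to $(-1)^{\maltese_{1; 0}} = +1$, reproducing the second summand. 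The only mildly delicate point in the argument is this last sign bookkeeping; once the empty-range conventions are fixed, the identification of the surviving summands is forced by inspection, so no part of the proof should pose a real obstacle.
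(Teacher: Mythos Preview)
Your proof is correct and is essentially the argument implicit in the paper, which states the corollary without proof as an immediate consequence of Lemma \ref{lem: cc is bimodule tensor product} and direct inspection of \eqref{bimodule tensor product differential}. Your careful verification that the second and fourth families always leave at least one residual factor, together with the sign bookkeeping using the empty-range convention $\maltese_{k+1;k} = \maltese'_{l+1;l} = \maltese_{1;0} = 0$, fills in exactly the details the paper leaves to the reader.
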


We can also define Hochschild chain and cochain complexes with coefficients in multi-modules.
Fix an $\ainf$-category $\cc$.
Let 
\begin{align}
\overrightarrow{\cc}_{1} &= (\cc_{1, 1}, \ldots, \cc_{1, r}), \\
\overrightarrow{\cc}_{2} &= (\cc_{2, 1}, \ldots, \cc_{2, s})
\end{align}
be ordered tuples of $\ainf$-categories, 
and $\cP$ a $\overrightarrow{\cc}_{1} - \overrightarrow{\cc}_{2}$ multi-module.
Suppose for some $1 \le i \le r, 1 \le j \le s$ we have $\cc_{1, i} = \cc_{2, j} = \cc$.
Denote by
\begin{align}
\overrightarrow{\cc}_{1}^{(i)} &= (\cc_{1, 1}, \ldots, \widehat{\cc_{1, i}}, \ldots, \cc_{1, r}), \\
\overrightarrow{\cc}_{2}^{(j)} &= (\cc_{2, 1}, \ldots, \widehat{\cc_{2, j}}, \ldots, \cc_{2, s}),
\end{align}
the ordered tuples of $\ainf$-categories with the particular entries removed.

\begin{defn}
The Hochschild chain complex of $\cc$ with cofficients in $\cP$
\begin{equation}
\r{CC}_{*}(\cc, \cP)
\end{equation}
is defined to be the $\overrightarrow{\cc}_{1}^{(i)} - \overrightarrow{\cc}_{2}^{(j)}$ multi-module whose underlying chain complex is
\begin{equation}
\r{CC}_{*}(\cc, \cP)(\vec{X}_{1}^{i, -}; \vec{X}_{2}^{j, -}) = \bigoplus_{\substack{k \ge 0\\ X_{0}, \ldots, X_{k} \in \ob \cc}} \cP(\vec{X}_{1}^{i, +, 0}; \vec{X}_{2}^{j, +, k}) \otimes \cc(X_{k-1}, X_{k}) \otimes \cdots \otimes \cc(X_{0}, X_{1}),
\end{equation}
where  
\begin{align}
&\vec{X}_{1}^{i, -} = (X_{1, 1; 0}, \ldots, X_{1, i-1; 0}, X_{1, i+1; 0}, \ldots, X_{1, r; 0}) \in \prod_{t \neq i} \ob \cc_{1, t}, \\
&\vec{X}_{2}^{j, -} = (X_{2, 1; 0}, \ldots, X_{2, j-1; 0}, X_{1, j+1; 0}, \ldots, X_{2, s; 0}) \in \prod_{t \neq j} \ob \cc_{2, t},
\end{align}
and
\begin{align}
&\vec{X}_{1}^{i, +, 0} = (X_{1, 1; 0}, \ldots, X_{1, i-1; 0}, X_{1, i; 0} = X_{0}, X_{1, i+1; 0}, \ldots, X_{1, r; 0}) \in \prod_{t=1}^{r} \ob \cc_{1, t}, \\
&\vec{X}_{2}^{j, +, k} = (X_{2, 1; 0}, \ldots, X_{2, j-1; 0}, X_{2, j; 0} = X_{k}, X_{1, j+1; 0}, \ldots, X_{2, s; 0}) \in \prod_{t=1}^{s} \ob \cc_{2, t}.
\end{align}

The structure maps are defined as follows.
For 
\begin{align}
& \vec{k}_{i} = (k_{1}, \ldots, k_{i-1}, k_{i+1}, \ldots, k_{r}), \\
& \vec{l}_{j}=(l_{1}, \ldots, l_{j-1}, l_{j+1}, \ldots, l_{s}),
\end{align}
we put 
\begin{align}
&\vec{k}_{i, h} = (k_{1}, \ldots, k_{i-1}, h, k_{i+1}, \ldots, k_{r}), \\
& \vec{l}_{j, h}=(l_{1}, \ldots, l_{j-1}, h, l_{j+1}, \ldots, l_{s}).
\end{align}
The $(\vec{0}; \vec{0})$-th term of the structure maps is the Hochschild differential defined by
\begin{equation}
\mu_{\r{CC}_{*}(\cc, \cP)}^{\vec{0}; \vec{0}}: \r{CC}_{*}(\cc, \cP)(\vec{X}_{1}^{(i)}, \vec{X}_{2}^{(j)}) \to \r{CC}_{*}(\cc, \cP)(\vec{X}_{1}^{(i)}, \vec{X}_{2}^{(j)})[-1]
\end{equation}
by
\begin{equation}
\begin{split}
\mu_{\r{CC}_{*}(\cc, \cP)}^{\vec{0}; \vec{0}}(p \otimes \mathbf{x}_{1; k}) & = \sum (-1)^{\dagger} \mu_{\cP}^{\vec{0}_{i, h}; \vec{0}_{j, l}} (\mathbf{x}_{1; h}; p; \mathbf{x}_{k-l+1; k}) \otimes \mathbf{x}_{1; k-l}\\
& + \sum (-1)^{\maltese_{m}} p \otimes \mathbf{x}_{m+d+1; k} \otimes \mu^{d}(\mathbf{x}_{m+1; m+d}) \otimes \mathbf{x}_{1; m}.
\end{split}
\end{equation}
Here the symbols are
\begin{equation}
\vec{0}_{i, h} = 0, \ldots, 0, h, 0, \ldots, 0, \text{ where } h \text{ appears at the } i\text{-th entry},
\end{equation}
and similarly for $\vec{0}_{j, l}$.
The higher order multi-module structure maps are induced from the original multi-module structure maps for $\cP$,
\begin{equation}
\mu_{\r{CC}_{*}(\cc, \cP)}^{\vec{k}_{i}; \vec{l}_{j}}(\vec{\mathbf{x}}_{1}; p \otimes \mathbf{x}_{1;k}; \vec{\mathbf{x}}_{2}) = \sum (-1)^{\dagger} \mu^{\vec{k}_{i, h}; \vec{l}_{j, l}}(\vec{\mathbf{x}}_{1}^{1,h,+}; p; \vec{\mathbf{x}}_{2}^{k-l+1,k,+}) \otimes \mathbf{x}_{1;k-l}.
\end{equation}
Here the symbol $\vec{\mathbf{x}}_{1}^{1, h, +}$ is obtained by inserting the tensor $\mathbf{x}_{1;h} \in \cc_{1, i}(\ldots)$ into the tensor $\vec{\mathbf{x}}_{1} \in \cc_{1,1}(\ldots) \otimes \cdots \cc_{1,i-1}(\ldots) \otimes \cc_{1,i+1}(\ldots) \otimes \cdots \otimes \cc_{1, r}(\ldots)$ between the $\cc_{1,i-1}$ and $\cc_{1,i+1}$ entries.
\end{defn}

\begin{defn}
The Hochschild cochain complex of $\cc$ with coefficients in $\cP$ 
\begin{equation}
\r{CC}^{*}(\cc, \cP)
\end{equation}
is defined to be the $\overrightarrow{\cc}_{1}^{(i)} - \overrightarrow{\cc}_{2}^{(j)}$ multi-module whose underlying chain complex is
\begin{equation}
(\vec{X}_{1}^{i, -}; \vec{X}_{2}^{j, -}) = \prod_{\substack{k \ge 0\\X_{0}, \ldots, X_{k} \in \ob \cc}} \hom_{\K}(\cc(X_{k-1}, X_{k}) \otimes \cdots \otimes \cc(X_{0}, X_{1}), \cP(\vec{X}_{1}^{i, +, k}; \vec{X}_{2}^{j, +, 0})).
\end{equation}
The structure maps are defined in a way similar to those on the Hochschild chain complex.
\end{defn}

\subsection{Operations on Hochschild complexes}\label{section: products on Hochschild}

The Hochschild cochain complex $\r{CC}^{*}(\cc, \cc) = \r{CC}^{*}(\cc, \cc_{\D})$ with diagonal bimodule coefficient carries has carries a cup product
\begin{equation}\label{cup product cc}
\cup: \r{CC}^{*}(\cc, \cc_{\D}) \otimes \r{CC}^{*}(\cc, \cc_{\D}) \to \r{CC}^{*}(\cc, \cc_{\D}),
\end{equation}
defined by
\begin{equation}
(\phi_{2} \cup \phi_{1})^{k}(\mathbf{x}_{1;k}) = \sum (-1)^{\diamond} \mu^{k-r-s+2}(\mathbf{x}_{j+r+1; k}, \phi_{2}^{r}(\mathbf{x}_{j+1; j+r}), \mathbf{x}_{i+s+1; j}, \phi_{1}^{s}(\mathbf{x}_{i+1; i+s}), \mathbf{x}_{1;i}),
\end{equation}
where
\begin{equation}
\diamond = (|\phi_{2}|-1) \cdot (\maltese_{i+s+1;j} + |\phi_{1}| + \maltese_{1;i}) + (|\phi_{1}|-1) \cdot \maltese_{1;i}.
\end{equation}

The cup product can be regarded as as consequence of the following family of binary operations on Hochschild cochain complexes with arbitrary bimodules
\begin{equation}\label{composition product}
\sqcup: \r{CC}^{*}(\cc, \cP) \otimes \r{CC}^{*}(\cc, \cQ) \to \r{CC}^{*}(\cc, \cP \otimes_{\cc} \cQ).
\end{equation}
Now if $\cP = \cQ$ and there is a morphism of bimodules $\cP \otimes_{\cc} \cP \to \cP$, 
we can define a product on $\r{CC}^{*}(\cc, \cP)$ by composing \eqref{composition product} with the induced map on Hochschild cochain complexes by the bimodule morphism.
The discovery of the map \eqref{composition product} actually dates back to early work of Eilenberg and MacLane \cite{eilenberg-maclane} (in the case of group algebras) and Gerstenhaber \cite{gerstenhaber} (in the case of associative algebras).
For $\phi \in \r{CC}^{*}(\cc, \cP), \psi \in \r{CC}^{*}(\cc, \cQ)$, their product
\begin{equation}
\phi \sqcup \psi
\end{equation}
is defined to be the element of $\r{CC}^{*}(\cc, \cP \otimes_{\cc} \cQ)$ such that for every $\mathbf{x}_{1; m}$,
\begin{equation}\label{composition product formula}
(\phi \sqcup \psi)^{k} (\mathbf{x}_{1; k}) = \sum_{i \le j} \phi^{k-j+1}(\mathbf{x}_{j; k}) \otimes \mathbf{x}_{i+1; j} \otimes \psi^{i}(\mathbf{x}_{1; i}).
\end{equation}

\begin{prop}\label{prop: composition product chain map}
The map $\sqcup$ \eqref{composition product} defined by \eqref{composition product formula} is a chain map.
\end{prop}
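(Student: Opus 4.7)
The plan is to establish the Leibniz rule $d_{\r{CC}^{*}(\cc, \cP \otimes_{\cc} \cQ)}(\phi \sqcup \psi) = (d\phi) \sqcup \psi + (-1)^{|\phi|-1} \phi \sqcup (d\psi)$, where on the right hand side $d\phi \in \r{CC}^{*}(\cc, \cP)$ and $d\psi \in \r{CC}^{*}(\cc, \cQ)$ are cupped with $\psi$ and $\phi$ respectively under the same binary operation \eqref{composition product formula}. The verification is almost entirely a termwise unfolding and matching exercise, with the only real content being the sign bookkeeping. The key structural observation that makes it tractable is that, by \eqref{structure maps for convolution of bimodules 0}, \eqref{structure maps for convolution of bimodules 1}, \eqref{structure maps for convolution of bimodules 2}, the bimodule structure maps $\mu^{r,s}_{\cP \otimes_{\cc} \cQ}$ vanish whenever both $r > 0$ and $s > 0$; this strongly constrains which terms appear on the left hand side.

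First I would expand the LHS using formula \eqref{hochschild cochain differential} applied to $\phi \sqcup \psi$ with coefficients in the convolution bimodule, substituting in the formula \eqref{composition product formula} for $\phi \sqcup \psi$. By the vanishing just mentioned, the first line of \eqref{hochschild cochain differential} produces only three kinds of contributions: actions of $\mu^{r,j}_{\cP}$ absorbing $\phi$'s output together with a block of $\mathbf{x}$'s to its left and some of the middle strand to its right (coming from $\mu^{0, 0}$ with $j > 0$ and $\mu^{r, 0}$), actions of $\mu^{i,s}_{\cQ}$ doing the symmetric thing to $\psi$'s output (coming from $\mu^{0,0}$ with $i > 0$ and $\mu^{0, s}$), and an internal $\mu^{d}_{\cc}$ acting on a contiguous block of the middle strand (the third summand of \eqref{structure maps for convolution of bimodules 0}). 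The second line of \eqref{hochschild cochain differential}, together with the chain-rule analysis of $\mu^{d}_{\cc}$ applied to arguments of $\phi \sqcup \psi$, produces internal $\mu^{d}_{\cc}$'s acting anywhere along $\mathbf{x}_{1;k}$.

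Next I would expand the RHS using \eqref{composition product formula} and \eqref{hochschild cochain differential} applied to $\phi$ and $\psi$ separately. In $(d\phi) \sqcup \psi$, the bimodule-structure terms of $d\phi$ become exactly the $\mu^{r,j}_{\cP}$ terms of $d(\phi \sqcup \psi)$ when the $\mu_{\cP}$ consumes only $\mathbf{x}$'s lying to the left of the cut between $\phi$'s arguments and the middle strand; and the internal $\mu^{d}_{\cc}$ terms of $d\phi$ become precisely those $\mu^{d}_{\cc}$-terms of $d(\phi \sqcup \psi)$ that act on a block contained in $\phi$'s argument strand. The symmetric matching holds for $\phi \sqcup (d\psi)$. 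The remaining $\mu^{d}_{\cc}$-terms of $d(\phi \sqcup \psi)$, namely those acting on a block strictly inside the middle strand or straddling the middle strand and $\phi$'s or $\psi$'s argument strands, appear on both sides with opposite signs after one reindexes the sum over $(i, j)$ in \eqref{composition product formula}, and therefore cancel; these are telescoping boundary terms of the middle-strand summation.

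The main obstacle is the sign verification. One must confirm that the Koszul sign $(-1)^{\diamond}$-style prefactors in the three blocks of \eqref{structure maps for convolution of bimodules 0}, combined with the $(-1)^{\dagger_{s}}$ with $|\phi \sqcup \psi| = |\phi| + |\psi|$ in the Hochschild differential, match the signs produced by separately applying $d$ to $\phi$ or $\psi$ with the intrinsic $(-1)^{|\phi|-1}$ shift. Concretely, I would fix a representative term on the LHS — say the block of $(0,0)$-contribution where $\mu^{r, j}_{\cP}$ acts on $\phi^{k-j+1}(\mathbf{x}_{j;k})$ together with $\mathbf{x}$'s pulled in from the middle — and compute its total sign by composing \eqref{hochschild cochain differential}'s $\dagger_{s}$ with the $\maltese'_{j+1;l}+|q|-1$ sign of \eqref{structure maps for convolution of bimodules 0}; then reproduce the same sign from the first line of $d\phi$ followed by the $\diamond$ sign appearing implicitly in \eqref{composition product formula}. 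A parallel check on a middle-strand $\mu^{d}_{\cc}$-term verifies the telescoping cancellation. Provided every such representative term matches, the Leibniz identity holds, proving that $\sqcup$ is a chain map. The computation is tedious but mechanical; the only delicate step is ensuring that the $|q|-1$ and $\maltese'$ shifts in \eqref{structure maps for convolution of bimodules 0} line up exactly with the Koszul signs from reordering $\phi$ past $\mathbf{x}$'s and $\psi$ in \eqref{composition product formula}.
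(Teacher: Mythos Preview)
Your proposal is correct and takes essentially the same approach as the paper: the paper's proof simply reads ``This is a straightforward computation based on the definitions of the Hochschild cochain differential \eqref{hochschild cochain differential} and the structure maps on the convolution tensor product,'' and what you have written is precisely a detailed outline of that computation, exploiting the vanishing $\mu^{r,s}_{\cP\otimes_{\cc}\cQ}=0$ for $r,s>0$ and matching the resulting terms against $(d\phi)\sqcup\psi$ and $\phi\sqcup(d\psi)$.
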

\begin{proof}
This is a straightforward computation based on the definitions of the Hochschild cochain differential \eqref{hochschild cochain differential} and the structure maps on the convolution tensor product.
\end{proof}

In the case where $\cP = \cQ = \cc_{\D}$, the quasi-isomorphism of bimodules $\mu_{\D, \cc_{\D}}: \cc_{\D} \otimes_{\cc} \cc_{\D} \to \cc_{\D}$ given by \eqref{bimodule collapse map} induces a map on Hochschild cochain complexes
\[
\mu_{\D, \cc_{\D}, *}: \r{CC}^{*}(\cc, \cc_{\D} \otimes_{\cc} \cc_{\D}) \to \r{CC}^{*}(\cc, \cc_{\D}),
\]
such that the overall composition
\[
\mu_{\D, \cc_{\D}, *} \circ \sqcup: \r{CC}^{*}(\cc, \cc_{\D}) \otimes \r{CC}^{*}(\cc, \cc_{\D}) \to \r{CC}^{*}(\cc, \cc_{\D})
\]
is the usual cup product on Hochschild cohomology with diagonal coefficients.

There is also a cap product between a Hochschild cochain complex with diagonal bimodule coefficient and a Hochschild chain complex with arbitrary bimodule coefficient:
\begin{equation}\label{cap product}
\cap: \r{CC}^{*}(\cc, \cc_{\D}) \otimes \r{CC}_{*}(\cc, \cQ) \to \r{CC}_{*}(\cc, \cQ),
\end{equation}
defined by
\begin{equation}
\phi \cap (q \otimes \mathbf{x}) = \sum (-1)^{\$} \mu_{\cP}^{j, k-j-r+1}(\mathbf{x}_{1; j}, q, \mathbf{x}_{k-i+1; k}, \phi^{r}(\mathbf{x}_{k-i-r+1; k-i}), \mathbf{x}_{s+1; k-i-r}) \otimes \mathbf{x}_{j+1; s}
\end{equation}
where 
\begin{equation}
\$ = (|\phi|-1)\maltese_{1; k-i-r} + \maltese_{1;j} \cdot (|q| + |\phi| + \maltese_{j+1; k}) + \maltese_{j+1;s}.
\end{equation}

Th cap product can also be realized by a more general kind of operations:
\begin{equation}\label{outer cap product}
\sqcap: \r{CC}^{*}(\cc, \cP) \otimes \r{CC}_{*}(\cc, \cQ) \to \r{CC}_{*}(\cc, \cQ \otimes_{\cc} \cP)
\end{equation}
defined by
\begin{equation}\label{outer cap product formula}
\phi \sqcap (q \otimes \mathbf{x}_{1;k}) = \sum (-1)^{\dagger} q \otimes \mathbf{x}_{ i+r+1; k} \otimes \phi^{r}(\mathbf{x}_{i+1; i+r}) \otimes \mathbf{x}_{1;i},
\end{equation}
where the sign
\begin{equation}\label{sign for cap}
\dagger  = (|\phi|-1) \maltese_{1, i}
\end{equation}
comes from standard Koszul sign twist similar to the one in \eqref{hochschild cochain differential}.
In the case where $\cP = \cc_{\D}$, the composition of \eqref{outer cap product} with the map $\r{CC}_{*}(\cc, \cQ \otimes_{\cc} \cc_{\D}) \to \r{CC}_{*}(\cc, \cQ)$ induced by the collapse map $\nu_{\D, \cQ}: \cQ \otimes_{\cc} \cc_{\D} \stackrel{\sim}\to \cQ$ \eqref{bimodule collapse map2} gives the cap product $\cap$ \eqref{cap product}.
From this point of view, there is another cap product
\begin{equation}
\cap: \r{CC}^{*}(\cc, \cP) \otimes \r{CC}_{*}(\cc, \cc_{\D}) \stackrel{\sqcup}\to \r{CC}_{*}(\cc, \cc_{\D} \otimes_{\cc} \cP) \stackrel{\mu_{\D, \cP}}\to \r{CC}_{*}(\cc, \cP),
\end{equation}
but there is no module structure to speak of since $\r{CC}^{*}(\cc, \cP)$ is not generally a differential graded algebra.

Similar to the classical case of associative algebras, 
the cup/cap products \eqref{cup product cc}, \eqref{cap product} makes $\r{HH}_{*}(\cc, \cP)$ a module over the algebra $\r{HH}^{*}(\cc, \cc_{\D})$ (\cite[Proposition 2.4]{ganatra}).
For the operations $\sqcup$ \eqref{composition product} and $\sqcap$ \eqref{outer cap product}, 
they also satisfy a similar associativity relation, 
which leads to the following statement. 

\begin{prop}\label{prop: outer cap product chain map}
The map $\sqcap$ \eqref{outer cap product} defined by \eqref{outer cap product formula} is a chain map.
Moreover, we have
\begin{equation}\label{cup cap compatible}
 (\phi \sqcup \psi) \sqcap \gamma = \phi \sqcap (\psi \sqcap \gamma),
\end{equation}
in $\r{CC}_{*}(\cc, \cP \otimes_{\cc} \cQ \otimes_{\cc} \cR)$,
where $\phi \in \r{CC}^{*}(\cc, \cP), \psi \in \r{CC}^{*}(\cc, \cQ), \gamma \in \r{CC}_{*}(\cc, \cR)$.
\end{prop}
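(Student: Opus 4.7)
The plan is to verify the chain-map property and the associativity identity by direct expansion and term-by-term matching, organizing terms by where the $\ainf$ or bimodule structure map is applied.

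For the chain-map claim, I would use Lemma \ref{lem: cc is bimodule tensor product} to identify $\r{CC}_{*}(\cc, \cQ \otimes_{\cc} \cP)$ with the two-sided bar complex $\cQ \otimes_{\cc - \cc} \cP$, so that the target differential is the explicit formula \eqref{bimodule tensor product differential}. Expanding $d_{\r{CC}_{*}}(\phi \sqcap \gamma)$ with $\gamma = q \otimes \mathbf{x}_{1;k}$ produces four families of terms: (i) bimodule structure maps of $\cQ$ acting near $q$; (ii) bimodule structure maps of $\cP$ acting near $\phi^{r}(\mathbf{x}_{i+1;i+r})$; (iii) $\mu^{j}_{\cc}$ applied to the middle $\cc$-tensors sitting between $q$ and $\phi^{r}(\ldots)$ inside the bimodule entry; and (iv) $\mu^{j}_{\cc}$ applied to the outer $\cc$-tensors. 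On the other side, I would expand the signed Leibniz sum $(d\phi) \sqcap \gamma \pm \phi \sqcap (d\gamma)$ using \eqref{hochschild cochain differential} and \eqref{hochschild chain differential}. Terms in which $\mu^{j}_{\cc}$ lands strictly inside the inputs of $\phi^{r}$ appear only in $(d\phi) \sqcap \gamma$; terms acting on $q$, on the intervening tensors, or on the outer tensors appear only in $\phi \sqcap (d\gamma)$; and the boundary terms in which an operation straddles $\phi^{r}$ cancel pairwise by the $\ainf$-relations \eqref{a-infinity relation} for $\phi$. Matching these four families against (i)--(iv) yields the identity.

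For the associativity \eqref{cup cap compatible}, I would expand both sides purely combinatorially. On the left, $\phi \sqcup \psi$ first assembles terms of the shape $\phi^{r}(\cdots) \otimes \mathbf{x}_{\cdots} \otimes \psi^{s}(\cdots)$, and the outer $\sqcap$ then inserts this package into $\gamma$ between the bimodule entry and the outer $\cc$-tensors. On the right, $\psi \sqcap \gamma$ first inserts $\psi^{s}$ into $\gamma$ to the right of the bimodule coefficient, and $\phi \sqcap$ then inserts $\phi^{r}$ further out. Both expressions index over pairs of disjoint nested subintervals of $\{1, \dots, k\}$ assigned to the inputs of $\phi$ and $\psi$, and the underlying tensors match bijectively under the obvious correspondence. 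What is left is to check that the Koszul sign $\diamond$ from $\sqcup$ combined with the single sign $\dagger$ from the outer $\sqcap$ on the LHS agrees with the sum of two successive $\dagger$-signs from the iterated $\sqcap$ on the RHS.

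The main obstacle in both steps will be sign bookkeeping: every term carries a Koszul sign depending on the degrees of $\phi, \psi, q$ and of the intervening $\cc$-morphisms, as well as on the relative positions of the insertions. The cleanest route is to work at the level of the shifted bar complex \eqref{cc shifted form} where many shifted-degree signs simplify, and to organize the computation by whether an operation lies strictly inside, strictly outside, or straddles the inputs of $\phi^{r}$ (or of $\phi^{r}$ and $\psi^{s}$ in the associativity step). A key simplification is that in the convolution bimodule $\cQ \otimes_{\cc} \cP$, the mixed structure maps $\mu^{r,s}_{\cQ \otimes_{\cc} \cP}$ with both $r, s > 0$ vanish (cf.\ Definition \ref{def: convolution tensor product}), so the matching reduces to three disjoint sign computations rather than an entangled one.
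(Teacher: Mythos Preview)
Your approach is correct and is exactly the ``standard computation'' the paper has in mind; the paper's own proof is a single sentence noting that the formulas \eqref{composition product formula} and \eqref{outer cap product formula} involve no $\mu$'s, so the associativity \eqref{cup cap compatible} is a pure tensor-reshuffling plus Koszul-sign check, and the chain-map property is the routine Leibniz expansion you describe.

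One minor correction in your plan for the chain-map step: the cancellation you mention is \emph{not} an application of the $\ainf$-relations \eqref{a-infinity relation}. There is no $\ainf$-relation ``for $\phi$''; $\phi$ is just a cochain. What actually happens is simpler: in $\phi \sqcap (d_{\r{CC}_*}\gamma)$ there are terms where the $\mu^{j}_{\cc}$-output lands inside the argument window of $\phi$, producing $\phi^{r}(\ldots, \mu^{j}_{\cc}(\ldots), \ldots)$, and these cancel on the nose against the $-\phi^{k-j+1}(\ldots, \mu^{j}_{\cc}(\ldots), \ldots)$ terms in $(d_{\r{CC}^*}\phi) \sqcap \gamma$ coming from the second sum in \eqref{hochschild cochain differential}. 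The remaining terms of $(d_{\r{CC}^*}\phi) \sqcap \gamma$, namely the $\mu_{\cP}^{r,s}(\ldots, \phi(\ldots), \ldots)$ terms, match your family (ii) in $d_{\r{CC}_*}(\phi \sqcap \gamma)$. No further identity is needed.
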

\begin{proof}
The proof is a standard computation using the formulas \eqref{composition product formula} \eqref{outer cap product formula},
in which no application of $\mu$'s are involved.
\end{proof}

\subsection{Calabi-Yau structures}\label{section: cy structures}

In this subsection, we review the definition of Calabi-Yau structures on $\ainf$-categories following \cite{kontsevich-soibelman}. 
See also \cite{brav-dyckerhoff} for definitions on relative Calabi-Yau structures.
We write $\r{CC}_{*}(\cc)_{hS^{1}}$ and resp. $\r{CC}_{*}(\cc)^{hS^{1}}$ for the homotopy orbit complex and resp. homotopy fixed point complex for the (non-unital) Hochschild chain complex for $\cc$,
which compute the positive cyclic homology $\r{HC}_{*}^{+}(\cc)$ and resp. negative cyclic homology $\r{HC}_{*}^{-}(\cc)$.
In this paper, we are not going to prove results concerning strong Calabi-Yau structures,
so we will not specify these chain models for $S^{1}$-complexes;
see \cite{Bourgeois-Oancea, dotsenko-shadrin-vallette, ganatra_cyclic, zhao} for various models.

\begin{defn}\label{def: cy structures}
Let $\cc$ be an $\ainf$-category.
A weak proper Calabi-Yau structure of dimension $n$ is a chain map $\phi: \r{CC}_{*}(\cc) \to \K[-n]$ of degree $-n$,
such that the composition
\begin{equation}
\cc^{*}(X, Y) \otimes \cc^{n-*}(Y, X) \stackrel{\mu_{\cc}^{2}}\to \cc^{n}(Y, Y) \stackrel{i}\to \r{CC}_{n}(\cinf) \stackrel{\phi}\to \K
\end{equation}
induced a nondegenerate pairing on cohomology:
\begin{equation}
H^{*}(\cc(X, Y)) \otimes H^{n-*}(\cc(Y, X)) \stackrel{[\mu_{\cc}^{2}]}\to H^{n}(\cc(Y, Y)) \stackrel{[i]}\to \r{HH}_{n}(\cc) \stackrel{[\phi]}\to \K.
\end{equation}

A strong proper Calabi-Yau structure is a chain map $\tilde{\phi}: \r{CC}_{*}(\cc)_{hS^{1}} \to \K[-n]$ such that $\tilde{\phi} \circ pr$ is weak proper Calabi-Yau structure,
where $\r{CC}_{*}(\cc)_{hS^{1}}$ is the homotopy orbit complex computing positive cyclic homology, 
and $pr: \r{CC}_{*}(\cc) \to \r{CC}_{*}(\cc)_{hS^{1}}$ is the projection to homotopy orbits.

Suppose $\cc$ is smooth. A weak smooth Calabi-Yau structure of dimension $n$ on $\cc$ is a chain map $\sigma: \K[n] \to \r{CC}_{*}(\cc)$, or equivalent a cycle $\sigma \in \r{CC}_{-n}(\cc)$, 
such that the induced map of bimodules
\begin{equation}
\cc^{!} \to \cc_{\D}[-n]
\end{equation}
is a quasi-isomorphism.

A strong smooth Calabi-Yau structure of dimension $n$ on $\cc$ is a chain map $\tilde{\sigma}: \K[n] \to \r{CC}_{*}(\cc)^{hS_{1}}$, 
or equivalently a cycle $\tilde{\sigma} \in \r{CC}_{*}(\cc)^{hS_{1}}[-n]$
such that $\sigma = i(\tilde{\sigma})$ is a weak Calabi-Yau structure of dimension $n$, 
where $i: \r{CC}_{*}(\cc)^{hS_{1}}$ is the inclusion of homotopy fixed points.

\end{defn}

Let $\cP$ be any $\cc$-bimodule. 
Given any Hochschild chain $\sigma \in \r{CC}_{-n}(\cc) = \r{CC}_{-n}(\cc, \cc_{\D})$, we can use the operation $\sqcap$ \eqref{outer cap product} to define a chain map
\begin{equation}\label{capping with cy}
-\cap \sigma: \r{CC}^{*}(\cc, \cP) \stackrel{- \sqcap \sigma}\to \r{CC}_{*-n}(\cc, \cc_{\D} \otimes_{\cc} \cP) \stackrel{\mu_{\D, \cP, *}}\to \r{CC}_{*-n}(\cc, \cP),
\end{equation}
where $\mu_{\D, \cP}$ is the collapse map \eqref{bimodule collapse map}, which is a quasi-isomorphism of bimodules.
The condition for $\sigma$ being a weak smooth Calabi-Yau structure implies:

\begin{lem}\label{lem: capping with cy is iso}
Capping with a weak smooth Calabi-Yau structure $\sigma \in \r{CC}_{-n}(\cc)$ induces a natural quasi-isomorphism
\begin{equation}\label{capping with cy iso}
- \cap \sigma: \r{CC}^{*}(\cc, \cP) \stackrel{\sim}\to \r{CC}_{*-n}(\cc, \cP).
\end{equation}
Here naturality means it commutes with chain maps induced by maps bimodules $\cP \to \cQ$.
It is a chain homotopy equivalence, i.e., has a chain homotopy inverse.
\end{lem}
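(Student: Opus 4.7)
The plan is to identify the capping map $-\cap\sigma$ with the tensor product of the defining bimodule quasi-isomorphism $\tilde\sigma:\cc^!\to\cc_\D[-n]$ with $\cP$, after passing through natural equivalences between Hochschild complexes and derived bimodule constructions, so that the quasi-isomorphism property then follows directly from the definition of a weak smooth Calabi-Yau structure.

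First, I would record the following three natural quasi-isomorphisms. Via \eqref{2pt to 1pt} together with Lemma \ref{lem: cc is bimodule tensor product}, one has $\r{CC}_*(\cc,\cP) \simeq \cc_\D\otimes_{\cc-\cc}\cP$; via \eqref{1pt to 2pt}, valid because $\cc$ is cohomologically unital, one has $\r{CC}^*(\cc,\cP) \simeq \hom_{\cc\bimod\cc}(\cc_\D,\cP)$; and the smoothness of $\cc$, that is, perfectness of $\cc_\D$ as a bimodule, yields a further natural quasi-isomorphism $\hom_{\cc\bimod\cc}(\cc_\D,\cP) \simeq \cc^! \otimes_{\cc-\cc}\cP$ via evaluation. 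All three are natural in the bimodule $\cP$.

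Second, by the very definition of a weak smooth Calabi-Yau structure, $\sigma$ induces a bimodule quasi-isomorphism $\tilde\sigma:\cc^! \to \cc_\D[-n]$. Tensoring over $\cc$-$\cc$ with $\cP$ yields
\[
\tilde\sigma\otimes\id_\cP:\; \cc^!\otimes_{\cc-\cc}\cP \longrightarrow (\cc_\D\otimes_{\cc-\cc}\cP)[-n],
\]
which remains a quasi-isomorphism since $\cc^!$ and $\cc_\D$ are both perfect bimodules. Composing with the equivalences of the first step produces a natural quasi-isomorphism $\r{CC}^*(\cc,\cP) \to \r{CC}_{*-n}(\cc,\cP)$ of the expected shape.

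Third, I would verify that this abstract map coincides, up to chain homotopy, with the concrete $-\cap\sigma$ of \eqref{capping with cy}. Both maps are natural in $\cP$, so by a Yoneda-style reduction it suffices to check agreement at $\cP = \cc_\D$, where $\r{CC}^*(\cc,\cc_\D) \simeq \hom_{\cc\bimod\cc}(\cc_\D,\cc_\D)$ contains the identity. Tracking $\id_{\cc_\D}$: under $-\cap\sigma$ the formulas \eqref{outer cap product formula} and the collapse \eqref{bimodule collapse map} produce exactly $\sigma\in\r{CC}_{-n}(\cc,\cc_\D)$, while under the abstract map $\id_{\cc_\D}$ is sent across the evaluation equivalence to the canonical unit of $\cc^!$, which $\tilde\sigma$ by construction sends to the image of $\sigma$ under $\r{CC}_{-n}(\cc) \simeq (\cc_\D\otimes_{\cc-\cc}\cc_\D)[-n]$. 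The two outputs agree, completing the identification.

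The hard part is this last step: although the class-level agreement follows from the Yoneda computation above, producing an honest chain-level homotopy between $-\cap\sigma$ and $\tilde\sigma\otimes\id_\cP$ requires careful sign-matched calculations with the two-sided bar complex, guided by the associativity compatibility \eqref{cup cap compatible} of Proposition \ref{prop: outer cap product chain map}. The chain homotopy inverse stated at the end of the lemma is then automatic, since quasi-isomorphisms between chain complexes of $\K$-vector spaces always admit homotopy inverses.
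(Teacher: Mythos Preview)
Your proposal is correct and follows essentially the same route as the paper: the paper's proof simply invokes the bimodule quasi-isomorphism $\cc^{!}\stackrel{\sim}\to\cc_\D[-n]$ from Definition~\ref{def: cy structures} and the identifications \eqref{2pt to 1pt}, \eqref{1pt to 2pt}, then obtains the homotopy inverse by inverting the bimodule quasi-isomorphism over a field. Your write-up is in fact more careful than the paper's, which does not explicitly address why the abstract composite agrees with the concrete $-\cap\sigma$; your Yoneda reduction at $\cP=\cc_\D$ (where both maps send $\id_{\cc_\D}$ to $\sigma$) is a legitimate way to fill that gap, since natural transformations out of the corepresentable functor $\hom_{\cc\bimod\cc}(\cc_\D,-)$ are determined by their value on the identity.
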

\begin{proof}
By Definition \ref{def: cy structures}, $\sigma$ induces a quasi-isomorphism of $\ainf$-bimodules $\cc^{!} \stackrel{\sim}\to \cc_{\D}[-n]$.
Then apply the quasi-isomorphisms \eqref{2pt to 1pt} and \eqref{1pt to 2pt}.

Since quasi-isomorphisms of $\ainf$-bimodules are invertible (when $\K$ is a field, \cite{seidel_book}),
we also have a quasi-isomorphism of $\ainf$-bimodules $\cc_{\D}[-n] \to \cc^{!}$, inducing a chain homotopy inverse of \eqref{capping with cy iso}.
\end{proof}

An $\ainf$-functor $F: \cc \to \dd$ induces a chain map
\begin{equation}
F_{*}: \r{CC}_{*}(\cc, \cc) \to \r{CC}_{*}(\dd, \dd),
\end{equation}
which is moreover a morphism of $S^{1}$-complexes.
Thus by \cite[Corollary 3]{ganatra_cyclic}, it induces chains maps
\begin{align}
F_{*, hS^{1}}: \r{CC}_{*}(\cc, \cc)_{hS^{1}} & \to \r{CC}_{*}(\dd, \dd)_{hS^{1}}, \\
F_{*}^{hS^{1}}: \r{CC}_{*}(\cc, \cc)^{hS^{1}} & \to \r{CC}_{*}(\dd, \dd)^{hS^{1}}.
\end{align}

\begin{defn}\label{def: weak smooth cy functor}
Let $(\cc, \sigma), (\dd, \tau)$ be $\ainf$-categories equipped with weak smooth Calabi-Yau structures of the same dimension $n$.
We say that $F: \cc \to \dd$ is a weak smooth Calabi-Yau functor, 
if $[F_{*}(\sigma)] = [\tau] \in \r{HH}_{*}(\cc, \cc)$.

Suppose $\phi, \psi$ admit lifts to strong proper Calabi-Yau structures $\tilde{\phi}, \tilde{\psi}$. 
We say that $F: \cc \to \dd$ is a strong proper Calabi-Yau functor,
if $[F_{*}^{hS^{1}}(\tilde{\psi})] = [\tilde{\phi}] \in \r{HC}_{*}^{+}(\cc)$.

A weak/strong smooth Calabi-Yau functor $F: \cc \to \dd$ is called a weak/strong smooth Calabi-Yau equivalence,
if there exists a weak/strong smooth Calabi-Yau functor $G: \cc \to \dd$ such that $G \circ F$ is homotopic to $\id_{\cc}$ and $F \circ G$ is homotopic to $\id_{\dd}$.
\end{defn}

\begin{defn}\label{def: weak proper cy functor}
Let $(\cc, \phi), (\dd, \psi)$ be $\ainf$-categories equipped with weak proper Calabi-Yau structures of the same dimension $n$.
We say that $F: \cc \to \dd$ is a weak proper Calabi-Yau functor, 
if $[\psi \circ F_{*}] = [\phi] \in \r{HH}_{*}(\cc, \cc)^{\vee}[-n]$.

Suppose $\phi, \psi$ admit lifts to strong proper Calabi-Yau structures $\tilde{\phi}, \tilde{\psi}$. 
We say that $F: \cc \to \dd$ is a strong proper Calabi-Yau functor,
if $[\tilde{\psi} \circ F_{*}^{hS^{1}}] = [\tilde{\phi}] \in \r{HC}_{*}^{+}(\cc)^{\vee}$. 

A weak/strong proper Calabi-Yau functor $F: \cc \to \dd$ is called a weak/strong proper Calabi-Yau equivalence,
if there exists a weak/strong smooth Calabi-Yau functor $G: \cc \to \dd$ such that $G \circ F$ is homotopic to $\id_{\cc}$ and $F \circ G$ is homotopic to $\id_{\dd}$.
\end{defn}

\subsection{A tautological pairing}\label{section: taut pairing}

To better understand a proper Calabi-Yau structure,
we take a closer look at the first piece of information it provides, i.e. a non-degenerate pairing between morphisms spaces of $\cc$.

We first introduce some terminologies. 

\begin{defn}\label{def: bilinear pairings}
Let $A, B, C$ be graded chain complexes over $\K$.
A bilinear map $\pi: A \times B \to C$ is said to be compatible with gradings, 
if there exists $m \in \Z$ such that for every $i, j$ and $a \in A^{i}, b \in B^{j}$, the output $f(a, b)$ has degree $i+j+m$.
When such an $m$ exists, we call $m$ the degree of the bilinear map $\pi$, $\deg(\pi) = m$.

A bilinear map $\pi: A \times B \to C$ is said to be compatible with differentials, if
\begin{equation}\label{graded leibniz}
d_{C}(f(a, b)) = f( (-1)^{|b| - 1} d_{A} a, b) + f(a, d_{B} b)
\end{equation}
for all $a \in A, b \in B$.

When $C = \K$ the ground field, a bilinear map $\pi: A \times B \to \K$ that is compatible with gradings and differentials is called a bilinear pairing between $A$ and $B$. 
If furthermore $A=B$, a bilinear pairing between $A$ and $B$ is simply called a bilinear pairing on $A$.
\end{defn}

The most basic yet important example is the following:

\begin{example}
Let $A^{\vee} = \hom_{\K}(A^{-*}, \K)$ be the linear dual chain complex of $A$, 
with its grading as indicated, i.e., $(A^{\vee})^{i} = \hom_{\K}(A^{-i}, \K)$.
The evaluation pairing 
\begin{equation}\label{evaluation on a}
ev: A^{\vee} \times A \to \K
\end{equation}
defined by 
\begin{equation}
ev(f, a) = f(a)
\end{equation}
is a bilinear pairing between $A^{\vee}$ and $A$ of degree $0$.
It is clear by definition that the pairing has degree $0$,
so it remains to check the condition \eqref{graded leibniz}.
We following the sign convention for $\ainf$-categories when defining the differential on chain complexes,
in which case we have $\mu^{1}_{A} = -d_{A}$, and
\begin{equation}
 (\d_{A^{\vee}}f)(a) = (-1)^{|a| - 1}f(\mu^{1}_{A}(a)) = (-1)^{|a|} f(d_{A} a),
\end{equation}
following the pattern for the structure maps \eqref{structure maps for linear dual} for the linear dual bimodule $\cc^{\vee}$.
So
\begin{equation}\label{skew symmetry for ev}
ev((-1)^{|a|-1} \d_{\A^{\vee}}f, a) + ev(f, d_{A}a) = (-1)^{|a|-1+|a|} f(d_{A}a) + f(d_{A}a) = 0.
\end{equation}
which is \eqref{graded leibniz} for the bilinear map $ev$.

The other pairing
\begin{equation}\label{evaluation on a opp}
\tilde{ev}: A \times A^{\vee} \to \K
\end{equation}
is defined by 
\begin{equation}
\tilde{ev}(a, f) = (-1)^{(|a|-1)(|f|-1)} f(a) = (-1)^{|a||f| + |a| + |f|} f(a).
\end{equation}
\end{example}

Using the above map $ev$,
we can define a tautological pairing as follows.
Let $C$ be a graded chain complex over $\K$,
which is either finite or infinite dimensional.
Consider the graded chain complex
\begin{equation}
V = C^{\vee}[1-n] \oplus C.
\end{equation}
We define a bilinear pairing on $V$ of degree $1-n$
\begin{equation}\label{taut pairing}
\langle \cdot, \cdot \rangle_{taut}: V \otimes V \to \K[1-n],
\end{equation}
by
\begin{equation}\label{taut pairing formula}
 \langle (f_{1}, x_{1}), (f_{2}, x_{2}) \rangle_{taut} =   f_{1}(x_{2}) + (-1)^{1+(|x_{1}|-1)(|x_{2}|-1)} f_{2}(x_{1}) =  f_{1}(x_{2}) + (-1)^{|x_{1}||x_{2} + |x_{1}| + |x_{2}|} f_{2}(x_{1}).
\end{equation}
Noting that the grading on the direct sum complex $V$ is determine by the $C$-factor, we see that the pairing is graded symmetric
\begin{equation}
 \langle (f_{1}, x_{1}), (f_{2}, x_{2}) \rangle_{taut} = (-1)^{1+(|x_{1}|-1)(|x_{2}|-1)}  \langle (f_{2}, x_{2}), (f_{1}, x_{1}) \rangle_{taut}.
\end{equation}
In fact, the following is almost straightforward from the definition:

\begin{lem}\label{lem: taut nondegenerate}
The pairing $\langle \cdot, \cdot \rangle_{taut}$ \eqref{taut pairing} is nondegenerate.
\end{lem}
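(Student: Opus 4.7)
The plan is to check nondegeneracy directly from the splitting $V = C^{\vee}[1-n] \oplus C$, case by case depending on which component of a nonzero element is nonzero. Since the pairing has degree $1-n$ and is defined on a direct sum, each half of the formula picks out a single coordinate, so the splitting should make the verification transparent.

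First I would fix a homogeneous element $(f_{1}, x_{1}) \in V^{k}$ with $(f_{1}, x_{1}) \neq 0$ and look for a companion $(f_{2}, x_{2}) \in V^{n-1-k}$ for which the pairing is nonzero. Observe that for $(f_{2}, x_{2}) = (f_{2}, 0)$ the formula collapses to $\pm f_{2}(x_{1})$, and for $(f_{2}, x_{2}) = (0, x_{2})$ it collapses to $f_{1}(x_{2})$. So the two components of $(f_{1}, x_{1})$ can be tested independently.

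Next I would split into the two cases. If $x_{1} \neq 0$, then $x_{1} \in C^{k}$ is a nonzero vector, so there is a coordinate functional $f_{2} \in (C^{\vee})^{-k}$ with $f_{2}(x_{1}) \neq 0$; this $f_{2}$ sits in the correct shifted degree $(C^{\vee}[1-n])^{n-1-k}$, so the element $(f_{2}, 0) \in V^{n-1-k}$ pairs with $(f_{1}, x_{1})$ to give $\pm f_{2}(x_{1}) \neq 0$. If on the other hand $f_{1} \neq 0$, then by definition of a nonzero linear functional there exists a homogeneous $x_{2} \in C^{n-1-k}$ with $f_{1}(x_{2}) \neq 0$ (the degree constraint is automatic since $f_{1}$ only pairs nontrivially with that specific graded piece of $C$), and then $(0, x_{2}) \in V^{n-1-k}$ satisfies $\langle (f_{1}, x_{1}), (0, x_{2}) \rangle_{taut} = f_{1}(x_{2}) \neq 0$. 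Since at least one of the two cases applies for any nonzero $(f_{1}, x_{1})$, this exhibits the required witness.

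There is essentially no obstacle; the only point deserving a sentence of care is the bookkeeping that the witness $(f_{2}, x_{2})$ constructed in each case genuinely lies in the degree $n-1-k$ dictated by $\deg(\langle \cdot, \cdot \rangle_{taut}) = 1-n$. This is automatic from the convention that the shift $[1-n]$ is applied to $C^{\vee}$ so as to make the evaluation pairing between the two summands land in $\K$ in total degree $n-1$. I would close by noting that the same argument, read in the other variable, shows nondegeneracy from the right, so the pairing is nondegenerate in the full two-sided sense; this also makes clear that nondegeneracy does not require any finite-dimensionality or topological hypothesis on $C$, since the full algebraic dual $C^{\vee}$ always separates points of $C$.
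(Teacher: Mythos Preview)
Your proof is correct and takes essentially the same approach as the paper: both arguments exploit the direct sum splitting $V = C^{\vee}[1-n] \oplus C$ to test the two components independently via evaluation. The paper phrases it as the contrapositive (assume $(f_{2},x_{2})$ pairs to zero with all $(f_{1},x_{1})$, then plug in $(f_{1},0)$ and $(0,x_{1})$ to force $x_{2}=0$ and $f_{2}=0$), whereas you produce an explicit witness for each nonzero element; this is the same content, and your extra remarks on degrees and two-sided nondegeneracy are harmless additions.
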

\begin{proof}
Suppose 
\begin{equation}\label{pairing zero}
f_{1}(x_{2}) + (-1)^{|x_{1}||x_{2} + |x_{1}| + |x_{2}|}  f_{2}(x_{1}) = 0
\end{equation}
for all $(f_{1}, x_{1})$. 
By fixing $x_{1} = 0$ and considering all $f_{1} \in V^{\vee}[1-n]$, we get from \eqref{pairing zero} $f_{1}(x_{2}) = 0, \forall f_{1}$ so $x_{2} = 0$. 
By fixing $f_{1} = 0$ and considering all $x_{1} \in V$, we get $f_{2}(x_{1}) = 0, \forall x_{1}$ so $f_{2} = 0$.
\end{proof}

More generally, we can consider for a given $\ainf$-category $\cc$ the following bimodule
\begin{equation}
\B = \cc^{\vee}[1-n] \oplus \cc_{\D}.
\end{equation}
$\B$ always has a tautological pairing, 
\begin{equation}\label{bimodule taut pairing}
\langle \cdot, \cdot \rangle_{taut}: \B(X, Y) \times \B(Y, X) \to \K.
\end{equation}
for all objects $X, Y \in \ob \cc$, defined in the same manner as \eqref{taut pairing}.
More generally, we can allow a `twist' on the bimodule structure on $\B$ by viewing $\B$ as a mapping cone of a map $\cc^{\vee}[-n] \to \cc_{\D}$
which comes from a categorical copairing
\begin{equation}
\K \to \cc(X, Y) \otimes \cc(Y, X)[n].
\end{equation}
When $\cc$ is proper, a cyclic $\ainf$-category structure on $\B$ with respect to this pairing is equivalent to the data of a pre-Calabi-Yau structure on $\cc$ (\cite{kontsevich-takeda-vlassopoulos}).
However, since we will be mainly interested in the non-proper case, 
hoping to find a strictly cyclic $\ainf$-structure on $\B$ is requiring too much symmetry for those categories $\cc$ appearing naturally in Floer theory, for which current geometric constructions are insufficient.
Instead, we shall proceed to \S\ref{section: neighborhood of infinity},
 to find $\ainf$-structures on a homotopy replacement of $\B$,
which possesses a weak proper Calabi-Yau structure.

\subsection{Canonical pairing systems}\label{section: bilinear pairings}

In this subsection, we discuss a generalization of the pairings \eqref{evaluation on a},
\eqref{taut pairing} and \eqref{bimodule taut pairing},
and its relation to Hochschild invariants.
 
 \begin{defn}\label{def: canonical pairing system}
 Let $\cP, \cQ$ be $\cc-\cc$-bimodules.
 A canonical pairing system $\pi$ between $\cP$ and $\cQ$ is a family of pairings of degree $m$
 \begin{equation}\label{bilinear pairing}
\pi_{X_{0}, X_{1}}: \cP(X_{0}, X_{1}) \times \cQ(X_{1}, X_{0}) \to \K[m],
\end{equation}
 one for every pair of objects $X_{0}, X_{1}$,
 such that for all $\mathbf{x}_{1; k}, \mathbf{x}'_{1; l}, p, q$ we have
 \begin{equation}\label{condition for pairing system}
(-1)^{\maltese_{1;k} \cdot(|p| + \maltese'_{1; l} + |q|) + |q| - 1} \pi(\mu_{\cP}^{k, l}(\mathbf{x}_{1; k}, p, \mathbf{x}'_{1; l}), q) + \pi(p, \mu^{l, k}_{\cQ}(\mathbf{x}'_{1; l}, q, \mathbf{x}_{1; k})) = 0.
 \end{equation}
 \end{defn}
 
 \begin{rem}\label{rem: sign for pairing system}
 The first set of equations in the condition \eqref{condition for pairing system} is 
 \begin{equation}\label{condition for pairing system 0}
 \pi((-1)^{|q| - 1} \mu_{\cP}^{0, 0}(p), q) + \pi(p, \mu_{\cQ}^{0, 0}(q)) = 0.
 \end{equation}
 In view that the $(0, 0)$-th order map on an $\ainf$-bimodule is a differential,
 we can thus understand the condition \eqref{condition for pairing system} as a generalization of the usual skew-symmetry relation for pairings between chain complexes.
 \end{rem}
 
 Suppose $\pi$ is a canonical pairing system between $\cP$ and $\cQ$ of degree $m$.
We define an induced map
\begin{equation}\label{pairing induced map}
\pi_{*}: \cP \otimes_{\cc-\cc} \cQ = \r{CC}_{*}(\cc, \cP \otimes_{\cc} \cQ) \to \K[m]
\end{equation}
as follows.
\begin{equation}\label{pairing induced map formula}
\pi_{*}(p \otimes \mathbf{x}'_{1;l} \otimes q \otimes \mathbf{x}_{1; k})=
\begin{cases}
\pi(p, q), & \text{ if } k=l=0, \\
0, & \text{ if } k>0 \text{ or } l > 0.
\end{cases}
\end{equation}

\begin{prop}\label{prop: pairing induced map}
The map $\pi_{*}: \r{CC}_{*}(\cc, \cP \otimes_{\cc} \cQ) \to \K$ \eqref{pairing induced map} defined by \eqref{pairing induced map formula} is a chain map of degree $m$.
\end{prop}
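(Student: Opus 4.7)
The plan is to reduce the claim to a direct check that the stated vanishing axiom \eqref{condition for pairing system} of a canonical pairing system precisely cancels the only surviving terms of $\pi_{*} \circ d$, using the identifications and sign-bookkeeping already established earlier in the section.

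First, I would observe that the degree assertion is immediate: on a length-zero tensor $p \otimes q \in \cP \otimes_{\cc} \cQ$ the grading from \eqref{grading on convolution tensor product} collapses to $|p|+|q|$, and $\pi(p,q)$ has degree $|p|+|q|+m$ by Definition \ref{def: bilinear pairings}; on tensors of positive length $\pi_{*}$ vanishes tautologically. Since $\K$ sits in degree $0$ with trivial differential, showing $\pi_{*}$ is a chain map amounts to showing $\pi_{*}(d \alpha) = 0$ for every $\alpha \in \r{CC}_{*}(\cc, \cP \otimes_{\cc} \cQ)$.

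Next I would invoke Lemma \ref{lem: cc is bimodule tensor product} to replace the Hochschild differential on $\r{CC}_{*}(\cc, \cP \otimes_{\cc} \cQ)$ by the explicit two-sided bar differential $d_{\cP \otimes_{\cc-\cc} \cQ}$ given in \eqref{bimodule tensor product differential}. Because $\pi_{*}$ annihilates every tensor carrying at least one $\mathbf{x}$- or $\mathbf{x}'$-entry, I only have to read off which summands of $d(p \otimes \mathbf{x}'_{1;l} \otimes q \otimes \mathbf{x}_{1;k})$ end up of length zero in both slots. This is precisely what Corollary \ref{cor: length zero tensors} packages: the surviving contributions are exactly the two summands displayed in \eqref{length zero tensors}, namely
\[
(-1)^{\maltese_{1;k} \cdot (|p| + \maltese'_{1;l} + |q|) + |q| - 1} \mu_{\cP}^{k, l}(\mathbf{x}_{1;k}, p, \mathbf{x}'_{1;l}) \otimes q \; + \; p \otimes \mu_{\cQ}^{l, k}(\mathbf{x}'_{1;l}, q, \mathbf{x}_{1;k}),
\]
obtained respectively from the first sum in \eqref{bimodule tensor product differential} at $r=s=0$ and from the third sum at $r=k$, $s=l$. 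All other summands either still contain at least one unused element of $\mathbf{x}$ or $\mathbf{x}'$, or contain a $\mu_{\cc}^{j}$-output inserted between entries, hence are killed by $\pi_{*}$.

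Finally, applying $\pi_{*}$ to the above yields exactly
\[
(-1)^{\maltese_{1;k} \cdot (|p| + \maltese'_{1;l} + |q|) + |q| - 1} \pi(\mu_{\cP}^{k, l}(\mathbf{x}_{1;k}, p, \mathbf{x}'_{1;l}), q) + \pi(p, \mu^{l, k}_{\cQ}(\mathbf{x}'_{1;l}, q, \mathbf{x}_{1;k})),
\]
which vanishes by the defining condition \eqref{condition for pairing system} of a canonical pairing system. There is no real obstacle beyond signs: the one place where things could go wrong is matching the sign $\#_{0,0}$ in \eqref{bimodule tensor product differential} against the prefactor prescribed by \eqref{condition for pairing system}, but since $\maltese_{k+1;k} = \maltese'_{l+1;l} = 0$ on empty tensors, the two expressions coincide on the nose, and no further computation is required.
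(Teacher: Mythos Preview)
Your proposal is correct and follows essentially the same approach as the paper's own proof: reduce to the length-zero terms via Lemma \ref{lem: cc is bimodule tensor product} and Corollary \ref{cor: length zero tensors}, then invoke \eqref{condition for pairing system}. Your version is slightly more explicit about the degree claim and the sign match $\#_{0,0}$, but the argument is otherwise identical.
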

\begin{proof}
Consider an element $p \otimes \mathbf{x}'_{1;l} \otimes q \otimes \mathbf{x}_{1; k}$.
We must show that 
\[
\pi_{*}(d_{\r{CC}_{*}(\cc, \cP \otimes_{\cc} \cQ)} (p \otimes \mathbf{x}'_{1;l} \otimes q \otimes \mathbf{x}_{1; k}))=0,
\]
where the Hochschild chain differential agrees with the differential $d_{\cP \otimes_{\cc-\cc} \cQ}$ by Lemma \ref{lem: cc is bimodule tensor product}.
Since the map $\pi_{*}$ defined by \eqref{pairing induced map formula} sends all tensors that have non zero length in the $\mathbf{x}$-entries and the $\mathbf{x}'$-entries,
it suffices to consider the terms in the output of $d_{\r{CC}_{*}(\cc, \cP \otimes_{\cc} \cQ)} (p \otimes \mathbf{x}'_{1;l} \otimes q \otimes \mathbf{x}_{1; k})$ that have zero length in the $\mathbf{x}'$-entries and the $\mathbf{x}$-entries in the sense of Definition \ref{def: length zero tensors}.
By Corollary \ref{cor: length zero tensors}, 
these terms are \eqref{length zero tensors}.
Apply $\pi_{*}$ to the sum of these terms, in view of its definition \eqref{pairing induced map formula}, we obtain
\begin{equation}
 (-1)^{\maltese_{1;k} \cdot(|p| + \maltese'_{1; l} + |q|) + |q| - 1}  \pi(\mu_{\cP}^{k, l}(\mathbf{x}_{1; k}, p, \mathbf{x}'_{1; l}), q) +\pi(p, \mu^{l, k}_{\cQ}(\mathbf{x}'_{1; l}, q, \mathbf{x}_{1; k}))
 \end{equation}
which is exactly equal to zero because of the condition \eqref{condition for pairing system}.
\end{proof}

The first main example of a canonical pairing system is the following:

\begin{lem}\label{lem: evaluation pairing canonical}
The evaluation pairings
\begin{equation}\label{evaluation pairing c}
ev:  \cc^{\vee}(X_{0}, X_{1}) \times  \cc_{\D}(X_{1}, X_{0}) =  \cc(X_{0}, X_{1})^{\vee} \times \cc(X_{0}, X_{1}) \to \K,
\end{equation}
defined by 
\begin{equation}
ev(f, w) = f(w)
\end{equation}
 form a canonical pairing system of degree $0$.

The other pairings
\begin{equation}\label{opp evaluation pairing c}
\tilde{ev}: \cc_{\D}(X_{1}, X_{0}) \times \cc^{\vee}(X_{0}, X_{1}) = \cc(X_{0}, X_{1}) \times \cc(X_{0}, X_{1})^{\vee} \to \K
\end{equation}
defined by 
\begin{equation}
\tilde{ev}(w, f)  = (-1)^{|w||f| + |w| + |f|} f(x)
\end{equation}
also form a canonical pairing system of degree $0$.
\end{lem}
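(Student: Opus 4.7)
My plan is to verify condition \eqref{condition for pairing system} directly for each of the two stated pairings by substituting the explicit formulas for the bimodule structure maps of $\cc^{\vee}$ (from \eqref{structure maps for linear dual}) and $\cc_{\D}$ (from \eqref{structure maps for diagonal}), and then checking that the resulting Koszul signs cancel. The degree $0$ claim is essentially immediate from the definitions: $ev(f,w)=f(w)$ is nonzero only when $|f|=-|w|$, so the image has total degree $|f|+|w|=0$, and similarly for $\tilde{ev}$.

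For the pairing $ev$ I take $\cP=\cc^{\vee}$, $\cQ=\cc_{\D}$, $p=f\in\cc^{\vee}(X_{0},X_{1})$, and $q=w\in\cc_{\D}(X_{1},X_{0})$. Upon substituting \eqref{structure maps for linear dual} into $ev(\mu_{\cc^{\vee}}^{k,l}(\mathbf{x}_{1;k},f,\mathbf{x}'_{1;l}),w)$, and \eqref{structure maps for diagonal} into $ev(f,\mu_{\cc_{\D}}^{l,k}(\mathbf{x}'_{1;l},w,\mathbf{x}_{1;k}))$ (carefully noting the role-swap of the left and right entries when $\mu_{\cc_{\D}}^{l,k}$ is applied), both terms become scalar multiples of
\begin{equation*}
f\bigl(\mu_{\cc}^{k+l+1}(\mathbf{x}'_{1;l},w,\mathbf{x}_{1;k})\bigr).
\end{equation*}
A direct tally of the Koszul exponents shows that the first term of \eqref{condition for pairing system} acquires total sign $(-1)^{\maltese_{1;k}}$ while the second acquires $(-1)^{\maltese_{1;k}+1}$; these are opposite, so the sum vanishes.

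For $\tilde{ev}$ I take $\cP=\cc_{\D}$ and $\cQ=\cc^{\vee}$. The extra prefactor $(-1)^{|w||f|+|w|+|f|}$ built into the definition of $\tilde{ev}$ produces additional sign shifts, but the mechanism is identical: both terms of \eqref{condition for pairing system} reduce to the same expression $f(\mu_{\cc}^{k+l+1}(\cdots))$, and a parallel Koszul bookkeeping confirms the two resulting signs differ by a factor of $-1$, giving zero.

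The only real obstacle is keeping careful track of the Koszul signs: there are several sources, namely the sign in \eqref{structure maps for linear dual}, the sign in \eqref{structure maps for diagonal}, the prefactor in \eqref{condition for pairing system}, and for $\tilde{ev}$ also the extra $(-1)^{|w||f|+|w|+|f|}$ in the definition. The cancellation is built into the sign conventions chosen in Definition \ref{def: linear dual bimodule} --- indeed, this sign twist is precisely what makes $\cc^{\vee}$ the correct $\ainf$-notion of linear dual bimodule, so verifying the lemma amounts to confirming that the definition does its job at the level of canonical pairing systems.
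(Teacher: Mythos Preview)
Your proposal is correct and follows essentially the same approach as the paper: direct substitution of the bimodule structure maps \eqref{structure maps for linear dual} and \eqref{structure maps for diagonal} into condition \eqref{condition for pairing system}, reducing both terms to $f(\mu_{\cc}^{k+l+1}(\mathbf{x}'_{1;l},w,\mathbf{x}_{1;k}))$ with signs $(-1)^{\maltese_{1;k}}$ and $(-1)^{\maltese_{1;k}+1}$ respectively. The paper writes out the $\tilde{ev}$ sign computation in more explicit detail than you do, but the argument is the same.
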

\begin{proof}
Recall that the bimodule structure maps for $\cc^{\vee}$ are defined by \eqref{structure maps for linear dual}, so we have
\begin{equation}
\begin{split}
&    (-1)^{\maltese_{1;k} \cdot(|f| + \maltese'_{1; l} + |w|) + |w| - 1} ev(\mu_{\cc^{\vee}}^{k, l}(\mathbf{x}_{1; k}, f, \mathbf{x}'_{1; l}), w) \\
= & (-1)^{\maltese_{1;k} \cdot(|f| + \maltese'_{1; l} + |w|) + |w| - 1}  (-1)^{\maltese_{1;k} \cdot(|f| + \maltese'_{1; l} + |w|-1) + |w| - 1} f(\mu^{k+l+1}_{\cc}(\mathbf{x}'_{1;l}, w, \mathbf{x}_{1;k})) \\
= & (-1)^{\maltese_{1;k}} f(\mu^{k+l+1}_{\cc}(\mathbf{x}'_{1;l}, w, \mathbf{x}_{1;k})).
\end{split}
\end{equation}
The structure maps for $\cc_{\D}$ are defined by \eqref{structure maps for diagonal}, so we have
\begin{equation}
ev(f, \mu_{\cc_{\D}}^{l, k}(\mathbf{x}'_{1;l}, w, \mathbf{x}_{1;k})) = (-1)^{\maltese_{1;k} +1} f(\mu_{\cc}^{k+l+1}(\mathbf{x}_{1;k}, w, \mathbf{x}'_{1;l})).
\end{equation}
It follows that 
\[
(-1)^{\maltese_{1;k} \cdot(|f| + \maltese'_{1; l} + |w|) + |w| - 1}  ev(\mu_{\cc^{\vee}}^{k, l}(\mathbf{x}_{1; k}, f, \mathbf{x}'_{1; l}), w)
+ ev(f, \mu_{\cc_{\D}}^{l, k}(\mathbf{x}'_{1;l}, w, \mathbf{x}_{1;k})) = 0,
\]
which is \eqref{condition for pairing system}.

For the other pairing $\tilde{ev}$ \eqref{opp evaluation pairing c}, we first compute
\begin{equation}
\mu_{\cc_{\D}}^{k, l}(\mathbf{x}_{1;k}, w, \mathbf{x}'_{1;l}) = (-1)^{\maltese'_{1;l} + 1} \mu_{\cc}^{k+l+1}(\mathbf{x}_{1;k}, w, \mathbf{x}'_{1;l}),
\end{equation}
and
\begin{equation}
\mu_{\cc^{\vee}}^{l, k}(\mathbf{x}'_{1;l}, f, \mathbf{x}_{1;k})(w)  = (-1)^{\maltese'_{1;l}(|f| + \maltese_{1;k} + |w| - 1) + |w| - 1} f(\mu_{\cc}^{l+k+1}(\mathbf{x}'_{1;l}, w, \mathbf{x}_{1;k})).
\end{equation}
Note that the degree satisfy
\begin{align}
& |\mu_{\cc_{\D}}^{k, l}(\mathbf{x}_{1;k}, w, \mathbf{x}'_{1;l})| = |w| + \maltese_{1;k} + \maltese'_{1;l} + 1, \\
& \mu_{\cc^{\vee}}^{l, k}(\mathbf{x}'_{1;l}, f, \mathbf{x}_{1;k}) = |f| + \maltese'_{1;l} + \maltese_{1;k} + 1.
\end{align}
We then compute
\begin{equation}\label{opp ev left}
\begin{split}
& (-1)^{\maltese_{1;k}(|w| + \maltese'_{1;l} + |f| - 1) + |f| -  1} \tilde{ev}(\mu_{\cc_{\D}}^{k, l}(\mathbf{x}_{1;k}, w, \mathbf{x}'_{1;l}), f) \\
= & (-1)^{\maltese_{1;k}(|w| + \maltese'_{1;l} + |f| - 1) + |f| - 1 + \maltese'_{1;l} + 1 + (|w| + \maltese_{1;k} + \maltese'_{1;l} + 1)|f| + |w| + \maltese_{1;k} + \maltese'_{1;l} + 1 + |f|} f(\mu_{\cc}^{k+l+1}(\mathbf{x}_{1;k}, w, \mathbf{x}'_{1;l})) \\
= & (-1)^{\maltese_{1;k}|w| + \maltese_{1;k} \maltese'_{1;l} + \maltese'_{1;l}|f| + |w||f| + |w| + |f| + \maltese_{1;k} + 1} f(\mu_{\cc}^{k+l+1}(\mathbf{x}_{1;k}, w, \mathbf{x}'_{1;l})),
\end{split}
\end{equation}
and
\begin{equation}\label{opp ev right}
\begin{split}
& \tilde{ev}(w, \mu_{\cc^{\vee}}^{l, k}(\mathbf{x}'_{1;l}, f, \mathbf{x}_{1;k})) \\
= & (-1)^{\maltese'_{1;l}(|f| + \maltese_{1;k} + |w| - 1) + |w| - 1 + |w|(|f| + \maltese'_{1;l} + \maltese_{1;k} + 1) + |w| +  |f| + \maltese'_{1;l} + \maltese_{1;k} + 1} f(\mu_{\cc}^{k + l + 1}(\mathbf{x}_{1;k}, w, \mathbf{x}'_{1;l}))\\
= & (-1)^{\maltese'_{1,l}|f| + \maltese_{1;k} \maltese'_{1;l} + |w| + |w||f|  + \maltese_{1;k}|w| + |f| + \maltese_{1;k}}
\end{split}
\end{equation}
So \eqref{opp ev left} and \eqref{opp ev right} add up to zero.
\end{proof}

It is illustrate to see the case where $k=0$, in which case the equation is
\[
(-1)^{|w| - 1} ev(\mu^{0, 0}_{\cc^{\vee}}(f), w) + ev(f, \mu^{0, 0}_{\cc_{\D}}) = (-1)^{|w| - 1} (-1)^{|w| - 1} f(\mu^{1}_{\cc}(w)) + (-1)^{-1} f(\mu^{1}_{\cc}(w) = 0,
\]
which is \eqref{skew symmetry for ev}.

By Proposition \ref{prop: pairing induced map}, the canonical pairing systems $ev$ \eqref{evaluation pairing c} and $\tilde{ev}$ \eqref{opp evaluation pairing c} induce chain maps of degree zero:
\begin{equation}
ev_{*}: \cc^{\vee} \otimes_{\cc-\cc} \cc_{\D} = \r{CC}_{*}(\cc, \cc^{\vee} \otimes_{\cc} \cc_{\D}) \to \K,
\end{equation}
and
\begin{equation}
\tilde{ev}_{*}: \cc_{\D} \otimes_{\cc-\cc} \cc^{\vee} = \r{CC}_{*}(\cc, \cc_{\D} \otimes_{\cc} \cc^{\vee}) \to \K.
\end{equation}

A similar example is the following:

\begin{cor}\label{cor: taut pairing canonical}
The tautological pairings $\langle \cdot, \cdot \rangle_{taut}$ \eqref{bimodule taut pairing} form a canonical pairing system,
which induces a map of chain complexes 
\begin{equation}
\B \otimes_{\cc-\cc} \B = \r{CC}_{*}(\cc, \B \otimes_{\cc} \B) \to \K[1-n]
\end{equation}
of degree $1-n$, where $\B = \cc^{\vee}[1-n] \oplus \cc_{\D}$.
\end{cor}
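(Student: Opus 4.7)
The plan is to reduce the canonical pairing system property for $\langle \cdot, \cdot \rangle_{taut}$ on $\B = \cc^{\vee}[1-n] \oplus \cc_{\D}$ to the evaluation-pairing cases already established in Lemma \ref{lem: evaluation pairing canonical}, and then to invoke Proposition \ref{prop: pairing induced map} to obtain the induced chain map.

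First, I would exploit the direct-sum decomposition $\B = \cc^{\vee}[1-n] \oplus \cc_{\D}$, which gives a splitting of bimodules in which the structure maps $\mu^{k,l}_{\B}$ are precisely the direct sum of $\mu^{k,l}_{\cc^{\vee}[1-n]}$ and $\mu^{k,l}_{\cc_{\D}}$ (no off-diagonal terms). Consequently $\B \otimes_{\cc} \B$ splits as a sum of four pieces, and inspecting \eqref{taut pairing formula} shows the tautological pairing vanishes on the $\cc^{\vee}[1-n] \otimes_\K \cc^{\vee}[1-n]$ and $\cc_{\D} \otimes_\K \cc_{\D}$ summands (there is no canonical scalar-valued pairing among these, and the formula involves only mixed $f_i(x_j)$ terms). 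On the remaining two cross summands, the pairing reduces to the evaluation $ev$ of Lemma \ref{lem: evaluation pairing canonical} (on the $\cc^{\vee}[1-n] \otimes_\K \cc_{\D}$ factor), and to $(-1)^{1+(|x_1|-1)(|x_2|-1)}$ times $\tilde{ev}$ (on the $\cc_{\D} \otimes_\K \cc^{\vee}[1-n]$ factor), both shifted in degree by $1-n$.

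The key step is then to verify identity \eqref{condition for pairing system} for $\langle \cdot, \cdot \rangle_{taut}$ summand by summand. Because the bimodule action on $\B$ does not mix the two summands and because the pairing vanishes on the diagonal summands, condition \eqref{condition for pairing system} only needs to be checked on the two mixed summands, where it reduces directly to the corresponding identities already verified for $ev$ and $\tilde{ev}$ in Lemma \ref{lem: evaluation pairing canonical}. The Koszul sign factor $(-1)^{1+(|x_1|-1)(|x_2|-1)}$ in \eqref{taut pairing formula} is precisely what aligns the $\tilde{ev}$-branch with the $ev$-branch to produce a consistent pairing of degree $1-n$, and the shift by $[1-n]$ only re-indexes the grading uniformly without altering the $\ainf$-bimodule structure maps, so it does not disturb either identity from Lemma \ref{lem: evaluation pairing canonical}.

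Once \eqref{condition for pairing system} is confirmed with $m = 1-n$, applying Proposition \ref{prop: pairing induced map} immediately yields a chain map $\pi_{*} : \r{CC}_{*}(\cc, \B \otimes_{\cc} \B) \to \K[1-n]$ of degree $1-n$. Finally, Lemma \ref{lem: cc is bimodule tensor product} identifies $\r{CC}_{*}(\cc, \B \otimes_{\cc} \B)$ with the bimodule tensor product $\B \otimes_{\cc-\cc} \B$ as chain complexes (with matching signs), completing the statement. The only real obstacle is the sign bookkeeping: keeping track of the interaction between the degree shift $[1-n]$ on the $\cc^{\vee}$ summand, the Koszul sign $(-1)^{1+(|x_1|-1)(|x_2|-1)}$ of the tautological pairing, and the universal sign $(-1)^{\maltese_{1;k}\cdot(|p|+\maltese'_{1;l}+|q|)+|q|-1}$ appearing in \eqref{condition for pairing system}. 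These signs conspire correctly precisely because the tautological pairing is constructed to be graded-symmetric, so the verification on the $\cc_{\D} \otimes \cc^{\vee}$ summand is obtained from the $\cc^{\vee} \otimes \cc_{\D}$ case by a single graded swap, matching Remark \ref{rem: sign for pairing system}.
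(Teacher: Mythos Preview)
Your proposal is correct and follows essentially the same approach as the paper: decompose $\B$ into its two summands, observe that the tautological pairing is nonzero only on the cross terms, and reduce to the evaluation pairings $ev$ and $\tilde{ev}$ already handled in Lemma~\ref{lem: evaluation pairing canonical}. You spell out more of the sign bookkeeping and make explicit the invocation of Proposition~\ref{prop: pairing induced map} and Lemma~\ref{lem: cc is bimodule tensor product} for the induced chain map, but the core argument is the same.
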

\begin{proof}
Since the pairing is only between $\cc^{\vee}[1-n](X_{0}, X_{1})$ from the first factor of the left hand side of \eqref{pre pairing on Z} and $\cc_{\D}(X_{1}, X_{0})$ from the second factor,
as well as between $\cc_{\D}(X_{0}, X_{1})$ from the first factor and $\cc^{\vee}[1-n](X_{1}, X_{1})$ from the second factor,
the result follows from Lemma \ref{lem: evaluation pairing canonical}.
\end{proof}

\begin{example}
If $(\cc, \mu^{k}, \langle \cdot, \cdot \rangle_{cyc})$ is a cyclic $\ainf$-category,
then the pairing defines a canonical pairing system between $(\cc)_{\D}$ and itself.
The sign in the equation \eqref{condition for pairing system} comes from the graded symmetry of the cyclic pairing $\langle \cdot, \cdot \rangle_{cyc}$.

However, one should be careful about the difference between this example and Corollary \ref{cor: taut pairing canonical}:
it does not say that $\B = \cc^{\vee}[1-n] \oplus \cc_{\D}$ itself carries a cyclic $\ainf$-structure, 
that pairing is between $\cc-\cc$-bimodules, and we do not even have an $\ainf$-category structure on $\B$
\end{example}

More importantly, we consider the $\cc^{op}-\cc^{op}$-bimodules $\Y^{l}_{X_{1}} \otimes_{\K} (\Y^{l}_{X_{0}})^{\vee}$ \eqref{finite hom bimodule} and $\hom_{\K}(\Y^{l}_{X_{1}}, \Y^{l}_{X_{0}})$ \eqref{hom bimodule}.

\begin{defn}\label{def: trace pairing}
For each pair of objects $Y_{1}, Y_{0} \in \ob \cc^{op}$, we define a pairing
\begin{equation}\label{hom pairing}
\pi^{ev}_{Y_{1}, Y_{0}}: (\Y^{l}_{X_{1}} \otimes_{\K} (\Y^{l}_{X_{0}})^{\vee})(Y_{1}, Y_{0}) \times (\hom_{\K}(\Y^{l}_{X_{1}}, \Y^{l}_{X_{0}}))(Y_{0}, Y_{1}) \to \K
\end{equation}
by the formula
\begin{equation}\label{hom pairing formula}
\pi^{ev}_{Y_{1}, Y_{0}}( z \otimes f, \psi) = (-1)^{|z|} f(\psi(z)).
\end{equation}
In the standard terms, we call this the composition trace pairing,
which comes from the supertrace of the composition of a finite-rank linear map (in this case rank-one) and a general linear map with opposite degrees.
It is clearly nondegenerate.
\end{defn}

\begin{lem}\label{lem: hom pairing canonical}
The pairings $\pi^{ev}_{Y_{1}, Y_{0}}$ form a canonical pairing system of degree $0$ between the $\cc^{op}-\cc^{op}$-bimodules $\Y^{l}_{X_{1}} \otimes_{\K} (\Y^{l}_{X_{0}})^{\vee}$ and $\hom_{\K}(\Y^{l}_{X_{1}}, \Y^{l}_{X_{0}})$.
\end{lem}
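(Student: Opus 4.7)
The plan is to verify the defining identity \eqref{condition for pairing system} for a canonical pairing system directly. The first structural observation, which dramatically reduces the amount of work, is that both bimodules $\cP := \Y^{l}_{X_{1}} \otimes_{\K} (\Y^{l}_{X_{0}})^{\vee}$ and $\cQ := \hom_{\K}(\Y^{l}_{X_{1}}, \Y^{l}_{X_{0}})$ have the property that the mixed structure maps $\mu^{k,l}$ with $k > 0$ and $l > 0$ both vanish, by the explicit formulas \eqref{bimodule maps for finite linear hom} and \eqref{bimodule maps for linear hom}. So \eqref{condition for pairing system} only needs to be verified in three cases: $(k,l) = (0,0)$, the ``left-only'' case $k > 0$ with $l = 0$, and the ``right-only'' case $k = 0$ with $l > 0$.

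In the case $(k,l) = (0,0)$, the identity reduces to a graded Leibniz rule for the trace-like expression $\pi^{ev}(z\otimes f, \psi) = (-1)^{|z|} f(\psi(z))$, which unpacks into three terms corresponding to the differentials on $z$, on $f$, and on $\psi$. The sign conventions baked into \eqref{bimodule maps for finite linear hom} and \eqref{bimodule maps for linear hom}, themselves inherited from \eqref{structure maps for linear dual} for $\cc^{\vee}$, are arranged precisely so that the three terms cancel in pairs, in the same manner as the model computation \eqref{skew symmetry for ev} that drives Lemma \ref{lem: evaluation pairing canonical}.

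For $k > 0$, $l = 0$, the left action on $\cP$ produces $(-1)^{|f|-1}\mu^{k+1}_{\cc^{op}}(\mathbf{x}_{1;k}, z) \otimes f$, leaving the $f$-slot untouched. The companion operation on $\cQ$---its right action at $(Y_{0}, Y_{1})$, since the two ``sides'' of $\cQ$ there are opposite to those of $\cP$ at $(Y_{1}, Y_{0})$---precomposes $\psi$ with $\mu^{k+1}_{\cc^{op}}(\mathbf{x}_{1;k}, -)$. Both sides of \eqref{condition for pairing system} therefore evaluate to a sign times the single expression $f(\psi(\mu^{k+1}_{\cc^{op}}(\mathbf{x}_{1;k}, z)))$, and the verification reduces to a one-line sign calculation. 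The remaining case $k = 0$, $l > 0$ is entirely analogous: both sides become, up to sign, $f(\mu^{l+1}_{\cc^{op}}(\mathbf{x}'_{1;l}, \psi(z)))$, the right action on $\cP$ acting inside the argument of $f$ and the left action on $\cQ$ postcomposing the output of $\psi$ with the same operation.

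The only real obstacle is sign bookkeeping across the three cases. What keeps it tractable is the structural fact that every nontrivial action factors through a single application of some $\mu^{k+1}_{\cc^{op}}$ on one designated slot (the $z$-slot on the $\cP$ side, or the $w$-slot inside $\psi$ on the $\cQ$ side), so no summation over splittings intervenes and each case produces at most one monomial on each side. The prefactor $(-1)^{\maltese_{1;k}(|p|+\maltese'_{1;l}+|q|)+|q|-1}$ appearing in \eqref{condition for pairing system} is precisely calibrated to reconcile the sign $(-1)^{|f|-1}$ introduced by the action on $\cP$ with the sign $(-1)^{|w|-1}$ introduced by the action on $\cQ$ after one substitutes the appropriate value of $w$, which closes the verification.
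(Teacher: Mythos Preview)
Your proposal is correct and follows essentially the same approach as the paper's own proof: both exploit the vanishing of the mixed structure maps $\mu^{k,l}$ for $k,l>0$ to reduce \eqref{condition for pairing system} to the three cases $(k,l)=(0,0)$, $(k,0)$, and $(0,l)$, and then verify each by directly expanding the definitions \eqref{bimodule maps for finite linear hom}, \eqref{bimodule maps for linear hom}, and \eqref{hom pairing formula} so that both sides reduce to a single monomial in $f$, $\psi$, and an application of $\mu_{\cc^{op}}$. The paper carries out the sign computation line by line whereas you summarize the mechanism, but the argument is the same.
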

\begin{proof}
We must verify the condition \eqref{condition for pairing system} for the pairings $\pi^{ev}_{Y_{1}, Y_{0}}$.
Since the formula \eqref{hom pairing formula} is not sensitive to the objects $Y_{1}, Y_{0}$,
we shall omit the subscripts $Y_{1}, Y_{0}$ throughout the proof for clearness of notation.
By definitions of the bimodule structure maps \eqref{bimodule maps for finite linear hom} and \eqref{bimodule maps for linear hom},
the $\mu^{k, l}$ maps all vanish for $k, l$ both positive.
So the only conditions to check are
\begin{equation}\label{pairing condition ev 0}
(-1)^{|\psi| - 1} \pi^{ev}( \mu_{\Y^{l}_{X_{1}} \otimes_{\K} (\Y^{l}_{X_{0}})^{\vee}}^{0, 0}(z \otimes f), \psi) 
+ \pi^{ev}(z \otimes f, \mu_{\hom_{\K}(\Y^{l}_{X_{1}}, \Y^{l}_{X_{0}})}^{0, 0}(\psi)) = 0,
\end{equation}
\begin{equation}\label{pairing condition ev 1}
(-1)^{\maltese_{1;k} \cdot (|f|+|z|+|\psi|) + |\psi| - 1} \pi^{ev}(\mu_{\Y^{l}_{X_{1}} \otimes_{\K} (\Y^{l}_{X_{0}})^{\vee}}^{k, 0}(\mathbf{x}_{1;k}, z \otimes f), \psi) 
+ \pi^{ev}(z \otimes f, \mu_{\hom_{\K}(\Y^{l}_{X_{1}}, \Y^{l}_{X_{0}})}^{0, k}(\psi, \mathbf{x}_{1;k})) = 0,
\end{equation}
\begin{equation}\label{pairing condition ev 2}
\pi^{ev}(\mu_{\Y^{l}_{X_{1}} \otimes_{\K} (\Y^{l}_{X_{0}})^{\vee}}^{0, l}(z \otimes f, \mathbf{x}'_{1;l}), \psi) 
+ \pi^{ev}(z \otimes f, \mu_{\hom_{\K}(\Y^{l}_{X_{1}}, \Y^{l}_{X_{0}})}^{l, 0}(\mathbf{x}'_{1;l}, \psi)) = 0.
\end{equation}

First we check \eqref{pairing condition ev 0},
in which the two terms are possibly nonzero only if the degrees satisfy
\begin{equation}\label{pairing condition ev 0 degree}
|z| + |f| + |\psi| + 1 = 0.
\end{equation}
By definition we have
\begin{align}
& \mu_{\Y^{l}_{X_{1}} \otimes_{\K} (\Y^{l}_{X_{0}})^{\vee}}^{0, 0}(z \otimes f)(w) =   (-1)^{|f| - 1} \mu^{1}_{\cc^{op}}(z) \otimes f(w) + (-1)^{|w| - 1} z \otimes f(\mu^{1}_{\cc^{op}}(w)) , \\
& \mu_{\hom_{\K}(\Y^{l}_{X_{1}}, \Y^{l}_{X_{0}})}^{0, 0}(\psi)(w) = (-1)^{|w|-1} \phi(\mu^{1}_{\cc^{op}}(w)) + (-1)^{ |w| - 1} \mu^{1}_{\cc^{op}}(\phi(w)).
\end{align}
Then we compute
\begin{equation}\label{pairing condition ev 0 left}
\begin{split}
& (-1)^{|\psi|-1} \pi^{ev}(\mu_{\Y^{l}_{X_{1}} \otimes_{\K} (\Y^{l}_{X_{0}})^{\vee}}^{0, 0}(z \otimes f), \psi) \\
= & (-1)^{|\psi|-1} \pi^{ev}((-1)^{|f| - 1} \mu^{1}_{\cc^{op}}(z) \otimes f +  z \otimes (w \mapsto (-1)^{|w| - 1}f(\mu^{1}_{\cc^{op}}(w))), \psi)\\
= & (-1)^{|\psi| + |f| + |\mu^{1}_{\cc^{op}}(z)|}  f(\psi(\mu^{1}_{\cc^{op}}(z))) + (-1)^{|\psi| + |z| + |\psi(z)| } f(\mu^{1}_{\cc^{op}}(\psi(z))) ,\\
= & f(\psi(\mu^{1}_{\cc^{op}}(z))) + f(\mu^{1}_{\cc^{op}}(\psi(z))),
\end{split}
\end{equation}
where the last equality is because of \eqref{pairing condition ev 0 degree}, $|\mu^{1}_{\cc^{op}}(z)| = |z| + 1$ and $|\psi(z)| = |\psi| + |z|$, and 
\begin{equation}\label{pairing condition ev 0 right}
\begin{split}
& \pi^{ev}(z \otimes f, \mu_{\hom_{\K}(\Y^{l}_{X_{1}}, \Y^{l}_{X_{0}})}^{0, 0}(\psi))\\
= & \pi^{ev}(z \otimes f, (w \mapsto (-1)^{|w|-1} \phi(\mu^{1}_{\cc^{op}}(w)) + (-1)^{ |w| - 1} \mu^{1}_{\cc^{op}}(\phi(w)))) \\
= & (-1)^{|z| + |z| - 1} f(\psi(\mu^{1}_{\cc^{op}}(z))) + (-1)^{|z| + |z| - 1} f(\mu^{1}_{\cc^{op}}(\psi(z))) \\
= & (-1)^{1} f(\psi(\mu^{1}_{\cc^{op}}(z)))  + (-1)^{1} f(\mu^{1}_{\cc^{op}}(\psi(z))).
\end{split}
\end{equation}
So \eqref{pairing condition ev 0 left} and \eqref{pairing condition ev 0 right} add up to zero.

Let us check \eqref{pairing condition ev 1}; the other is completely analogous. 
These terms in \eqref{pairing condition ev 1} are possibly nonzero only if the degrees satisfy
\begin{equation}\label{trace pairing degree}
\maltese_{1;k} + |z| + |f| + 1 + |\psi| = 0.
\end{equation}
By definition we have
\begin{align}
& \mu_{\Y^{l}_{X_{1}} \otimes_{\K} (\Y^{l}_{X_{0}})^{\vee}}^{k, 0}(\mathbf{x}_{1;k}, z \otimes f)(w) = (-1)^{|f| - 1} \mu^{k+1}_{\cc^{op}}(\mathbf{x}_{1;k}, z) \otimes f(w), \\
& \mu_{\hom_{\K}(\Y^{l}_{X_{1}}, \Y^{l}_{X_{0}})}^{0, k}(\psi, \mathbf{x}_{1;k})(w) = (-1)^{|w|-1} \psi(\mu^{k+1}_{\cc^{op}}(\mathbf{x}_{1;k}, w)).
\end{align}
Note that the degree of $\mu^{k+1}_{\cc^{op}}(\mathbf{x}_{1;k}, z)$ is
\begin{equation}
|\mu^{k+1}_{\cc^{op}}(\mathbf{x}_{1;k}, z)| = |z| + \maltese_{1;k} + 1.
\end{equation}
Now apply the pairing \eqref{hom pairing} to obtain
\begin{equation}\label{trace 1}
\begin{split}
& (-1)^{\maltese_{1;k} \cdot (|z|+|f|+|\psi|) + |\psi| - 1} \pi^{ev}(\mu_{\Y^{l}_{X_{1}} \otimes_{\K} (\Y^{l}_{X_{0}})^{\vee}}^{k, 0}(\mathbf{x}_{1;k}, z \otimes f), \psi) \\
= & (-1)^{\maltese_{1;k} \cdot (|z|+|f|+|\psi|) + |\psi| - 1} \pi^{ev}((-1)^{|f| - 1} \mu^{k+1}_{\cc^{op}}(\mathbf{x}_{1;k}, z) \otimes f(\cdot), \psi) \\
= & (-1)^{\maltese_{1;k}(|z|+|f|+|\psi|)) + |\psi| + |f| + |z| + \maltese_{1;k} + 1 + |f|} f(\mu_{\cc^{op}}^{k+1}(\psi(z), \mathbf{x}_{1;k}),\\
= & (-1)^{\maltese_{1;k}(-\maltese_{1;k} - 1)} f(\mu_{\cc^{op}}^{k+1}(\psi(z), \mathbf{x}_{1;k}) \\
=  & f(\mu_{\cc^{op}}^{k+1}(\psi(z), \mathbf{x}_{1;k}).
\end{split}
\end{equation}
where the fourth equality is because of \eqref{trace pairing degree}.
Similarly, we have
\begin{equation}\label{trace 2}
\begin{split}
& \pi^{ev}(z \otimes f, \mu_{\hom_{\K}(\Y^{l}_{X_{1}}, \Y^{l}_{X_{0}})}^{0, k}(\psi, \mathbf{x}_{1;k})) \\
= &  \pi^{ev}(z \otimes f, w \mapsto (-1)^{|w|-1} \psi(\mu^{k+1}_{\cc^{op}}(\mathbf{x}_{1;k}, w))  \\
= & (-1)^{|z| } (-1)^{|z|-1} f(\mu_{\cc^{op}}^{k+1}(\phi(z), \mathbf{x}_{1;k})) \\
= & (-1)^{ 1} f(\mu_{\cc^{op}}^{k+1}(\phi(z), \mathbf{x}_{1;k})).
 \end{split}
\end{equation}
So \eqref{trace 1} and \eqref{trace 2} add up to zero.

For the other equation \eqref{pairing condition ev 2},
by definition we have
\begin{align}
& \mu_{\Y^{l}_{X_{1}} \otimes_{\K} (\Y^{l}_{X_{0}})^{\vee}}^{0, l}(z \otimes f, \mathbf{x}'_{1;l})(w) = (-1)^{|w|-1} z \otimes f(\mu_{\cc^{op}}^{l+1}(\mathbf{x}'_{1;l}, w)), \\
& \mu_{\hom_{\K}(\Y^{l}_{X_{1}}, \Y^{l}_{X_{0}})}^{l, 0}(\mathbf{x}'_{1;l}, \psi)(w) = (-1)^{ |w| -1} \mu^{l+1}_{\cc^{op}}(\mathbf{x}'_{1;l}, \psi(w)). 
\end{align}
Then we compute
\begin{equation}
\begin{split}
& (-1)^{|\psi|-1} \pi^{ev}(\mu_{\Y^{l}_{X_{1}} \otimes_{\K} (\Y^{l}_{X_{0}})^{\vee}}^{0, l}(z \otimes f, \mathbf{x}'_{1;l}), \psi) \\
= & (-1)^{|\psi|-1} \pi^{ev}(z \otimes (w \mapsto (-1)^{|w|-1} f(\mu_{\cc^{op}}^{l+1}(\mathbf{x}'_{1;l}, w))), \psi) \\
= & (-1)^{|\psi| + |z| + |\psi(z)|} f(\mu_{\cc^{op}}^{l+1}(\mathbf{x}'_{1;l}, \psi(z)))\\
= & f(\mu_{\cc^{op}}^{l+1}(\mathbf{x}'_{1;l}, \psi(z))),
\end{split}
\end{equation}
and
\begin{equation}
\begin{split}
& \pi^{ev}(z \otimes f, \mu_{\hom_{\K}(\Y^{l}_{X_{1}}, \Y^{l}_{X_{0}})}^{l, 0}(\mathbf{x}'_{1;l}, \psi) \\
= & \pi^{ev}(z \otimes f, (w \mapsto (-1)^{ |w| - 1} \mu^{l+1}_{\cc^{op}}(\mathbf{x}'_{1;l}, \psi(w)))) \\
= & (-1)^{|z| + |z| -1} f(\mu_{\cc^{op}}^{l+1}(\mathbf{x}'_{1;l}, \psi(z))) \\
= & (-1)^{1} f(\mu_{\cc^{op}}^{l+1}(\mathbf{x}'_{1;l}, \psi(z))).
\end{split}
\end{equation}
\end{proof}

Similar to the tautological pairing $\langle \cdot, \cdot \rangle_{taut}$ \eqref{bimodule taut pairing},
we can use the pairing the other way round with a sign twist to define a pairing of degree $1$:
\begin{equation}\label{pre pairing on Z}
\begin{split}
\pi^{ev, +}_{Y_{0}, Y_{1}}: & (\Y^{l}_{X_{1}} \otimes_{\K} (\Y^{l}_{X_{0}})^{\vee}[1] \bigoplus \hom_{\K}(\Y^{l}_{X_{0}}, \Y^{l}_{X_{1}}))(Y_{1}, Y_{0}) \\
& \times (\Y^{l}_{X_{0}} \otimes_{\K} (\Y^{l}_{X_{1}})^{\vee}[1] \bigoplus \hom_{\K}(\Y^{l}_{X_{1}}, \Y^{l}_{X_{0}}))(Y_{1}, Y_{0}) \to \K[1],
\end{split}
\end{equation}
\begin{equation}\label{pre pairing on Z formula}
\pi^{ev, +}_{Y_{0}, Y_{1}}( (z \otimes f, \phi), (w \otimes g, \psi)) =  (-1)^{|z|} f(\psi(z)) + (-1)^{|w| + |\phi|\psi| + |\phi| + |\psi|} g(\phi(w)).
\end{equation}
where the second sign follows the pattern of the sign in \eqref{taut pairing formula}.

Immediately from the definition \eqref{pre pairing on Z formula}, we see that

\begin{lem}\label{lem: Z pairing symmetric}
The pairing $\pi^{ev, +}_{Y_{0}, Y_{1}}$ \eqref{pre pairing on Z} is graded symmetric with sign $(-1)^{1+(|\phi|-1)(|\psi|-1)}$.
\end{lem}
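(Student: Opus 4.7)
The plan is to verify the claimed graded symmetry
\[
\pi^{ev,+}_{Y_{0}, Y_{1}}\bigl((w \otimes g, \psi), (z \otimes f, \phi)\bigr) = (-1)^{1+(|\phi|-1)(|\psi|-1)}\, \pi^{ev,+}_{Y_{0}, Y_{1}}\bigl((z \otimes f, \phi), (w \otimes g, \psi)\bigr)
\]
by direct substitution into the defining formula \eqref{pre pairing on Z formula} followed by a short parity manipulation. I expect no real obstacle; the entire content is a combinatorial check on exponents.

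First I would apply \eqref{pre pairing on Z formula} with the two arguments interchanged. The two summands of the definition simply swap roles, giving
\[
\pi^{ev,+}_{Y_{0}, Y_{1}}\bigl((w \otimes g, \psi), (z \otimes f, \phi)\bigr) = (-1)^{|w|} g(\phi(w)) + (-1)^{|z| + |\psi||\phi| + |\psi| + |\phi|}\, f(\psi(z)).
\]
Next I would rewrite the overall sign appearing in the lemma by expanding $(|\phi|-1)(|\psi|-1) = |\phi||\psi| - |\phi| - |\psi| + 1$, so that
\[
1 + (|\phi|-1)(|\psi|-1) \equiv |\phi||\psi| + |\phi| + |\psi| \pmod{2}.
\]
Multiplying \eqref{pre pairing on Z formula} by this sign and distributing yields term by term
\[
(-1)^{|z| + |\phi||\psi| + |\phi| + |\psi|}\, f(\psi(z)) + (-1)^{|w|}\, g(\phi(w)),
\]
which matches the swapped pairing on the nose. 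This establishes the lemma.

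The one point worth a brief comment is that the domain involves the shifted summand $(\Y^{l}_{X_{0}})^{\vee}[1]$, which might suggest that shift corrections should enter the symmetry check. However, both summands of \eqref{pre pairing on Z formula} are designed so that their internal degrees balance to the same total, and the exponents in the formula are written in terms of the underlying (unshifted) degrees of $z, f, w, g, \phi, \psi$. Consequently the shift does not contribute, and the two displays above coincide with no further bookkeeping, completing the proof.
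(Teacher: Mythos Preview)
Your proof is correct and follows exactly the approach the paper takes: the paper simply states that the lemma is immediate from the defining formula \eqref{pre pairing on Z formula} and appends a \qed, and your computation is the explicit unwinding of that claim. There is nothing to add.
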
 \qed

\begin{cor}\label{cor: Z pairing canonical}
The pairings $\pi^{ev, +}$ \eqref{pre pairing on Z} form a canonical pairing system of degree $1$ between the $\cc-\cc$-bimodules 
\[
\Y^{l}_{X_{1}} \otimes_{\K} (\Y^{l}_{X_{0}})^{\vee}[1] \bigoplus \hom_{\K}(\Y^{l}_{X_{0}}, \Y^{l}_{X_{1}})
\]
 and 
 \[
\Y^{l}_{X_{0}} \otimes_{\K} (\Y^{l}_{X_{1}})^{\vee}[1] \bigoplus \hom_{\K}(\Y^{l}_{X_{1}}, \Y^{l}_{X_{0}}).
 \]
\end{cor}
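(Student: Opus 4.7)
The plan is to split $\pi^{ev,+}$ according to the direct sum decompositions of the two bimodules and reduce the verification of the canonical pairing system condition to Lemma \ref{lem: hom pairing canonical} summand-wise. Write the left-hand bimodule as $\cP = \cP_1 \oplus \cP_2$ with $\cP_1 = \Y^{l}_{X_1} \otimes_\K (\Y^{l}_{X_0})^\vee[1]$ and $\cP_2 = \hom_\K(\Y^{l}_{X_0}, \Y^{l}_{X_1})$, and analogously $\cQ = \cQ_1 \oplus \cQ_2$ for the right-hand bimodule. Since the bimodule structure maps on a direct sum act diagonally on summands, the condition \eqref{condition for pairing system} can be verified one pair of summands at a time. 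Inspecting the defining formula \eqref{pre pairing on Z formula}, $\pi^{ev,+}$ vanishes identically on the pairs $(\cP_1, \cQ_1)$ and $(\cP_2, \cQ_2)$, so only the cross-pairs $(\cP_1, \cQ_2)$ and $(\cP_2, \cQ_1)$ remain to be analyzed.

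On the $(\cP_1, \cQ_2)$ piece, $\pi^{ev,+}$ restricts to exactly the pairing $\pi^{ev}$ of Definition \ref{def: trace pairing} between $\Y^{l}_{X_1} \otimes_\K (\Y^{l}_{X_0})^\vee$ and $\hom_\K(\Y^{l}_{X_1}, \Y^{l}_{X_0})$. The shift $[1]$ on $\cP_1$ is a bare degree shift whose only effect on \eqref{condition for pairing system} is to uniformly increase the total degree by one, turning the degree-zero pairing of Lemma \ref{lem: hom pairing canonical} into a degree-one pairing. The required identity is then an immediate consequence of Lemma \ref{lem: hom pairing canonical}.

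The $(\cP_2, \cQ_1)$ piece is handled by the same strategy, with the roles of $X_0$ and $X_1$ interchanged throughout. Concretely, I would invoke Lemma \ref{lem: hom pairing canonical} with $X_0$ and $X_1$ swapped, together with the graded symmetry of Lemma \ref{lem: Z pairing symmetric} in order to match the convention in which the $\Y^{l}$-tensor argument appears on the left of the pairing, and then read off the canonical pairing identity. The principal obstacle, and the only substantive thing to check, is that the sign $(-1)^{|w|+|\phi||\psi|+|\phi|+|\psi|}$ appearing in the second summand of \eqref{pre pairing on Z formula} is exactly the sign needed to reconcile this swap-and-shift with the Koszul-sign convention of \eqref{condition for pairing system}. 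This reduces to a routine Koszul computation identical in flavor to those already performed in the proof of Lemma \ref{lem: hom pairing canonical}; once verified, the two pieces combine to give the desired canonical pairing system condition for $\pi^{ev,+}$ of degree $1$.
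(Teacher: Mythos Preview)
Your proposal is correct and follows essentially the same approach as the paper: both observe that $\pi^{ev,+}$ is supported only on the two cross-summands $(\cP_1,\cQ_2)$ and $(\cP_2,\cQ_1)$ and then invoke Lemma~\ref{lem: hom pairing canonical} on each piece. The paper's proof is simply a terser version of yours, omitting the explicit discussion of the shift and of the sign/symmetry bookkeeping on the second cross-term that you spell out.
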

\begin{proof}
Since the pairing is only between $\Y^{l}_{X_{1}} \otimes_{\K} (\Y^{l}_{X_{0}})^{\vee}[1](Y_{1}, Y_{0})$ from the first factor of the left hand side of \eqref{pre pairing on Z} and $\hom_{\K}(\Y^{l}_{X_{1}}, \Y^{l}_{X_{0}})(Y_{0}, Y_{1})$ from the second factor,
as well as between $\hom_{\K}(\Y^{l}_{X_{0}}, \Y^{l}_{X_{1}})(Y_{1}, Y_{0})$ from the first factor and $\Y^{l}_{X_{0}} \otimes_{\K} (\Y^{l}_{X_{1}})^{\vee}[1](Y_{0}, Y_{1})$ from the second factor,
the result follows from Lemma \ref{lem: hom pairing canonical}.
\end{proof}

\section{The categorical punctured neighborhood of infinity}\label{section: neighborhood of infinity}

In this section, we review the definition of the {\it (algebraizable) categorical formal punctured neighborhood of infinity} of a given $\ainf$-catetegory $\cc$,
whose definition was first due to \cite{efimov2} in the case of dg-categories,
and has a straightforward generalization to $\ainf$-categories in \cite{GGV}.
We develop further studies of it which lead to several important Hochschild invariants related to Calabi-Yau structures.

\subsection{The definition}

The notion of the Calkin algebra was originally introduced in \cite{calkin} to study bounded operators in Hilbert spaces,
and has the following algebraic analogue (\cites{drinfeld_inf, efimov2}).
Define the category of {\it Calkin modules} over any $\ainf$-category $\cc$ to be 
\begin{equation}
\calk_{\cc} = \rmod \cc / \perf \cc.
\end{equation}
When $\cc = \K$ is the $\ainf$-category with one object whose endomorphism is the ground ring $\K$ on which all $\ainf$-operations vanish except $\mu^{1}$ and $\mu^{2}$;
we call the resulting category $\calk_{\K}$ the category of Calkin complexes over $\K$.
To give a concrete description of $\mu^{1}, \mu^{2}$, we first compute the morphism space in $\calk_{\K}$ to be:

\begin{lem}\label{lem: hom in calkin}
Let $M, N \in \ob \calk_{\K} = \ob \ch_{\K}$, i.e., chain complexes over $\K$ thought of as objects in the category $\calk_{\K}$.
The morphism space in $\calk_{\K}$ is given by
\begin{equation}\label{hom in calkin}
\calk_{\K}(M, N) := \hom_{\calk_{\K}}(M, N) = \cone(N \otimes M^{\vee} \to \hom_{\K}(M, N)) = N \otimes M^{\vee}[1] \bigoplus \hom_{\K}(M, N).
\end{equation}
\end{lem}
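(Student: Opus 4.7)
The plan is to compute the morphism space in $\calk_\K = \ch_\K/\perf_\K$ by invoking the general formula for hom spaces in the $A_\infty$-quotient, specialized to our setting where $\perf_\K$ is the pre-triangulated hull of the single object $\K$. This makes the quotient structurally simple enough that one obtains a clean mapping cone formula rather than a colimit over zigzags.

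The first step is to recall that for an $A_\infty$-category $\mathcal{A}$ and a full subcategory $\mathcal{B}$ generated by a set of objects $\{B_\alpha\}$, one may compute the hom space in the Drinfeld-type quotient as a mapping cone
\[
\hom_{\mathcal{A}/\mathcal{B}}(M, N) \;\simeq\; \cone\!\left(\, \bigoplus_\alpha \hom_{\mathcal{A}}(B_\alpha, N) \otimes \hom_{\mathcal{A}}(M, B_\alpha) \;\longrightarrow\; \hom_{\mathcal{A}}(M, N) \,\right),
\]
with the map being the composition pairing. This is Efimov's formula in the dg case, and it lifts to the $A_\infty$-setting by the standard yoga. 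The second step is to specialize to $\mathcal{A} = \ch_\K$ and $\mathcal{B} = \perf_\K$; since $\K$ is a field, $\perf_\K$ is the thick hull of the single object $\K$, and the displayed formula reduces to a cone on the single map
\[
N \otimes M^\vee \;=\; \hom_\K(\K, N) \otimes \hom_\K(M, \K) \;\longrightarrow\; \hom_\K(M, N),
\]
which is exactly the map $i^{0,0}$ of \eqref{bimodule i map 00} in its one-object specialization, sending $n \otimes f \mapsto (m \mapsto f(m) n)$. The third step is simply unfolding the mapping cone to recognize the underlying graded vector space as the claimed direct sum $N \otimes M^\vee[1] \oplus \hom_\K(M, N)$, with differential and composition inherited from the cone structure together with the bimodule operations described in Definitions \ref{example: linear tensor bimodule} and \ref{example: linear hom bimodule}.

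The main obstacle is ensuring that the cone formula in Step 1 is actually valid at the chain level in the $A_\infty$-setting, rather than merely on cohomology. For this a careful invocation of either the Drinfeld quotient in its $A_\infty$ generalization or Efimov's direct construction is needed; working over a field removes all flatness-type concerns. Crucially, Lemma \ref{lem: bimodule i map} already supplies the precise compatibility between the finite-rank inclusion $i$ and the bimodule structures on both sides, so the signs propagate cleanly from \eqref{sign for i map} and the conventions in Definitions \ref{example: linear tensor bimodule} and \ref{example: linear hom bimodule} into the twisted differential on the cone. Beyond the identification of the quotient formula, no further genuinely new computation is required.
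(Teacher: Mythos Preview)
Your approach is essentially the same as the paper's. The paper's proof is a one-liner: the bar complex for morphisms in the quotient $\ch_\K/\perf_\K$ collapses, because derived tensor over $\K$ is ordinary tensor, to the single map $\hom_\K(\K,N)\otimes\hom_\K(M,\K)\to\hom_\K(M,N)$, yielding the stated cone. You say the same thing with more scaffolding, invoking Efimov's formula and the fact that $\perf_\K$ is the thick hull of $\K$.

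One small caution: the ``general formula'' you display in Step~1,
\[
\hom_{\mathcal{A}/\mathcal{B}}(M,N)\simeq\cone\Bigl(\bigoplus_\alpha\hom_{\mathcal{A}}(B_\alpha,N)\otimes\hom_{\mathcal{A}}(M,B_\alpha)\to\hom_{\mathcal{A}}(M,N)\Bigr),
\]
is not correct as stated for an arbitrary generating set $\{B_\alpha\}$; the Drinfeld quotient bar complex has higher-length terms, and this single-cone reduction only holds under additional hypotheses (here, a single generator $\K$ with $\hom_\K(\K,\K)=\K$, so the higher bar terms are acyclic). You do immediately specialize to exactly this situation, so the argument goes through, but the intermediate general claim should either be stated with its hypotheses or dropped in favor of working directly with the single generator $\K$ from the start, as the paper does.
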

\begin{proof}
Since the derived tensor product over $\K$ is the ordinary tensor product, 
the bar complex for computing the morphism spaces in the quotient category $\ch_{\K}/ \perf \K$ reduces to
$\hom_{\K}(\K, N) \otimes \hom_{\K}(M, \K)  \to \hom_{\K}(M, N)$,
which gives the desired complex \eqref{hom in calkin}.
\end{proof}

The differential of $a = (z \otimes f, \phi) \in \calk_{\K}(M, N)$ is defined to be
\begin{equation}
\begin{split}
d_{\calk_{\K}}  (z \otimes f, \phi) = & ((-1)^{|f|} d_{N}z \otimes f + z \otimes \d f, \d \phi) \\
= & ((-1)^{|f|} d_{N} z \otimes f - (-1)^{|f|} z \otimes f \circ d_{M}, d_{N} \circ \phi - (-1)^{|\phi|} \phi \circ d_{M}).
\end{split}
\end{equation}
For $a_{i} = (z_{i} \otimes f_{i}, \phi_{i}) \in \calk_{\K}(M_{i-1}, M_{i}), i = 1, 2$, 
their {\it Calkin composition}, induced from the dg category structure on $\ch_{\K}/\perf \K$, is defined to be
\begin{equation}\label{calkin composition}
(f_{2} \otimes z_{2}, \phi_{2}) \circ_{\calk_{\K}} (f_{1} \otimes z_{1}, \phi_{1}) = (\phi_{1}^{*}(f_{2}) \otimes z_{2} + f_{1} \otimes \phi_{2}(z_{1}), \phi_{2} \circ \phi_{1}).
\end{equation}
Passing from the dg category structure to the $\ainf$-category structure introduces a sign twist:
\begin{align}
\mu^{1}_{\calk_{\K}}(a) & = - d_{\calk_{\K}}(a), \\
\mu^{2}_{\calk_{\K}}(a_{2}, a_{1}) & = (-1)^{|a_{1}|} a_{2} \circ_{\calk_{\K}} a_{1}. \label{mu2 calkin}
\end{align}
 
Composed with the natural quotient functor $\ch_{\K} \to \calk_{\K}$, the Yoneda functor for $\cc$ induces
\begin{equation}\label{calkinyoneda}
\bar{y}: \cc \to \rmod \cc \to \fun(\cc^{op}, \calk_{\K}).
\end{equation}
We define the categorical formal punctured neighborhood of infinity $\cinf$ of $\cc$ to be the essential image of \eqref{calkinyoneda}, i.e.
\begin{equation}\label{cinf definition}
\cinf = ess-im(\bar{y}: \cc \to \fun(\cc^{op}, \calk_{\K}).
\end{equation}

\begin{lem}\label{lem: cinf is dg}
The $\ainf$-category $\cinf$ is a dg category, i.e., has $\mu^{k}_{\cinf} = 0$ for all $k \ge 3$.
\end{lem}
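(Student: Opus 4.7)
The plan is to reduce the claim to the observation that the target $\ainf$-category $\calk_{\K}$ is itself a dg category, and then invoke the general formula for the $\ainf$-structure on a functor category.

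First, I would verify (really just unpack the definition) that $\calk_{\K}$ is a dg category: the discussion preceding the lemma specifies only $\mu^{1}_{\calk_{\K}}$ and $\mu^{2}_{\calk_{\K}}$, with $\mu^{2}_{\calk_{\K}}$ given by the sign-twisted composition \eqref{mu2 calkin} coming from the honest dg composition law \eqref{calkin composition}. Explicitly, $\mu^{k}_{\calk_{\K}} = 0$ for $k \geq 3$, which is automatic from the dg structure on the quotient $\ch_{\K}/\perf_{\K}$ (or, alternatively, verified directly from \eqref{calkin composition} by checking strict associativity of $\circ_{\calk_{\K}}$, which is routine since $\circ_{\calk_{\K}}$ is built from strict composition of $\K$-linear maps and strict evaluation).

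Second, I would inspect the formula \eqref{muk in fun} for the higher structure maps in a functor category $\fun(\cc^{op}, \dd)$ with $\dd = \calk_{\K}$. In that formula, for $k \geq 2$, every term in the output of $\mu^{k}_{\fun(\cc^{op},\dd)}(T_{k}, \ldots, T_{1})$ has the shape of a single outer application of some $\mu^{r}_{\dd}$ whose arguments contain each of the natural transformations $T_{1}, \ldots, T_{k}$ exactly once (interleaved with functor components $F_{j}^{s_{\bullet}}$). Consequently $r \geq k$. When $k \geq 3$ this forces $r \geq 3$, and since $\mu^{r}_{\calk_{\K}} = 0$ for $r \geq 3$, every term vanishes; hence $\mu^{k}_{\fun(\cc^{op}, \calk_{\K})} = 0$ for $k \geq 3$.

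Finally, since $\cinf$ is by \eqref{cinf definition} the essential image of $\bar{y}: \cc \to \fun(\cc^{op}, \calk_{\K})$, it is a full sub-$\ainf$-category of $\fun(\cc^{op}, \calk_{\K})$, inheriting its $\ainf$-operations. The vanishing above descends, giving $\mu^{k}_{\cinf} = 0$ for all $k \geq 3$. The only step with any content is the second one; but it is not really an obstacle, merely a careful read of the indices in \eqref{muk in fun}. There is no tension with the fact that $\cc$ itself is an arbitrary $\ainf$-category: the higher $\mu^{\geq 3}_{\cc}$ enters only through the $F_{j}^{s_{\bullet}}$ entries inside the single outer $\mu^{r}_{\dd}$, which still vanishes.
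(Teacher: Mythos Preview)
Your proposal is correct and follows essentially the same approach as the paper: the paper's proof simply asserts that $\fun(\cc^{op}, \calk_{\K})$ has $\mu^{k} = 0$ for all $k \ge 3$ and then notes that $\cinf$, as the essential image, inherits this. Your argument supplies the missing justification for that assertion by reading off from \eqref{muk in fun} that $r \ge k$, which is exactly the right reason.
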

\begin{proof}
Since $\fun(\cc^{op}, \calk_{\K})$ has $\mu^{k} = 0$ for all $k \ge 3$,
it follows from the definition \eqref{cinf definition} that $\cinf$ has $\mu^{k}_{\cinf} = 0$ for all $k \ge 3$ as well.
\end{proof}

By construction, it comes with a canonical $\ainf$-functor, the induced Yoneda functor,
\begin{equation}\label{c to cinf}
\bar{y}: \cc \to \cinf,
\end{equation}
which sends (right) proper objects to zero,
where we say an object $X \in \ob \cc$ is (right) proper if $\cc(Y, X) \in \perf \K$ for all $Y$;
in other words, the right Yoneda module of $X$ is a proper $\cc$-module.
Denote by $\cc_{prop}$ the subcategory of (right) proper objects.
Thus the functor \eqref{c to cinf} induces a functor
\begin{equation}\label{cmodprop to cinf1}
\tilde{\bar{y}}: \cc/\cc_{prop} \to \cinf.
\end{equation}

It follows immediately by definition that

\begin{lem}\label{lem: cinf=0cproper}
$\cinf = 0$ if and only if $\cc$ is proper.
\end{lem}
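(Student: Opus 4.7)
The plan is to unwind the definition of $\cinf$ step by step, reducing the statement ``$\cinf = 0$'' to a pointwise condition on morphism complexes of $\cc$, which is precisely the definition of properness.

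First I would fix what ``$\cinf = 0$'' means: by \eqref{cinf definition}, $\cinf$ is the essential image of the induced Yoneda functor $\bar y : \cc \to \fun(\cc^{op}, \calk_\K)$, so $\cinf = 0$ amounts to the statement that $\bar y(X) \simeq 0$ in $\fun(\cc^{op}, \calk_\K)$ for every $X \in \ob \cc$. A functor $F: \cc^{op} \to \calk_\K$ is equivalent to the zero functor iff $F(Y) \simeq 0$ in $\calk_\K$ for every $Y \in \ob \cc$, and by definition of the quotient $\calk_\K = \ch_\K/\perf_\K$, an object $M \in \ch_\K$ becomes zero in $\calk_\K$ precisely when $M \in \perf_\K$, i.e.\ $M$ is quasi-isomorphic to a perfect complex over $\K$.

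For the easy direction, suppose $\cc$ is proper. Then by definition $\cc(Y,X) \in \perf_\K$ for every pair $X,Y \in \ob \cc$. Since $\bar y(X)$ is the composition of the right Yoneda module $\Y^r_X$ with the projection $\ch_\K \to \calk_\K$, and $\Y^r_X(Y) = \cc(Y,X)$, the pointwise values of $\bar y(X)$ all lie in $\perf_\K$, hence vanish in $\calk_\K$. Thus $\bar y(X) \simeq 0$ for every $X$, so $\cinf = 0$.

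Conversely, suppose $\cinf = 0$. By the observations in the first paragraph, $\bar y(X)(Y) = \cc(Y,X)$ must lie in $\perf_\K$ for every $X, Y \in \ob \cc$. This is exactly the definition of $\cc$ being proper. The main (and essentially only) thing to verify carefully is that the quotient $\ch_\K/\perf_\K$ detects perfectness on the nose, i.e.\ that an object becomes zero in $\calk_\K$ if and only if it was perfect to begin with; this is standard for the Drinfeld/Verdier-type quotient by a thick subcategory and will be the only genuine input beyond unwinding definitions.
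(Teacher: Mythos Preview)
Your proposal is correct and follows essentially the same approach as the paper's proof: both directions are obtained by unwinding the definition of $\cinf$ as the essential image of $\bar y$ and reducing to the pointwise condition that $\cc(Y,X) \in \perf_\K$ for all $X,Y$. If anything, you are slightly more explicit than the paper in flagging that the only nontrivial input is that an object of $\ch_\K$ becomes zero in $\calk_\K$ precisely when it is perfect.
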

\begin{proof}
If $\cc$ is proper, then every right Yoneda module $Y^{r}_{X}, X \in \ob \cc$ is proper.
Thus $\bar{y}(L) = 0$, where $\bar{y}$ is \eqref{calkinyoneda}.

Conversely, if $\cinf=0$, then $\bar{y}(X) = pr(Y^{r}_{X}) = 0$, where $pr: \ch_{\K} \to \ch_{\K}/\perf \K$ is the quotient functor. 
Thus $\cc(Y, X) \in \perf \K$ for all $X, Y$, so $\cc$ is proper. 
\end{proof}

Let $\r{Prop} \cc \subset \rmod \cc$ denote the full subcategory of proper right $\cc$-modules,
also called the {\it pseudo-perfect} modules.
If $\cc$ is smooth,
then by \cite{toen-vaquie} $\r{Prop} \cc \xhookrightarrow{} \perf \cc$,
and the functor \eqref{cmodprop to cinf1} can be rewritten as
\begin{equation}\label{cmodprop to cinf2}
\tilde{\bar{y}}: \cc/\r{Prop} \cc \to \cinf.
\end{equation}

Since the morphism spaces in the functor category are given by Hochschild cochain complexes, 
we see that

\begin{lem}[{\cite[\S 2.4]{GGV}}]\label{lem: cinf hom as cone}
For any $X_{0}, X_{1} \in \ob \cc = \ob \cinf$, we have strict equalities of chain complexes
\begin{align}
\cinf(X_{0}, X_{1}) & = \r{CC}^{*}(\cc^{op}, \calk_{\K}(\Y^{l}_{X_{0}}, \Y^{l}_{X_{1}})) \\
& = \r{CC}^{*}(\cc^{op}, \cone(\Y^{l}_{X_{1}} \otimes_{\K} (\Y^{l}_{X_{0}})^{\vee} \stackrel{i}\to \hom_{\K}(\Y^{l}_{X_{0}}, \Y^{l}_{X_{1}}))) \label{cinf hom as cone1} \\ 
& = \cone(\r{CC}^{*}(\cc^{op}, \Y^{l}_{X_{1}} \otimes_{\K} (\Y^{l}_{X_{0}})^{\vee}) \stackrel{i_{*}}\to \r{CC}^{*}(\cc^{op}, \hom_{\K}(\Y^{l}_{X_{0}}, \Y^{l}_{X_{1}}))), \label{cinf hom as cone2}
\end{align}
where the bimodule map $i$ is \eqref{bimodule i map},
and the differential on the Hochschild cochain complex is defined in \eqref{hochschild cochain differential}.

Moreover, the cone \eqref{cinf hom as cone1} carries a natural product structure giving an explicit formula for the product structure on $\cinf$
\begin{equation}\label{mu2 on cinf}
\begin{split}
\mu_{\cinf}^{2}(\psi_{2}, \psi_{1})^{k}(x_{1}, \ldots, x_{k}) = & \sum_{l} \mu^{2}_{\calk_{\K}}( \psi_{2}(x_{1}, \ldots, x_{l}), \psi_{1}(x_{l+1}, \ldots, x_{k})) \\
=& \sum_{l} (-1)^{|\psi_{1}(x_{l+1}, \ldots, x_{k})|} \psi_{2}(x_{1}, \ldots, x_{l}) \circ_{\calk_{\K}} \psi_{1}(x_{l+1}, \ldots, x_{k})) \\
 =& \sum_{l} (-1)^{|\psi_{1}| + \maltese_{l+1; k}} \psi_{2}(x_{1}, \ldots, x_{l}) \circ_{\calk_{\K}} \psi_{1}(x_{l+1}, \ldots, x_{k}).
\end{split}
\end{equation}
\end{lem}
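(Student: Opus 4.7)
The plan is to prove the three identifications by direct unwinding of definitions, and then to extract the product formula by specializing the general composition rule \eqref{muk in fun} to the dg target $\calk_{\K}$.

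First, by the definition \eqref{cinf definition}, the morphism spaces in $\cinf$ are by construction the morphism spaces in the functor $\ainf$-category $\fun(\cc^{op}, \calk_{\K})$ between the Yoneda images. Applying Lemma \ref{lem: cc computes hom in fun} to the functors $\bar{y}(X_0), \bar{y}(X_1) \co \cc^{op} \to \calk_{\K}$ gives the strict equality
\[
\cinf(X_0, X_1) \;=\; \hom_{\fun(\cc^{op}, \calk_{\K})}(\bar{y}(X_0), \bar{y}(X_1)) \;=\; \r{CC}^{*}(\cc^{op}, (\bar{y}(X_0), \bar{y}(X_1))^{*} \calk_{\K, \D}).
\]
The pullback diagonal bimodule evaluated on a pair $(Y_0, Y_1) \in \ob\cc^{op} \times \ob\cc^{op}$ is precisely $\calk_{\K}(\Y^{l}_{X_0}(Y_0), \Y^{l}_{X_1}(Y_1)) = \calk_{\K}(\Y^{l}_{X_0}, \Y^{l}_{X_1})(Y_1, Y_0)$, yielding the first claimed equality.

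Second, Lemma \ref{lem: hom in calkin} realizes $\calk_{\K}(M, N)$ as the mapping cone of the canonical inclusion $i^{0,0}$ of the rank-one tensor into the full linear hom, and Lemma \ref{lem: bimodule i map} upgrades this to an identification of $\cc^{op}$-bimodules when $M = \Y^{l}_{X_0}, N = \Y^{l}_{X_1}$. The functor $\r{CC}^{*}(\cc^{op}, -)$ is additive in the coefficient bimodule: both its underlying graded object \eqref{hochschild cochains} and its differential \eqref{hochschild cochain differential} are built from bimodule-linear operations, so it commutes strictly with degree shifts and with mapping cones of bimodule morphisms. Pulling the cone outside gives both the second and third equalities simultaneously.

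Third, for the product formula, I specialize \eqref{muk in fun} to $k = 2$ transformations $\psi_2, \psi_1$ in $\fun(\cc^{op}, \calk_{\K})$. Since $\calk_{\K}$ is a dg category (Lemma \ref{lem: cinf is dg}), the outer sum collapses to the single term $r = 2$, and in that term there is no room to interleave any higher-order components of the Yoneda functors $\bar{y}(X_i)$ between $\psi_2$ and $\psi_1$: the two arguments of $\mu^{2}_{\calk_{\K}}$ are filled directly by applications of $\psi_2$ and $\psi_1$ to complementary ordered blocks of the inputs. Combining the surviving contribution to the Koszul sign in \eqref{signforfunctorcomp} with the $\ainf$ vs.\ dg twist \eqref{mu2 calkin} converting $\mu^{2}_{\calk_{\K}}$ into $\circ_{\calk_{\K}}$ reproduces the formula \eqref{mu2 on cinf}.

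The principal obstacle is sign bookkeeping: one must verify that the Hochschild differential on the cochain complex of the cone bimodule literally coincides (not merely up to quasi-isomorphism) with the mapping-cone differential of the $i_{*}$-induced map on Hochschild cochain complexes, and that the Koszul signs produced by passing to $\cc^{op}$ in \eqref{hochschild cochain differential} align with those prescribed by \eqref{bimodule maps for finite linear hom} and \eqref{bimodule maps for linear hom}. These are routine but must be carried out piece by piece on each summand of the cone.
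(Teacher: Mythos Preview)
Your proposal is correct and follows essentially the same approach as the paper's proof, which simply cites Lemma~\ref{lem: cc computes hom in fun}, Lemma~\ref{lem: hom in calkin}, and the formula~\eqref{muk in fun}; you have merely unpacked these references in more detail. One small slip: when you say ``$\calk_{\K}$ is a dg category (Lemma~\ref{lem: cinf is dg})'', that lemma is about $\cinf$, not $\calk_{\K}$ --- the dg-ness of $\calk_{\K}$ is built into its definition as a dg quotient and is used in the proof of Lemma~\ref{lem: cinf is dg}, not asserted by it.
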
 
\begin{proof}
This follows from the definition of $\mu^{2}$ in the functor category $\fun(\cc^{op}, \calk_{\K})$ \eqref{muk in fun}, Lemma \ref{lem: cc computes hom in fun} and Lemma \ref{lem: hom in calkin}.
\end{proof}

For simplicity of notation, for each $X_{0}, X_{1} \in \ob \cinf$, we denote the $\cc^{op} - \cc^{op}$-bimodule in the coefficient of \eqref{cinf hom as cone1} by
\begin{equation}\label{Z symbol for cone bimodules}
\cZ_{X_{0}}^{X_{1}} := \cone(\Y^{l}_{X_{1}} \otimes_{\K} (\Y^{l}_{X_{0}})^{\vee} \stackrel{i}\to \hom_{\K}(\Y^{l}_{X_{0}}, \Y^{l}_{X_{1}})) = \Y^{l}_{X_{1}} \otimes_{\K} (\Y^{l}_{X_{0}})^{\vee}[1] \bigoplus \hom_{\K}(\Y^{l}_{X_{0}}, \Y^{l}_{X_{1}})),
\end{equation}
so that $\cinf(X_{0}, X_{1}) = \r{CC}^{*}(\cc^{op}, \cZ_{X_{0}}^{X_{1}})$.

\begin{lem}\label{lem: typical chain complex for Z}
As chain complexes, we have
\begin{equation}\label{Z as cone}
\begin{split}
\cZ_{X_{0}}^{X_{1}}(Y_{1}, Y_{0}) = & \cone(\Y^{l}_{X_{1}} \otimes_{\K} (\Y^{l}_{X_{0}})^{\vee}(Y_{1}, Y_{0}) \to \hom_{\K}(\Y^{l}_{X_{0}}(Y_{0}), \Y^{l}_{X_{1}}(Y_{1}))) \\
= & \cone(\cc(Y_{1}, X_{1}) \otimes_{\K} \cc(Y_{0}, X_{0})^{\vee} \to \hom_{\K}(\cc(Y_{0}, X_{0}), \cc(Y_{1}, X_{1}))) 
\end{split}
\end{equation}
\end{lem}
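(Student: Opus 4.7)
The statement is essentially an unpacking of definitions together with the observation that the mapping cone of bimodules is formed levelwise, so evaluation at a pair of objects commutes with the cone. I will execute this in three short steps.

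First, recall from \eqref{Z symbol for cone bimodules} that $\cZ_{X_{0}}^{X_{1}}$ is defined as the mapping cone of the bimodule morphism $i: \Y^{l}_{X_{1}} \otimes_{\K} (\Y^{l}_{X_{0}})^{\vee} \to \hom_{\K}(\Y^{l}_{X_{0}}, \Y^{l}_{X_{1}})$ given in \eqref{bimodule i map}, whose existence as a bimodule map was verified in Lemma \ref{lem: bimodule i map}. Since the mapping cone of a morphism of $\ainf$-bimodules has underlying graded vector space the direct sum of the (shifted) source and the target evaluated object-wise, evaluating the cone at $(Y_{1}, Y_{0})$ produces the cone of the chain map $i(Y_{1},Y_{0})$ between the chain complexes obtained by evaluation at $(Y_{1}, Y_{0})$. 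This gives the first equality in the statement.

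Second, I would apply the explicit formulas from Definition \ref{example: linear tensor bimodule} and Definition \ref{example: linear hom bimodule} at the particular pair $(Y_{1}, Y_{0})$. By definition of the left Yoneda $\cc^{op}$-module, $\Y^{l}_{X_{1}}(Y_{1}) = \cc^{op}(X_{1}, Y_{1}) = \cc(Y_{1}, X_{1})$ and $\Y^{l}_{X_{0}}(Y_{0}) = \cc(Y_{0}, X_{0})$, so
\begin{equation*}
(\Y^{l}_{X_{1}} \otimes_{\K} (\Y^{l}_{X_{0}})^{\vee})(Y_{1}, Y_{0}) = \cc(Y_{1}, X_{1}) \otimes_{\K} \cc(Y_{0}, X_{0})^{\vee},
\end{equation*}
and similarly
\begin{equation*}
\hom_{\K}(\Y^{l}_{X_{0}}, \Y^{l}_{X_{1}})(Y_{1}, Y_{0}) = \hom_{\K}(\cc(Y_{0}, X_{0}), \cc(Y_{1}, X_{1})).
\end{equation*}
Plugging these into the cone obtained in step one yields the second equality.

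Third, since the claim is a strict equality of chain complexes (not merely a quasi-isomorphism), I would briefly check that the induced differential matches: the $(0,0)$-th order bimodule structure maps recorded in \eqref{bimodule maps for finite linear hom} and \eqref{bimodule maps for linear hom} restrict, after the identifications above, to the standard differentials on $\cc(Y_{1}, X_{1}) \otimes_{\K} \cc(Y_{0}, X_{0})^{\vee}$ and $\hom_{\K}(\cc(Y_{0}, X_{0}), \cc(Y_{1}, X_{1}))$ induced from $\mu^{1}_{\cc}$, and the connecting component of the cone differential is $i^{0,0}$ from \eqref{bimodule i map 00}. The only mild subtlety here, which I regard as the main (very minor) bookkeeping issue, is verifying that the signs in \eqref{bimodule maps for finite linear hom}--\eqref{bimodule maps for linear hom} agree with the cone convention used throughout the paper; this is exactly the sign twist already arranged in Lemma \ref{lem: bimodule i map}, so no extra work is required.
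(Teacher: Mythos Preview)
Your proposal is correct and follows essentially the same approach as the paper: the paper's proof simply says this is a straightforward calculation from the definition of $\cZ$ in \eqref{Z symbol for cone bimodules} together with Definitions~\ref{example: linear tensor bimodule} and~\ref{example: linear hom bimodule}, and you have carried out exactly that unpacking (with some additional care about the differentials that the paper leaves implicit).
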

\begin{proof}
This is a straightforward calculation by the definition of the bimodules $\cZ$ \eqref{Z symbol for cone bimodules}.
Also see Definition \ref{example: linear tensor bimodule} and Definition \ref{example: linear hom bimodule} for the relevant definitions of the linear tensor product bimodule and the linear hom bimodule.
\end{proof}

The bimodule structure maps on this cone \eqref{Z symbol for cone bimodules} are induced from the bimodule structure maps \eqref{bimodule maps for finite linear hom} for $\Y^{l}_{X_{1}} \otimes_{\K} (\Y^{l}_{X_{0}})^{\vee}$ and \eqref{bimodule maps for linear hom} for $\hom_{\K}(\Y^{l}_{X_{0}}, \Y^{l}_{X_{1}})$, 
as well as the bimodule map $i$ \eqref{bimodule i map} (which has vanishing higher order terms),
\begin{equation}\label{structure maps for Z}
\begin{split}
 & \mu^{k, l}_{\cZ_{X_{0}}^{X_{1}}}(\mathbf{x}_{1;k}, (z \otimes f, \phi), \mathbf{x}'_{1;l}) =\\
& \begin{cases}
 & (\mu^{k, l}_{\Y^{l}_{X_{1}} \otimes_{\K} (\Y^{l}_{X_{0}})^{\vee}}(\mathbf{x}_{1;k}, z \otimes f, \mathbf{x}'_{1;l}), \mu^{k, l}_{\hom_{\K}(\Y^{l}_{X_{0}}, \Y^{l}_{X_{1}})}(\mathbf{x}_{1;k}, \phi, \mathbf{x}'_{1;l})),  \text{ if } k>0 \text{ or } l>0, \\
& (\mu^{0, 0}_{\Y^{l}_{X_{1}} \otimes_{\K} (\Y^{l}_{X_{0}})^{\vee}}(z \otimes f), \mu^{0, 0}_{\hom_{\K}(\Y^{l}_{X_{0}}, \Y^{l}_{X_{1}})}(\phi) + i(z \otimes f)).
 \end{cases}
 \end{split}
\end{equation}

Phrasing Lemma \ref{lem: cinf hom as cone} more functorially, 
the composition in the Calkin category $\calk_{\K} = \ch_{\K}/\perf \K$ induces a morphism of $\cc^{op}-\cc^{op}$-bimodules 
\begin{equation}\label{bimodule calkin composition}
c_{\calk_{\K}}: \cZ_{X_{1}}^{X_{2}} \otimes_{\cc^{op}} \cZ_{X_{0}}^{X_{1}} \to \cZ_{X_{0}}^{X_{2}}.
\end{equation}
Now the second half of Lemma \ref{lem: cinf hom as cone} can be restated as:

\begin{cor}\label{cor: mu2 on cinf is composition}
The $\mu^{2}_{\cinf}$-product on $\cinf$ is given by the composition
\begin{equation}\label{mu2 on cinf is composition}
\cinf(X_{1}, X_{2}) \otimes \cinf(X_{0}, X_{1}) \stackrel{\sqcup}\to \r{CC}^{*}(\cc^{op}, \cZ_{X_{1}}^{X_{2}} \otimes_{\cc^{op}} \cZ_{X_{0}}^{X_{1}}) \stackrel{c_{\calk_{\K}, *}}\to \r{CC}^{*}(\cc^{op}, \cZ_{X_{0}}^{X_{2}}) = \cinf(X_{0}, X_{2})
\end{equation}
up to the sign twist $(-1)^{|c_{1}|}$,
where $\sqcup$ is \eqref{composition product}, 
and the map $c_{\calk_{\K}, *}$ is map on Hochschild cochain complexes induced by the morphism of bimodules $c_{\calk_{\K}}$ \eqref{bimodule calkin composition}.
That is
\begin{equation}\label{mu2 on cinf composition sign}
\mu^{2}_{\cinf}(c_{2}, c_{1}) = (-1)^{|c_{1}|} c_{\calk_{\K}, *}(c_{2} \sqcup c_{1}).
\end{equation}
\end{cor}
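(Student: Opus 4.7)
The plan is to directly unpack the right-hand side of \eqref{mu2 on cinf composition sign} and match it term-by-term with the explicit formula \eqref{mu2 on cinf} already produced in Lemma~\ref{lem: cinf hom as cone}. First I would expand $c_2 \sqcup c_1$ using the defining formula \eqref{composition product formula}: on the input $\mathbf{x}_{1;k}$ it yields
\[
(c_2 \sqcup c_1)^k(\mathbf{x}_{1;k}) = \sum_{i \le j} c_2^{k-j+1}(\mathbf{x}_{j;k}) \otimes \mathbf{x}_{i+1;j} \otimes c_1^i(\mathbf{x}_{1;i}),
\]
viewed as a cochain valued in the convolution bimodule $\cZ_{X_1}^{X_2} \otimes_{\cc^{op}} \cZ_{X_0}^{X_1}$.

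Next I would analyze the bimodule morphism $c_{\calk_{\K}}$ of \eqref{bimodule calkin composition}. Because $\calk_{\K}$ is a dg category (only $\mu^1_{\calk_{\K}}, \mu^2_{\calk_{\K}}$ are nonzero) and its composition \eqref{calkin composition} is strictly bilinear with no higher multilinear data, the induced bimodule map has no higher-order components; concretely $c_{\calk_{\K}}^{r,s} = 0$ for $(r,s) \ne (0,0)$, and $c_{\calk_{\K}}^{0,0}$ vanishes on tensors whose internal $\mathbf{x}'$-word is nonempty while acting as $\circ_{\calk_{\K}}$ on tensors with empty internal word. I would verify this from the explicit bimodule structure \eqref{structure maps for Z} on $\cZ$ together with the dg nature of $\calk_{\K}$. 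Consequently the postcomposition $c_{\calk_{\K}, *}$ keeps only the $i = j$ summands above, collapsing the tensor via Calkin composition and producing $\sum_l c_2^{k-l}(\mathbf{x}_{l+1;k}) \circ_{\calk_{\K}} c_1^l(\mathbf{x}_{1;l})$.

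Once both sides are written as sums over splittings of $\mathbf{x}_{1;k}$ with Calkin composition applied, the proof reduces to a sign comparison. On the left, the formula \eqref{mu2 on cinf} has the Koszul sign $(-1)^{|c_1| + \maltese_{l+1;k}}$, which arises by applying \eqref{muk in fun} in the functor category $\fun(\cc^{op}, \calk_{\K})$ and invoking \eqref{mu2 calkin} to pass from dg composition to $\mu^2_{\calk_{\K}}$. On the right, the Koszul twist $\maltese_{l+1;k}$ is already built into the evaluation of $c_2 \sqcup c_1$ through the standard Koszul reordering in the convolution tensor product, and the remaining overall twist $(-1)^{|c_1|}$ in \eqref{mu2 on cinf composition sign} is exactly the one needed to convert $\circ_{\calk_{\K}}$ into $\mu^2_{\calk_{\K}}$. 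The main obstacle is not conceptual but bookkeeping: verifying the vanishing of higher components of $c_{\calk_{\K}}$ and carefully reconciling the Koszul signs between the two descriptions; once the former is checked the latter follows by direct computation.
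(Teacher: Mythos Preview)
Your proposal is correct and follows essentially the same approach as the paper's own proof, which simply says the corollary ``follows immediately from Lemma~\ref{lem: cinf hom as cone} and the formula \eqref{composition product formula} for the product $\sqcup$.'' You have filled in the details the paper leaves implicit: the reduction of $c_{\calk_{\K}}$ to its $(0,0)$-component (so that only the $i=j$ terms of $\sqcup$ survive) and the sign bookkeeping. One minor remark: your explanation of where the Koszul twist $\maltese_{l+1;k}$ originates is slightly misattributed---it does not come from a reordering inside $\sqcup$ (which carries no sign in \eqref{composition product formula}) but rather from the identity $|\psi_1(x_{l+1},\ldots,x_k)| = |\psi_1| + \maltese_{l+1;k}$, so that the global $(-1)^{|c_1|}$ together with this degree shift exactly reproduces the sign $(-1)^{|\psi_1|+\maltese_{l+1;k}}$ appearing in \eqref{mu2 on cinf} via \eqref{mu2 calkin}.
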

\begin{proof}
This follows immediately from Lemma \ref{lem: cinf hom as cone} and the formula \eqref{composition product formula} for the product $\sqcup$.
\end{proof}

From this, it is also clear that $\mu^{2}_{\cinf}$ is associative on the chain level with the standard $\ainf$-sign twist,
because $\sqcup$ and the composition in the Calkin category $\calk_{\K}$ are both associative.

\begin{cor}\label{cor: cinf as cone}
As dg categories, or $\ainf$-categories with $\mu^{k} = 0$ for $k \ge 3$, there are quasi-equivalence
\begin{align}\label{cinf as cone}
\cinf \cong & \cone(\r{CC}^{*}(\cc^{op}, \calk_{\K}(\cc^{op}, \cc^{op}))) \\
\cong & \cone(\r{CC}^{*}(\cc^{op}, \cc^{op}_{\D} \otimes (\cc^{op})^{\vee}) \to \r{CC}^{*}(\cc^{op}, \hom_{\K}(\cc^{op}_{\D}, \cc^{op}_{\D})))
\end{align}
\end{cor}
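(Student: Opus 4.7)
The plan is to derive this corollary as a direct functorial consequence of Lemma \ref{lem: cinf hom as cone} combined with the quadmodule realizations of the Yoneda--tensor and Yoneda--hom bimodules from Definitions \ref{example: linear tensor bimodule} and \ref{example: linear hom bimodule}. All the essential content has already been established; the task is to reorganize the object-by-object description into a statement about bimodule coefficients that makes sense without fixing a pair $(X_0, X_1)$.

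First I would unpack the right-hand side. By Definition \ref{example: linear tensor bimodule}, the $(\cc^{op},\cc^{op})$-$(\cc^{op},\cc^{op})$ quadmodule $\cc^{op}_{\D} \otimes_{\K} (\cc^{op})^{\vee}$ specializes, upon inserting $X_{1}$ and $X_{0}$ into two of its slots, to the $\cc^{op}$-$\cc^{op}$-bimodule $\Y^{l}_{X_{1}} \otimes_{\K} (\Y^{l}_{X_{0}})^{\vee}$ of \eqref{finite hom bimodule}. Dually, Definition \ref{example: linear hom bimodule} specializes $\hom_{\K}(\cc^{op}_{\D}, \cc^{op}_{\D})$ to $\hom_{\K}(\Y^{l}_{X_{0}}, \Y^{l}_{X_{1}})$ \eqref{hom bimodule}, and by Lemma \ref{lem: bimodule i map} the inclusion $i$ from \eqref{bimodule i map} is the specialization of the quadmodule map \eqref{quadmodule i map}. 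Since taking a cone commutes with specialization at objects, the cone bimodule $\cZ_{X_{0}}^{X_{1}}$ from \eqref{Z symbol for cone bimodules} coincides, object-wise, with the specialization of the cone quadmodule underlying the second expression in the corollary. Passing to Hochschild cochains over $\cc^{op}$ commutes with object-wise specialization as well, so Lemma \ref{lem: cinf hom as cone} yields the identification of morphism complexes on the nose.

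For the composition, I would appeal to Corollary \ref{cor: mu2 on cinf is composition}, which realizes $\mu^{2}_{\cinf}$ as $\sqcup$ of \eqref{composition product} followed by the pushforward of the Calkin composition bimodule morphism $c_{\calk_{\K}}$ \eqref{bimodule calkin composition}, subject to the sign twist \eqref{mu2 on cinf composition sign}. The same construction, read at the quadmodule level (Calkin composition applied to $\calk_{\K}(\cc^{op},\cc^{op})$ followed by $\sqcup$ on Hochschild cochains), endows the right-hand side of the claimed identification with its dg structure, and the two products match strictly by construction. Since Lemma \ref{lem: cinf is dg} says $\cinf$ has $\mu^{k}=0$ for $k\geq 3$, the $\ainf$-structure is determined by $\mu^{1}$ and $\mu^{2}$, so this suffices to upgrade the object-wise equality to an equivalence of dg categories.

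The only delicate point is sign bookkeeping: one must verify that the cone differential, the bimodule structure maps from \eqref{structure maps for Z}, the sign twist \eqref{mu2 on cinf composition sign}, and the Koszul conventions in $\sqcup$ all line up so that the identification is a strict equality rather than merely a quasi-equivalence up to homotopy. Since the explicit formulas are already in place in Lemma \ref{lem: cinf hom as cone}, Corollary \ref{cor: mu2 on cinf is composition}, and \eqref{mu2 on cinf}, this step requires no genuinely new computation, which is precisely why the statement is recorded as a corollary rather than a theorem.
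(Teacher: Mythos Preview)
Your proposal is correct and follows exactly the line the paper intends: the corollary is marked with an immediate \qed, so the paper regards it as a direct consequence of Lemma \ref{lem: cinf hom as cone}, Corollary \ref{cor: mu2 on cinf is composition}, and the quadmodule descriptions in Definitions \ref{example: linear tensor bimodule} and \ref{example: linear hom bimodule}, which is precisely what you spell out. Your elaboration of the sign bookkeeping is more careful than the paper's treatment, but the approach is identical.
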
 \qed

In the case where $\cc$ is smooth, we also have

\begin{lem}[{\cite[Proposition 2.11]{GGV}}]\label{lem: cinf smooth cone}
Suppose $\cc$ is a smooth $\ainf$-category. 
Then we have a distinguished triangle of $\cc^{op}$-bimodules
\begin{equation}\label{triangle of cinf}
(\cc^{op})^{!} \otimes_{\cc^{op}} (\cc^{op})^{\vee} \to \cc^{op}_{\D} \to \cinf.
\end{equation}
\end{lem}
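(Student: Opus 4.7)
The plan is to identify the cone description of $\cinf$ provided by Corollary \ref{cor: cinf as cone} with the mapping cone of a natural map $(\cc^{op})^{!} \otimes_{\cc^{op}} (\cc^{op})^{\vee} \to \cc^{op}_{\D}$; once this identification is made, the desired distinguished triangle follows from the standard triangle $A \to B \to \cone(A \to B) \to A[1]$.

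First, I would recall that Corollary \ref{cor: cinf as cone} writes $\cinf$ as the cone, at the level of bimodules indexed by pairs $(X_{0}, X_{1})$, of the map
\begin{equation*}
i_{*}\colon \r{CC}^{*}(\cc^{op}, \cc^{op}_{\D} \otimes_{\K} (\cc^{op})^{\vee}) \longrightarrow \r{CC}^{*}(\cc^{op}, \hom_{\K}(\cc^{op}_{\D}, \cc^{op}_{\D}))
\end{equation*}
induced by the inclusion-of-finite-rank-operators bimodule map $i$ from Lemma \ref{lem: bimodule i map}. The next step is to identify the target of $i_{*}$ with $\cc^{op}_{\D}$ and the source with $(\cc^{op})^{!} \otimes_{\cc^{op}} (\cc^{op})^{\vee}$.

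For the target: using the chain-level quasi-isomorphism \eqref{1pt to 2pt} between $\r{CC}^{*}(\cc^{op}, \cP)$ and $\hom_{\cc^{op}\bimod\cc^{op}}(\cc^{op}_{\D}, \cP)$, and the fact that $\hom_{\K}(\Y^{l}_{X_{0}}, \Y^{l}_{X_{1}})$ is a bimodule built from Yoneda modules, a standard adjunction/Yoneda computation identifies $\hom_{\cc^{op}\bimod\cc^{op}}(\cc^{op}_{\D}, \hom_{\K}(\Y^{l}_{X_{0}}, \Y^{l}_{X_{1}}))$ with $\hom_{\rmod\cc^{op}}(\Y^{l}_{X_{0}}, \Y^{l}_{X_{1}}) \simeq \cc^{op}(X_{0}, X_{1}) = \cc^{op}_{\D}(X_{0}, X_{1})$, naturally in $X_{0}, X_{1}$. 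For the source: invoking the hypothesis that $\cc$ (hence $\cc^{op}$) is smooth, the diagonal bimodule $\cc^{op}_{\D}$ is perfect, so the perfect-duality quasi-isomorphism $\cP^{!} \otimes_{\cc-\cc} \cQ \simeq \hom_{\cc\bimod\cc}(\cP, \cQ)$ applied to $\cP = \cc^{op}_{\D}$ and $\cQ = \cc^{op}_{\D} \otimes_{\K} (\cc^{op})^{\vee}$ gives $(\cc^{op})^{!} \otimes_{\cc^{op}-\cc^{op}} (\cc^{op}_{\D} \otimes_{\K} (\cc^{op})^{\vee})$, and then the collapse quasi-isomorphism of Lemma \ref{lem: collapse iso} applied to the $\cc^{op}_{\D}$ factor contracts this to $(\cc^{op})^{!} \otimes_{\cc^{op}} (\cc^{op})^{\vee}$.

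Finally, I would verify that under these two identifications the map $i_{*}$ corresponds to the natural evaluation/composition map $(\cc^{op})^{!} \otimes_{\cc^{op}} (\cc^{op})^{\vee} \to \cc^{op}_{\D}$: this is essentially tautological, since $i$ itself sends $z \otimes f$ to the rank-one operator $w \mapsto f(w)z$, which after the Yoneda identification is exactly the evaluation pairing threaded through the inverse dualizing bimodule. Taking the cone of this identified map produces $\cinf$, yielding the desired triangle \eqref{triangle of cinf}.

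The main obstacle will be keeping the bimodule structures, not merely the underlying chain complexes, compatible throughout the chain of quasi-isomorphisms — in particular making sure that the perfect-duality iso $\cP^{!} \otimes_{\cc-\cc} \cQ \simeq \Rhom_{\cc-\cc}(\cP, \cQ)$ is applied in a way that respects the residual $\cc^{op}-\cc^{op}$ bimodule action on the $(\cc^{op})^{\vee}$ factor, and that all Koszul signs inherited from Definitions \ref{example: linear tensor bimodule}, \ref{example: linear hom bimodule} and the convolution tensor \eqref{structure maps for convolution of bimodules 0}–\eqref{structure maps for convolution of bimodules 2} line up so the resulting triangle map is precisely the canonical one, rather than only a twisted cousin.
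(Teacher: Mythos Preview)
The paper does not prove this lemma: it is stated as a citation of \cite[Proposition 2.11]{GGV} and immediately closed with a \qed. Your proposal is a correct reconstruction of the standard argument --- starting from the cone description in Corollary~\ref{cor: cinf as cone}, identifying the target with $\cc^{op}_{\D}$ via Yoneda and the source with $(\cc^{op})^{!}\otimes_{\cc^{op}}(\cc^{op})^{\vee}$ via the perfect-duality isomorphism available under the smoothness hypothesis --- and this is indeed the line of reasoning in the cited reference, so there is nothing further to compare.
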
 \qed

Sometimes it will be convenient to switch from $\cc^{op}$-bimodules to $\cc$-bimodules.
Noting that the convention for the diagonal bimodule is $(\cinf)_{\D}(X, Y) = \cinf(Y, X)$,
one obtains the following corollary by interchanging the left and right actions to go from a $\cc^{op}$-bimodule to a $\cc$-bimodule.

\begin{cor}\label{cor: cinf smooth cone}
If $\cc$ is a smooth $\ainf$-category, 
we have a distinguished triangle of $\cc$-bimodules
\begin{equation}
\cc^{!} \otimes_{\cc} \cc^{\vee} \to \cc_{\D} \to (\bar{y}, \bar{y})^{*}(\cinf)_{\D}
\end{equation}
where $\bar{y}: \cc \to \cinf$ is \eqref{c to cinf}.
\end{cor}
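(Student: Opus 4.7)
The plan is to derive Corollary \ref{cor: cinf smooth cone} directly from Lemma \ref{lem: cinf smooth cone} by converting the triangle of $\cc^{op}$-bimodules there into a triangle of $\cc$-bimodules. The canonical conversion interchanges the roles of the left and right module structures: to every $\cc$-bimodule $\cP$ one associates a $\cc^{op}$-bimodule $\cP^{\r{sw}}$ with $\cP^{\r{sw}}(X, Y) := \cP(Y, X)$, structure maps twisted using the sign conventions of Definition \ref{def: opposite category}. This defines a strict equivalence between the dg categories of $\cc$-bimodules and $\cc^{op}$-bimodules, preserving quasi-isomorphisms, convolution tensor products (up to possible reordering of factors), and distinguished triangles.

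The second step is to identify how the canonical bimodule constructions transform. A direct unwinding of Definition \ref{def: diagonal bimodule}, Definition \ref{def: linear dual bimodule}, and the definition \eqref{inverse dualizing bimod} of the inverse dualizing bimodule shows that under this swap $(\cc^{op})_{\D}$, $(\cc^{op})^{\vee}$, and $(\cc^{op})^{!}$ correspond to $\cc_{\D}$, $\cc^{\vee}$, and $\cc^{!}$ respectively. A similar computation starting from Definition \ref{def: convolution tensor product} matches $(\cc^{op})^{!} \otimes_{\cc^{op}} (\cc^{op})^{\vee}$ with $\cc^{!} \otimes_{\cc} \cc^{\vee}$, where the reversal of composition in passing to the opposite category accounts for preserving the order of the factors. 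The remaining term, the $\cc^{op}$-bimodule $\cinf$ appearing in Lemma \ref{lem: cinf smooth cone} --- obtained by viewing $\cinf$ as a bimodule over $\cc^{op}$ via its induced Yoneda embedding --- transforms into the pullback $(\bar{y}, \bar{y})^{*}(\cinf)_{\D}$ along the functor $\bar{y}: \cc \to \cinf$ from \eqref{c to cinf}.

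The main obstacle is the careful sign bookkeeping. In particular one must check that the first map in the triangle of Lemma \ref{lem: cinf smooth cone} swaps to the expected map $\cc^{!} \otimes_{\cc} \cc^{\vee} \to \cc_{\D}$, which abstractly is induced by evaluation against the coevaluation part of the definition \eqref{inverse dualizing bimod} of $\cc^{!}$, and which in the presence of a weak smooth Calabi--Yau structure corresponds to the natural trace map arising from $\cc^{!} \simeq \cc_{\D}[-n]$. Once this map is identified, the exactness of the resulting $\cc$-bimodule triangle is an immediate consequence of the exactness of its $\cc^{op}$-counterpart, completing the proof.
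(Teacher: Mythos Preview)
Your proposal is correct and follows essentially the same approach as the paper: both derive the corollary by applying the left/right interchange equivalence between $\cc$-bimodules and $\cc^{op}$-bimodules to the triangle of Lemma \ref{lem: cinf smooth cone}. The paper's proof is a one-liner to this effect, whereas you spell out how each of the standard constructions $(\cc^{op})_{\D}$, $(\cc^{op})^{\vee}$, $(\cc^{op})^{!}$, and the convolution tensor product transform under the swap; your remark about the Calabi--Yau trace map is extraneous (the corollary only assumes smoothness) but harmless.
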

\begin{proof}
Since $\cc^{op}$-bimodule is naturally a $\cc$-bimodule by interchanging the left and right actions,
the result follows by applying the interchange to \eqref{triangle of cinf}.
\end{proof}

\subsection{An extended product}\label{section: cup product}

Viewing $\cinf$ as a bimodule over $\cc$ via the canonical functor $\bar{y}: \cc \to \cinf$ \eqref{calkinyoneda}, 
we can consider the Hochschild cohomology $\r{CC}^{*}(\cc, \cinf)$ with coefficients in $\cinf$, i.e.,
\begin{equation}
\r{CC}^{*}(\cc, \cinf) := \r{CC}^{*}(\cc, (\bar{y}, \bar{y})^{*}(\cinf)_{\D}).
\end{equation}
The $\ainf$ category structure on $\cinf$ induces a morphism of $\cc$-bimodules
\begin{equation}\label{wedge product}
\wedge_{\infty}: \cinf \otimes_{\cc} \cinf \to \cinf
\end{equation}
by means of pullback by $\bar{y}$;
the formula is similar to \eqref{bimodule collapse map formula} for the map \eqref{bimodule collapse map},
where $\mu^{k+s+1, l}_{(\cinf)_{\D}} = (-1)^{\maltese'_{l}+1} \mu^{k+s+1+l+1}_{\cinf}$, 
since $(\cinf)_{\D}$ is a diagonal bimodule itself whose structure maps are defined in \eqref{structure maps for diagonal}
(in fact, most of the maps vanish since $\cinf$ is a dg category by Lemma \ref{lem: cinf is dg}).
In other words, we define $\wedge_{\infty}$ to be the pullback of the bimodule collapse map $\mu_{\D, (\cinf)_{\D}}: (\cinf)_{\D} \otimes_{\cinf} (\cinf)_{\D} \to (\cinf)_{\D}$ for $\cinf$ by $(\bar{y}, \bar{y})^{*}$.

Thus we can define an extended cup product on $\r{CC}^{*}(\cc, \cinf)$ by
\begin{equation}\label{extended cup product}
\cup_{\infty}: \r{CC}^{*}(\cc, \cinf) \otimes \r{CC}^{*}(\cc, \cinf) \stackrel{\sqcup} \to \r{CC}^{*}(\cc, \cinf \otimes_{\cc} \cinf) \stackrel{\wedge_{\infty}}\to \r{CC}^{*}(\cc, \cinf),
\end{equation}
where $\sqcup: \r{CC}^{*}(\cc, \cinf) \otimes \r{CC}^{*}(\cc, \cinf) \to \r{CC}^{*}(\cc, \cinf \otimes_{\cc} \cinf)$ is the product \eqref{composition product} defined in \S\ref{section: products on Hochschild}.
Compare to a similar given in \cite{rivera-takeda-wang} using a method of graphical calculus.

Now we relate this cup product to the usual cup product \eqref{cup product cc} on $\r{CC}^{*}(\cinf, \cinf)$,
defined on the chain level as the composition
\begin{equation}\label{cup product on cinf}
\widehat{\cup}_{\infty}: \r{CC}^{*}(\cinf, \cinf) \otimes \r{CC}^{*}(\cinf, \cinf) \stackrel{\sqcup}\to \r{CC}^{*}(\cinf, \cinf \otimes_{\cinf} \cinf) \stackrel{\mu_{\D, (\cinf)_{\D}}}\to \r{CC}^{*}(\cinf, \cinf),
\end{equation}
where $\mu_{\cinf}$ is the map induced by the canonical collapse map of $\cinf$-bimodules
\begin{equation}
\mu_{\D, \cinf}: (\cinf)_{\D} \otimes_{\cinf} (\cinf)_{\D} \stackrel{\sim}\to (\cinf)_{\D}
\end{equation} 
defined as in \eqref{bimodule collapse map},
which is a quasi-isomorphism.

\begin{prop}
The pullback 
\begin{equation}
\r{HH}^{*}(\cinf, \cinf) \to \r{HH}^{*}(\cc, \cinf)
\end{equation}
is a map of unital rings.
\end{prop}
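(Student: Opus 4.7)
The plan is to establish the ring-map property on the chain level by unwinding both product definitions and exploiting naturality of the universal operation $\sqcup$ from Proposition \ref{prop: composition product chain map} under pullback by $\ainf$-functors. Both $\widehat{\cup}_{\infty}$ on $\r{CC}^{*}(\cinf, \cinf)$ and $\cup_{\infty}$ on $\r{CC}^{*}(\cc, \cinf)$ are defined as $\sqcup$ followed by a collapse-type bimodule morphism; moreover, $\wedge_{\infty}$ is by construction the $(\bar{y}, \bar{y})^{*}$-pullback of the collapse map $\mu_{\D, (\cinf)_{\D}}$. Hence the multiplicativity will follow once we verify that $\bar{y}^{*}$ intertwines the two $\sqcup$ operations, up to the natural comparison map between the two different convolution products.

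The first step is to recall that the Yoneda--Calkin functor $\bar{y}: \cc \to \cinf$ induces a pullback chain map $\bar{y}^{*}: \r{CC}^{*}(\cinf, \cinf) \to \r{CC}^{*}(\cc, \cinf)$ by precomposition with tensor products of the higher components $\bar{y}^{k}$ (with the usual Koszul signs). Then, for any $\ainf$-functor $F: \cc \to \dd$ and any pair of $\dd$-bimodules $\cP, \cQ$, I would construct a natural morphism of $\cc$-bimodules
\[
\rho_{F; \cP, \cQ}: (F, F)^{*}\cP \otimes_{\cc} (F, F)^{*}\cQ \to (F, F)^{*}(\cP \otimes_{\dd} \cQ),
\]
obtained by inserting tensors of components $F^{k_{i}}$ into the consecutive slots of the middle bar. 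The key naturality statement to verify is
\[
\rho_{F; \cP, \cQ, *}(F^{*}\phi \sqcup F^{*}\psi) = F^{*}(\phi \sqcup \psi),
\]
which is a direct comparison of the two evaluations of \eqref{composition product formula}: on one side the signs come from the pullback convention, and on the other from inserting $F$-components.

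Applying this with $F = \bar{y}$ and $\cP = \cQ = (\cinf)_{\D}$, the definition of $\wedge_{\infty}$ in \eqref{wedge product} is precisely $(\bar{y}, \bar{y})^{*}(\mu_{\D, (\cinf)_{\D}}) \circ \rho_{\bar{y}; (\cinf)_{\D}, (\cinf)_{\D}}$ after the natural identification $(\bar{y}, \bar{y})^{*}(\cinf)_{\D} = \cinf$ as $\cc$-bimodules. Combining this with the previous step yields
\[
\bar{y}^{*}(\phi_{2} \widehat{\cup}_{\infty} \phi_{1}) = \wedge_{\infty, *}(\bar{y}^{*}\phi_{2} \sqcup \bar{y}^{*}\phi_{1}) = \bar{y}^{*}\phi_{2} \cup_{\infty} \bar{y}^{*}\phi_{1}
\]
at the level of Hochschild cochains, which passes to cohomology to give the asserted multiplicativity. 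For unitality, I would use that $\bar{y}: \cc \to \cinf$ is cohomologically unital, so the length-zero Hochschild cocycle representing the unit of $\r{HH}^{0}(\cinf, \cinf)$ pulls back to the length-zero cocycle whose value at each $X \in \ob \cc$ is the cohomological unit in $\cinf(\bar{y}(X), \bar{y}(X))$; this represents the unit of $\r{HH}^{0}(\cc, \cinf)$.

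The only real obstacle is a careful sign and bookkeeping check in the naturality statement for $\sqcup$: the formula \eqref{composition product formula} splits the bar into two halves on which different $\bar{y}^{k_{i}}$ components act in opposite orders, and one must confirm that the Koszul signs carried by $\rho_{F; \cP, \cQ}$ conspire correctly with those in \eqref{composition product formula}, the Hochschild cochain differential \eqref{hochschild cochain differential}, and the bimodule pullback convention. This verification is entirely formal and requires no new input beyond the algebraic framework of \S\ref{section: products on Hochschild}.
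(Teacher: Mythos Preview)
Your proposal is correct and follows essentially the same approach as the paper's proof: both observe that $\wedge_{\infty}$ is by definition the $(\bar{y}, \bar{y})^{*}$-pullback of the collapse map $\mu_{\D, (\cinf)_{\D}}$, so that compatibility of $\cup_{\infty}$ and $\widehat{\cup}_{\infty}$ under $\bar{y}^{*}$ reduces to naturality of $\sqcup$ under pullback. The paper states this in one sentence, treating the identification of $(\bar{y}, \bar{y})^{*}\bigl((\cinf)_{\D} \otimes_{\cinf} (\cinf)_{\D}\bigr)$ with $\cinf \otimes_{\cc} \cinf$ as immediate; you make the comparison morphism $\rho_{\bar{y}; (\cinf)_{\D}, (\cinf)_{\D}}$ explicit and also address unitality, which the paper omits.
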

\begin{proof}
Under the pullback by $\bar{y}: \cc \to \cinf$, 
the bimodule $(\cinf)_{\D} \otimes_{\cinf} (\cinf)_{\D}$ becomes $\cinf \otimes_{\cc} \cinf: = (\bar{y}, \bar{y})^{*}(\cinf)_{\D} \otimes_{\cc} (\bar{y}, \bar{y})^{*}(\cinf)_{\D}$,
such that the pullback of $\mu_{\D, (\cinf)_{\D}}$ agrees with $\wedge_{\infty}$ (this is how $\wedge_{\infty}$ is defined).
The statement then follows by the definition of \eqref{extended cup product} and the definition of the usual cup product on $\r{CC}^{*}(\cinf, \cinf)$ by \eqref{cup product on cinf}.
\end{proof}

\subsection{The residue}\label{section:res}

In this subsection, we shall present our main construction of a chain map of degree $1-n$
\begin{equation}\label{res cy}
\r{res}: \r{CC}_{n-1}(\cinf) \to \K,
\end{equation}
which gives the weak proper Calabi-Yau structure on $\cinf$ under the hypothesis of Theorem \ref{thm: cinf cy}.

Throughout the remaining subsections of \S\ref{section: neighborhood of infinity},
we assume $\cc$ has a weak smooth Calabi-Yau structure $\sigma$ of dimension $n$,
so that Lemma \ref{lem: capping with cy is iso} provides a canonical quasi-isomorphism
\[
-\cap \sigma: \r{CC}^{*}(\cc, \cP) \stackrel{\sim}\to \r{CC}_{*-n}(\cc, \cP)
\]
for any $\cc$-bimodule $\cP$.
We will define a map 
\begin{equation}\label{res with tensor product coefficients}
\r{res}_{pre}: \r{CC}_{*}(\cinf, \cinf \otimes_{\cinf} \cinf) \to \K[1-n]
\end{equation}
of degree $1-n$.
By Proposition \ref{prop: pairing induced map}, it suffices to construct a canonical pairing system (in the sense of Definition \ref{def: canonical pairing system}) of degree $1-n$
\begin{equation}\label{pairing on cinf}
\pi_{\infty, X_{0}, X_{1}}: \cinf(X_{0}, X_{1}) \times \cinf(X_{1}, X_{0}) \to \K[1-n]
\end{equation}
for all $X_{0}, X_{1} \in \ob \cinf$.

Recall from \eqref{cinf hom as cone1} in Lemma \ref{lem: cinf hom as cone} that the morphism spaces in $\cinf$ can be computed as Hochschild cochain complexes of $\cc^{op}$ with coefficients in certain cones of $\cc^{op}$-bimodules.
Writing the Hochschild chain complex in the shifted form \eqref{cc shifted form}, we have
\begin{align}
\r{CC}_{*}(\cinf) = &\bigoplus (\cinf)_{\D}(X_{0}, X_{k}) \otimes \cinf[1](X_{k-1}, X_{k}) \otimes \cdots \otimes \cinf[1](X_{0}, X_{1}) \\
= &\bigoplus \cinf(X_{k}, X_{0}) \otimes \cinf[1](X_{k-1}, X_{k}) \otimes \cdots \otimes \cinf[1](X_{0}, X_{1}) \label{computing cc of cinf 1}\\
&= \bigoplus \r{CC}^{*}(\cc^{op}, \cZ_{X_{k}}^{X_{0}}) \otimes \r{CC}^{*}(\cc^{op}, \cZ_{X_{k-1}}^{X_{k}})[1] \otimes \cdots \otimes \r{CC}^{*}(\cc^{op}, \cZ_{X_{0}}^{X_{1}})[1]. \label{computing cc of cinf 2}
\end{align}
Here $\cZ_{X}^{Y}$ are defined in \eqref{Z symbol for cone bimodules}.

\begin{lem}\label{lem: hochschild differential on cinf decreases length only by one}
The Hochschild differential on $\r{CC}_{*}(\cinf)$,
with respect to the direct sum decomposition \eqref{computing cc of cinf 1},
has the following formula
\begin{equation}\label{hochschild chain differential on cinf}
\begin{split}
& d_{\r{CC}_{*}}(a_{0} \otimes \mathbf{x}_{1;k}) \\
=  & (-1)^{\star_{0}^{0}} \mu_{\cinf}^{1}(a_{0}) \otimes \mathbf{x}_{1;k} + (-1)^{\star_{1}^{0}} \mu_{\cinf}^{2}(x_{1}, a_{0}) \otimes \mathbf{x}_{2; k}
 + (-1)^{\star_{0}^{1} + |x_{k}|-1} \mu_{\cinf}^{2}(a_{0}, x_{k}) \otimes \mathbf{x}_{1;k-1} \\
& + \sum (-1)^{\maltese_{1;i}} a_{0} \otimes \mathbf{x}_{i+2; k} \otimes \mu_{\cinf}^{1}(x_{i+1}) \otimes \mathbf{x}_{1;i} 
+ \sum (-1)^{\maltese_{1;i}} a_{0} \otimes \mathbf{x}_{i+3; k} \otimes \mu_{\cinf}^{2}(x_{i+2}, x_{i+1}) \otimes \mathbf{x}_{1;i}.
\end{split}
\end{equation}
\end{lem}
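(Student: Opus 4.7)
The plan is to derive \eqref{hochschild chain differential on cinf} by applying the general Hochschild chain differential formula \eqref{hochschild chain differential} to the specific case $\cc = \cinf$ and $\cP = (\cinf)_\D$, then radically simplifying using Lemma \ref{lem: cinf is dg}. Since $\cinf$ is a dg category, we have $\mu^k_{\cinf} = 0$ for all $k \geq 3$, so the Hochschild differential collapses to those terms involving only $\mu^1_{\cinf}$ and $\mu^2_{\cinf}$.

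First I would recall from \eqref{hochschild chain differential} that $d_{\r{CC}_*}$ has two kinds of contributions when applied to $a_0 \otimes \mathbf{x}_{1;k}$: a \textbf{boundary} contribution of the form $(-1)^{\star_i^j} \mu^{i,j}_{(\cinf)_\D}(\mathbf{x}_{1;i}, a_0, \mathbf{x}_{k-j+1;k}) \otimes \mathbf{x}_{i+1;k-j}$, and an \textbf{interior} contribution $(-1)^{\maltese_{1;i}} a_0 \otimes \mathbf{x}_{i+j+1;k} \otimes \mu^j_{\cinf}(\mathbf{x}_{i+1;i+j}) \otimes \mathbf{x}_{1;i}$. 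By Definition \ref{def: diagonal bimodule}, $\mu^{i,j}_{(\cinf)_\D}$ is (up to a sign $(-1)^{\maltese'_{1;j}+1}$) equal to $\mu^{i+j+1}_{\cinf}$, and since $\cinf$ is dg, this vanishes unless $i+j+1 \in \{1,2\}$. Combined with the vanishing of $\mu^j_{\cinf}$ for $j \geq 3$ in the interior part, this cuts down the sums enormously.

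Next I would enumerate the surviving terms. For the boundary part, only $(i,j)=(0,0)$, $(1,0)$, $(0,1)$ contribute: the first gives $(-1)^{\star_0^0}\mu^1_{\cinf}(a_0)\otimes \mathbf{x}_{1;k}$ after absorbing the sign $(-1)^{\maltese'_{1;0}+1}$ from \eqref{structure maps for diagonal}; the second gives $(-1)^{\star_1^0}\mu^2_{\cinf}(x_1, a_0)\otimes \mathbf{x}_{2;k}$; and the third gives $(-1)^{\star_0^1+|x_k|-1}\mu^2_{\cinf}(a_0, x_k)\otimes \mathbf{x}_{1;k-1}$, where the extra $|x_k|-1$ in the exponent is exactly $\maltese'_{1;1}$ coming from the sign prescription \eqref{structure maps for diagonal} for $\mu^{0,1}_{(\cinf)_\D}$. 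For the interior part, only $j = 1$ and $j = 2$ contribute, giving the last two sums in the target formula \eqref{hochschild chain differential on cinf}.

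The proof is thus essentially a sign-unwinding calculation: the only conceptually nontrivial input is Lemma \ref{lem: cinf is dg}, and everything else follows by matching the general definitions. The main (mild) obstacle is a careful bookkeeping of Koszul signs, in particular verifying that the sign $(-1)^{\maltese'_{1;1}+1}$ produced by \eqref{structure maps for diagonal} in the $(i,j)=(0,1)$ case combines with $(-1)^{\star_0^1}$ to give precisely $(-1)^{\star_0^1+|x_k|-1}$ as claimed; I would handle this by treating each of the three surviving boundary cases separately and tracing the convention in \eqref{structure maps for diagonal} rather than invoking any general lemma about bimodule conventions.
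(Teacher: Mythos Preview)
Your proposal is correct and follows essentially the same approach as the paper's own proof: both invoke Lemma \ref{lem: cinf is dg} to reduce to $\mu^1_{\cinf}$ and $\mu^2_{\cinf}$, then specialize the general Hochschild chain differential \eqref{hochschild chain differential} using the diagonal bimodule sign convention \eqref{structure maps for diagonal}, enumerating exactly the cases $(i,j)\in\{(0,0),(1,0),(0,1)\}$ for the bimodule part and $j\in\{1,2\}$ for the interior part. Your explicit tracking of the extra $|x_k|-1$ in the $(0,1)$ term via $\maltese'_{1;1}$ matches the paper's computation of $\mu^{0,1}_{(\cinf)_\D}(a,x') = (-1)^{|x'|}\mu^2_{\cinf}(a,x')$.
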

\begin{proof}
By Lemma \ref{lem: cinf is dg}, $\cinf$ has $\mu^{k}_{\cinf} = 0$ for all $k \ge 3$.
In particular, the formula \eqref{structure maps for diagonal} implies that the only non-trivial structure maps for the diagonal bimodule $(\cinf)_{\D}$ (where $(\cinf)_{\D}(X, Y) = \cinf(Y, X)$) are
\begin{equation}\label{cinf diagonal 00}
\mu^{0,0}_{(\cinf)_{\D}}(a) = (-1)^{1} \mu^{1}_{\cinf}(a) = -\mu^{1}_{\cinf}(a),
\end{equation}
\begin{equation}\label{cinf diagonal 10}
\mu^{1,0}_{(\cinf)_{\D}}(x, a) = (-1)^{1} \mu^{2}_{\cinf}(x, a) = -\mu^{2}_{\cinf}(x, a),
\end{equation}
and 
\begin{equation}\label{cinf diagonal 01}
\mu^{0,1}_{(\cinf)_{\D}}(a, x') = (-1)^{|x'|-1+1} \mu^{2}_{\cinf}(a, x') = (-1)^{|x'|} \mu^{2}_{\cinf}(a, x').
\end{equation}
where we use the letter $a$ to denote an element of the diagonal bimodule, and $x, x'$ from morphism spaces in $\cinf$.

The formula \eqref{hochschild chain differential on cinf} follows from the general formula \eqref{hochschild chain differential} for Hochschild chain differentials:
\begin{enumerate}[label=(\roman*)]

\item The first sum on the right hand side of \eqref{hochschild chain differential} has only terms in three cases $i=0, j=0$ or $i=1,j=0$ or $i=0,j=1$, because the diagonal bimodule $(\cinf)_{\D}$ has only the above non-trivial structure maps \eqref{cinf diagonal 00}, \eqref{cinf diagonal 10} and \eqref{cinf diagonal 01}

\item The second sum has only terms in two cases: $j=1$ or $j=2$, because the category $\cinf$ has only non-trivial structure maps $\mu_{\cinf}^{1}$ and $\mu_{\cinf}^{2}$.

\end{enumerate}
\end{proof}

\begin{cor}\label{cor: hochschild differential on cinf decreases length only by one}
The Hochschild differential on $\r{CC}_{*}(\cinf)$,
with respect to the direct sum decomposition \eqref{computing cc of cinf 2},
is the sum of the tensor product differential of the Hochschild differentials on the various Hochschild cochain complexes $\r{CC}^{*}(\cc^{op}, \cZ_{X_{i}}^{X_{i+1}})$ (where $X_{k+1}=X_{0}$), 
as well as maps
\begin{equation}\label{contracting differential}
\begin{split}
& \r{CC}^{*}(\cc^{op}, \cZ_{X_{k}}^{X_{0}}) \otimes \r{CC}^{*}(\cc^{op}, \cZ_{X_{k-1}}^{X_{k}})[1] \otimes \cdots \otimes \r{CC}^{*}(\cc^{op}, \cZ_{X_{0}}^{X_{1}})[1]\\
\to \bigoplus &\r{CC}^{*}(\cc^{op}, \cZ_{X_{k}}^{X_{0}}) \otimes \r{CC}^{*}(\cc^{op}, \cZ_{X_{k-1}}^{X_{k}})[1] \otimes \cdots \r{CC}^{*}(\cc^{op}, \cZ_{X_{i-1}}^{X_{i+1}})[1] \otimes \cdots \otimes \r{CC}^{*}(\cc^{op}, \cZ_{X_{0}}^{X_{1}})[1],
\end{split}
\end{equation}
induced by maps
\begin{equation}
\mu^{2}_{\cinf}: \r{CC}^{*}(\cc^{op}, \cZ_{X_{i}}^{X_{i+1}}) \otimes \r{CC}^{*}(\cc^{op}, \cZ_{X_{i-1}}^{X_{i}}) \to \r{CC}^{*}(\cc^{op}, \cZ_{X_{i-1}}^{X_{i+1}}).
\end{equation}
\end{cor}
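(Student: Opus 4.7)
The plan is to read off Corollary \ref{cor: hochschild differential on cinf decreases length only by one} as a direct translation of Lemma \ref{lem: hochschild differential on cinf decreases length only by one} via the identification provided by Lemma \ref{lem: cinf hom as cone}, which expresses each morphism space $\cinf(X_{i}, X_{i+1})$ as the Hochschild cochain complex $\r{CC}^{*}(\cc^{op}, \cZ_{X_{i}}^{X_{i+1}})$.

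First, I would take the explicit formula \eqref{hochschild chain differential on cinf} and sort the five kinds of terms on its right-hand side into two classes: those involving $\mu^{1}_{\cinf}$ (namely $\mu^{1}_{\cinf}(a_{0})$ on the diagonal factor and the interior $\mu^{1}_{\cinf}(x_{i+1})$ on each tensor factor) and those involving $\mu^{2}_{\cinf}$ (the boundary terms $\mu^{2}_{\cinf}(x_{1}, a_{0})$, $\mu^{2}_{\cinf}(a_{0}, x_{k})$, and the interior terms $\mu^{2}_{\cinf}(x_{i+2}, x_{i+1})$). This partition already matches the two kinds of maps described in the statement.

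Next, for the $\mu^{1}_{\cinf}$ terms, I would use Lemma \ref{lem: cinf hom as cone} to observe that under the identification $\cinf(X_{i}, X_{i+1}) = \r{CC}^{*}(\cc^{op}, \cZ_{X_{i}}^{X_{i+1}})$, the differential $\mu^{1}_{\cinf}$ is by construction the Hochschild cochain differential \eqref{hochschild cochain differential} on $\r{CC}^{*}(\cc^{op}, \cZ_{X_{i}}^{X_{i+1}})$ (this is the $\mu^{1}_{\fun}$ from \eqref{mu1 in fun} restricted to the essential image of $\bar{y}$). Summing over all the factors in the decomposition \eqref{computing cc of cinf 2} with the Koszul sign $(-1)^{\maltese_{1;i}}$ coming from passing the differential past the shifted factors is exactly the definition of the tensor product differential on the (shifted) tensor product of the complexes $\r{CC}^{*}(\cc^{op}, \cZ_{X_{i}}^{X_{i+1}})$.

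For the $\mu^{2}_{\cinf}$ terms, I would apply Corollary \ref{cor: mu2 on cinf is composition}: the composition $\mu^{2}_{\cinf}$ on two adjacent factors $\r{CC}^{*}(\cc^{op}, \cZ_{X_{i}}^{X_{i+1}}) \otimes \r{CC}^{*}(\cc^{op}, \cZ_{X_{i-1}}^{X_{i}})$ is (up to the sign $(-1)^{|c_{1}|}$ recorded in \eqref{mu2 on cinf composition sign}) the composite $c_{\calk_{\K}, *} \circ \sqcup$ landing in $\r{CC}^{*}(\cc^{op}, \cZ_{X_{i-1}}^{X_{i+1}})$. This is exactly the form of the contracting map \eqref{contracting differential} asserted in the statement, with the three subcases from \eqref{hochschild chain differential on cinf} (left boundary, right boundary, and interior) giving the full range of indices $i$ once the cyclic identification $X_{k+1} = X_{0}$ from \eqref{computing cc of cinf 1} is imposed on the diagonal factor.

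The main bookkeeping obstacle is reconciling the signs: on the one side, the signs $\star_{0}^{0}, \star_{1}^{0}, \star_{0}^{1}$ and $(-1)^{\maltese_{1;i}}$ in \eqref{hochschild chain differential on cinf}, together with the shift by $(-1)^{|x'|}$ in the diagonal structure map \eqref{cinf diagonal 01}; on the other side, the Koszul signs from the tensor product differential of shifted complexes together with the factor $(-1)^{|c_{1}|}$ from \eqref{mu2 on cinf composition sign}. Both sets of signs arise from the same Koszul convention of commuting operators past the shifted factors $\r{CC}^{*}(\cc^{op}, \cZ_{X_{i}}^{X_{i+1}})[1]$, so they agree; verifying this is routine and follows the pattern already exhibited in the proof of Lemma \ref{lem: cc is bimodule tensor product}.
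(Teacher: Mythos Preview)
Your proposal is correct and follows essentially the same approach as the paper, which simply states that the corollary follows from Lemma \ref{lem: hochschild differential on cinf decreases length only by one} and Lemma \ref{lem: cinf hom as cone}. You have merely unpacked in detail what the paper leaves as a one-line remark: sorting the five terms of \eqref{hochschild chain differential on cinf} into the $\mu^{1}_{\cinf}$ part (giving the tensor product differential) and the $\mu^{2}_{\cinf}$ part (giving the contracting maps), and invoking Corollary \ref{cor: mu2 on cinf is composition} to justify the latter is a perfectly reasonable elaboration.
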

\begin{proof}
This follows from Lemma \ref{lem: hochschild differential on cinf decreases length only by one} and Lemma \ref{lem: cinf hom as cone}.
\end{proof}

Now we return to the construction of the canonical pairing system \eqref{pairing on cinf}.
First take the product \eqref{composition product}
\begin{equation}\label{cup product for cinf}
\cinf(X_{0}, X_{1}) \otimes \cinf(X_{1}, X_{0}) = \r{CC}^{*}(\cc^{op}, \cZ_{X_{0}}^{X_{1}}) \otimes \r{CC}^{*}(\cc^{op}, \cZ_{X_{1}}^{X_{0}}) \stackrel{\sqcup}\to \r{CC}^{*}(\cc^{op}, \cZ_{X_{0}}^{X_{1}} \otimes_{\cc^{op}} \cZ_{X_{1}}^{X_{0}}).
\end{equation}
Since $\cc$ has a weak smooth Calabi-Yau structure, so does $\cc^{op}$ (via $\r{CC}_{*}(\cc) \cong \r{CC}_{*}(\cc^{op})$ on the chain level), 
and by Lemma we have \ref{lem: capping with cy is iso} a canonical quasi-isomorphism of chain complexes
\begin{equation}\label{cy for cc of Z}
-\cap \sigma: \r{CC}^{*}(\cc^{op}, \cZ_{X_{0}}^{X_{1}} \otimes_{\cc^{op}} \cZ_{X_{1}}^{X_{0}}) \stackrel{\sim}\to \r{CC}_{*-n}(\cc^{op}, \cZ_{X_{0}}^{X_{1}} \otimes_{\cc^{op}} \cZ_{X_{1}}^{X_{0}}).
\end{equation}
Now it suffices to define a chain map of degree $1$:
\begin{equation}\label{trace on cc Z}
\r{CC}_{*}(\cc^{op}, \cZ_{X_{0}}^{X_{1}} \otimes_{\cc^{op}} \cZ_{X_{1}}^{X_{0}}) \to \K[1].
\end{equation}
This will be constructed using Proposition \ref{prop: pairing induced map}.

Now we define the map \eqref{trace on cc Z} as follows.
By Lemma \ref{lem: typical chain complex for Z}, a typical element of $\cZ_{X_{0}}^{X_{1}}(Y_{1}, Y_{0})$ is of the following form
\begin{equation}
(z \otimes f, \phi) \in \cone(\cc(Y_{1}, X_{1}) \otimes_{\K} \cc(Y_{0}, X_{0})^{\vee}  \to \hom_{\K}(\cc(Y_{0}, X_{0}), \cc(Y_{1}, X_{1}))),
\end{equation}
and a typical element of $\cZ_{X_{1}}^{X_{0}}(Y_{0}, Y_{1})$ is of the form
\begin{equation}
(w \otimes g, \psi) \in ( \cc(Y_{0}, X_{0}) \otimes_{\K} \cc(Y_{1}, X_{1})^{\vee}  \to \hom_{\K}(\cc(Y_{1}, X_{1}), \cc(Y_{0}, X_{0}))).
\end{equation}
Because of the grading convention on the mapping cone complex, the degrees should satisfy
\begin{align}
|f| + |z| & = |\phi| +1, \label{grading relation on cone1}\\
|g| + |w| & = |\psi| +1. \label{grading relation on cone2}
\end{align}
We define a bilinear pairing:
\begin{equation}\label{pairing on Z}
\pi^{\cZ}_{Y_{0}, Y_{1}}: \cZ_{X_{0}}^{X_{1}}(Y_{1}, Y_{0}) \times \cZ_{X_{1}}^{X_{0}}(Y_{0}, Y_{1}) \to \K[1]
\end{equation}
by the formula
\begin{equation}\label{pairing on Z formula}
\pi^{\cZ}_{Y_{0}, Y_{1}}((z \otimes f, \phi), (w \otimes g, \psi) ) = (-1)^{|z| + |f|} f(\psi(z)) + (-1)^{|w| + |g| + |\phi||\psi| + |\phi| + |\psi|} g(\phi(w)).
\end{equation}
That is, this is the pairing from \eqref{pre pairing on Z} with formula \eqref{pre pairing on Z formula}.

\begin{lem}\label{lem: pairing on Z canonical}
The pairings $\pi^{\cZ}_{Y_{0}, Y_{1}}$ \eqref{pairing on Z} form a canonical pairing system of degree $1$ for the $\cc^{op} - \cc^{op}$-bimodules $\cZ_{X_{0}}^{X_{1}}$ and $\cZ_{X_{1}}^{X_{0}}$.
\end{lem}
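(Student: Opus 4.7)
The plan is to verify the canonical-pairing condition \eqref{condition for pairing system} by exploiting the mapping-cone structure of $\cZ^{X_1}_{X_0}$. Writing $p = (z \otimes f, \phi) \in \cZ^{X_1}_{X_0}(Y_1, Y_0)$ and $q = (w \otimes g, \psi) \in \cZ^{X_0}_{X_1}(Y_0, Y_1)$, and recalling from \eqref{structure maps for Z} that $\mu^{k,l}_{\cZ}$ agrees with the direct sum of the structure maps on $\Y^l \otimes_\K (\Y^l)^\vee[1]$ and $\hom_\K$ in all cases except $(k,l) = (0,0)$, where the extra cone term $i(z \otimes f) = \phi_{z,f}$ enters the hom slot, the verification splits naturally into a \emph{direct-sum} piece and a \emph{cone} piece.

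For the direct-sum piece, I observe that the pairing formula \eqref{pairing on Z formula} factors: its first summand $(-1)^{|z|+|f|} f(\psi(z))$ pairs the tensor part of $p$ against the hom part of $q$, while its second summand $(-1)^{|w|+|g|+|\phi||\psi|+|\phi|+|\psi|} g(\phi(w))$ pairs the hom part of $p$ against the tensor part of $q$. Each of these cross-pairings reduces to the $\pi^{ev}$-pairing condition established in Lemma \ref{lem: hom pairing canonical}, after accounting for the degree shift $[1]$ on the tensor summand; concretely, the extra $(-1)^{|f|}$ and $(-1)^{|g|}$ factors in \eqref{pairing on Z formula} relative to \eqref{hom pairing formula} are exactly the Koszul signs that match the differential on the shifted complex. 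In this way Corollary \ref{cor: Z pairing canonical} is reused essentially wholesale for the direct-sum part, covering every case with $k > 0$ or $l > 0$ as well as the non-$i$ portion of $(k, l) = (0, 0)$.

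The new input is the cone contribution at $(k, l) = (0, 0)$, namely the extra terms
\begin{equation*}
(-1)^{|\psi|-1}\, \pi^\cZ\!\left((0, \phi_{z, f}),\, q\right) \;+\; \pi^\cZ\!\left(p,\, (0, \phi_{w, g})\right).
\end{equation*}
Using $\phi_{z, f}(w) = f(w) z$ and $\phi_{w, g}(z) = g(z) w$, both terms collapse via \eqref{pairing on Z formula} to scalar multiples of $f(w) g(z)$. I then compare the overall signs using the cone gradings $|z|+|f| = |\phi|+1$ and $|w|+|g| = |\psi|+1$, together with the support conditions $|f|+|w| = 0$ and $|g|+|z| = 0$ that force $f(w), g(z)$ to be non-zero, and verify that the specific sign choices in \eqref{pairing on Z formula} make the two contributions negatives of each other.

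The main obstacle is not conceptual but combinatorial: the sign bookkeeping for the cone cancellation must be carried out carefully, as the $[1]$ shift on the tensor summand interacts with the Koszul signs in the bimodule structure \eqref{bimodule maps for finite linear hom}, the sign in the canonical-pairing condition \eqref{condition for pairing system}, and the internal signs of the pairing formula \eqref{pairing on Z formula}. Once \eqref{condition for pairing system} is verified, Proposition \ref{prop: pairing induced map} immediately promotes $\pi^\cZ$ to the chain map \eqref{trace on cc Z} of degree $1$ needed for the construction of the residue.
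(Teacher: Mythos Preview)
Your proposal is correct and follows essentially the same approach as the paper: reduce the direct-sum part of the cone to the already-established canonical pairing system from Lemma~\ref{lem: hom pairing canonical} and Corollary~\ref{cor: Z pairing canonical}, and then handle separately the extra $(k,l)=(0,0)$ cone contribution coming from $i^{0,0}$, which produces two $f(w)g(z)$ terms of opposite sign. Your write-up is in fact more explicit than the paper's on the sign cancellation for the cone piece.
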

\begin{proof}
Since the $\cc^{op}-\cc^{op}$-bimodule structure on $\cZ_{X_{0}}^{X_{1}}$ is just formal consequence of the fact that a $\cc-\cc$-bimodule is automatically a $\cc^{op}-\cc^{op}$-bimodule when switching the left and right actions, 
and since the condition \eqref{condition for pairing system} is completely symmetric in the entries from the category $\cc$ or $\cc^{op}$, 
Lemma \ref{lem: pairing on Z canonical} is equivalent to the statement that the pairings $\pi^{\cZ}_{Y_{0}, Y_{1}}$ defined by the same formula \eqref{pairing on Z} form a canonical pairing system of degree $1$ for the $\cc-\cc$-bimodules.

Now we appeal to Lemma \ref{lem: hom pairing canonical} and Corollary \ref{cor: Z pairing canonical} to draw the conclusions.
The only difference is that the $\mu^{0, 0}$-terms for $\cZ_{X_{0}}^{X_{1}}$ and $\cZ_{X_{0}}^{X_{1}}$ \eqref{structure maps for Z} have an extra term coming from the bimodule map $i$ \eqref{bimodule i map},
which has only a $i^{0, 0}$-th term. 
When applying $\mu^{0, 0}_{\cZ_{X_{0}}^{X_{1}}}$ to $(z \otimes f, \phi)$ and pairing it with $(w \otimes g, \psi)$ we get an extra term of the form $(-1)^{*} f(w)g(z)$.
Similarly, the pairing $(z \otimes f, \phi)$ with $\mu^{0, 0}_{\cZ_{X_{1}}^{X_{0}}}(w \otimes g, \psi)$ also yields the same term,
where the signs \eqref{bimodule maps for finite linear hom}, \eqref{bimodule maps for linear hom}, \eqref{pairing on Z} formula ensure that these terms add up to zero.
\end{proof}

Now apply Proposition \ref{prop: pairing induced map} to the canonical pairing system $\pi^{\cZ}_{Y_{0}, Y_{1}}$ \eqref{pairing on Z} to obtain:

\begin{cor}\label{cor: induced map on cc Z}
The induced map
\begin{equation}\label{induced map on cc Z}
\pi^{\cZ}_{*}: \r{CC}_{*}(\cc^{op}, \cZ_{X_{0}}^{X_{1}} \otimes_{\cc^{op}} \cZ_{X_{1}}^{X_{0}}) = \cZ_{X_{0}}^{X_{1}} \otimes_{\cc^{op}-\cc^{op}} \cZ_{X_{1}}^{X_{0}} \to \K[1]
\end{equation}
defined by 
\begin{equation}\label{induced map on cc Z formula}
\pi^{\cZ}_{*}((z \otimes f, \phi) \otimes \mathbf{x}'_{1;l} \otimes (w \otimes g, \psi) \otimes \mathbf{x}_{1;k}) = 
\begin{cases}
\pi^{\cZ}_{Y_{0}, Y_{1}}((z \otimes f, \phi), (w \otimes g, \psi)), & \text{ if } k=l=0, \\
0, & \text{ if } k > 0 \text{ or } l > 0,
\end{cases}
\end{equation}
is a chain map of degree $1$.
\end{cor}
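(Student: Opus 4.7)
The plan is to derive this as a direct instance of Proposition \ref{prop: pairing induced map} applied to the canonical pairing system established in Lemma \ref{lem: pairing on Z canonical}. Since the formulas \eqref{induced map on cc Z} and \eqref{induced map on cc Z formula} are precisely the specialization of \eqref{pairing induced map} and \eqref{pairing induced map formula} to the bimodules $\cP = \cZ_{X_0}^{X_1}$ and $\cQ = \cZ_{X_1}^{X_0}$ over the category $\cc^{op}$ (which appears in the role of $\cc$ in Proposition \ref{prop: pairing induced map}), and since the degree of the pairing $\pi^{\cZ}_{Y_0, Y_1}$ is $1$, the conclusion that $\pi^{\cZ}_*$ is a chain map of degree $1$ follows immediately.

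More concretely, I would first identify the Hochschild chain complex $\r{CC}_*(\cc^{op}, \cZ_{X_0}^{X_1} \otimes_{\cc^{op}} \cZ_{X_1}^{X_0})$ with the bimodule tensor product $\cZ_{X_0}^{X_1} \otimes_{\cc^{op}-\cc^{op}} \cZ_{X_1}^{X_0}$ using Lemma \ref{lem: cc is bimodule tensor product}, which is the content of the equality already written in \eqref{induced map on cc Z}. Then the map \eqref{induced map on cc Z formula} vanishes on all tensors of positive length in both the $\mathbf{x}$- and $\mathbf{x}'$-entries, so by Corollary \ref{cor: length zero tensors} the only contributions to $\pi^{\cZ}_* \circ d$ come from the length-zero output terms \eqref{length zero tensors}. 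Applying $\pi^{\cZ}_*$ to this sum reduces, by definition, to exactly the left-hand side of the canonical pairing system condition \eqref{condition for pairing system} for $\pi^{\cZ}_{Y_0, Y_1}$.

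By Lemma \ref{lem: pairing on Z canonical}, this sum vanishes, so $\pi^{\cZ}_*$ commutes with the differential. The degree is visible from the defining formula \eqref{pairing on Z formula}: the pairing increases total degree by $1$, matching the grading on $\K[1]$. There is no real obstacle here, as the entire verification has already been absorbed into the abstract framework; the work was done in establishing the canonical pairing system in Lemma \ref{lem: pairing on Z canonical} (which itself rested on Lemma \ref{lem: hom pairing canonical} and Corollary \ref{cor: Z pairing canonical}, together with the compatibility of the bimodule map $i$ with the pairing on the mapping cone).
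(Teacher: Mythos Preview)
Your proposal is correct and matches the paper's approach exactly: the paper simply states that the corollary follows by applying Proposition \ref{prop: pairing induced map} to the canonical pairing system $\pi^{\cZ}_{Y_0,Y_1}$ established in Lemma \ref{lem: pairing on Z canonical}, and marks it with a \qed. Your additional unpacking via Lemma \ref{lem: cc is bimodule tensor product} and Corollary \ref{cor: length zero tensors} is just a recap of the proof of Proposition \ref{prop: pairing induced map} itself, which is fine but not needed.
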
 \qed

There is one additional cyclically associativity property or the pairing $\pi^{\cZ}$ in the following sense.

\begin{lem}\label{lem: Z pairing associative}
The following diagram commutes
\begin{equation}
\begin{tikzcd}
& \cZ_{X_{2}}^{X_{0}}(Y_{2}, Y_{0}) \times \cZ_{X_{1}}^{X_{2}}(Y_{1}, Y_{2}) \times \cZ_{X_{0}}^{X_{1}}(Y_{0}, Y_{1}) \arrow[r, " c_{\calk_{\K}} \times \id"] \arrow[d, "\id \times c_{\calk_{\K}}"] & \cZ_{X_{1}}^{X_{0}}(Y_{1}, Y_{0}) \times \cZ_{X_{0}}^{X_{1}}(Y_{0}, Y_{1}) \arrow[d, "\pi^{\cZ}"] \\
& \cZ_{X_{2}}^{X_{0}}(Y_{2}, Y_{0}) \times \cZ_{X_{0}}^{X_{1}}(Y_{0}, Y_{1}) \arrow[r, "\pi^{\cZ}"] & \K
\end{tikzcd}
\end{equation}
\end{lem}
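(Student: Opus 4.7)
The plan is an explicit computation showing both sides of the diagram agree. Write elements of the cone complexes \eqref{Z as cone} as $\alpha = (z_\alpha \otimes f_\alpha, \phi_\alpha) \in \cZ_{X_2}^{X_0}(Y_2, Y_0)$, and analogously $\beta = (z_\beta \otimes f_\beta, \phi_\beta) \in \cZ_{X_1}^{X_2}(Y_1, Y_2)$ and $\gamma = (z_\gamma \otimes f_\gamma, \phi_\gamma) \in \cZ_{X_0}^{X_1}(Y_0, Y_1)$. I would use the Calkin composition formula \eqref{calkin composition} to compute $c_{\calk_{\K}}(\alpha, \beta)$ and $c_{\calk_{\K}}(\beta, \gamma)$; each is a pair whose rank-one component is a sum of the form $\phi^*(f) \otimes z + f \otimes \phi(z)$ and whose operator component is the strict composition of the two $\phi$'s.

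Substituting into the pairing formula \eqref{pairing on Z formula}, each side of the diagram expands into a sum of ``trace-like'' terms---either single evaluations of the form $f_i\bigl(\phi_j \circ \phi_k(z_\ell)\bigr)$, in which no rank-one contraction from the cones occurs inside the composition, or doubly-reduced products of evaluations of the form $f_i(z_j) \cdot f_k(z_\ell)$, arising when both slots of $\pi^{\cZ}$ draw from their rank-one pieces. The LHS and RHS produce the same basic expressions term-by-term; structurally this is the cyclic invariance of the composition supertrace $\operatorname{tr}(\phi \circ \psi \circ \chi) = \operatorname{tr}(\psi \circ \chi \circ \phi)$, applied in the three cases where zero, one, or two of the operators are of rank one.

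The principal technical obstacle is sign bookkeeping. Koszul signs come from the pairing \eqref{pairing on Z formula}, from the $|\phi|$-sensitive sign of the bimodule map $i$ \eqref{bimodule i map}, from the Calkin composition \eqref{calkin composition}, and from the sign twist in \eqref{mu2 on cinf composition sign}. I would normalize using the cone-grading identities $|f_\alpha|+|z_\alpha|=|\phi_\alpha|+1$ and analogues (implicit in \eqref{grading relation on cone1}, \eqref{grading relation on cone2}), then pair off each of the six families of terms on the two sides, reducing the check to a comparison of parities in $(|z_\alpha|, |f_\alpha|, |z_\beta|, |f_\beta|, |z_\gamma|, |f_\gamma|)$ modulo $2$. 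Since Lemma \ref{lem: pairing on Z canonical} has already verified that these signs assemble into a well-defined canonical pairing system, and Corollary \ref{cor: mu2 on cinf is composition} records the compatible sign twist for $\mu^2_{\cinf}$, the remaining cyclic check is tedious but routine, requiring no new structural input.
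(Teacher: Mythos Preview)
Your approach is essentially the same as the paper's, which simply states that the lemma is ``a straightforward computation following the definition of the Calkin composition \eqref{calkin composition} and the formula of the pairing \eqref{pairing on Z formula}.'' Your plan to expand both sides using these two formulas and match trace-like terms is exactly what is intended.

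One minor point: you bring in signs from the bimodule map $i$ \eqref{bimodule i map} and from the $\mu^{2}_{\cinf}$ sign twist \eqref{mu2 on cinf composition sign}, but neither is actually needed here. The lemma concerns only the raw Calkin composition $c_{\calk_{\K}}$ and the pairing $\pi^{\cZ}$; the map $i$ enters the \emph{differential} on the cone, not the composition or pairing, and the $(-1)^{|c_1|}$ twist relates $c_{\calk_{\K}}$ to $\mu^2_{\cinf}$, which does not appear in this diagram. Dropping these extraneous sources of signs will simplify your bookkeeping.
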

\begin{proof}
This is a straightforward computation following the definition of the Calkin composition \eqref{calkin composition} and the formula of the pairing \eqref{pairing on Z formula}.
\end{proof}

Now we compose the chain map $\pi^{\cZ}_{*}$ \eqref{induced map on cc Z}
with the quasi-isomorphism \eqref{cy for cc of Z} and the product \eqref{cup product for cinf} to get a chain map of degree $1-n$:
\begin{equation}\label{defining pairing on cinf}
\pi_{\infty, X_{0}, X_{1}}: \cinf(X_{0}, X_{1}) \otimes \cinf(X_{1}, X_{0}) \stackrel{\sqcup}\to  \r{CC}^{*}(\cc^{op}, \cZ_{X_{0}}^{X_{1}} \otimes_{\cc^{op}} \cZ_{X_{1}}^{X_{0}}) \stackrel{-\cap \sigma}\to \r{CC}_{*}(\cc^{op}, \cZ_{X_{0}}^{X_{1}} \otimes_{\cc^{op}} \cZ_{X_{1}}^{X_{0}})[-n] \stackrel{\pi^{\cZ}_{*}}\to \K[1-n].
\end{equation}
which gives the pairing $\pi_{\infty, X_{0}, X_{1}}$ \eqref{pairing on cinf}.

\begin{lem}\label{lem: pairing commutative}
The pairing \eqref{defining pairing on cinf} is graded symmetric with sign $(-1)^{1+(|c_{0}|-1)(|c_{1}|-1)}$.
\end{lem}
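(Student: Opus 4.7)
The plan is to trace the graded symmetry back, step by step, through the three-fold composition \eqref{defining pairing on cinf} defining $\pi_{\infty, X_{0}, X_{1}}$. The three stages to be analyzed are: the product $\sqcup$ on Hochschild cochains; the cap with the weak smooth Calabi--Yau class $\sigma$; and the bimodule pairing $\pi^{\cZ}_{*}$.

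First, I would work at the bimodule pairing level. Lemma \ref{lem: Z pairing symmetric} (applied to $\pi^{ev,+}_{Y_{0},Y_{1}} = \pi^{\cZ}_{Y_{0},Y_{1}}$) records that the pairing \eqref{pairing on Z} is graded symmetric with sign $(-1)^{1+(|\phi|-1)(|\psi|-1)}$. Using Lemma \ref{lem: cc is bimodule tensor product} and Corollary \ref{cor: induced map on cc Z}, the induced map $\pi^{\cZ}_{*}$ vanishes on the positive-length part of the two-sided bar complex $\cZ_{X_{0}}^{X_{1}} \otimes_{\cc^{op}-\cc^{op}} \cZ_{X_{1}}^{X_{0}}$, so $\pi^{\cZ}_{*}$ is a length-zero trace and inherits the graded symmetry up to reordering factors in the bar construction.

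Next, I would analyze the interaction between the product $\sqcup$ and swapping of arguments. The product $c_{0} \sqcup c_{1}$ lives in $\r{CC}^{*}(\cc^{op}, \cZ_{X_{0}}^{X_{1}} \otimes_{\cc^{op}} \cZ_{X_{1}}^{X_{0}})$ while $c_{1} \sqcup c_{0}$ lives in $\r{CC}^{*}(\cc^{op}, \cZ_{X_{1}}^{X_{0}} \otimes_{\cc^{op}} \cZ_{X_{0}}^{X_{1}})$; these two spaces are related by the obvious swap of tensor factors, which produces a Koszul sign $(-1)^{|c_{0}||c_{1}|}$ at the bimodule level. Under $\pi^{\cZ}_{*}$ the symmetry from Lemma \ref{lem: Z pairing symmetric} supplies the additional $(-1)^{1+(|\phi|-1)(|\psi|-1)}$ sign, and combining the two contributions produces the target sign $(-1)^{1+(|c_{0}|-1)(|c_{1}|-1)}$ once one accounts for the degree shifts $|c_{i}| = |\phi_{i}| + (\text{number of tensors})$ coming from the Hochschild cochain grading.

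Finally, I would compare $(c_{0}\sqcup c_{1})\cap \sigma$ with the swapped expression $(c_{1}\sqcup c_{0})\cap \sigma$ on the Hochschild chain side, after applying $\pi^{\cZ}_{*}$. The key point is that the combination $\pi^{\cZ}_{*} \circ (-\cap \sigma)$ is a functional on $\r{HH}_{*}(\cc^{op}, \cZ_{X_{0}}^{X_{1}} \otimes_{\cc^{op}} \cZ_{X_{1}}^{X_{0}})$ which, by Lemma \ref{lem: capping with cy is iso} and cohomological unitality, is invariant under the cyclic rotation that exchanges the two bimodule factors. Concretely, one writes out $(c_{0}\sqcup c_{1})\sqcap \sigma$ using the formulas \eqref{composition product formula} and \eqref{outer cap product formula}, and uses the compatibility \eqref{cup cap compatible} from Proposition \ref{prop: outer cap product chain map} to rewrite the result cyclically so that $c_{1}$ appears first and $c_{0}$ second, picking up precisely the expected Koszul signs.

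The main obstacle is this last step: careful sign-bookkeeping for the cyclic reshuffle of $(c_{0}\sqcup c_{1})\sqcap \sigma$ into a form comparable to $(c_{1}\sqcup c_{0})\sqcap \sigma$. This is essentially a chain-level manifestation of the cyclic symmetry of the trace pairing induced by a (weak) Calabi--Yau class, and although the idea is standard, the precise sign tracking through \eqref{hochschild chain differential on cinf}, \eqref{pairing on Z formula}, and the convolution tensor product structure \eqref{structure maps for convolution of bimodules 0}--\eqref{structure maps for convolution of bimodules 2} is delicate. Once all Koszul signs align, the three contributions multiply to $(-1)^{1+(|c_{0}|-1)(|c_{1}|-1)}$, as required.
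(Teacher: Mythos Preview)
Your plan follows the same reduction as the paper --- trace the symmetry back to the graded symmetry of $\pi^{\cZ}$ from Lemma~\ref{lem: Z pairing symmetric} --- but is considerably more detailed. The paper's own proof is extremely terse: it argues only that $\pi^{\cZ}_{*}$ inherits the graded symmetry of $\pi^{\cZ}$ (since by \eqref{induced map on cc Z formula} it vanishes on tensors of positive length), i.e.\ essentially your first stage, and then declares that $\pi^{\cZ}_{*}$ ``takes the same value as the other map'' on the swapped two-sided bar complex $\r{CC}_{*}(\cc^{op}, \cZ_{X_{1}}^{X_{0}} \otimes_{\cc^{op}} \cZ_{X_{0}}^{X_{1}})$. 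It does not explicitly address how this symmetry propagates through $\sqcup$ and $-\cap\sigma$.

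Your stages two and three therefore address a point the paper leaves implicit, and your instinct that there is something to check is correct. However, your proposed mechanism for stage three overshoots. Appealing to ``cyclic rotation invariance'' of $\pi^{\cZ}_{*}\circ(-\cap\sigma)$ via Lemma~\ref{lem: capping with cy is iso} and cohomological unitality is heavier than necessary and, as stated, only yields a homology-level identity, whereas the lemma is a strict chain-level claim. The cleaner route --- and the one the paper's phrase ``the other map'' is gesturing at --- is the standard chain-level swap isomorphism of two-sided bar complexes $\cP \otimes_{\cc^{op}-\cc^{op}} \cQ \cong \cQ \otimes_{\cc^{op}-\cc^{op}} \cP$. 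Under this swap, $(c_{0}\sqcup c_{1})\sqcap\sigma$ and $(c_{1}\sqcup c_{0})\sqcap\sigma$ correspond up to the Koszul sign directly from the formulas \eqref{composition product formula} and \eqref{outer cap product formula}, and $\pi^{\cZ}_{*}$ is invariant under the swap by Lemma~\ref{lem: Z pairing symmetric}. The one honest wrinkle you correctly flag as ``delicate'' is that the collapse $\mu_{\D,*}$ in \eqref{capping with cy} privileges one tensor factor, so its compatibility with the swap needs to be checked; the paper suppresses this, and your plan does not resolve it either, but the swap-isomorphism framing isolates exactly what remains to be verified.
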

\begin{proof}
Since the pairing $\pi^{\cZ}_{Y_{0}, Y_{1}}$ \eqref{pairing on Z} which was defined by \eqref{pre pairing on Z} is graded symmetric by Lemma \ref{lem: Z pairing symmetric},
it follows by the definition \eqref{induced map on cc Z formula} that $\pi^{\cZ}_{*}$,
which only takes nonzero values on the length zero tensors (Definition \ref{def: length zero tensors}) in the subspace $\cZ_{X_{0}}^{X_{1}}(Y_{0}, Y_{1}) \otimes \cZ_{X_{1}}^{X_{0}}(Y_{1}, Y_{0})$ of the two-sided bar complex $\cZ_{X_{0}}^{X_{1}} \otimes_{\cc^{op}-\cc^{op}} \cZ_{X_{1}}^{X_{0}} = \r{CC}_{*}(\cc^{op}, \cZ_{X_{0}}^{X_{1}} \otimes_{\cc^{op}} \cZ_{X_{1}}^{X_{0}})$,
is also graded symmetric in the sense that it takes the same value as the other map
\begin{equation}
\pi^{\cZ}_{*}: \r{CC}_{*}(\cc^{op}, \cZ_{X_{1}}^{X_{0}} \otimes_{\cc^{op}} \cZ_{X_{0}}^{X_{1}}) \to \K,
\end{equation}
defined in the same manner, up to the sign twist specified by Lemma \ref{lem: Z pairing symmetric}. 
\end{proof}

To get the desired map \eqref{res cy}, we need to appeal to Proposition \ref{prop: pairing induced map} again,
which requires that we prove the following:

\begin{lem}\label{lem: cinf canonical pairing system}
The pairings $\pi_{\infty, X_{0}, X_{1}}$ \eqref{pairing on cinf} defined by \eqref{defining pairing on cinf} form a canonical pairing system between the diagonal bimodule $(\cinf)_{\D}$ and itself.
\end{lem}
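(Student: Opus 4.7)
Since $\cinf$ is a dg category by Lemma \ref{lem: cinf is dg}, the diagonal bimodule $(\cinf)_{\D}$ has vanishing structure maps $\mu^{k,l}_{(\cinf)_{\D}}$ outside the three bi-arities $(k,l)\in\{(0,0),(1,0),(0,1)\}$, as recorded in \eqref{cinf diagonal 00}--\eqref{cinf diagonal 01}. The general condition \eqref{condition for pairing system} that we must verify therefore reduces to three identities: a Leibniz-type identity and two cyclic compatibility identities relating $\pi_{\infty}$ to $\mu^{2}_{\cinf}$.

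The case $(k,l)=(0,0)$ is precisely the assertion that $\pi_{\infty, X_{0}, X_{1}}$ is a chain map (cf.\ Remark \ref{rem: sign for pairing system}). This is immediate from its construction \eqref{defining pairing on cinf} as the composition of three chain maps: the composition product $\sqcup$ is a chain map by Proposition \ref{prop: composition product chain map}; capping with the Calabi-Yau cycle $\sigma$ is a chain map (indeed a quasi-isomorphism) by Lemma \ref{lem: capping with cy is iso}, which uses crucially that $\sigma$ is a Hochschild cycle; and $\pi^{\cZ}_{*}$ is a chain map of degree $1$ by Corollary \ref{cor: induced map on cc Z}, whose proof in turn invokes Lemma \ref{lem: pairing on Z canonical} asserting that the $\pi^{\cZ}_{Y_{0},Y_{1}}$ form a canonical pairing system between the $\cc^{op}$-bimodules $\cZ^{X_{1}}_{X_{0}}$ and $\cZ^{X_{0}}_{X_{1}}$.

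For the cyclic cases $(k,l)=(1,0)$ and $(0,1)$ I would reduce to a three-fold cyclic symmetry inside the Calkin category. First, apply Corollary \ref{cor: mu2 on cinf is composition} to replace each instance of $\mu^{2}_{\cinf}$ by $\pm c_{\calk_{\K},*}\circ\sqcup$. Next, use the strict associativity of $\sqcup$, which is manifest from the formula \eqref{composition product formula}, together with the naturality property $\alpha_{*}(u)\sqcup v = (\alpha\otimes \id)_{*}(u\sqcup v)$ for any morphism of bimodules $\alpha$, to rewrite both sides in a common form
\[
\pi^{\cZ}_{*}\Bigl(\bigl(c_{\calk_{\K},*}^{(\mathrm{slot})}(u\sqcup v\sqcup w)\bigr)\cap\sigma\Bigr),
\]
differing only in which adjacent pair of factors the Calkin composition $c_{\calk_{\K},*}$ acts upon. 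The equality of the two expressions is then exactly the promotion of Lemma \ref{lem: Z pairing associative} (cyclic associativity of $\pi^{\cZ}$ with respect to $c_{\calk_{\K}}$) to the induced maps on Hochschild chains of the convolution tensor product of three copies of $\cZ$-bimodules.

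\textbf{Main obstacle.} The principal technical difficulty is sign bookkeeping: the Koszul prefactor $(-1)^{\maltese_{1;k}(|p|+\maltese'_{1;l}+|q|)+|q|-1}$ from \eqref{condition for pairing system} must be reconciled with the sign $(-1)^{|c_{1}|}$ in \eqref{mu2 on cinf composition sign}, with the internal signs of the cap product \eqref{capping with cy}, with the graded symmetry sign of $\pi_{\infty}$ recorded in Lemma \ref{lem: pairing commutative}, and with the unspecified signs in Lemma \ref{lem: Z pairing associative} (which in any careful write-up should be extracted directly from the explicit formulas \eqref{pairing on Z formula} and \eqref{calkin composition}). The degree-shift conventions from the mapping cone structure of $\cZ_{X_{0}}^{X_{1}}$ and from the shifted Hochschild chain complex \eqref{cc shifted form} must also be threaded through consistently.
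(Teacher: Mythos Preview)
Your proposal is correct and follows essentially the same route as the paper's proof: reduce to the three bi-arities $(0,0),(1,0),(0,1)$ via Lemma \ref{lem: cinf is dg}, dispatch $(0,0)$ by observing that $\pi_{\infty}$ is a composite of chain maps, and handle the remaining two cases by unwinding $\mu^{2}_{\cinf}$ through Corollary \ref{cor: mu2 on cinf is composition} and invoking the cyclic associativity of $\pi^{\cZ}$ with respect to Calkin composition (Lemma \ref{lem: Z pairing associative}).

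The only organizational difference is in how the cap with $\sigma$ is commuted past the extra $\sqcup$-factor. The paper unwinds $-\cap\sigma$ as $\mu_{\Delta,*}\circ(-\sqcap\sigma)$ and then invokes the strict identity $(\phi\sqcup\psi)\sqcap\gamma=\phi\sqcap(\psi\sqcap\gamma)$ of Proposition \ref{prop: outer cap product chain map}, equation \eqref{cup cap compatible}, to move between the two configurations. You instead keep $-\cap\sigma$ as a black box and appeal to the naturality $\alpha_{*}(u)\sqcup v=(\alpha\otimes\id)_{*}(u\sqcup v)$ together with naturality of $-\cap\sigma$ in the bimodule coefficient (Lemma \ref{lem: capping with cy is iso}). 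These are two packagings of the same computation; the paper's route is slightly more direct in that \eqref{cup cap compatible} is a single strict equality requiring no homotopies, whereas your naturality-of-$\cap\sigma$ statement in Lemma \ref{lem: capping with cy is iso} is phrased only up to functoriality and one should check it holds on the nose for the specific (strict) bimodule map $c_{\calk_{\K}}$. The paper also uses the graded symmetry of $\pi_{\infty}$ (Lemma \ref{lem: pairing commutative}) to deduce the $(1,0)$ case from the $(0,1)$ case rather than treating both independently, which streamlines the sign verification you flag as the main obstacle.
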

\begin{proof}
We need to check the condition \eqref{condition for pairing system} for $\pi_{\infty, X_{0}, X_{1}}$.
By construction, the map $\pi_{\infty, X_{0}, X_{1}}$ is a chain map from the tensor product $\cinf(X_{0}, X_{1}) \otimes \cinf(X_{1}, X_{0})$ to $\K$,
so the first set of equations in \eqref{condition for pairing system} with $k=l=0$ is satisfied.
Since $\cinf$ has $\mu^{k}_{\cinf} = 0$ for all $k \ge 3$,
the only remaining equations to check are
\begin{equation}\label{cinf pairing symmetric}
(-1)^{|c_{1}|-1} \pi_{\infty}(\mu^{0, 1}_{(\cinf)_{\D}}(c_{0}, x'), c_{1}) + \pi_{\infty}(c_{0}, \mu^{1, 0}_{(\cinf)_{\D}}(x', c_{1})) = 0,
\end{equation}
and
\begin{equation}\label{cinf pairing symmetric 2}
(-1)^{(|x|-1)(|c_{0}| + |c_{1}|) + |c_{1}| - 1}\pi_{\infty}(\mu^{1, 0}_{(\cinf)_{\D}}(x, c_{0}), c_{1}) + \pi_{\infty}(c_{0}, \mu^{0, 1}_{(\cinf)_{\D}}(c_{1}, x)) = 0.
\end{equation}

Let us check \eqref{cinf pairing symmetric};
 the other is analogous, which is essentially the same as \eqref{cinf pairing symmetric} using Lemma \ref{lem: pairing commutative}.
By \eqref{cinf diagonal 10} and \eqref{cinf diagonal 01},
the equation \eqref{cinf pairing symmetric} is equivalent to
\begin{equation}\label{cinf pairing symmetric 1}
 \pi_{\infty}((-1)^{|x'| + |c_{1}|} \mu^{2}_{\cinf}(c_{0}, x'), c_{1}) =  \pi_{\infty}(c_{0}, \mu^{2}_{\cinf}(x', c_{1})),
\end{equation}
or 
\begin{equation}\label{cinf pairing symmetric 11}
 \pi_{\infty}((-1)^{|x'|} \mu^{2}_{\cinf}(c_{0}, x'), c_{1}) =  \pi_{\infty}(c_{0}, (-1)^{|c_{1}|}\mu^{2}_{\cinf}(x', c_{1})),
\end{equation}

Recall from \eqref{capping with cy} that the map $-\cap \sigma$ is defined as the following composition
\begin{equation}
\begin{split}
& \r{CC}^{*}(\cc^{op}, \cZ_{X_{0}}^{X_{1}} \otimes_{\cc^{op}} \cZ_{X_{1}}^{X_{0}}) \stackrel{- \sqcap \sigma}\to \r{CC}_{*}(\cc^{op},   (\cc^{op})_{\D} \otimes_{\cc^{op}} \cZ_{X_{0}}^{X_{1}} \otimes_{\cc^{op}} \cZ_{X_{1}}^{X_{0}})[-n]\\
& \stackrel{\mu_{\D, *}}\to \r{CC}_{*}(\cc^{op}, \cZ_{X_{0}}^{X_{1}} \otimes_{\cc^{op}} \cZ_{X_{1}}^{X_{0}})[-n],
\end{split}
\end{equation}
where $\mu_{\D} = \mu_{\D, \cZ_{X_{0}}^{X_{1}} \otimes_{\cc^{op}} \cZ_{X_{1}}^{X_{0}}}$ is the collapse map \eqref{bimodule collapse map} for the bimodule $\cZ_{X_{0}}^{X_{1}} \otimes_{\cc^{op}} \cZ_{X_{1}}^{X_{0}}$.
Thus $\pi_{\infty}$ is given as the following composition
\begin{equation}
\begin{split}
& \r{CC}^{*}(\cc^{op}, \cZ_{X_{0}}^{X_{1}}) \otimes \r{CC}^{*}(\cc^{op}, \cZ_{X_{1}}^{X_{0}}) \stackrel{\sqcup}\to \r{CC}^{*}(\cc^{op}, \cZ_{X_{0}}^{X_{1}} \otimes_{\cc^{op}} \cZ_{X_{1}}^{X_{0}}) \\
 \stackrel{- \sqcap \sigma}\to & \r{CC}_{*}(\cc^{op}, (\cc^{op})_{\D} \otimes_{\cc^{op}} \cZ_{X_{0}}^{X_{1}} \otimes_{\cc^{op}} \cZ_{X_{1}}^{X_{0}})[-n]
 \stackrel{\mu_{\D, *}}\to \r{CC}_{*}(\cc^{op}, \cZ_{X_{0}}^{X_{1}} \otimes_{\cc^{op}} \cZ_{X_{1}}^{X_{0}})[-n] \stackrel{\pi^{\cZ}_{*}}\to \K[1-n],
\end{split}
\end{equation}
i.e, 
\begin{equation}\label{pairing on cinf as composition}
\pi_{\infty}(c_{0}, c_{1}) = \pi^{\cZ}_{*}( \mu_{\D, *}( (c_{0} \sqcup c_{1}) \sqcap \sigma )).
\end{equation}

Then \eqref{cinf pairing symmetric 11} follows from the product relation \eqref{cup cap compatible} in Proposition \ref{prop: outer cap product chain map},
the sign twist in defining $\mu^{2}_{\cinf}$ \eqref{mu2 on cinf composition sign},
as well as Lemma \ref{lem: Z pairing associative}.
\end{proof}

By Lemma \ref{lem: cinf canonical pairing system} and Proposition \ref{prop: pairing induced map}, we get the desired chain map of degree $1-n$
\[
\r{res}_{pre}: \r{CC}_{*}(\cinf, (\cinf)_{\D} \otimes_{\cinf} (\cinf)_{\D}) \to \K[1-n]
\]
as proposed in \eqref{res with tensor product coefficients}.
Since quasi-isomorphisms of $\ainf$-bimodules are invertible, 
we choose a homotopy inverse 
\begin{equation}\label{inverse of collapse}
\gamma: (\cinf)_{\D} \to (\cinf)_{\D} \otimes_{\cinf} (\cinf)_{\D}
\end{equation}
 of the collapse map $\mu_{\D, (\cinf)_{\D}}: (\cinf)_{\D} \otimes_{\cinf} (\cinf)_{\D} \to (\cinf)_{\D}$ \eqref{bimodule collapse map},
which induces a quasi-isomorphism 
\begin{equation}\label{inverse of collapse on cc}
\gamma_{*}: \r{CC}_{*}(\cinf, (\cinf)_{\D}) \stackrel{\sim}\to \r{CC}_{*}(\cinf, (\cinf)_{\D} \otimes_{\cinf} (\cinf)_{\D}).
\end{equation}
The composition of \eqref{res with tensor product coefficients} with this map \eqref{inverse of collapse on cc} yields the desire residue map \eqref{res cy}:
\begin{equation}\label{res cy chain map}
\r{res} = \r{res}_{pre} \circ \gamma_{*}: \r{CC}_{*}(\cinf) = \r{CC}_{*}(\cinf, (\cinf)_{\D}) \to \K[1-n].
\end{equation}

Summarizing the above discussion throughout this subsection, we have thus proved:

\begin{prop}\label{prop: res cy chain map}
The map $\r{res} = \r{res}_{pre} \circ \gamma_{*}$ \eqref{res cy chain map} is a chain map of degree $1-n$.
\end{prop}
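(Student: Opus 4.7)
The plan is to recognize $\r{res}$ as the composition of two factors, $\gamma_*$ followed by $\r{res}_{pre}$, and to argue that each factor is a chain map of a known degree; adding the degrees will then yield $1-n$, and composition of chain maps is a chain map. All of the substantive work has already been done in the preceding subsection, so this proposition is essentially a packaging step that certifies $\r{res}$ is suitable for use as the weak proper Calabi-Yau structure on $\cinf$ in Theorem \ref{thm: cinf cy}.

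First I would verify that
\[
\r{res}_{pre}: \r{CC}_*(\cinf, (\cinf)_\D \otimes_\cinf (\cinf)_\D) \to \K[1-n]
\]
is a chain map of degree $1-n$. This will follow from Proposition \ref{prop: pairing induced map} applied to the family of pairings $\pi_{\infty, X_0, X_1}$ from \eqref{defining pairing on cinf}, whose status as a canonical pairing system between $(\cinf)_\D$ and itself is exactly the content of Lemma \ref{lem: cinf canonical pairing system}. The degree can be read off directly from the three-step factorization \eqref{defining pairing on cinf}: the cup-type product $\sqcup$ of \eqref{composition product} preserves degree, capping with the weak smooth Calabi-Yau class $\sigma \in \r{CC}_{-n}(\cc)$ shifts by $-n$ via Lemma \ref{lem: capping with cy is iso}, and $\pi^\cZ_*$ has degree $1$ by Corollary \ref{cor: induced map on cc Z} (inherited from the degree-$1$ pairing $\pi^\cZ$ of \eqref{pairing on Z}, which itself arises from the cone shift built into $\cZ_{X_0}^{X_1}$).

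Second, I would observe that $\gamma_*$ of \eqref{inverse of collapse on cc} is a chain map of degree $0$. This is essentially tautological: $\gamma$ in \eqref{inverse of collapse} is by choice an $\ainf$-bimodule morphism of degree $0$, serving as a homotopy inverse to the collapse quasi-isomorphism $\mu_{\D, (\cinf)_\D}$, which exists since quasi-isomorphisms of $\ainf$-bimodules over a field are invertible up to homotopy. Any $\ainf$-bimodule morphism induces a chain map on Hochschild chain complexes of the same degree, yielding the desired $\gamma_*$.

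Combining the above, $\r{res} = \r{res}_{pre} \circ \gamma_*$ is the composition of a degree-$0$ chain map with a degree-$(1-n)$ chain map, hence a chain map of degree $1-n$. I do not anticipate any genuine obstacle at this step, since the only delicate verifications -- the canonical-pairing-system property for $\pi^\cZ$ (Lemma \ref{lem: pairing on Z canonical}) and for $\pi_\infty$ (Lemma \ref{lem: cinf canonical pairing system}) -- have already been handled using the formalism of \S\ref{section: bilinear pairings}; the remaining content of the proposition is the straightforward degree and compatibility bookkeeping just described.
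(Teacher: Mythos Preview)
Your proposal is correct and matches the paper's approach exactly: the paper presents this proposition as an immediate summary of the constructions in \S\ref{section:res}, marking it with \qed after noting ``Summarizing the above discussion throughout this subsection, we have thus proved.'' Your decomposition into the degree-$(1-n)$ chain map $\r{res}_{pre}$ (via Lemma \ref{lem: cinf canonical pairing system} and Proposition \ref{prop: pairing induced map}) followed by the degree-$0$ chain map $\gamma_*$ is precisely the intended argument.
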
 \qed

\subsection{Nondegeneracy of the induced pairing}\label{section: pairing nondegenerate}

Given the chain map of degree $1-n$ 
\[
\r{res}: \r{CC}_{*}(\cinf) \to \K[1-n]
\]
defined by \eqref{res cy chain map}, 
we define a pairing on $\cinf$ as the following composition
\begin{equation}\label{cinf res pairing}
\langle \cdot, \cdot \rangle_{\r{res}}: \cinf^{*}(X, Y) \otimes \cinf^{n-1-*}(Y, X) \stackrel{\mu^{2}_{\cinf}}\to \cinf^{n-1}(Y, Y) \stackrel{i_{Y}}\to \r{CC}_{n-1}(\cinf) \stackrel{\r{res}}\to \K,
\end{equation}
where $i_{Y}: \cinf(Y, Y) \to \r{CC}_{*}(\cinf)$ is the canonical inclusion.

\begin{prop}\label{prop: res pairing nondegenerate}
The pairing $\langle \cdot, \cdot \rangle_{\r{res}}$ \eqref{cinf res pairing} is nondegenerate on the level of cohomology groups.
\end{prop}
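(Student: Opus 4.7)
The plan is to prove nondegeneracy in three stages: (i) identify the pairing $\langle \cdot, \cdot \rangle_{\r{res}}$ on cohomology with the canonical pairing $\pi_{\infty}$ from \eqref{defining pairing on cinf}; (ii) unfold $\pi_{\infty}$ into its constituent pieces—the product $\sqcup$, the Calabi-Yau cap product, and the trace $\pi^{\cZ}_{*}$; (iii) deduce cohomological nondegeneracy from pointwise nondegeneracy of $\pi^{\cZ}$ together with the cone description of $\cinf$.

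For (i), I would use that $\r{res} = \r{res}_{pre} \circ \gamma_{*}$, where $\gamma$ is a chain homotopy inverse of the quasi-isomorphism $\mu_{\D,(\cinf)_{\D},*}$ induced by the collapse map. For cocycles $c_{0} \in \cinf(X,Y)$ and $c_{1} \in \cinf(Y,X)$, the element $i_{Y}(\mu^{2}_{\cinf}(c_{0},c_{1}))$ equals $i_{Y}(\mu_{\D,(\cinf)_{\D}}(c_{0} \otimes c_{1}))$, so $\gamma_{*}$ sends it to a chain cohomologous to the length-zero inclusion of $c_{0} \otimes c_{1}$ into $\r{CC}_{*}(\cinf, (\cinf)_{\D} \otimes_{\cinf} (\cinf)_{\D})$. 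Evaluating $\r{res}_{pre}$ on that length-zero chain returns exactly $\pi_{\infty, X, Y}(c_{0}, c_{1})$ by the defining formula \eqref{pairing induced map formula}, which identifies the two pairings on cohomology.

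For (ii) and (iii), equation \eqref{pairing on cinf as composition} presents $\pi_{\infty}(c_{0},c_{1}) = \pi^{\cZ}_{*}\bigl(\mu_{\D,*}((c_{0} \sqcup c_{1}) \sqcap \sigma)\bigr)$. Since $-\cap\sigma$ is a chain homotopy equivalence by Lemma \ref{lem: capping with cy is iso}, cohomological nondegeneracy of $\pi_{\infty}$ reduces to that of the composite $\pi^{\cZ}_{*} \circ (-\cap\sigma) \circ \sqcup$ between the Hochschild cochain complexes $\r{CC}^{*}(\cc^{op}, \cZ_{X}^{Y})$ and $\r{CC}^{*}(\cc^{op}, \cZ_{Y}^{X})$. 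To handle this, I would invoke the distinguished triangle of Lemma \ref{lem: cinf smooth cone}: using the Calabi-Yau quasi-isomorphism $\cc^{!} \simeq \cc_{\D}[-n]$, it presents $H^{*}(\cinf(X,Y))$ inside a long exact sequence built from $H^{*}(\cc(X,Y))$ and $H^{*-n+1}(\cc^{\vee}(X,Y))$. Under this description, the canonical pairing $\pi^{\cZ}$—which by Corollary \ref{cor: Z pairing canonical} is the pairing $\pi^{ev,+}$ on the cone bimodule—matches the tautological pairing of \eqref{taut pairing formula} between $\cc^{\vee}[1-n]$ and $\cc_{\D}$. That tautological pairing is manifestly nondegenerate by Lemma \ref{lem: taut nondegenerate}, and a diagram chase along the long exact sequence transfers the nondegeneracy to $H^{*}(\cinf)$.

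The main obstacle is the transition from pointwise nondegeneracy of $\pi^{\cZ}$ to cohomological nondegeneracy of the induced pairing on Hochschild invariants. Since $\cc$ is typically not proper, the bimodule $\cZ_{X}^{Y}$ is not quasi-isomorphic to the linear dual of $\cZ_{Y}^{X}$, so the naive ``finite-dimensional dual'' argument is unavailable and one cannot simply dualize a quasi-isomorphism of bimodules. The weak smooth Calabi-Yau structure is essential precisely here: it both defines the residue and, through Lemma \ref{lem: cinf smooth cone}, reduces the cohomology of $\cinf(X,Y)$ to pieces on which the tautological pairing of \S\ref{section: taut pairing} is directly nondegenerate, sidestepping the non-properness obstruction.
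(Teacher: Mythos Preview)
Your step (i) is exactly the paper's argument: both identify $\langle\cdot,\cdot\rangle_{\r{res}}$ on cohomology with $\pi_{\infty,X,Y}$ by observing that the left triangle in the diagram \eqref{cd for pairing} commutes strictly (so $\gamma_{*}\circ i_{Y}\circ\mu^{2}_{\cinf}$ is chain homotopic to the length-zero inclusion $\iota$), and that $\r{res}_{pre}\circ\iota=\pi_{\infty,X,Y}$ by construction. The paper's proof then simply asserts that $\pi_{\infty,X,Y}$ is nondegenerate and stops.

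Your steps (ii)--(iii) go beyond the paper in attempting to justify that assertion. The strategy you outline---use the smooth Calabi--Yau structure and Lemma~\ref{lem: cinf smooth cone} to present $H^{*}(\cinf(X,Y))$ via a long exact sequence with pieces $H^{*}(\cc(X,Y))$ and $H^{*}(\cc^{\vee}(X,Y)[1-n])$, on which the tautological pairing of \S\ref{section: taut pairing} is manifestly nondegenerate---is the natural one and is clearly what the paper has in mind given the surrounding material. However, your sentence ``a diagram chase along the long exact sequence transfers the nondegeneracy'' is where the actual content lies, and you have not carried it out. For a five-lemma-style transfer of nondegeneracy one must check that $\pi_{\infty}$ is a pairing of exact triangles: concretely, that the map $\cc^{\vee}[-n]\to\cc_{\D}$ (coming from the Chern character after applying the Calabi--Yau isomorphism) is, up to sign, adjoint to itself under the tautological pairing. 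This self-adjointness is not automatic and is exactly the ``main obstacle'' you flag; neither you nor the paper resolves it explicitly. There is also a slip in your phrasing: it is $\pi_{\infty}$, not the bimodule-level pairing $\pi^{\cZ}$, that must be matched with the tautological pairing on cohomology; the passage from one to the other involves the $\sqcup$ product and the cap with $\sigma$, and tracking the pairing through those maps is part of the missing work.
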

\begin{proof}
Observe that the map $i_{Y}: \cinf(Y, Y) \to \r{CC}_{*}(\cinf)$ is the canonical inclusion of chain complexes, i.e., 
the map
\begin{equation}
\cinf^{*}(Y, Y) \to \bigoplus_{\substack{k \ge 0\\X_{0}, \ldots, X_{k} \in \ob \cinf}} \cinf^{*}(X_{k}, X_{0}) \otimes \cinf^{*}(X_{k-1}, X_{k})[1] \otimes \cdots \otimes \cinf^{*}(X_{0}, X_{1})[1],
\end{equation}
including into the $k=0$ piece.

We consider the following diagram
\begin{equation}\label{cd for pairing}
\begin{tikzcd}
& \cinf^{*}(X, Y) \otimes \cinf^{n-1-*}(Y, X) \arrow[r, "\sqcup"] \arrow[d, "\mu_{\cinf}^{2}"] \arrow[rd]  & \r{CC}^{n-1}(\cc^{op}, \cZ_{X}^{Y} \otimes_{\cc^{op}} \cZ_{Y}^{X}) \arrow[r, "- \cap \sigma"] \arrow[ld, "\circ_{\calk_{\K}}"] & \r{CC}_{-1}(\cc^{op}, \cZ_{X}^{Y} \otimes_{\cc^{op}} \cZ_{Y}^{X}) \arrow[d, "\pi^{\cZ}_{*}"] \\
& \cinf^{n-1}(Y, Y) \arrow[d, "i_{Y}"] & \r{CC}_{n-1}(\cinf, (\cinf)_{\D} \otimes_{\cinf} (\cinf)_{\D}) \arrow[r, "\r{res}_{pre}"] \arrow[ld, "\mu_{{\D}, (\cinf)_{\D}, *}"] &\K \\
& \r{CC}_{n-1}(\cinf)
 \end{tikzcd}
\end{equation}
The arrow 
\begin{equation}\label{inclusion of length zero tensors}
\iota: \cinf^{*}(X, Y) \otimes \cinf^{n-1-*}(Y, X) \to \r{CC}_{n-1}(\cinf, (\cinf)_{\D} \otimes_{\cinf} (\cinf)_{\D})
\end{equation}
 is the inclusion of ``length zero'' tensors in the two-sided bar complex $(\cinf)_{\D} \otimes_{\cinf} (\cinf)_{\D}$ for computing $\r{CC}_{n-1}(\cinf, (\cinf)_{\D} \otimes_{\cinf - \cinf} (\cinf)_{\D})$.
By the formula \eqref{bimodule collapse map formula} for the collapse map \eqref{bimodule collapse map},
the left triangle strictly commutes. 
Thus, by choosing the homotopy inverse $\gamma: (\cinf)_{\D} \to (\cinf)_{\D} \otimes_{\cinf} (\cinf)_{\D}$ \eqref{inverse of collapse} whose induced map on Hochschild cochain complexes $\gamma_{*}$ \eqref{inverse of collapse on cc},
the composition
\begin{equation}\label{mu2 to cc diagonal}
\cinf^{*}(X, Y) \otimes \cinf^{n-1-*}(Y, X) \stackrel{\mu_{\cinf}^{2}}\to \cinf^{n-1}(Y, Y) \stackrel{i_{Y}}\to \r{CC}_{n-1}(\cinf) \stackrel{\gamma_{*}}\to \r{CC}_{n-1}(\cinf, (\cinf)_{\D} \otimes_{\cinf} (\cinf)_{\D})
\end{equation}
is chain homotopic to the inclusion of ``length zero'' tensors, $\iota$ \eqref{inclusion of length zero tensors}.

Recall from Corollary \eqref{cor: induced map on cc Z} that the formula \eqref{induced map on cc Z formula} for the chain map $\pi^{\cZ}_{*}: \r{CC}_{*}(\cc^{op}, \cZ_{X_{0}}^{X_{1}} \otimes_{\cc^{op}} \cZ_{X_{1}}^{X_{0}}) \to \K[1]$ \eqref{induced map on cc Z}
only takes nonzero values only for tensors that have zero length in $\mathbf{x}$-entries and $\mathbf{x}'$-entries in the two-sided bar complex for computing $\r{CC}_{*}(\cc^{op}, \cZ_{X_{0}}^{X_{1}} \otimes_{\cc^{op}} \cZ_{X_{1}}^{X_{0}})$.
Going along the top horizontal row then followed by $\pi^{\cZ}_{*}$ gives the pairing $\pi_{\infty, X, Y}$ \eqref{pairing on cinf},
which induces the map $\r{res}_{pre}$ in the middle row, 
whose restriction to ``length zero'' tensors via the map $\iota$ \eqref{inclusion of length zero tensors} is exactly the nondegenerate pairing $\pi_{\infty, X, Y}$ \eqref{pairing on cinf},
\begin{equation}
\pi_{\infty, X, Y} = \r{res}_{pre} \circ \iota.
\end{equation}
On the other hand, the composition of \eqref{mu2 to cc diagonal} with $\r{res}_{pre}$ is by definition the residue pairing $\langle \cdot, \cdot, \rangle_{\r{res}}$, i.e.,
\begin{equation}
\langle \cdot, \cdot, \rangle_{\r{res}} = \r{res}_{pre} \circ \gamma_{*} \circ i_{Y} \circ \mu_{\cinf}^{2}.
\end{equation}
Since $\gamma_{*} \circ i_{Y} \circ \mu_{\cinf}^{2}$ is chain homotopic to $\iota$,
it follows that the cohomology-level pairing induced by $\langle \cdot, \cdot, \rangle_{\r{res}}$ agrees with the induced pairing by $\pi_{\infty, X, Y}$, 
which is nondegenerate.
\end{proof}

\begin{proof}[Proof of Theorem \ref{thm: cinf cy}]
By Proposition \ref{prop: res cy chain map}, the map $\r{res}: \cc_{*}(\cinf) \to \K[1-n]$ \eqref{res cy chain map}  is a chain map.
Its induced pairing is nondegenerate on cohomology groups by Proposition \ref{prop: res pairing nondegenerate}.
Thus the proof of Theorem \ref{thm: cinf cy} is complete.
\end{proof}

\section{Orlov's singularity category on a proper singular scheme}\label{section: singularity category}

\subsection{Some basic definitions}

Let $f: X \to \r{Spec}(\K)$ be a separated scheme of finite type over a field $\K$.
Let $QCoh(X)$ be the dg enhancement of unbounded derived category of quasi-coherent sheaves.
$QCoh(X)$ is compactly generated, 
and the compact objects are exactly the perfect complexes $\perf(X)$.
Let $Coh(X) \subset QCoh(X)$ be the full dg subcategory of objects with bounded coherent cohomology sheaves.
For a smooth $X$, we have $\perf(X) = Coh(X)$;
and for a singular $X$, the inclusions $\perf(X) \subset Coh(X) \subset QCoh(X)$ are strictly proper.
However, by \cite[Theorem 6.3]{lunts}, the dg category $Coh(X)$ is always smooth regardless whether $X$ is smooth or not.

The dg category $\r{Ind}Coh(X)$ ind-coherent sheaves is by definition the ind-completion of $Coh(X)$,
in which the compact objects are $Coh(X)$.
Let $D_{X} = f^{!}\K \in \r{Ind} Coh(X)$ be the dualizing complex,
where 
\[
f^{!}: \r{Mod}_{\K} \to \r{Ind} Coh(X)
\]
is the right adjoint to pushforward 
\[
f_{*}: \r{Ind} Coh(X) \to \r{Mod}_{\K},
\]
which is the unique colimit-preserving extension of $f_{*}: Coh(X) \to \r{Mod}_{\K}$.
We say that $X$ is {\it Cohen-Macaulay} of dimension $n$ if $\omega_{X} = D_{X}[-n]$ is a coherent sheaf.
We say that $X$ is {\it Gorenstein} of dimension $n$ if $\omega_{X}$ is tensor-invertible, i.e., a line bundle.

Suppose now $f: X \to \r{Spec}(\K)$ is proper.
Then $f_{*}: \r{Ind}Coh(X) \to \r{Mod}_{\K}$ sends compact objects to compact objects,
whose right adjoint $f^{!}$ is therefore continuous and sends compact objects to compact objects.
In this case, the dualizing complex $D_{X} = f^{!}\K$ lives in $Coh(X)$.
We say an object $\ee \in Coh(X)$ is $!$-perfect,
if the object $R\mathcal{H}om_{\o_{X}}(\ee, D_{X})$ is perfect.
Denote by $!\!-\! \perf(X)$ the full dg subcategory of $!$-perfect objects.
Clearly, by Grothendieck-Verdier duality,
we have a quasi-equivalence
\[
!\!-\! \perf(X) \simeq \perf(X)^{op}.
\]
Thus we obtain a quasi-equivalence
\begin{equation}
\dd^{b}_{sg}(X)^{op} \simeq Coh(X)/!\!-\! \perf(X).
\end{equation}
In addition, it is observed:

\begin{prop}[{\cite[Proposition B.1]{efimov2}}]
Let $\K$ be a perfect field,
and $X$ a separated scheme of finite type over $\K$.
Then the dg category $\r{Prop} Coh(X)$ is quasi-equivalent to the dg category $!\!-\! \perf(X)_{prop}$ of $!$-perfect complexes with proper support.
\end{prop}

By composing this with the functor from \eqref{cmodprop to cinf2},
 we get a composite functor
\begin{equation}\label{singoptocohinf}
\dd^{b}_{sg}(X)^{op} = \dd^{b}Coh(X)/\perf(X)^{op} \simeq \dd^{b}Coh(X)/!\!-\!\perf(X) \simeq \dd^{b}Coh(X)/\r{Prop}D^{b}Coh(X) \to \widehat{\dd^{b}Coh(X)}_{\infty}.
\end{equation}
Using this, Efimov proved:

\begin{thm}[{\cite[Theorem 9.2]{efimov2}}]\label{thm: singopiscohinf}
The functor $\dd^{b}_{sg}(X)^{op} \to \widehat{\dd^{b}Coh(X)}_{\infty}$ is a quasi-equivalence.
\end{thm}

\subsection{Calabi-Yau structures on the singularity category}

For $X$ a proper scheme of finite type over $\K$,
the dg category $\dd^{b}Coh(X)$ is smooth but not proper,
(which will be proper if $X$ is smooth).
It has the desirable property to support a smooth Calabi-Yau structure when $X$ is Gorenstein:

\begin{prop}[{\cite{brav-dyckerhoff}[Proposition 5.12]}]\label{prop: gorenstein cy}
Let $X$ be a Gorenstein scheme of dimension $n$.
Then giving a strong smooth Calabi-Yau structure on $\dd^{b}Coh(X)$ is equivalent to giving a trivialization of $\omega_{X} \simeq \o_{X}$.
\end{prop}

We can now prove Theorem \ref{thm: sing cy}:

\begin{proof}
By Proposition \ref{prop: gorenstein cy},
a trivialization of $\omega_{X} \simeq \o_{X}$ gives rise to a strong smooth Calabi-Yau structure on $\dd^{b}Coh(X)$ of dimension $n$,
which in particular induces a weak smooth Calabi-Yau structure on $\dd^{b}Coh(X)$ of dimension $n$,
Then Theorem \ref{thm: cinf cy} implies that $\widehat{\dd^{b}Coh(X)}_{\infty}$ has a weak proper Calabi-Yau structure of dimension $n-1$.
By the equivalence \eqref{singoptocohinf} from Theorem \ref{thm: singopiscohinf},
we deduce that $\dd^{b}_{sg}(X)^{op}$ has a weak proper Calabi-Yau structure of dimension $n-1$,
so does $\dd^{b}_{sg}(X)$.
\end{proof}

\section{The Rabinowitz Fukaya category}\label{section: rabinowitz fukaya category}

In this section, we provide a shortcut to the construction of the Rabinowitz Fukaya category.
Still, we set up Floer theory using quadratic Hamiltonians following \cites{abouzaid_gc, GGV}, 
as we find it particular simple for our algebraic argument.

\subsection{Geometric preliminaries}

A {\it Liouville domain} is a compact symplectic manifold-with-boundary with an exact symplectic form $\omega = d\lambda$,
where $\lambda$ is called a {\it Liouville one-form},
such that its dual vector field $Z$, called a {\it Liouville vector field} defined by $\lambda = i_{Z}\omega$,
 is outward pointing along the boundary. 
An exact symplectic manifold$(X, \omega)$ is called a {\it Liouville manifold}, 
if there exists a Liouville domain $X_{0} \subset X$ such that the positive flow of $\p X_{0}$ under $Z$ is defined for all time and the map 
\begin{equation}\label{cylindrical end}
X_{0} \cup_{\p X_{0}} \p X_{0} \times [1, +\infty) \to X
\end{equation}
 is a diffeomorphism.
In other words, $X$ has a positive cylindrical end,
modeled on the contact manifold $\p X_{0}$ equipped with the contact form $\alpha|_{\p X_{0}}$.
Different choices of $\p X_{0}$ are contactomorphic, 
so there is a well-defined contact manifold $(\p_{\infty} X, \xi)$,
which we call the {\it boundary at infinity} of $X$.
We shall assume $c_{1}(X)=0$ for the rest of the paper.

A Lagrangian submanifold $L \subset X$ is {\it exact}, 
if $\lambda|_{L} = df$ for some $f: L \to \R$ called a {\it primitive} for $L$.
$L$ is said to be {\it cylindrical}, if outside of a compact set, $L$ is invariant under the flow of $Z$.
These conditions imply that an exact cylindrical submanifold $L$ also has a positive cylindrical end
\begin{equation}
L = L_{0} \cup_{\p L_{0}} \p L_{0} \times [1, +\infty),
\end{equation}
modeled on the Legendrian submanifold $l = \p L_{0} \subset \p X_{0}$.
Moreover, the primitive $f_{L}: L \to \R$ is locally constant in the cylindrical end.
The Legendrian isotopy class is independent of choices of cylindrical ends,
so there is a well-defined Legendrian submanifold $\p_{\infty} L \subset \p_{\infty} X$,
which we call the {\it boundary at infinity} of $L$.
Suppose $L$ comes with a grading in the sense of \cite{seidel_graded},
and has a Spin structure.

We will fix, once for all, an at most countable collection 
\begin{equation}\label{collection of objects}
\L = \{L_{1}, L_{2}, \ldots | L_{i} \text{ exact, cylindrical, equipped with a grading and a Spin structure}\}.
\end{equation}
 of exact cylindrical Lagrangian submanifolds, 
 for which there exists a generic choice of Liouville form such that
\begin{equation}\label{non-degenerate Reeb dynamics}
\begin{split}
& \text{ all Reeb orbits of } \alpha \text{ are non-degenerate, and } \\
 &\text{all Reeb chords from the Legendrian } \p_{\infty}L_{i} \text{ to } \p_{\infty}L_{j} \text{ are non-degenerate}, \forall L_{i}, L_{j} \in \L.
\end{split}
\end{equation}

A Hamiltonian $H: X \to \R$ is said to be {\it admissible} or {\it quadratic at infinity},
if outside of a compact set, $H$ only depends on the radial coordinate on $\p X_{0} \times [1, +\infty)$ via the identification \eqref{cylindrical end},
and satisfies 
\begin{equation}
H(y, r) = r^{2}.
\end{equation}
The space of admissible Hamiltonians is denoted by 
\begin{equation}
\mathcal{H}(X).
\end{equation}

An $\omega$-compatible almost complex structure $J$ is said to be {\it $h$-rescaled contact type} on the cylindrical end,
if it satisfies 
\begin{equation}
\label{h rescaled contact type}
\frac{h}{r} \lambda \circ J = dr
\end{equation}
for some $h>0$ on the cylindrical end.
The space of all $h$-rescaled contact type almost complex structures is denoted by
\begin{equation}
\mathcal{J}_{h}(X).
\end{equation}

\subsection{Popsicles}\label{section: popsicles}

In this subsection we briefly review the definition of popsicles, 
originally introduced in \cite{abouzaid-seidel},
and adapted in \cite{GGV} to construct the $\ainf$-structure on the Rabinowitz Fukaya category.

Let $k \ge 1$ be a positive integer.
Let $\mathbf{p}: F \to \{1, \ldots, d\}$ a map from a finite set,
and put $p_{f} = \mathbf{p}(f)$ for $f \in F$.
Let $S$ be a connected bordered Riemann surface of genus zero with $k+1$ boundary punctures $z_{0}, z_{1}, \ldots z_{k}$ ordered cyclically along the boundary.

\begin{defn}\label{def: popsicles}
A popsicle structure $\eta$ on $S$ of flavor $\mathbf{p}$ consists of a collection of choices of preferred points $\{\eta_{f}\}_{f \in F}$,
with each $\eta_{f}$ on the unique hyperbolic geodesic $C_{p_{f}}$ connecting $z_{0}$ to $z_{p}$.

We call the pair $(S, \eta)$ a popsicle of flavor $\mathbf{p}$.
\end{defn}

A popsicle $(S, \eta)$ of flavor $\mathbf{p}$ is {\it stable} if $k+|F| \ge 2$.
The stable popsicles form a moduli space 
\begin{equation}
\cR^{k+1, \mathbf{p}, \mathbf{w}},
\end{equation}
which can be compactified by adding broken popsicles.
Consider a ribbon tree $T$ with $k$ leaves and one root,
such that the valency $|v|$ of each vertex $v \in V(T)$ is at least two.
Let $\vec{F} = \{F_{v}\}_{v \in V(T)}$ be a decomposition of $F$ indexed vertices of $T$,
and $\mathbf{p}_{v}: F_{v} \to \{1, \ldots, |v|-1\}$ be the map induced by $\mathbf{p}$.
A broken popsicle of type $T$ of flavor $\vec{\mathbf{p}} = \{\mathbf{p}_{v}\}_{v \in V(T)}$ is a collection of popsicles $\{(S_{v}, \eta_{v})\}_{v \in V(T)}$,
each of flavor $F_{v}$ for each vertex $v$.
A broken popsicle is stable if every component $(S_{v}, \eta_{v})$ is stable, which means $|v|+|F_{v}| \ge 3$.
The stable ones form a moduli space
\begin{equation}
\cR^{T, \vec{\mathbf{p}}} = \prod_{v \in V(T)} \cR^{|v|, \mathbf{p}_{v}}.
\end{equation}
Define the moduli space of broken popsicles with $k$ leaves and one root to be
\begin{equation}
\bar{\cR}^{k+1, \mathbf{p}} = \coprod_{\substack{T, \vec{F}\\ T \text{ has } k \text{ leaves and one root }}} \cR^{T, \vec{\mathbf{p}}}.
\end{equation}
This is a compact manifold with corners (\cite{abouzaid-seidel}).
There is a natural action of the group of automorphisms $\r{Aut}(\mathbf{p}) \subset \r{Sym}(F)$ of $F$ preserving $\mathbf{p}$ on the moduli space,
which is a trivial action if $\mathbf{p}$ is injective.

Now we assign {\it weights} to popsicles. 
Consider non-negative integers $w_{i} \in \Z_{\le 0}, i = 0, 1,\ldots, k$ satisfying
\begin{equation}\label{weight condition 1}
w_{0} = w_{1} + \cdots + w_{k} + |F|
\end{equation}
and
\begin{equation}\label{weight condition 2}
|\mathbf{p}^{-1}(i) \le -w_{i}, \forall i=1,\ldots, k.
\end{equation}
Assign the weight $w_{i}$ to the puncture $z_{i}$ of a popsicle $(S, \eta)$ of flavor $\mathbf{p}$.
Put $\mathbf{w} = (w_{0}, \ldots, w_{k})$. 
Denote the resulting moduli space with this decoration by $\cR^{k+1, \mathbf{p}, \mathbf{w}}$.

The weights are automatically inherited by broken popsicles,
such that both \eqref{weight condition 1} and \eqref{weight condition 2} are satisfied on each smooth component $S_{v}$ labeled by some vertex $v \in V(T)$.
Denote by $\mathbf{w}_{v}$ the collection of weights on that component.
The compactified moduli space of broken popsicles is
\begin{equation}\label{compactified moduli space of popsicles}
\bar{\cR}^{k+1, \mathbf{p}, \mathbf{w}} = \coprod_{\substack{T, \vec{F}\\ T \text{ has } k \text{ leaves and one root }}} \cR^{T, \vec{\mathbf{p}}, \vec{\mathbf{w}}}
= \coprod_{\substack{T, \vec{F}\\ T \text{ has } k \text{ leaves and one root }}} \prod_{v \in V(T)} \mathcal{R}^{|v|, \mathbf{p}_{v}, \mathbf{w}_{v}}.
\end{equation}

We shall be only interested in the case where $w_{i} \in \{-1, 0\}$.
Although this condition is not always preserved when passing to boundary strata of the compactified moduli space $\bar{\cR}^{k+1, \mathbf{p}, \mathbf{w}}$,
but we will observe:

\begin{lem}[{\cite[Lemma 4.2]{seidel6}}]
Let $\mathbf{w}$ be a collection of weights satisfying $w_{i} \in \{-1, 0\}$.
Consider a broken popsicle in the codimension-one boundary stratum of $\bar{\cR}^{k+1, \mathbf{p}, \mathbf{w}}$ modeled on a tree $T$ with two vertices $v_{2}, v_{1}$ with an edge going from $v_{2}$ to $v_{1}$.
Suppose there is an inherited weight that does not belong to $-1, 0$, i.e., for some $i$ we have
\begin{equation}
w_{v_{1}, i} = w_{v_{2}, 0} < -1.
\end{equation}
Then exactly one of the following is true:
\begin{enumerate}[label=(\roman*)]

\item $|\mathbf{p}_{v_{1}}^{-1}(i)| \ge 2$, or

\item $w_{v_{1}, 0 } = w_{0} = -1, w_{v_{1}, i} = w_{v_{2}, 0} = -2$, 
and $\mathbf{p}_{v_{1}}^{-1}(i) = \{f\}$ is a singleton set.
In this case, there are exactly two possible values for $k = \mathbf{p}(j_{v_{1}}(f))$ satisfying $w_{k} = w_{v_{2}, k-i+1} = -1$ and $\mathbf{p}_{v_{2}}^{-1}(k-i+1) = \varnothing$.

\end{enumerate}
\end{lem}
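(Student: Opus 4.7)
The plan is to run a direct combinatorial argument from the two weight constraints \eqref{weight condition 1} and \eqref{weight condition 2} applied separately at each of the two components. Throughout, take the convention that $v_{1}$ is the root component and $v_{2}$ is glued onto the $i$-th input of $v_{1}$, so that every input of $v_{2}$ and every input of $v_{1}$ other than the $i$-th is a leaf of $T$. By hypothesis the original leaf weights lie in $\{-1, 0\}$, hence
\[
w_{v_{1}, 0} = w_{0} \in \{-1, 0\}, \qquad w_{v_{1}, j} \in \{-1, 0\} \text{ for } j \ne i, \qquad w_{v_{2}, j} \in \{-1, 0\} \text{ for } j \ge 1,
\]
while only the internal-edge weight $w_{v_{1}, i} = w_{v_{2}, 0}$ is assumed to satisfy $\le -2$. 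Condition \eqref{weight condition 2} moreover forces every input of weight $0$ to carry no sprinkle and every input of weight $-1$ to carry at most one.

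First I would bound $|\mathbf{p}_{v_{1}}^{-1}(i)|$ from below. Rearranging \eqref{weight condition 1} at $v_{1}$ and substituting $|F_{v_{1}}| = \sum_{j \ge 1} |\mathbf{p}_{v_{1}}^{-1}(j)|$ together with \eqref{weight condition 2} on every input other than the $i$-th gives the chain
\[
w_{0} - w_{v_{1}, i} = \sum_{j \ne i} w_{v_{1}, j} + |F_{v_{1}}| \le \sum_{j \ne i} w_{v_{1}, j} + \sum_{j \ne i} (-w_{v_{1}, j}) + |\mathbf{p}_{v_{1}}^{-1}(i)| = |\mathbf{p}_{v_{1}}^{-1}(i)|.
\]
Since $w_{0} \ge -1$ and $w_{v_{1}, i} \le -2$, the left-hand side is at least $1$, so $|\mathbf{p}_{v_{1}}^{-1}(i)| \ge 1$. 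If case (i) fails then $|\mathbf{p}_{v_{1}}^{-1}(i)| \le 1$, and the two inequalities combine to equality throughout. This saturation forces $|\mathbf{p}_{v_{1}}^{-1}(i)| = 1$, $w_{0} = -1$, $w_{v_{1}, i} = -2$, and $|\mathbf{p}_{v_{1}}^{-1}(j)| = -w_{v_{1}, j}$ for every $j \ne i$.

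Next I would count the admissible gluing positions by analysing $v_{2}$. With $w_{v_{2}, 0} = -2$, relation \eqref{weight condition 1} reads $\sum_{j \ge 1}(-w_{v_{2}, j}) = |F_{v_{2}}| + 2$; writing $a$ for the number of inputs of $v_{2}$ of weight $-1$, this says $a = |F_{v_{2}}| + 2$. Combining with $|F_{v_{2}}| = \sum_{j \ge 1}|\mathbf{p}_{v_{2}}^{-1}(j)|$ and the fact that each of these $a$ inputs carries at most one sprinkle, exactly $|F_{v_{2}}|$ of them carry a sprinkle and exactly two carry none. Under the index shift sending the $j$-th input of $v_{2}$ to the $(j + i - 1)$-th leaf of the glued popsicle, these two exceptional indices translate precisely to the two values of $k$ described in (ii), and at each of them the corresponding leaf weight is $w_{k} = -1$ with $\mathbf{p}_{v_{2}}^{-1}(k - i + 1) = \varnothing$.

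The main obstacle is bookkeeping rather than estimation: one must keep the local indices at $v_{2}$ straight against the global leaf index $k$ of the broken popsicle, and verify that the unique sprinkle $f \in \mathbf{p}_{v_{1}}^{-1}(i)$ is compatible with each of the two permissible gluing positions (which is what the notation $k = \mathbf{p}(j_{v_{1}}(f))$ encodes). Once the sprinkle/geodesic conventions of Definition \ref{def: popsicles} are unwound this is a routine verification, and the equality cases extracted in the preceding two paragraphs together with the dichotomy $|\mathbf{p}_{v_{1}}^{-1}(i)| \ge 2$ versus $|\mathbf{p}_{v_{1}}^{-1}(i)| = 1$ deliver the exclusive alternative between (i) and (ii).
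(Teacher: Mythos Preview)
The paper does not actually prove this lemma; it is quoted verbatim as \cite[Lemma 4.2]{seidel6} and used without further argument. Your combinatorial proof via the two constraints \eqref{weight condition 1} and \eqref{weight condition 2} is correct and is essentially the intended argument: the key inequality $w_{0} - w_{v_{1},i} \le |\mathbf{p}_{v_{1}}^{-1}(i)|$ together with the saturation analysis at $v_{1}$, followed by the count $a = |F_{v_{2}}| + 2$ at $v_{2}$, is exactly how one extracts the dichotomy.
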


\subsection{Pseudoholomorphic maps from popsicles}

Fix a quadratic Hamiltonian $H \in \mathcal{H}(X)$ and a time-dependent almost complex structure $J_{t}: [0, 1] \to \mathcal{J}_{1}(X)$.
To define pseudoholomorphic maps from popsicles to the target Liouville manifold,
we shall introduce {\it Floer data} on popsicles.
The reader is referred to \cites{abouzaid_gc, ganatra, GGV} for more detailed discussions regarding Floer data (in which the Hamiltonians are quadratic),
but for completeness and easy references in later sections, 
we shall include the basic definitions here.
The weight $w_{i}$ determines a rule for assigning either a positive or a negative strip-like end near the puncture $z_{i}$. 
To describe the rule, we introduce a function
\begin{equation}\label{sign function}
\d_{i} = 
\begin{cases}
0, &\text{ if } i =  0, w_{i} < 0 \text{ or } i = 1, w_{i} = 0, \\
-1, &\text{ if } i = 0, w_{i} = 0 \text{ or } i = 1, w_{i} < 0.
\end{cases}
\end{equation}
Also define the associated sign symbols
\begin{equation}\label{sign symbol}
\sk_{i}= 
\begin{cases}
+, &\text{ if } i =  0, w_{i} < 0 \text{ or } i = 1, w_{i} = 0, \\
-, &\text{ if } i = 0, w_{i} = 0 \text{ or } i = 1, w_{i} < 0.
\end{cases}
\end{equation}
Denote by
\begin{align}
Z^{+} & = [0, +\infty) \times [0, 1], \\
Z^{-} & = (-\infty, 0] \times [0, 1].
\end{align}
the positive and the negative infinite half-strips.

\begin{defn}\label{def: Floer data}
A Floer datum $\mathbf{D}_{S, \eta}$ for a $\mathbf{w}$-weighted popsicle $(S, \eta)$ of flavor $\mathbf{p}$ consists of:
\begin{enumerate}[label=(\roman*)]

\item A collection of real numbers $\nu_{i} \ge 1, i=0, \ldots, k$ satisfying
\begin{equation}
\sum_{i=0}^{k} (-1)^{\d_{i}} \nu_{i} \le 0.
\end{equation}

\item A collection of strip-like ends 
\begin{equation} 
\e_{i}: Z^{\sk_{i}}: \to S, i=0, \ldots, k.
\end{equation}
These depend on the weights $\mathbf{w}$ following the rule by \eqref{sign symbol}.

\item A rescaling function $\rho_{S}: S \to [1, +\infty)$ that is constant and equal to $\nu_{i}$ over the strip-like end $\e_{i}$, for every $i=0, \ldots, k$.

\item A one-form $\alpha_{S}$ on $S$ such that $\alpha_{S}|_{\p S} = 0, d \alpha_{S} \le 0$ and compatible with strip-like ends in the sense that
\begin{equation}
\e_{i}^{*} \alpha_{S} = \nu_{i} dt.
\end{equation}

\item A family of Hamiltonians $H_{S}: S \to \mathcal{H}(X)$ compatible with strip-like ends in the sense that
\begin{equation}
\e_{i}^{*}H_{S} = \frac{H \circ \psi^{\nu_{i}}}{\nu_{i}^{2}}, i = 0, \ldots, k.
\end{equation}

\item A domain-dependent family of almost complex structures $J_{S}$ such that at every point $z \in S$, $J_{S, z} \in \mathcal{J}_{\rho_{S}(z)}(X)$, and compatible with strip-like ends in the sense that
\begin{equation}
\e_{i}^{*}J_{S} = (\psi^{\nu_{i}})^{*} J_{t}, i = 0, \ldots, k.
\end{equation}

\end{enumerate}
In addition, the data $\rho_{S}, \alpha_{S}, H_{S}, J_{S}$ must be invariant under the action of $\r{Aut}(\mathbf{p})$,
which is naturally lifted to the space of functions on $S$ valued in $[1, +\infty)$, the space of one-forms on $S$,
the space of smooth maps from $S$ to $\mathcal{H}(X)$, 
and the space of smooth maps from $S$ to $\mathcal{J}(X)$.
We call such a Floer datum $\r{Aut}(\mathbf{p})$-invariant.
\end{defn}

We can also extend the definition to an unstable domain $Z$ by requiring 
\begin{equation}
\rho_{Z} \equiv 1, \alpha_{Z} \equiv dt, H_{Z} \equiv H \text{ and } J_{Z} \equiv J_{t}.
\end{equation}

\begin{defn}\label{def: conformally equivalent Floer data}
Two Floer data $\mathbf{D}_{S, \eta}, \mathbf{D}'_{S, \eta}$ are said to be conformally equivalent, 
if there exist $K>0, C>0$ such that
\begin{align}
\nu'_{i} & = K \nu_{i}, \\
\rho'_{S} & = K \rho_{S}, \\
\alpha'_{S} & = K \alpha_{S}, \\
H'_{S} &= \frac{H_{S} \circ \psi^{K}}{K^{2}} + C, \\
J'_{S} &= (\psi^{K})^{*} J_{S}.
\end{align}
\end{defn}

To ensure that the moduli spaces of pseudoholomorphic maps defined with respect to choices of Floer data as above have good compactifications, we introduce the following notion, following \cite{abouzaid_gc} (see also \cite[\S 3.5]{GGV}):

\begin{defn}\label{def: universal and conformally consistent choice of Floer data}
A universal and conformally consistent choice of Floer data $\mathbf{D}_{\cR}$ for all disks with popsicle structures is a choice of Floer data,
one for each representative $(S, \eta)$ of element in $\bar{\cR}^{k+1, \mathbf{p}, \mathbf{w}}$, 
being $\r{Aut}(\mathbf{p})$-invariant and smoothly varying over $\cR^{k+1, \mathbf{p}, \mathbf{w}}$, 
which at boundary strata agree with a product Floer data chosen for lower dimensional moduli spaces up to conformal equivalence.
\end{defn}

All the choices as part of a Floer datum $D_{S, \eta}$ as in Definition \ref{def: Floer data} form a contractible space;
so by a standard induction argument, a universal and conformally consistent choice of Floer data $\mathbf{D}_{\cR}$ for all disks with popsicle structures exists; see \cites{abouzaid_gc, seidel_book, ganatra}, etc.

For boundary conditions of the pseudoholomorphic maps, we introduce:

\begin{defn}
A Lagrangian label for $(S, \eta)$ is a collection of exact cylindrical Lagrangian submanifolds $L_{i} \in \L, i = 0, \ldots, k$ where $\L$ is defined in \eqref{collection of objects},
with each $L_{i}$ assigned to the component of $\p S$ between the punctures $z_{i}$ and $z_{i+1}$, $\r{mod}$  $k+1$ cyclically.
\end{defn}

For $i = 1, \ldots, k$, let 
\begin{equation}
x_{i} \in 
\begin{cases}
\chi(L_{i-1}, L_{i}; H), & \text{ if } \d_{i} = 0, \\
\chi(L_{i}, L_{i-1}; H), & \text{ if } \d_{i} = -1,
\end{cases}
\end{equation}
For $i=0$, let 
\begin{equation}
x_{0} \in 
\begin{cases}
\chi(L_{0}, L_{k}; H), & \text{ if } \d_{0} = -1, \\
\chi(L_{k}, L_{0}; H), & \text{ if } \d_{0} = 0.
\end{cases}
\end{equation}
Here the functions $\d_{i}$ are defined in \eqref{sign function}.

A pseudoholomorphic map from a popsicle is a map $u: S \to X$ satisfying {\it homogeneous Cauchy-Riemann equation}
\begin{equation}\label{cr equation for popsicles}
\begin{cases}
& (du-X_{H_{S}} \otimes \alpha_{S})^{0, 1} = \frac{1}{2} [ (du-X_{H_{S}} \otimes \alpha_{S}) + J_{S} \circ (du-X_{H_{S}} \otimes \alpha_{S}) \circ j] = 0, \\
& u(z) \in (\psi^{\rho_{S}(z)})^{*} L_{i}, z \in \p S, i = 0, \ldots, k \text{ exponentially},\\
& \lim\limits_{s \to \sk_{i}\infty} u \circ \e_{i}(s, \cdot) = (\psi^{\nu_{i}})^{*} x_{i} \text{ exponentially}.
\end{cases}
\end{equation}

Put $\mathbf{x} = (x_{0}, x_{1}, \ldots, x_{k})$.
Let 
\begin{equation}\label{moduli space of popsicles}
\cR^{k+1, \mathbf{p}, \mathbf{w}}(\mathbf{x})
\end{equation}
be the moduli space of pseudoholomorphic maps from popsicles, i.e., maps $u: S \to X$ satisfying \eqref{cr equation for popsicles}.

\begin{lem}[{\cite[Lemma 3.5]{GGV}}]
The virtual dimension of the moduli space $\cR^{k+1, \mathbf{p}, \mathbf{w}}(\mathbf{x})$ \eqref{moduli space of popsicles} is
\begin{equation}
v-\dim \cR^{k+1, \mathbf{p}, \mathbf{w}}(\mathbf{x}) = k - 2 + |F| + n(1+\sum_{i=0}^{k} \d_{i}) - \sum_{i=0}^{k} (-1)^{\d_{i}} |x_{i}|.
\end{equation}
\end{lem}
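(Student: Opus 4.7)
The plan is to split the virtual dimension into two contributions,
\[
v\text{-}\dim\, \cR^{k+1, \mathbf{p}, \mathbf{w}}(\mathbf{x}) \,=\, \dim\, \cR^{k+1, \mathbf{p}, \mathbf{w}} \,+\, \mathrm{ind}(D_{u}),
\]
where $D_{u}$ denotes the linearized Cauchy--Riemann operator on a fixed representative domain with the prescribed Lagrangian boundary conditions and asymptotic chords, and then to evaluate each summand separately. This is the standard parametric transversality decomposition for pseudoholomorphic curves, adapted to the popsicle setup by treating the preferred points as additional real parameters of the domain.

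I would first compute the dimension of the domain moduli. The moduli of disks with $k+1$ cyclically ordered boundary punctures has dimension $k-2$, since three punctures can be fixed using the $PSL(2,\R)$-action on the disk. Each popsicle marking $\eta_{f}$ is constrained to lie on the one-dimensional hyperbolic geodesic $C_{p_{f}}$, contributing one real parameter per element of $F$, giving $\dim\, \cR^{k+1, \mathbf{p}, \mathbf{w}} = k-2+|F|$; the weight vector $\mathbf{w}$ is discrete data and contributes nothing. When $\r{Aut}(\mathbf{p})$ acts non-trivially, the Floer data and hence the resulting moduli space descend consistently thanks to the $\r{Aut}(\mathbf{p})$-invariance required in Definition \ref{def: Floer data}.

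Next I would apply Riemann--Roch for Cauchy--Riemann type operators on a punctured disk with totally real boundary conditions and non-degenerate asymptotic Hamiltonian chords. Under the standing hypothesis $c_{1}(X)=0$ and the Spin and grading structures on the Lagrangians in $\L$, the bulk and boundary--Maslov contributions assemble, puncture by puncture, into a term of the form $n(1+\sum_{i=0}^{k}\d_{i})$, where $\d_{i} \in \{0,-1\}$ encodes whether $z_{i}$ carries a positive or negative strip-like end via \eqref{sign function}. Each puncture of sign $\d_{i}$ then contributes asymptotically $-(-1)^{\d_{i}}|x_{i}|$ to the index, consistent with the degree conventions that make $\mu_{\cc}^{k}$ of degree $2-k$. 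Assembling these pieces yields the stated formula; as a sanity check, the standard $\ainf$-configuration ($\d_{0}=-1$, $\d_{i}=0$ for $i\geq 1$, and $|F|=0$) gives $1+\sum\d_{i}=0$ and reduces the formula to the familiar $k-2+|x_{0}|-\sum_{i\geq 1}|x_{i}|$.

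The principal obstacle I anticipate is the sign and grading bookkeeping across the variable configurations of $\d_{i}$, since the weight convention on popsicles permits both positive and negative strip-like ends at every puncture, unlike the traditional $\ainf$-setup. In particular, one should verify that the formula is additive across the codimension-one degenerations recorded in \eqref{compactified moduli space of popsicles}: the $n$-contribution and the discretely-changing $\d_{i}$'s must combine correctly under gluing of two-component broken popsicles, which is required for the virtual dimension count to be internally coherent on the compactified moduli space $\bar{\cR}^{k+1,\mathbf{p},\mathbf{w}}$.
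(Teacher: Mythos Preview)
The paper does not supply its own proof of this lemma: it is quoted verbatim from \cite[Lemma 3.5]{GGV} with no argument. Your decomposition into $\dim \cR^{k+1,\mathbf{p},\mathbf{w}} + \mathrm{ind}(D_u)$ and the subsequent Riemann--Roch computation is the standard route, and it is exactly the style of argument the paper invokes for the analogous open-closed dimension formula in Lemma~\ref{lem: dim of oc moduli space}, where the proof simply points to Proposition~11.13 of \cite{seidel_book}. So your proposal is correct and aligned with how the paper treats index computations elsewhere; there is nothing further to compare.
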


Suppose we have made a universal and conformally consistent choice of Floer data $\mathbf{D}_{\cR}$ for all disks with popsicle structures.
The moduli space $\cR^{k+1, \mathbf{p}, \mathbf{w}}(\mathbf{x})$ \eqref{moduli space of popsicles} has a natural Gromov compactification 
\begin{equation}
\bar{\cR}^{k+1, \mathbf{p}, \mathbf{w}}(\mathbf{x})
\end{equation}
by adding stable maps from broken popsicles modeled on ribbon trees carrying non-positive weights,
as well as inhomogeneous pseudoholomorphic strips which are elements of $\cR^{2, \varnothing, (0, 0)}(x_{0}, x_{1})$ and $\cR^{2, \varnothing, (-1, -1)}(x_{0}, x_{1})$.
In particular, the codimension-one boundary strata are covered by
\begin{align}
\cR^{j+1, \mathbf{p}_{1}, \mathbf{w}_{1}}(\tilde{x}, x_{i+1}, \ldots, x_{i+j}) & \times \cR^{k-j+2, \mathbf{p}_{2}, \mathbf{w}_{2}}(x_{0}, x_{1}, \ldots, x_{i}, \tilde{x}, x_{i+j+1}, \ldots, x_{k}), \\
\cR^{2, \varnothing, (w_{i}, w_{i})}(\tilde{x}_{i}, x_{i}) & \times \cR^{k+1, \mathbf{p}, \mathbf{w}}(x_{0}, \ldots, x_{i-1}, \tilde{x}_{i}, x_{i+1}, \ldots, x_{k}), \\
 \cR^{k+1, \mathbf{p}, \mathbf{w}}(\tilde{x}_{0}, x_{1}, \ldots, x_{k}) & \times \cR^{2, \varnothing, (w_{0}, w_{0})}(x_{0}, \tilde{x}_{0}),
\end{align}
for some `intermediate' chords $\tilde{x}, \tilde{x}_{i}, \tilde{x}_{0}$.
Boundary strata of higher codimensions can be described inductively.

The following statement summarizes the above description of the structure of the compactified moduli spaces,
where the proof for transversality is by now standard using a Sard-Smale argument, following e.g. \cite{seidel_book}.

\begin{prop}[{\cite[Proposition 3.6]{GGV}}]\label{prop: transversality and compactness of popsicles}
There exists a universal and conformally consistent choice of Floer data $\mathbf{D}_{\cR}$ such that the following holds.
\begin{enumerate}[label=(\roman*)]

\item If $k - 2 + |F| + n(1+\sum_{i=0}^{k} \d_{i}) - \sum_{i=0}^{k} (-1)^{\d_{i}} |x_{i}| = 0$, 
the moduli space $\bar{\M}_{\oc}^{k, \mathbf{p}_{oc, k}, \mathbf{w}_{oc, k}}(y; \mathbf{x})$ \eqref{compactified moduli space for oc} is a compact smooth manifold of dimension zero.

\item If $k - 2 + |F| + n(1+\sum_{i=0}^{k} \d_{i}) - \sum_{i=0}^{k} (-1)^{\d_{i}} |x_{i}| = 1$, 
the moduli space $\bar{\M}_{\oc}^{k, \mathbf{p}_{oc, k}, \mathbf{w}_{oc, k}}(y; \mathbf{x})$ \eqref{compactified moduli space for oc} is a compact smooth manifold-with-boundary of dimension one,
with boundary strata given by products of zero-dimensional moduli spaces.

\item If $k - 2 + |F| + n(1+\sum_{i=0}^{k} \d_{i}) - \sum_{i=0}^{k} (-1)^{\d_{i}} |x_{i}|=2$,
the moduli space $\bar{\M}_{\oc}^{k, \mathbf{p}_{oc, k}, \mathbf{w}_{oc, k}}(y; \mathbf{x})$ \eqref{compactified moduli space for oc} is a compact smooth manifold-with-boundary of dimension two,
with codimension-one boundary strata given by products of zero-dimensional and one-dimensional moduli spaces,
and codimension-two boundary strata given by products of zero-dimensional moduli spaces. 

\end{enumerate}
\end{prop}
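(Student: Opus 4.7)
The plan is to deduce (i)--(iii) simultaneously via the standard Sard--Smale transversality plus Gromov compactness package, extended to the open-closed popsicle setup in which one of the outputs is an interior cylindrical puncture asymptotic to a Hamiltonian orbit $y$. Throughout I will build a universal and conformally consistent choice of Floer data $\mathbf{D}_{\cR}$ (including on the interior cylindrical end and its rescaling function) by induction on the pair $(k, |F|)$ ordered lexicographically, gluing to the previously chosen data on boundary strata up to the conformal equivalence of Definition \ref{def: conformally equivalent Floer data}, exactly in the style of \cites{abouzaid_gc, seidel_book, ganatra, GGV}.

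First I would establish transversality. For a fixed representative $(S, \eta)$ the space of admissible Floer data obeying the constraints of Definition \ref{def: Floer data}, together with the analogous constraints on the interior cylindrical end, is contractible, so the inductive extension over $\bar{\cR}^{k+1, \mathbf{p}, \mathbf{w}}_{\oc}$ exists. At a solution $u \in \bar{\M}_{\oc}^{k, \mathbf{p}_{oc,k}, \mathbf{w}_{oc,k}}(y; \mathbf{x})$ the linearized Cauchy--Riemann operator is Fredholm of the claimed virtual index. Since $H_S$ and $J_S$ are fully domain-dependent, the popsicle sprinkles $\eta_f$ contribute additional free parameters, and the $\r{Aut}(\mathbf{p})$-invariance is compatible with the standard argument, every non-constant solution admits somewhere-injective points away from the cylindrical/strip-like ends and away from the sprinkles $\eta_f$. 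A Sard--Smale argument on the universal moduli space then produces a comeager subset of Floer data for which the linearized operator is surjective, yielding smoothness and the claimed dimension in (i) and (ii), and the structure of a smooth $2$-manifold-with-corners in (iii).

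The key compactness input is a $C^0$ bound excluding escape to infinity. This follows from the integrated maximum principle of Abouzaid--Seidel applied to the radial coordinate on the cylindrical end of $X$: the combination $d\alpha_S \le 0$, $\alpha_S|_{\p S} = 0$, the $h$-rescaled contact-type condition \eqref{h rescaled contact type} with $h = \rho_S(z)$, and the compatible Lagrangian boundary condition $u(z) \in (\psi^{\rho_S(z)})^* L_i$ force the image of any solution whose asymptotic Hamiltonian orbits/chords lie in a compact region to remain in a fixed compact subset of $X$. Exactness of $X$ and of the $L_i \in \L$ excludes sphere and disc bubbling, and the nondegeneracy assumption \eqref{non-degenerate Reeb dynamics} rules out breaking along degenerate asymptotic orbits/chords. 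Gromov compactness then leaves only Floer breakings at the strip-like/cylindrical asymptotic ends and popsicle degenerations of the domain, which assemble into the product boundary strata asserted in (ii) and (iii).

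The main obstacle is the bookkeeping of popsicle flavors and weights at boundary strata in light of \cite[Lemma 4.2]{seidel6}: when one component breaks off and carries some of the sprinkles, the inherited weight at the new node may fail to lie in $\{-1, 0\}$, placing it outside the class of moduli we a priori consider. One must show that this alternative either occurs only in strata of codimension $\ge 2$ (case (i) of that lemma, where $|\mathbf{p}_{v_1}^{-1}(i)| \ge 2$) and therefore does not appear in the codimension-one description in (ii), or else corresponds to a genuine glued configuration whose Floer data on the two components match across the node after the prescribed conformal rescaling (case (ii)). Verifying that the inductively chosen $\mathbf{D}_{\cR}$ is compatible with both outcomes, and keeping track of the sign function $\d_i$ \eqref{sign function} at the freshly created puncture, is the delicate combinatorial step; the rest of the proof is a direct assembly of standard Floer-theoretic ingredients.
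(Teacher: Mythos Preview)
The paper does not prove this proposition: it is quoted from \cite[Proposition 3.6]{GGV}, with only the remark preceding the statement that ``the proof for transversality is by now standard using a Sard--Smale argument, following e.g.\ \cite{seidel_book}.'' Your Sard--Smale plus maximum-principle/Gromov-compactness outline matches that remark exactly, and your discussion of the weight bookkeeping at boundary strata via \cite[Lemma 4.2]{seidel6} is precisely the ingredient the paper points to in the paragraphs above the proposition.

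One substantive point of confusion, though it originates in the paper rather than in your argument: the statement as printed refers to $\bar{\M}_{\oc}^{k, \mathbf{p}_{oc, k}, \mathbf{w}_{oc, k}}(y; \mathbf{x})$ via \eqref{compactified moduli space for oc}, but this is evidently a typographical slip. The dimension formula shown contains no orbit $y$ and no $2\d$ term (compare Lemma~\ref{lem: dim of oc moduli space}), the proposition sits in \S\ref{section: popsicles} immediately after the description of $\bar{\cR}^{k+1,\mathbf{p},\mathbf{w}}(\mathbf{x})$, and the Floer data invoked are $\mathbf{D}_{\cR}$ rather than $\mathbf{D}_{\oc}$. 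The intended moduli space is the popsicle one $\bar{\cR}^{k+1,\mathbf{p},\mathbf{w}}(\mathbf{x})$; the genuine open-closed analogue is Proposition~\ref{prop: transversality and compactness of oc moduli space}. You have written your proposal for the open-closed case, introducing an interior cylindrical end and an orbit asymptotic; that extra structure is not present in the result actually being asserted here, so your argument is doing more work than required (though nothing you wrote is wrong for the open-closed version either).
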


\subsection{Floer complexes}

Choose an admissible Hamiltonian $H \in \mathcal{H}(X)$.
Let $L_{0}, L_{1}$ be a pair of exact cylindrical Lagrangian submanifolds of $X$.
Consider the set
\begin{equation}
\chi(L_{0}, L_{1}; H)
\end{equation}
of all time-one $H$-chords from $L_{0}$ to $L_{1}$, 
which are maps $x: [0, 1] \to X$ satisfying Hamilton's equation:
\begin{equation}
\begin{cases}
&\frac{dx}{dt} + X_{H}(x(t)) = 0, \\
&x(0) \in L_{0}, x(1) \in L_{1}.
\end{cases}
\end{equation}
Each time-one $H$-chord $x$ a well-defined Maslov index, 
which we will denote by $|x| \in \Z/2$.
When $c_{1}(X) = 0$ and the Lagrangians come with gradings, 
this admits an integral lift $|x| \in \Z$.
To each chord $x \in \mathcal{X}(L_{0}, L_{1}; H)$, denote by $o_{x}$ its orientation line,
and $|o_x|_{\K}$ the associated $\K$-normalized orientation space.  

Define
\begin{equation}\label{Floer cochains}
CW^{*}(L_{0}, L_{1}; H) = \bigoplus_{x \in \chi(L_{0}, L_{1}; H)} |o_x|_{\K}[-\deg(x)],
\end{equation}
where the degree shift means that we are putting $|o_x|_{\K}[-\deg(x)]$ in degree zero, 
or equivalently $|o_x|_{\K}$ in degree $\deg(x)$.
Elements of this graded $\K$-modules are called the {\it wrapped Floer cochains}.
The differential is defined by counting rigid elements in the moduli spaces of Floer trajectories $\cR^{2}(x_{0}, x_{1})$, 
\begin{align}
d:  CW^{*}(L_{0}, L_{1}; H) & \to CW^{*+1}(L_{0}, L_{1}; H),\\
d([x_{1}])  = & \sum_{\substack{x_{0}\\|x_{0}|=|x_{1}|+1}} \sum_{u \in \cR^{2, \varnothing, (0, 0)}(x_{0}, x_{1})} o_{u}([x_{1}]),
\end{align}
where $o_{u}: o_{x_{1}} \stackrel{\cong}\to o_{x_{0}}$ is the induced isomorphism of orientation lines by each rigid element in the moduli space (see \cites{seidel_book, abouzaid_gc}),
and we denote the induced isomorphism on $\K$-normalized orientation space by the same notation.

Now consider the negative Hamiltonian $-H$.
It is not admissible in the usual sense, but we can still define a Floer complex for it as follows.
The negative reparametrization $\bar{x}(t):=x(1-t)$ provides a canonical one-to-one correspondence between time-one $(-H)$-chords $x \in \chi(L_{0}, L_{1}; -H)$ of Maslov index $i$ to time-one $H$-chords $\bar{x} \in \chi(L_{1}, L_{1}; H)$ of Maslov index $n-i$.
Let $o^{-}_{\bar{x}}$ be the negative orientation line for the chord $\bar{x}$,
such that there is a canonical isomorphism $o_{x} \cong o^{-}_{\bar{x}}$.
Define
\begin{equation}\label{Floer chains}
CW^{*}(L_{0}, L_{1}; -H) = \prod_{x \in \chi(L_{0}, L_{1}; -H)} |o_{x}|_{\K}[-\deg(x)] = \prod_{\bar{x} \in \chi(L_{1}, L_{0}; H)} |o^{-}_{\bar{x}}|_{\K}[n-\deg(\bar{x})].
\end{equation}
Elements of this graded $\K$-modules are called the {\it wrapped Floer chains}.

\begin{lem}\label{lem: PD for w-}
There is a canonical isomorphism of chain complexes, called the Poincar\'{e} duality isomorphism,
\begin{equation}\label{PD for w-}
I: CW^{*}(L_{0}, L_{1}; -H) \to CW^{*}(L_{1}, L_{1}; H)^{\vee}[-n] = \hom_{\K}(CW^{n-*}(L_{1}, L_{0}; H), \K).
\end{equation}
\end{lem}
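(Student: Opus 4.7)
The plan is to construct $I$ explicitly on generators using the chord reparametrization $x \mapsto \bar x$ with $\bar x(t) = x(1-t)$, and then verify that the differential on $CW^{*}(L_{0},L_{1};-H)$ is intertwined with the dual of the differential on $CW^{*}(L_{1},L_{0};H)$ via reflection of Floer strips. Concretely, $x \mapsto \bar x$ provides a degree-reversing bijection $\chi(L_0,L_1;-H) \cong \chi(L_1,L_0;H)$ with $|\bar x| = n - |x|$, and the canonical identification of the negative orientation line $o_x \cong o^-_{\bar x}$ pairs naturally with $o_{\bar x}$ to produce a canonical isomorphism $|o_x|_{\K} \cong |o_{\bar x}|_{\K}^{\vee}$ of one-dimensional $\K$-spaces. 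Defining $I$ as the direct product over chords of these orientation-line identifications, and noting that $CW^{*}(L_{0},L_{1};-H)$ is by construction a direct product while $\hom_{\K}(CW^{n-*}(L_{1},L_{0};H),\K)$ is also a direct product (the linear dual of a direct sum), $I$ is tautologically an isomorphism of graded $\K$-modules of degree $0$.

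The content is therefore that $I$ commutes with differentials. First I would set up the reflection $\tau \colon \R \times [0,1] \to \R \times [0,1]$, $\tau(s,t) = (-s, 1-t)$. If $u$ is a Floer strip for $-H$ with asymptotics $x_1,x_0$ (so a contribution to the $-H$ differential), then $u \circ \tau$ satisfies Floer's equation for $H$ with boundary conditions swapped (from $L_0,L_1$ to $L_1,L_0$) and asymptotics $\bar x_0, \bar x_1$, with the input/output roles of $x_0$ and $x_1$ exchanged. This gives a bijection of moduli spaces $\cR^{2,\varnothing,(-1,-1)}(x_0,x_1) \cong \cR^{2,\varnothing,(0,0)}(\bar x_1, \bar x_0)$ which is precisely the statement that counting the $-H$ differential out of $x_1$ into $x_0$ is the same as counting the $H$ differential out of $\bar x_0$ into $\bar x_1$. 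The latter is the matrix coefficient of the differential on $CW^{*}(L_{1},L_{0};H)$, so after dualizing it becomes the matrix coefficient of the differential on $CW^{*}(L_{1},L_{0};H)^{\vee}[-n]$ between $\bar x_1^{\vee}$ and $\bar x_0^{\vee}$, i.e.\ between $I(x_1)$ and $I(x_0)$.

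The main obstacle, and what I would treat carefully, is the sign computation: one must check that the reflection $\tau$ induces, on the determinant lines of the linearized Cauchy--Riemann operators, exactly the composite of the canonical identifications $o_x \cong o^-_{\bar x}$ with the duality pairing $o^-_{\bar x} \otimes o_{\bar x} \to \K$, and that the resulting sign on the count agrees with the Koszul sign implicit in $(\d_{A^{\vee}} f)(a) = (-1)^{|a|-1} f(\mu^1_A a)$ used in \eqref{skew symmetry for ev}. This is essentially the standard Poincar\'e duality for Floer complexes (see the analogous statement for closed strings in \cite{CO_tate} and the discussions around Poincar\'e--Lefschetz duality for the wrapped category in \cite{ganatra}); the key verification is local at each chord, and reduces to comparing the two trivializations of the determinant line of the Cauchy--Riemann operator on the infinite strip associated to $(H, L_0, L_1)$ versus $(-H, L_1, L_0)$.

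Finally, I would note that the product (not direct sum) in the definition \eqref{Floer chains} of $CW^{*}(L_{0},L_{1};-H)$ is essential for $I$ to be defined at all: the Hamiltonian $-H$ is not admissible in the usual sense, so there are infinitely many chords in each degree, and only by taking a direct product does one match the linear dual of the direct sum in \eqref{Floer cochains}. This is consistent with the Tate/locally linearly compact perspective alluded to in the introduction (Proposition~\ref{prop: rab tate}).
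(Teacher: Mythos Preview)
Your proposal is correct and follows essentially the same approach as the paper: define $I$ on generators by $x \mapsto \bar{x}$, then identify the moduli spaces of Floer trajectories for $-H$ with those for $H$ to conclude $I$ is a chain map. The paper's proof is considerably terser---it simply stipulates that the almost complex structures for $-H$ are chosen to agree with those for $H$, so that the moduli spaces are ``naturally identified''---whereas you make the mechanism explicit via the domain reflection $\tau(s,t)=(-s,1-t)$ and spell out the orientation-line and sign verifications; this extra detail is useful but not a different argument.
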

\begin{proof}
On generators, it is exactly given by the identification $x \mapsto \bar{x}$.
By choosing the family of almost complex structures for $-H$ to agree with the ones chosen for $H$ when choosing Floer data,
we also get a natural identification of moduli spaces of Floer trajectories for $-H$ and those for $H$.
Thus $I$ intertwines $d$ and $\p$ as well, i.e., is a chain map.
\end{proof}

An important feature in Floer theory is the existence of {\it a continuation map}
\begin{equation}\label{continuation map}
c: CW^{*}(L_{0}, L_{1}; -H) \to CW^{*}(L_{0}, L_{1}; H)
\end{equation}
associated to a monotone homotopy of Hamiltonians.
This is defined by counting rigid elements in the moduli space $\cR^{2, \{1\}, (0, -1)}(x_{0}, x_{1})$, i.e.,
\begin{equation}\label{continuation map formula}
c([x_{1}^{-}]) = \sum_{\substack{x_{0}\\|x_{0}| = n - |x_{1}|}} \sum_{u \in \cR^{2, \{1\}, (0, -1)(x_{0}, x_{1})}} \cR^{2, \{1\}, (0, -1)}_{u}([x_{1}^{-}]),
\end{equation}
where we take the input $[x_{1}^{-}] \in CW^{*}(L_{0}, L_{1}; -H)$ to be the generator associated with the chord $x_{1}^{-} \in \chi(L_{0}, L_{1}; -H)$ which uniquely corresponds to the chord $x_{1} \in \chi(L_{1}, L_{0}; H)$ by negative reparametrization.
Note that the wrapped Floer chain complex $CW^{*}(L_{0}, L_{1}; -H)$ is a direct product, 
we must therefore show that the map is well-defined.
This can be done by the following lemma.

\begin{lem}\label{lem: finitely many continuation moduli spaces}
Let $x_{0} \in \chi(L_{1}, L_{0}; H)$ and $x_{1} \in \chi(L_{0}, L_{1}; H)$ satisfy $|x_{0}|+|x_{1}|=n$. 
There are only finitely many pairs $(x_{0}, x_{1})$ for which the moduli space $\cR^{2, \{1\}, (0, -1)}(x_{0}, x_{1})$ is non-empty.
\end{lem}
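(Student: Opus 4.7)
The plan is to derive a uniform a priori bound on the cylindrical radial coordinates of the chords $x_{0}$ and $x_{1}$ whenever $\cR^{2,\{1\},(0,-1)}(x_{0},x_{1})$ is non-empty, and then invoke the nondegeneracy of the Reeb dynamics \eqref{non-degenerate Reeb dynamics} to conclude finiteness of admissible pairs.

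First I would apply the integrated maximum principle of Abouzaid-Seidel. The required ingredients are built into Definition \ref{def: Floer data}: the admissibility of $H$ ensures $H=r^{2}$ at infinity, one has $d\alpha_{S}\le 0$ with $\alpha_{S}|_{\partial S}=0$, and $J_{S}$ is $\rho_{S}$-rescaled contact type in the sense of \eqref{h rescaled contact type}. Together these force $r\circ u$ to attain its maximum at a strip-like end, so the image of any solution $u$ is confined to a cylindrical region $\{r\le R\}$ with $R$ determined by the Liouville-rescaled asymptotic data $\nu_{i}^{-1}r(x_{i})$. This is essentially the argument used for continuation maps in \cite{abouzaid_gc}; the single sprinkle on the geodesic $C_{1}$ does not interfere, as it affects only the domain-dependent choice of $H_{S}$ on a compact part of $S$, not the asymptotic or boundary behavior of the equation.

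Next I would combine this radial confinement with the action-energy identity for inhomogeneous pseudoholomorphic maps. Using $\alpha_{S}|_{\partial S}=0$, $d\alpha_{S}\le 0$, and Stokes' theorem one obtains an inequality of the schematic form
\begin{equation*}
0\le E(u) = -\sum_{i=0}^{1}(-1)^{\delta_{i}}\nu_{i}\,\mathcal{A}_{H}(x_{i}) + C_{S},
\end{equation*}
where $C_{S}$ bundles together the curvature contribution $\int_{S} dH_{S}\wedge\alpha_{S}$ and the fixed Liouville primitives of $L_{0},L_{1}$, and is bounded independently of $(x_{0},x_{1})$ by the universal and conformally consistent Floer datum. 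Because $H=r^{2}$ on the cylindrical end, $|\mathcal{A}_{H}(x)|=r(x)^{2}+O(1)$ for any chord supported there. The quadratic growth of the action coupled with the above energy inequality — whose right-hand side is in turn controlled in terms of $r(x_{0}),r(x_{1})$ by the radial confinement — forces both $r(x_{0})$ and $r(x_{1})$ to lie below some universal constant $R_{0}$ depending only on $L_{0},L_{1}$ and the chosen Floer data.

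Finally, by the nondegeneracy assumption \eqref{non-degenerate Reeb dynamics}, for each ordered pair of Legendrians the set of Reeb chords of length at most $2R_{0}$ is finite, and these are in bijection with $H$-chords satisfying $r(x)\le R_{0}$. This yields finitely many candidates for each of $x_{0},x_{1}$, hence finitely many pairs. Note that the index constraint $|x_{0}|+|x_{1}|=n$ plays no role in the finiteness argument; it only selects the rigid component on which the continuation map \eqref{continuation map formula} is defined. The main technical subtlety is the calibration of signs and rescaling factors in the action-energy identity under the somewhat nonstandard configuration in which both strip-like ends are negative, but the integrated maximum principle itself is insensitive to the orientation of the ends since it only uses $d\alpha_{S}\le 0$ and the contact-type condition on $J_{S}$.
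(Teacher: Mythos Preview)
Your argument is correct, but it is considerably more elaborate than the paper's. The paper's proof is essentially just your middle paragraph stripped to its core: since the weights are $(0,-1)$, the sign function \eqref{sign function} gives $\delta_0=\delta_1=-1$, so both strip-like ends are \emph{negative} (outputs) and there are no positive ends at all. The action--energy estimate then forces the sum of the actions $\mathcal{A}_H(x_0)+\mathcal{A}_H(x_1)$ to be positive, and the paper simply invokes \cite[Lemma~B.2]{abouzaid_gc} (for any bound, only finitely many $H$-chords have action above it) to conclude.

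Your first and third paragraphs are therefore unnecessary detours. The integrated maximum principle you invoke to control $r\circ u$, and hence $C_S$, is not needed: in the monotone-homotopy framework one arranges the curvature term to have the favorable sign, so it may simply be dropped from the inequality rather than bounded via the image of $u$. Likewise, converting the action bound back into a radial bound and then appealing to Reeb nondegeneracy \eqref{non-degenerate Reeb dynamics} merely re-derives what the cited lemma already packages, since $\mathcal{A}_H(x)\sim -r(x)^2$ on the cylindrical end. What you flag at the end as ``the main technical subtlety''---the configuration with both ends negative---is in fact the whole point of the paper's two-line argument: no inputs means the energy directly bounds the output actions from below.
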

\begin{proof}
If there exists some $u \in \cR^{2, \{1\}, (0, -1)}(x_{0}, x_{1})$,
the domain of $u$ has only two negative punctures (outputs) and no positive punctures (input),
this implies that the sum of the actions of $x_{0}$ and $x_{1}$ is positive.
However, all but finitely many $H$-chords have negative action (\cite[Lemma B.2]{abouzaid_gc}).
\end{proof}

\begin{cor}\label{cor: continuation map well-defined}
The continuation map \eqref{continuation map} is well-defined.
\end{cor}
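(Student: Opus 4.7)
The plan is to show that the formula \eqref{continuation map formula}, which a priori only defines $c$ on each individual generator $[x_1^-]$, in fact extends to a well-defined $\K$-linear map from the direct product $CW^{*}(L_{0}, L_{1}; -H)$ into the direct sum $CW^{*}(L_{0}, L_{1}; H)$. This is the only thing to worry about: the source is allowed to be a formal (possibly infinite) tuple of generators, while the target requires every resulting element to have only finitely many nonzero components.

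First I would unpack the definition. A general element $\xi = (a_{x_1^-})_{x_1^- \in \chi(L_0, L_1; -H)}$ of the direct product is sent, formally, to a sum whose coefficient at a fixed target generator $[x_0] \in CW^{*}(L_0, L_1; H)$ reads
\begin{equation}
\sum_{\substack{x_1^- \in \chi(L_0, L_1; -H)\\ |x_1| = n - |x_0|}}\ \sum_{u \in \cR^{2, \{1\}, (0, -1)}(x_0, x_1)} o_u(a_{x_1^-}[x_1^-]).
\end{equation}
The issue is to verify that, for each fixed $x_0$, only finitely many indices $x_1^-$ contribute a nonzero summand, so that the coefficient is a finite sum in $|o_{x_0}|_{\K}$.

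At this point I would simply invoke Lemma \ref{lem: finitely many continuation moduli spaces}, which asserts that there are only finitely many pairs $(x_0, x_1)$ with $|x_0| + |x_1| = n$ for which $\cR^{2, \{1\}, (0, -1)}(x_0, x_1)$ is nonempty. In particular the slice at any fixed $x_0$ is finite, hence the displayed coefficient is a finite sum for every $x_0$, and $c(\xi)$ is a well-defined element of the direct sum $CW^{*}(L_0, L_1; H)$; linearity in $\xi$ is then immediate from the formula. There is essentially no obstacle to overcome: the whole content of the corollary is that the action-filtration input recorded in Lemma \ref{lem: finitely many continuation moduli spaces} suffices to make the formal matrix $[o_u]$ column-finite with respect to the chosen bases, which is exactly what is needed to convert a map of bases into a map product $\to$ sum.
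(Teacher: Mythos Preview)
Your argument has a small but genuine gap. You correctly observe that for each fixed target $x_0$ only finitely many sources $x_1^-$ contribute, so every individual coefficient of $c(\xi)$ is a well-defined finite sum. But this alone does \emph{not} guarantee that $c(\xi)$ lies in the direct sum $CW^{*}(L_0, L_1; H)$: you must also check that only finitely many targets $x_0$ receive a nonzero coefficient. Your closing claim that this slice-finiteness ``is exactly what is needed to convert a map of bases into a map product $\to$ sum'' is false; the identity matrix on a countable basis already shows that finite slices at each target are not enough. The fix is immediate from the same lemma you invoke: Lemma \ref{lem: finitely many continuation moduli spaces} says there are only finitely many \emph{pairs} $(x_0, x_1)$ with nonempty moduli space, so in particular only finitely many $x_0$ can ever appear as targets, and $c(\xi)$ has finite support.

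The paper's proof organizes the same input slightly differently: it notes first that $c([x_1^-]) = 0$ for all but finitely many $x_1^-$, and second that each nonzero $c([x_1^-])$ is itself a finite sum. Together these make $c$ factor through the projection onto a finite sub-sum of the product, from which well-definedness is immediate.
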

\begin{proof}
Lemma \ref{lem: finitely many continuation moduli spaces} implies that $c([x_{1}^{-}]) = 0$ for all but finitely many $x_{1}^{-} \in \chi(L_{0}, L_{1}; -H)$,
and that for each such $x_{1}^{-}$, the output $c([x_{1}^{-}])$ is a finite sum as well.
\end{proof}

The Rabinowitz Floer complex is defined to be the mapping cone
\begin{equation}\label{rc}
RC^{*}(L_{0}, L_{1}) = \cone(c: CW^{*}(L_{0}, L_{1}; -H) \to CW^{*}(L_{0}, L_{1}; H)).
\end{equation}

As a by product of the definitions, we immediately see that the Rabinowitz Floer complex defined in \eqref{rc} is a locally linear compact vector space in the sense of Lefschetz \cite{lefschetz}.

\begin{prop}\label{prop: rab tate}
Suppose $\K$ is a field. 
The Rabinowitz Floer complex $RC^{*}(L_{0}, L_{1})$ is a locally linearly compact vector space over $\K$.
\end{prop}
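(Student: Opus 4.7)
The plan is to exhibit $RC^{*}(L_{0}, L_{1})$ as an extension of a discrete $\K$-vector space by a linearly compact $\K$-vector space, which is the standard way to produce a locally linearly compact (Tate) vector space in the sense of Lefschetz.

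First I would topologize the two building blocks of the mapping cone \eqref{rc}. The complex $CW^{*}(L_{0}, L_{1}; H) = \bigoplus_{x} |o_{x}|_{\K}[-\deg(x)]$ from \eqref{Floer cochains} is a direct sum of one-dimensional spaces; in each degree I equip it with the discrete linear topology, under which it is trivially locally linearly compact with open linearly compact subspace $\{0\}$. Dually, $CW^{*}(L_{0}, L_{1}; -H) = \prod_{\bar{x}} |o^{-}_{\bar{x}}|_{\K}[n-\deg(\bar{x})]$ from \eqref{Floer chains} is a direct product of one-dimensional discrete spaces; equipped degreewise with the product topology, it is a cofiltered limit of the finite-dimensional (hence linearly compact) quotients indexed by finite subsets of chords, and is therefore itself linearly compact.

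Next I would equip the mapping cone $RC^{*}(L_{0}, L_{1}) = CW^{*}(L_{0}, L_{1}; -H)[1] \oplus CW^{*}(L_{0}, L_{1}; H)$ with the direct sum topology of the two summands, so that the shifted cone factor $CW^{*}(L_{0}, L_{1}; -H)[1]$ is an open linearly compact subspace whose quotient is the discrete space $CW^{*}(L_{0}, L_{1}; H)$. This yields a short exact sequence of topological $\K$-vector spaces in each degree,
\begin{equation*}
0 \longrightarrow CW^{*}(L_{0}, L_{1}; -H)[1] \longrightarrow RC^{*}(L_{0}, L_{1}) \longrightarrow CW^{*}(L_{0}, L_{1}; H) \longrightarrow 0,
\end{equation*}
with linearly compact kernel and discrete cokernel, which is precisely the defining data for a locally linearly compact vector space.

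The one compatibility I would verify at the end is that the Rabinowitz differential on the mapping cone is continuous in this topology. Continuity of the internal Floer differentials on the two summands is routine, as each is finite-to-one on generators. Continuity of the continuation map $c$ from \eqref{continuation map}, viewed as a map from the linearly compact direct product into the discrete direct sum, is the substantive point: it amounts to the statement that, for each target generator $x_{0}$, only finitely many source generators $x_{1}^{-}$ contribute, and that for each $x_{1}^{-}$ the output is a finite sum. Both finiteness statements are exactly the content of Lemma \ref{lem: finitely many continuation moduli spaces} and its use in Corollary \ref{cor: continuation map well-defined}, so once they are invoked the Tate structure on $RC^{*}(L_{0}, L_{1})$ falls out formally from the definitions.
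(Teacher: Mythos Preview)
Your proof is correct and follows essentially the same approach as the paper: identify $CW^{*}(L_{0},L_{1};H)$ as discrete and $CW^{*}(L_{0},L_{1};-H)$ as linearly compact, then conclude that their direct sum (the underlying space of the cone) is locally linearly compact. The paper obtains linear compactness of the chain space by invoking Lefschetz--Tate duality through the Poincar\'e duality isomorphism \eqref{PD for w-}, whereas you argue it directly from the product topology; and you go further than the paper by checking continuity of the differential (in particular of the continuation map via Lemma~\ref{lem: finitely many continuation moduli spaces}), a point the paper's terse proof does not address.
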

\begin{proof}
Since the wrapped Floer cochain space \eqref{Floer cochains} is discrete,
it follows from Lefschetz-Tate duality (\cite[II. (28.2-29.1)]{lefschetz}, \cite[Theorem 1.25]{rojas}) that the wrapped Floer chain space \eqref{Floer chains} is linear compact because of the duality \eqref{PD for w-}.
By definition, a locally linearly compact vector space is a direct sum of a discrete vector space with a locally linear compact vector space.
\end{proof}

In fact, this is a type of locally linearly compact vector space, or equivalently a Tate vector space in the sense of \cite{Beilinson-Feigin-Mazur}, which is self dual (\cite[Proposition 3.15]{CO_tate}),
which in our context will imply

\begin{cor}
The topological linear dual of the Poincar\'{e} duality isomorphism $I$ \eqref{PD for w-} is still an isomorphism, 
and combined with $I$ itself gives an isomorphism between $RC^{*}(L_{0}, L_{1})$ and its topological linear dual,
which is itself.
\end{cor}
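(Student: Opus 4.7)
The plan is to realize $RC^{*}(L_{0}, L_{1})$ as a Tate/locally linearly compact vector space in a way that exposes its two summands, take the topological dual summand-by-summand, and then use the Poincar\'{e} duality $I$ of Lemma \ref{lem: PD for w-} --- applied to both orderings of the pair --- to re-package the answer as (a shift of) a Rabinowitz Floer complex with the Lagrangians swapped.

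First, by the very definition of the mapping cone in \eqref{rc}, there is a splitting as graded vector spaces
\[
RC^{*}(L_{0}, L_{1}) \;=\; CW^{*}(L_{0}, L_{1}; -H)[1] \;\oplus\; CW^{*}(L_{0}, L_{1}; H),
\]
in which the first summand is linearly compact (being a direct product of one-dimensional orientation spaces) and the second is discrete (being a direct sum). This is the Tate decomposition underlying Proposition \ref{prop: rab tate}. The topological linear dual functor on Tate vector spaces interchanges the two types of summands: it sends a discrete $V_{d}$ to its full algebraic dual $V_{d}^{\vee}$ (linearly compact), and sends a linearly compact $V_{c}$ to its continuous dual (discrete), and is an involution by Lefschetz--Tate reflexivity. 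Applying this to the decomposition above gives
\[
(RC^{*}(L_{0}, L_{1}))^{\vee,\mathrm{top}} \;\cong\; \bigl(CW^{*}(L_{0}, L_{1}; H)\bigr)^{\vee} \;\oplus\; \bigl(CW^{*}(L_{0}, L_{1}; -H)\bigr)^{\vee,\mathrm{top}}[-1].
\]

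Next I would apply $I$ to both orderings of the pair. From Lemma \ref{lem: PD for w-} applied to $(L_{1}, L_{0})$, the map $I$ is an algebraic isomorphism $CW^{*}(L_{1}, L_{0}; -H) \stackrel{\sim}{\to} CW^{*}(L_{0}, L_{1}; H)^{\vee}[-n]$, so dualizing once identifies $(CW^{*}(L_{0}, L_{1}; H))^{\vee} \cong CW^{*}(L_{1}, L_{0}; -H)[n]$. Applying $I$ to the original pair $(L_{0}, L_{1})$, the map is an isomorphism $CW^{*}(L_{0}, L_{1}; -H) \stackrel{\sim}{\to} CW^{*}(L_{1}, L_{0}; H)^{\vee}[-n]$; taking topological duals and using reflexivity of the linearly compact space on the left, together with the fact that the algebraic dual of a discrete space equals its topological dual, gives $(CW^{*}(L_{0}, L_{1}; -H))^{\vee,\mathrm{top}} \cong CW^{*}(L_{1}, L_{0}; H)[n]$. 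Combining these two identifications assembles the previous display into
\[
(RC^{*}(L_{0}, L_{1}))^{\vee,\mathrm{top}} \;\cong\; CW^{*}(L_{1}, L_{0}; -H)[n] \;\oplus\; CW^{*}(L_{1}, L_{0}; H)[n-1] \;=\; RC^{*}(L_{1}, L_{0})[n-1],
\]
which is $RC$ up to the expected self-dual shift by $n-1$ (the self-dual dimension of the underlying Tate vector space).

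The remaining check, and the main obstacle, is that this isomorphism of graded Tate vector spaces intertwines the differentials --- equivalently, that the topological linear dual of the continuation map $c$ for $(L_{0}, L_{1})$ agrees, after the two $I$-identifications, with the continuation map for $(L_{1}, L_{0})$. This is a chain-level, not cohomology-level, compatibility between Poincar\'{e} duality and continuation maps, and it is the only point that requires geometric input beyond formal Tate-duality and Lemma \ref{lem: PD for w-}. The identification should follow from the fact that the moduli space $\cR^{2, \{1\}, (0,-1)}(x_{0}, x_{1})$ defining $c$ in \eqref{continuation map formula} is, upon choosing almost complex structures for $\pm H$ consistently with the convention in Lemma \ref{lem: PD for w-} and reparametrizing by $x \mapsto \bar{x}$, canonically identified with the analogous moduli space for the pair $(L_{1}, L_{0})$; this is exactly the same symmetry that was exploited to produce the chain-level isomorphism $I$ in the first place. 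Once this geometric identification is in place, the remaining work is a Koszul-sign and orientation-line bookkeeping exercise, and combining it with the preceding summand-wise isomorphism completes the proof.
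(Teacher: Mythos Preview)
The paper gives no proof of this Corollary at all: it is stated as an immediate consequence of the general self-duality of Tate vector spaces, with a reference to \cite[Proposition 3.15]{CO_tate} in the preceding sentence, and the paper explicitly declines to pursue the matter further. So there is nothing to compare against beyond that citation.

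Your unpacking of the Tate-duality argument is correct and is exactly what lies behind the cited result: split $RC^{*}$ into its discrete and linearly compact summands, dualize each, and use $I$ (and reflexivity) to identify the pieces. Two comments. First, the Corollary as stated is about the underlying topological vector space, not the chain complex; your final step checking that the dual of the continuation map for $(L_{0},L_{1})$ matches the continuation map for $(L_{1},L_{0})$ is interesting and plausible, but it is not required for the statement and the paper does not claim it. Second, your computation lands on $RC^{*}(L_{1},L_{0})[n-1]$ rather than literally $RC^{*}(L_{0},L_{1})$; the paper's phrase ``which is itself'' should be read as abstract self-duality of the Tate vector space (the discrete and linearly compact pieces swap under duality, yielding an abstractly isomorphic object), not as a literal chain-complex identity. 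Your more refined identification is correct but is a sharper statement than what the Corollary asserts.
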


While we expect an interesting category theory enriched in Tate vector spaces,
we do not plan to address this further in this paper as our main results are independent of it.

\subsection{$\ainf$-structures}

Define maps
\begin{equation}\label{ainf maps for rw}
\mu^{k}_{\rw}: RC^{*}(L_{k-1}, L_{k}) \otimes \cdots RC^{*}(L_{0}, L_{1}) \to RC^{*}(L_{0}, L_{k})
\end{equation}
by countign rigid elements in the moduli space of pseudoholomorphic maps from popsicles.
This is defined first on components as maps of the form
\begin{equation}\label{components for rw ainf maps}
\mu^{k, \mathbf{p}, \mathbf{w}}: CW^{*}(L_{k-1}, L_{k}; \sk_{k}H) \otimes \cdots \otimes CW^{*}(L_{0}, L_{1}; \sk_{1}H) \to CW^{*}(L_{0}, L_{k}; \sk_{0}H),
\end{equation}
where the symbols $\sk_{i}$ are defined in \eqref{sign symbol}.
On a basis of elements the map \eqref{components for rw ainf maps} takes the form:
\begin{equation}\label{ainf maps for rw basis}
\mu^{k}_{\rw}([x_{k}^{\sk_{k}}], \ldots, [x_{1}^{\sk_{1}}]) = \prod_{x_{0}} \sum_{u \in \cR^{k+1, \mathbf{p}, \mathbf{w}}(\vec{x})} (-1)^{*_{k, \mathbf{p}, \mathbf{w}} + \diamond_{k, \mathbf{p}, \mathbf{w}}} o_{u}([x_{k}^{\sk_{k}}] \otimes \cdots \otimes [x_{1}^{\sk_{1}}]),
\end{equation}
where the signs are
\begin{align}
*_{k, \mathbf{p}, \mathbf{w}} = & \sum_{i=1}^{k} (i + w_{1} + \ldots + w_{i-1}) |x_{i}^{\sk_{i}}| + \sum_{i=1}^{k} (k-i) w_{i}, \label{popsicle sign 1} \\
 \diamond_{k, \mathbf{p}, \mathbf{w}} = & \sum_{i=1}^{k} |\mathbf{p}^{-1}(\{i+1, \ldots, k\})|(w_{i} + |\mathbf{p}^{-1}(i)|). \label{popsicle sign 2}
\end{align}
Technically, each rigid element $u$ would induce an isomorphism of orientation lines associated with the rescaled chords $(\psi^{\nu_{i}})^{*}x_{i}$, but there are canonical isomorphisms $o_{(\psi^{\nu_{i}})^{*}x_{i}} \cong o_{x_{i}}$ induced by the Liouville rescaling (\cite{abouzaid_gc}); similarly for the negative orientation lines.
The maps \eqref{ainf maps for rw basis} extend to well-defined maps \eqref{ainf maps for rw} by \cite[Lemma 4.8]{GGV}, 
which satisfy the $\ainf$-equations by \cite[Lemma 4.9]{GGV}.

\begin{defn}
The Rabinowitz Fukaya category $\rw = \rw(\L)$ is defined to be the $\ainf$-category having objects being exact cylindrical Lagrangian usbmanifolds in the collection $\L$ \eqref{collection of objects},
morphism spaces $\hom_{\rw}(L_{0}, L_{1}) = RC^{*}(L_{0}, L_{1})$,
and $\ainf$-structure maps $\mu^{k}_{\rw}$ \eqref{ainf maps for rw}.

When no confusion may occur, we shall refer to this as the Rabinowitz Fukaya category of $X$,
and denote it by $\rw = \rw(X)$.
\end{defn}

By construction, the Rabinowitz Fukaya categories $\rw$ comes with a natural strict $\ainf$-functor 
\begin{equation}\label{w to rw}
j: \w \to \rw
\end{equation}
which is the identity on objects,
has $j^{1}: CW(K, L; H) \to RC^{*}(K, L)$ the obvious inclusion map,
and $j^{k} = 0$ for all $k \ge 2$.
This functor sends compact exact Lagrangians to zero,
and more generally all {\it right proper objects} (meaning those whose right Yoneda modules are proper modules) to zero.
In this sense, the non-triviality of $\rw$ measures the failure of $\w$ being proper.

\subsection{Relations to the wrapped Fukaya category} \label{section: rw relation to w}

In addition to the functor $j$ \eqref{w to rw}, we have the following result establishing a deeper connection between the Rabinowitz Fukaya category and the wrapped Fukaya category:

\begin{thm}[{\cite[Theorem 1.1]{GGV}}]\label{thm: rw=winf}
There is a canonical $\ainf$-functor
\begin{equation}\label{rw to winf2}
\Phi: \rw \to \winf
\end{equation}
which is a quasi-equivalence whenever $X$ is a non-degenerate Liouville manifold with $c_{1}(X)=0$.
\end{thm}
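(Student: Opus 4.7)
The plan is to construct the $A_\infty$-functor $\Phi: \rw \to \winf$ geometrically via moduli spaces of popsicles with mixed positive/negative ends, and then to upgrade it to a quasi-equivalence under nondegeneracy by matching both sides with a common computation of Hochschild cochains. First I would set up the target: by Lemma \ref{lem: cinf hom as cone}, we have $\winf(X_0, X_1) = \r{CC}^*(\w^{op}, \cZ_{X_0}^{X_{1}})$, where $\cZ_{X_0}^{X_1}$ is the cone over the inclusion $i$ of finite-rank operators into all linear maps. Recalling that $RC^*(L_0, L_1)$ is itself a mapping cone of the continuation $c$, the natural strategy is to arrange $\Phi$ to send the two summands of $RC^*$ to the two summands of the cone $\cZ$ separately, roughly so that chords for $+H$ give rise to ``elementary'' natural transformations landing in $\Y^l_{X_1} \otimes_{\K} (\Y^l_{X_0})^{\vee}[1]$ (i.e., rank-one matrix coefficients built from multiplication with a single morphism), while chords for $-H$ (viewed as Floer chains via the duality $I$ of Lemma \ref{lem: PD for w-}) produce general natural transformations landing in $\hom_\K(\Y^l_{X_0}, \Y^l_{X_1})$ by counting popsicles with one negative boundary puncture of weight $-1$. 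The continuation map $c$ in the definition \eqref{rc} of $RC^*$ then precisely corresponds to the inclusion $i$ in the cone defining $\cZ$.

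Concretely, the higher order terms $\Phi^d$ would be defined by counting rigid elements in moduli spaces $\mathcal{R}^{d+k+2, \mathbf{p}, \mathbf{w}}(\vec x; \vec y)$ of popsicles with $d$ inputs decorated by chords in $RC^*$ (each carrying either weight $0$ or $-1$ according to whether it comes from the $+H$ or $-H$ summand), $k$ additional inputs carrying $+H$-chords in $CW^*$, and a single output producing the $k$-th component of a Hochschild cochain in $\r{CC}^*(\w^{op}, \cZ)$. The choice of weights, Floer data, and strip-like ends follows the recipe of Definition \ref{def: Floer data}, with the new ingredient being that the inputs from $RC^*$ are split into two flavors matching \eqref{sign symbol}, and the single output puncture's weight is determined so that the target lands in the correct summand of $\cZ$.

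The $A_\infty$-functor equations for $\Phi$ would be verified by a standard codimension-one boundary analysis of the compactified moduli spaces, invoking Proposition \ref{prop: transversality and compactness of popsicles}. The boundary strata split into three families: breakings along a collection of $\rw$-inputs (giving the $\mu^*_{\rw}$ side of the equation), breakings along the $\w$-inputs together with the single output puncture (giving the composition in the functor category $\fun(\w^{op}, \calk_\K)$, hence the $\mu^*_{\winf}$ side via Lemma \ref{lem: cinf hom as cone}), and breakings reflecting the Hochschild differential on the target. The key signs follow \eqref{popsicle sign 1}--\eqref{popsicle sign 2} and \eqref{mu2 on cinf}.

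The hard part will be the second assertion: that $\Phi$ is a quasi-equivalence when $X$ is nondegenerate with $c_1(X)=0$. Since $\Phi$ is the identity on objects, this reduces to showing $\Phi^1$ is a quasi-isomorphism on each hom space. The strategy would be to exploit that nondegeneracy means a finite collection $\mathcal{B}$ generates $\w$ in the split-closed sense, so every object $L$ is quasi-isomorphic, through its right Yoneda module, to a twisted complex in $\mathcal{B}$; both $RC^*(L_0, L_1)$ and $\winf(L_0, L_1)$ are then determined by a common computation in the triangulated category of $\w$-modules, namely the one encoded by the triangle \eqref{triangle of cinf} of Lemma \ref{lem: cinf smooth cone} (which uses the weak smooth Calabi-Yau structure on $\w$, available under the $c_1 = 0$ assumption). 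One then shows $\Phi^1$ intertwines the geometric mapping cone defining $RC^*$ with the algebraic mapping cone defining $\winf(L_0, L_1)$, by comparing the continuation $c$ with $i_*$ on the nose on generators and up to contractible ambiguity on higher order pieces, using the five-lemma on long exact sequences obtained from both mapping cones. The main technical obstacle is controlling the Hochschild cochain level constructions simultaneously with the geometric count, for which the popsicle formalism and the universal and conformally consistent choice of Floer data from Definition \ref{def: universal and conformally consistent choice of Floer data} are essential.
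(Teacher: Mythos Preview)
The paper does not prove this theorem; it is quoted from \cite{GGV}, and \S\ref{section: rw relation to w} only records the key intermediate statements (Propositions~\ref{prop: rw as cone of bimodule}, \ref{prop: bimodule PD}, Lemmas~\ref{lem: comp phi iso}, \ref{lem: continuation to chern}) used elsewhere. Your outline is in the right spirit, but it contains a concrete reversal that would break the construction.

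You assign $+H$-chords to the finite-rank summand $\Y^{l}_{X_{1}} \otimes (\Y^{l}_{X_{0}})^{\vee}[1]$ and $-H$-chords to $\hom_{\K}(\Y^{l}_{X_{0}}, \Y^{l}_{X_{1}})$. This is backwards. The Yoneda functor $\bar{y}$ sends a morphism $x \in \w(X_{0}, X_{1})$ to ``compose with $x$'', which is a genuine natural transformation in $\hom_{\K}(\Y^{l}_{X_{0}}, \Y^{l}_{X_{1}})$, not a rank-one operator; so $+H$-chords (i.e.\ $\w^{op}_{\D}$) must land in the $\hom_{\K}$ summand. Conversely, $-H$-chords, via the bimodule Poincar\'{e} duality $\mathcal{I}: \w_{-} \simeq (\w^{op})^{\vee}[-n]$ of Proposition~\ref{prop: bimodule PD}, land in the tensor summand. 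This is exactly what the paper records in \eqref{comp phi+}--\eqref{comp phi-2}. With your assignment, the continuation map $c: CW^{*}(-H) \to CW^{*}(+H)$ would have to correspond to a map $\hom_{\K} \to \Y^{l} \otimes (\Y^{l})^{\vee}$, which goes the wrong way relative to $i$; the cones would not match.

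For the quasi-equivalence, your five-lemma strategy is correct in outline but understates what is needed. The two filtered pieces are not ``the identity on generators'': one must show that the composition \eqref{comp phi+} is a quasi-isomorphism $\w^{op}_{\D} \to \r{CC}^{*}(\w^{op}, \hom_{\K}(\w^{op}_{\D}, \w^{op}_{\D}))$ (this is essentially Yoneda for a homotopy-unital category, independent of Calabi--Yau), and separately that \eqref{comp phi-2} is a quasi-isomorphism $\w_{-} \to \r{CC}^{*}(\w^{op}, (\w^{op})^{\vee} \otimes_{\K} \w^{op}_{\D}) \simeq (\w^{op})^{!} \otimes_{\w^{op}} (\w^{op})^{\vee}$. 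The second requires both Proposition~\ref{prop: bimodule PD} and the smooth Calabi--Yau structure $\sigma$ (to identify $(\w^{op})^{!}$ with $(\w^{op})_{\D}[-n]$); it does not follow from generation by $\B$ alone. The triangle \eqref{triangle of cinf} you cite uses smoothness, not the Calabi--Yau structure; the latter enters only when matching $\w_{-}$ with $(\w^{op})^{!} \otimes (\w^{op})^{\vee}$.
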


Below we shall list some of the key properties that are both relevant to the proof of Theorem \ref{thm: rw=winf} and also useful in other places of this paper, 
while refer the reader to \cite{GGV} for full details of the construction of the functor \eqref{rw to winf2} and the proof that it is a quasi-equivalence.

Recall from Corollary \ref{cor: cinf as cone} that $\winf$ naturally has the structure of a bimodule over $\w^{op}$,
together with a quasi-isomorphism
\begin{equation}\label{winf as cone}
\winf \cong \cone(\r{CC}^{*}(\w^{op}, (\w^{op})^{\vee} \otimes_{\K} \w_{\D}^{op}) \to \r{CC}^{*}(\w^{op}, \hom_{\K}(\w_{\D}, \w_{\D}))).
\end{equation}
Similarly, we can regard $\rw$ as a $\w^{op}$-bimodule via the natural functor $j: \w \to \rw$ \eqref{w to rw},
by restricting all except for one (the main input) of the inputs to $\w$ via the natural functor $j$ (the resulting one would be a bimodule over $\w$, 
but that is equivalent to a bimodule over $\w$ by interchanging the left and right actions.

\begin{rem}
Here we are saying that $\rw$, whose underlying cochain complex is
\begin{equation}
\rw(K, L) = RC^{*}(K, L)
\end{equation}
is a bimodule over $\w^{op}$.
The reader should notice that this is different from the diagonal bimodule $\rw_{\D}$ of $\rw$ itself, 
which has underlying cochain complex 
\begin{equation}
\rw_{\D}(K, L) = \rw(L, K) = RC^{*}(L, K).
\end{equation}
This is one of the main results that we are treating most of the relevant bimodules as over $\w^{op}$ instead of $\w$,
although it indeed brings up some inconvenience at times.
\end{rem}

There is another distinguished $\w^{op}$-bimodule
\begin{equation}
\w_{-},
\end{equation}
whose underlying cochain space is
\begin{equation}
\w_{-}(K, L) = CW^{*}(K, L; -H).
\end{equation}
The bimodule structure maps are defined by counting rigid popsicle maps used to define structure maps for $\rw$ subject to certain constructions on the flavors $\mathbf{p}$ and weights $\mathbf{w}$,
giving an $\ainf$-enhancement of the action of wrapped Floer cohomology on wrapped Floer homology
(see \cite[\S 5.6]{GGV} for details).
This comes with a map of bimodules
\begin{equation}\label{bimodule continuation map}
\ck: \w_{-} \to \w^{op}_{\D}
\end{equation} 
enhancing the continuation map.

\begin{prop}[{\cite[Lemma 5.9]{GGV}}]\label{prop: rw as cone of bimodule}
There is a canonical quasi-isomorphism of $\w^{op}$-bimodules
\begin{equation}
\iota: \cone(\ck: \w_{-} \to \w^{op}_{\D}) \to \rw.
\end{equation}
\end{prop}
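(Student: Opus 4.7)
The plan is to exhibit $\iota$ as the strict identity map on underlying graded vector spaces, and then to verify that the two $\w^{op}$-bimodule structures agree order-by-order by a direct decomposition of the popsicle moduli spaces that define them. First, I would observe that the underlying chain complex $\cone(\ck)(K,L)$, namely $\w_{-}(K,L)[1] \oplus \w^{op}_{\D}(K,L) = CW^{*}(K,L;-H)[1] \oplus CW^{*}(K,L;H)$ equipped with the twisted differential incorporating the continuation map $c$, is by the definition \eqref{rc} nothing other than $RC^{*}(K,L) = \rw(K,L)$. So tautologically we may set $\iota^{0,0}$ equal to the identity, and the only content of the claim is that the $\w^{op}$-bimodule structure maps $\mu^{k,l}_{\cone(\ck)}$ (built from the bimodule maps on $\w_{-}$, on $\w^{op}_{\D}$, and from $\ck$ through the mapping cone construction) coincide with the bimodule structure $(j,\ldots,j)^{*}\mu^{k,l}_{\rw}$ obtained by restricting all entries except the distinguished one from $\rw$ to $\w$ via the strict functor $j$ of \eqref{w to rw}.

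To verify this, I would decompose the popsicle moduli spaces $\cR^{k+1,\mathbf{p},\mathbf{w}}(\mathbf{x})$ that underlie $\mu^{k}_{\rw}$ according to the weight datum $\mathbf{w}$ when all but one input is assigned weight $0$ (the convention for inputs pulled back from $\w$ via $j$). With the bimodule-input weight denoted $w_{i_{0}}$ and the output weight $w_{0}$, the constraint $w_{0} = w_{i_{0}} + |F|$ together with $w_{i}\in\{-1,0\}$ and $|F|\ge 0$ eliminates the case $(w_{i_{0}},w_{0})=(0,-1)$ outright and leaves exactly three configurations. The case $(w_{i_{0}},w_{0})=(0,0)$ with $|F|=0$ is precisely the popsicle problem with no sprinkles and all positive ends, giving the $\ainf$-operations of $\w$ and hence the bimodule structure of $\w^{op}_{\D}$. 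The case $(w_{i_{0}},w_{0})=(-1,-1)$ with $|F|=0$ gives the bimodule operations defining $\w_{-}$ (as constructed in \cite[\S 5.6]{GGV}, specializing the popsicle counts). The case $(w_{i_{0}},w_{0})=(-1,0)$ with $|F|=1$ produces, by construction, the enhancement $\ck: \w_{-}\to\w^{op}_{\D}$ of the continuation map \eqref{bimodule continuation map}. These three contributions are exactly the components of $\mu^{k,l}_{\cone(\ck)}$.

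With the decomposition in hand, the $\ainf$-bimodule morphism equation for $\iota^{0,0}=\mathrm{id}$ reduces to the $\ainf$-bimodule equations that $\w_{-}$, $\w^{op}_{\D}$, and $\ck$ satisfy among themselves: the quadratic relations among popsicle boundary strata, when sorted by the $(w_{i_{0}}, w_{0})$ type, give precisely the mapping-cone bimodule relations (compatibility of $\ck$ with the bimodule structures on domain and target, plus the usual $\ainf$-bimodule identities on each factor). No higher-order components $\iota^{k,l}$ for $k+l>0$ are needed; the moduli-theoretic and the algebraic differentials match on the nose. Finally, since $\iota$ is the identity on chain-level underlying spaces, it is automatically a quasi-isomorphism, and functoriality under a change of collection $\L$ gives the ``canonical'' qualifier.

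The main obstacle I anticipate is bookkeeping of signs: one must check that the Koszul signs $*_{k,\mathbf{p},\mathbf{w}}$ and $\diamond_{k,\mathbf{p},\mathbf{w}}$ of \eqref{popsicle sign 1}--\eqref{popsicle sign 2}, when restricted to the three weight configurations above, reproduce the sign conventions of the cone bimodule (in particular the shift by $[1]$ on the $\w_{-}$ summand and the sign twist on $\ck$ coming from viewing it as a cone differential). This is a routine but tedious check, best organized by tracking the contribution of $w_{i_{0}} = -1$ to \eqref{popsicle sign 1} as a shift of the grading of the distinguished input, and by noting that the single sprinkle in Case 3 forces a fixed contribution to \eqref{popsicle sign 2} that matches the mapping-cone sign twist of $\ck$.
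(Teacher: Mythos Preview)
The paper does not give its own proof of this statement; it simply records the citation to \cite[Lemma 5.9]{GGV} and places a \qed. Your proposal is therefore not being compared against an argument in this paper but against the construction in \cite{GGV}, and your sketch is precisely the intended one: identify the underlying graded spaces, decompose the $\rw$-popsicle counts with all auxiliary inputs restricted along $j$ according to the three admissible weight patterns $(w_{i_0},w_0)\in\{(0,0),(-1,-1),(-1,0)\}$, and recognize these as the bimodule maps on $\w^{op}_{\D}$, on $\w_{-}$, and the components of $\ck$ respectively. Your observation that the fourth pattern $(0,-1)$ is excluded by $|F|\ge 0$ is the key combinatorial point, and your anticipation that the only nontrivial verification is the sign bookkeeping is accurate.
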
 \qed

In addition to the bimodule continuation map, we also have a bimodule enhancement of the Poncar\'{e} duality isomorphism $I$ \eqref{PD for w-}.

\begin{prop}[{\cite[Proposition 5.11]{GGV}}]\label{prop: bimodule PD}
There is a canonical quasi-isomorphism of $\w^{op}$-bimodules
\begin{equation}\label{bimodule PD}
\mathcal{I}: \w_{-} \stackrel{\sim}\to (\w^{op})^{\vee}[-n],
\end{equation}
whose $(0,0)$-th term on chain complexes satisfies $\mathcal{I}^{0, 0} = I$ where $I$ is the Poncar\'{e} duality isomorphism \eqref{PD for w-}. 
\end{prop}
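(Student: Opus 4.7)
The plan is to construct $\mathcal{I}$ using the same popsicle-moduli technology that produces the $\ainf$-bimodule structure on $\w_{-}$ and the bimodule continuation map $\ck: \w_{-} \to \w^{op}_{\D}$ in \cite{GGV}. Specifying a $\w^{op}$-bimodule morphism $\mathcal{I}: \w_{-} \to (\w^{op})^{\vee}[-n]$ is equivalent, after evaluating the dual bimodule, to giving a compatible family of multilinear maps
\[
\mathcal{I}^{k,l}: \w^{op}(X_0,\ldots,X_k) \otimes \w_{-}(X_0,Y_0) \otimes \w^{op}(Y_l,\ldots,Y_0) \otimes \w(Y_l,X_k) \to \K[-n],
\]
subject to the $\ainf$-bimodule morphism relations, with the structure maps on $(\w^{op})^{\vee}$ given by \eqref{structure maps for linear dual}.

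First I would introduce moduli spaces $\cR^{k+l+2,\mathbf{p},\mathbf{w}}_{\mathrm{PD}}$ of popsicle maps whose $k+l+2$ boundary punctures split into $k+l$ standard positive strip-like ends carrying the $\w^{op}$-inputs and two distinguished boundary punctures whose weights and flavor data are chosen so that, in the degenerate case $k=l=0$, the moduli space reduces to a bigon whose only rigid element pairs $x^{-}$ against its Poincar\'{e}-dual partner $\bar{x^{-}}$. I would then define $\mathcal{I}^{k,l}$ by counting rigid elements of these moduli spaces with the popsicle sign twists $*_{k,\mathbf{p},\mathbf{w}}$ and $\diamond_{k,\mathbf{p},\mathbf{w}}$ from \eqref{popsicle sign 1}--\eqref{popsicle sign 2}, so that by construction $\mathcal{I}^{0,0}=I$.

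Next I would verify the $\ainf$-bimodule morphism equations by the standard codimension-one boundary analysis of the one-dimensional moduli spaces $\cR^{k+l+2,\mathbf{p},\mathbf{w}}_{\mathrm{PD}}$. The degenerations split into three families: (a) breakings peeling off an $\ainf$-operation $\mu^j_{\w}$ acting on a string of consecutive $H$-inputs, (b) breakings isolating the bimodule structure map of $\w_{-}$ around the $\w_{-}$-entry, and (c) breakings near the evaluation puncture, which under the evaluation pairing translate into the $(\w^{op})^{\vee}$-bimodule structure map from \eqref{structure maps for linear dual}. These reassemble into the bimodule morphism equation for $\mathcal{I}$. Well-definedness of $\mathcal{I}^{k,l}$ as a map into the completed product $(\w^{op})^{\vee}[-n]$ follows from an action-energy argument analogous to Lemma \ref{lem: finitely many continuation moduli spaces}.

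Finally, since $\mathcal{I}^{0,0}=I$ is already a chain-level isomorphism by Lemma \ref{lem: PD for w-}, the induced map on cohomology at every pair of objects is an isomorphism, so $\mathcal{I}$ is a quasi-isomorphism of $\w^{op}$-bimodules. The main technical obstacle is the sign bookkeeping in family (c): one must match the geometric sign arising from a degeneration near the evaluation puncture against the Koszul-dualized bimodule structure sign of $(\w^{op})^{\vee}$ prescribed by \eqref{structure maps for linear dual}, whose interchange of left and right actions together with the opposite-grading convention interacts non-trivially with the popsicle signs \eqref{popsicle sign 1}--\eqref{popsicle sign 2}, making this the delicate step of the construction.
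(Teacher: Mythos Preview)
Your proposal is correct and follows essentially the same approach as the paper, which cites \cite[Proposition 5.11]{GGV} for the proof and describes the construction just after the statement as using ``disks with two primary inputs pairing between $\w_{-}$ and $\w^{op}$, as well as arbitrarily many auxiliary inputs from $\w^{op}$.'' Your sketch of the moduli spaces, the codimension-one boundary analysis yielding the bimodule morphism equation, and the observation that $\mathcal{I}^{0,0}=I$ forces the quasi-isomorphism property all match this outline.
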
 \qed

The construction of the bimodule map $\mathcal{I}$ \eqref{bimodule PD} uses disks with two primary inputs pairing between $\w_{-}$ and $\w^{op}$, 
as well as arbitrarily many auxiliary inputs from $\w^{op}$; see \S 3.6 of \cite{GGV}.

The functor $\Phi$ \eqref{rw to winf} induces a map of $\w^{op}$-bimodules
\begin{equation}\label{rw to winf as bimod}
\Phi_{\w^{op}}: \rw \to \winf,
\end{equation}
which is a quasi-isomorphism when $X$ is non-degenerate with $c_{1}(X) = 0$.
In view of the quasi-isomorphism \eqref{winf as cone}, 
this bimodule map can be written as two components,
\begin{equation}\label{phi+}
\Phi^{+}_{\w^{op}}: \rw \to \r{CC}^{*}(\w^{op}, \hom_{\K}(\w_{\D}, \w_{\D})),
\end{equation}
and
\begin{equation}\label{phi-}
\Phi^{-}_{\w^{op}}: \rw \to \r{CC}^{*}(\w^{op}, (\w^{op})^{\vee} \otimes_{\K} \w_{\D}^{op})[1].
\end{equation}
Here $\Phi^{+}_{\w^{op}}$ \eqref{phi+} is still a bimodule map, 
but $\Phi^{-}_{\w^{op}}$ \eqref{phi-} is not, but rather some kind of homotopy of between bimodule maps.
We consider the following two compositions
\begin{equation}\label{comp phi+}
\w^{op}_{\D} \to \cone(\ck: \w_{-} \to \w^{op}_{\D}) \stackrel{\iota}\to \rw \stackrel{\Phi^{-}_{\w^{op}}}\to \r{CC}^{*}(\w^{op}, \hom_{\K}(\w_{\D}, \w_{\D})),
\end{equation}
\begin{equation}\label{comp phi-}
\w_{-}[1] \to \cone(\ck: \w_{-} \to \w^{op}_{\D}) \stackrel{\iota}\to \rw \stackrel{\Phi^{-}_{\w^{op}}}\to \r{CC}^{*}(\w^{op}, (\w^{op})^{\vee} \otimes_{\K} \w_{\D}^{op})[1].
\end{equation}
The first map $\w_{-}[1] \to \cone(\ck: \w_{-} \to \w^{op}_{\D})$ is not a bimodule map either just as \eqref{phi-} is not, 
but the overall composition \eqref{comp phi-}, with the grading shifted back, defines a bimodule map
\begin{equation}\label{comp phi-2}
\w_{-} \to \r{CC}^{*}(\w^{op}, (\w^{op})^{\vee} \otimes_{\K} \w_{\D}^{op}).
\end{equation}

\begin{lem}[{\cite[Corollary 5.15, Proposition 5.19, Lemma 5.22]{GGV}}]\label{lem: comp phi iso}
The compositions \eqref{comp phi+} and \eqref{comp phi-2} are both quasi-isomorphism of $\w^{op}$-bimodules.
\end{lem}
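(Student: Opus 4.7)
The plan is to prove both quasi-isomorphism assertions simultaneously via a morphism-of-triangles argument in the derived category of $\w^{op}$-bimodules, leveraging the bimodule-level quasi-isomorphism $\Phi_{\w^{op}}: \rw \to \winf$ from Theorem \ref{thm: rw=winf}. First I would lay out two distinguished triangles of $\w^{op}$-bimodules: on the geometric side, Proposition \ref{prop: rw as cone of bimodule} gives
\[
\w_{-} \xrightarrow{\ck} \w^{op}_{\D} \longrightarrow \rw \longrightarrow \w_{-}[1],
\]
while the mapping-cone presentation \eqref{winf as cone} supplies
\[
\r{CC}^{*}(\w^{op}, (\w^{op})^{\vee} \otimes_{\K} \w^{op}_{\D}) \xrightarrow{i_{*}} \r{CC}^{*}(\w^{op}, \hom_{\K}(\w_{\D}, \w_{\D})) \longrightarrow \winf \longrightarrow [1].
\]

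Next I would show that $\Phi_{\w^{op}}$ extends to a morphism between these triangles, with left vertical arrow \eqref{comp phi-2} and middle vertical arrow \eqref{comp phi+}. By construction, \eqref{comp phi+} is the composition of the canonical inclusion $\w^{op}_{\D} \hookrightarrow \cone(\ck) \simeq \rw$ with $\Phi^{+}_{\w^{op}}$, and it maps to the factor sitting under the natural projection $\winf \twoheadrightarrow \r{CC}^{*}(\w^{op}, \hom_{\K}(\w_{\D}, \w_{\D}))$; dually for the connecting map. The subtle point is that $\Phi^{-}_{\w^{op}}$ is not itself a bimodule map but a homotopy, and this homotopy is exactly the datum promoting the cone diagram to a morphism of triangles. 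Its compatibility can be read off from the construction of $\Phi$ in \cite{GGV} by comparing the relevant moduli of weighted popsicles controlling the two sides.

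Once this is in place, the five lemma applied to the long exact cohomology sequences of both triangles, combined with the fact that the right vertical arrow $\Phi_{\w^{op}}$ is a quasi-isomorphism (Theorem \ref{thm: rw=winf}), reduces the claim to verifying that \emph{one} of \eqref{comp phi+} or \eqref{comp phi-2} is a quasi-isomorphism. I would dispatch \eqref{comp phi+} directly via the cohomological Yoneda lemma: its chain-level content identifies, using Lemma \ref{lem: cc computes hom in fun}, with the canonical inclusion
\[
\w^{op}_{\D}(K, L) \longhookrightarrow \hom_{\fun(\w^{op}, \ch_{\K})}(\Y^{l}_{K}, \Y^{l}_{L}) \simeq \r{CC}^{*}(\w^{op}, \hom_{\K}(\w_{\D}, \w_{\D}))(K, L),
\]
which is a quasi-isomorphism because $\w$ is cohomologically unital (cf.\ \cite[Proposition 2.5]{ganatra}). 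Alternatively (and as a consistency check), one can derive the quasi-isomorphism of \eqref{comp phi-2} from Proposition \ref{prop: bimodule PD} by prepending the Poincar\'{e}-duality identification $\mathcal{I}: \w_{-} \stackrel{\sim}\to (\w^{op})^{\vee}[-n]$ and recognizing the resulting map as a Yoneda-type embedding into the Hochschild cochain complex with the indicated coefficients.

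The main obstacle is the second step: extracting from the definition of $\Phi$ in \cite{GGV} the precise null-homotopy data making $\Phi^{-}_{\w^{op}}$ into a compatible connecting map of triangles. Because $\Phi^{\pm}_{\w^{op}}$ are built from families of weighted popsicles with different flavors, this compatibility amounts to identifying a one-parameter family of popsicle moduli whose codimension-one boundary realizes the difference between the cone-theoretic null-homotopy and the one implicit in the construction of $\Phi$. This is delicate popsicle bookkeeping but requires no new analytic input beyond what is already established in \S\ref{section: popsicles} and \cite{GGV}.
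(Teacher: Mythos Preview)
The paper gives no proof of this lemma at all: the statement is imported wholesale from \cite{GGV} (Corollary~5.15, Proposition~5.19, Lemma~5.22) and closed with a bare \qed. So there is nothing to compare your argument against in this paper itself.

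That said, your five-lemma strategy has a circularity problem. You invoke Theorem~\ref{thm: rw=winf} (that $\Phi_{\w^{op}}$ is a quasi-isomorphism) to deduce one of the two vertical arrows from the other. But the paper explicitly introduces Lemma~\ref{lem: comp phi iso} as one of ``the key properties that are both relevant to the proof of Theorem~\ref{thm: rw=winf}'': in \cite{GGV} the logic runs the other way, with the two quasi-isomorphisms \eqref{comp phi+} and \eqref{comp phi-2} established first and then assembled (via a filtration/five-lemma argument) to conclude that $\Phi$ is a quasi-equivalence. Using Theorem~\ref{thm: rw=winf} here would therefore be circular from the standpoint of the source.

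Your direct arguments are on firmer ground and are in fact close in spirit to what \cite{GGV} does: identifying \eqref{comp phi+} with a Yoneda-type map and handling \eqref{comp phi-2} through the Poincar\'e duality bimodule map $\mathcal{I}$ of Proposition~\ref{prop: bimodule PD}. But the entire content of those steps is the verification that the \emph{geometrically defined} components $\Phi^{\pm}_{\w^{op}}$ (built from popsicle moduli) agree, up to homotopy, with the \emph{algebraic} Yoneda/PD maps. You defer this to ``popsicle bookkeeping,'' but that bookkeeping is precisely Corollary~5.15, Proposition~5.19, and Lemma~5.22 of \cite{GGV}; it is the substance of the result, not a routine check. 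So your sketch correctly locates where the work lies but does not actually supply it.
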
 \qed

\begin{lem}\label{lem: continuation to chern}
The quasi-equivalence $\Phi: \rw \to \winf$ induces a quasi-isomorphism of cones of bimodules
\begin{equation}
\cone(\ck: \w_{-} \to \w^{op}_{\D}) \to \cone((\w^{op})^{!} \otimes_{\w^{op}} (\w^{op})^{\vee} \to \w^{op}_{\D}).
\end{equation}
\end{lem}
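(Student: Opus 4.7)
The plan is to assemble the desired quasi-isomorphism from three pieces already established in the paper. First, Proposition \ref{prop: rw as cone of bimodule} provides a quasi-isomorphism of $\w^{op}$-bimodules
$\iota: \cone(\ck: \w_{-} \to \w^{op}_{\D}) \xrightarrow{\sim} \rw$.
Second, Theorem \ref{thm: rw=winf}, under the standing hypothesis that $X$ is non-degenerate with $c_{1}(X)=0$, furnishes a quasi-equivalence $\Phi: \rw \to \winf$, and hence a quasi-isomorphism of $\w^{op}$-bimodules $\Phi_{\w^{op}}: \rw \to \winf$ as in \eqref{rw to winf as bimod}. Third, the same non-degeneracy hypothesis implies that $\w$ is smooth, so applying Lemma \ref{lem: cinf smooth cone} with $\cc = \w$ yields a distinguished triangle of $\w^{op}$-bimodules
\begin{equation*}
(\w^{op})^{!} \otimes_{\w^{op}} (\w^{op})^{\vee} \to \w^{op}_{\D} \to \winf,
\end{equation*}
which in particular gives a quasi-isomorphism
$\cone((\w^{op})^{!} \otimes_{\w^{op}} (\w^{op})^{\vee} \to \w^{op}_{\D}) \xrightarrow{\sim} \winf$.

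With these in hand, I would form the composition $\Phi_{\w^{op}} \circ \iota$ to obtain a quasi-isomorphism $\cone(\ck) \to \winf$, and then invert the third quasi-isomorphism. Since $\K$ is a field, quasi-isomorphisms of $\ainf$-bimodules are invertible (as recalled in the proof of Lemma \ref{lem: capping with cy is iso}), so this produces a bimodule morphism, well-defined up to homotopy, from $\cone(\ck: \w_{-} \to \w^{op}_{\D})$ to $\cone((\w^{op})^{!} \otimes_{\w^{op}} (\w^{op})^{\vee} \to \w^{op}_{\D})$ which is a quasi-isomorphism as a composition of three quasi-isomorphisms.

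To produce an on-the-nose chain map of cones rather than a zigzag, one can use the componentwise data already supplied by Lemma \ref{lem: comp phi iso}: the quasi-isomorphism \eqref{comp phi+} handles the $\w^{op}_{\D}$ summand, while \eqref{comp phi-2} handles the $\w_{-}$ summand. Compatibility of these two maps with $\ck$, which is encoded in the very construction of $\Phi$, promotes them to a chain map of cones landing in the Hochschild-cone description of $\winf$ from Corollary \ref{cor: cinf as cone}. One then reidentifies the latter with $\cone((\w^{op})^{!} \otimes_{\w^{op}} (\w^{op})^{\vee} \to \w^{op}_{\D})$ using the smooth resolution $(\w^{op})^{!} \otimes_{\w^{op}} (\w^{op})^{\vee} \simeq \r{CC}^{*}(\w^{op}, (\w^{op})^{\vee} \otimes_{\K} \w^{op}_{\D})$ together with the quasi-isomorphism $\w^{op}_{\D} \simeq \r{CC}^{*}(\w^{op}, \hom_{\K}(\w_{\D}, \w_{\D}))$ of \eqref{1pt to 2pt}, both of which already appear in the proof of Lemma \ref{lem: cinf smooth cone}.

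The main obstacle is the bookkeeping in the last paragraph: one must verify that these identifications assemble into a genuine morphism of cones and that the resulting chain-level map agrees, up to homotopy, with the composition obtained via the zigzag. For the statement of the lemma as written, the zigzag argument already suffices and is what I would carry out in detail; the explicit chain-level construction becomes useful only when one needs a preferred representative, e.g.\ in subsequent comparisons with the geometric open-closed map of \S\ref{section: open-closed}.
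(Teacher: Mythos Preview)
Your proposal is correct and the route you sketch in your third paragraph is exactly the paper's argument: the paper invokes Lemma \ref{lem: comp phi iso} for the two filtered pieces, the identification $\r{CC}^{*}(\w^{op}, (\w^{op})^{\vee} \otimes_{\K} \w_{\D}^{op}) \simeq (\w^{op})^{!} \otimes_{\w^{op}} (\w^{op})^{\vee}$ coming from \eqref{1pt to 2pt}, and then a ``filtration argument'' (i.e.\ the two-step cone filtration with quasi-isomorphic associated graded pieces) to conclude. Your first two paragraphs provide a valid alternative via a zigzag through $\rw$ and $\winf$, which is slightly less direct but has the virtue of making the dependence on $\Phi$ explicit; the paper's version is terser precisely because it jumps straight to the componentwise maps and the filtration.
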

\begin{proof}
Note that there is a canonical quasi-isomorphism 
\[
\r{CC}^{*}(\w^{op}, (\w^{op})^{\vee} \otimes_{\K} \w_{\D}^{op}) \stackrel{\sim}\to (\w^{op})^{!} \otimes_{\w^{op}} (\w^{op})^{\vee}
\]
given by \eqref{1pt to 2pt}.
Then apply Lemma \ref{lem: comp phi iso} plus a filtration argument.
\end{proof}

\subsection{The geometric pairing}\label{section: rw taut pairing}

Finally, note that there is a tautological pairing on $\rw$ by evaluation,
\begin{equation}\label{taut pairing on rw}
\langle \cdot, \cdot \rangle_{taut}: \rw^{*}(K, L) \otimes \rw^{n-1-*}(L, K) \to \K,
\end{equation}
which has degree $1-n$.
This is a consequence of the general algebraic construction in \S\ref{section: taut pairing} together with the Poincar\'{e} duality isomorphism $I$ \eqref{PD for w-}.
The explicit formula is given as follows.
Since by definition we have
\begin{equation}
\hom_{\rw}(K, L) = RC^{*}(K, L) = \cone(c: CW^{*}(K, L; -H) \to CW^{*}(K, L; H)),
\end{equation}
it follows that we can define a pairing by
\begin{equation}
\langle (x_{-}, x_{+}), (y_{-}, y_{+}) \rangle_{taut} = I(x_{-})(y_{+}) + (-1)^{|x_{+}||y_{+}| + |x_{+}| + |y_{+}|} I(y_{-})(x_{+}).
\end{equation}
This is a form of a tautological pairing defined in \eqref{taut pairing} in \S\ref{section: taut pairing},
and Lemma \ref{lem: taut nondegenerate} implies

\begin{cor}\label{cor: rw taut nondegenerate}
The tautological pairing \eqref{taut pairing on rw} on $\rw$ is nondegenerate.
\end{cor}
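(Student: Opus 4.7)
The plan is to reduce the statement directly to Lemma \ref{lem: taut nondegenerate} in \S\ref{section: taut pairing}, using the Poincar\'{e} duality isomorphism $I$ from Lemma \ref{lem: PD for w-}. The key observation is that although $\rw(K, L)$ is defined as a mapping cone with a nontrivial differential, as a bigraded vector space it splits as a direct sum, and $I$ converts this sum into one of the form $C^{\vee}[1-n] \oplus C$ appearing in the abstract setup.

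First I would unpack the cone decomposition as graded vector spaces,
\[
\rw(K, L) = CW^{*}(K, L; -H)[1] \oplus CW^{*}(K, L; H),
\]
and apply $I \co CW^{*}(K, L; -H) \stackrel{\sim}{\to} CW^{*}(L, K; H)^{\vee}[-n]$ to rewrite this as
\[
\rw(K, L) \cong CW^{*}(L, K; H)^{\vee}[1-n] \oplus CW^{*}(K, L; H),
\]
with the completely analogous identification for $\rw(L, K)$. Under these identifications, the defining formula for $\langle \cdot, \cdot \rangle_{taut}$ given in \S\ref{section: rw taut pairing} becomes precisely the abstract tautological pairing \eqref{taut pairing formula} with $C = CW^{*}(K, L; H)$, once one also uses the canonical identification of $CW^{*}(L, K; H)^{\vee}$ with the linear functionals pairing against $CW^{*}(L, K; H)$. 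Lemma \ref{lem: taut nondegenerate} then gives the desired nondegeneracy.

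The main (and essentially only) obstacle is sign bookkeeping: one must verify that the degree shift by $1-n$ and the sign twist $(-1)^{|x_{+}||y_{+}|+|x_{+}|+|y_{+}|}$ appearing in \eqref{taut pairing on rw} match those of \eqref{taut pairing formula} under the identifications above. The proof of Lemma \ref{lem: taut nondegenerate} itself uses only the fact that for any graded vector space $V$ the evaluation pairings $V \times V^{\vee} \to \K$ and $V^{\vee} \times V \to \K$ are each nondegenerate, together with the test-element strategy of separately setting $y_{+} = 0$ and $y_{-} = 0$ to isolate each cross-pairing; both inputs are available to us here because $I$ is a genuine isomorphism of graded vector spaces by Lemma \ref{lem: PD for w-}. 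Once the formulas are matched up, the conclusion is immediate.
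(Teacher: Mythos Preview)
Your proposal is correct and matches the paper's approach: the paper simply states that the pairing is ``a form of a tautological pairing defined in \eqref{taut pairing}'' and invokes Lemma \ref{lem: taut nondegenerate}, which is exactly your reduction via the Poincar\'{e} duality isomorphism $I$. One very minor point: when $K \neq L$ the two factors are $C_{1}^{\vee}[1-n] \oplus C_{2}$ and $C_{2}^{\vee}[1-n] \oplus C_{1}$ rather than literally $V \otimes V$ for a single $V$, so Lemma \ref{lem: taut nondegenerate} does not apply verbatim; but as you already note, the test-element argument in its proof only uses nondegeneracy of the evaluation pairings and goes through unchanged in this two-object variant.
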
 \qed

The above discussion is sufficient for showing that the cohomology category, 
or the idempotent split-closed derived category of the Rabinowitz Fukaya category, 
is a Calabi-Yau category as an ordinary category.
However, this pairing is too rigid for proving $\ainf$-structures to be compatible with it strictly using direct geometric arguments,
which often fail due to lack of enough symmetries.
Relatedly, the continuation map $\w_{-} \to \w^{op}_{\D}$ \eqref{bimodule continuation map} has too many higher order terms for which compatibility with pairing cannot be guaranteed in the way we choose Floer data.
The chain-level Calabi-Yau structure on the Rabinowitz Fukaya category,
to be constructed in the next section,
 will actually give rise to a slightly different yet closely related pairing.

\section{The Calabi-Yau structure on the Rabinowitz Fukaya category}\label{section: residue on rw}

The goal of this section is to construct a chain-level weak proper Calabi-Yau structure on the Rabinowitz Fukaya category and prove Theorem \ref{thm: rw cy}.
With the algebraic techniques developed previously in \S\ref{section: neighborhood of infinity},
it turns out that this weak proper Calabi-Yau structure is the obvious one that we may expect.

\subsection{The residue on the Rabinowitz Fukaya category}

Suppose $X$ is nondegenerate and has $c_{1}(X) = 0$.
By \cite{ganatra}, the wrapped Fukaya category $\w = \w(X)$ admits a weak smooth Calabi-Yau structure $\sigma \in \r{CC}_{n}(\w)$,
which is a canonical Hochschild cycle whose homology class is the preimage of the unit in symplectic cohomology under the open-closed map \eqref{usual oc}.
(In fact, \cite{ganatra_cyclic} also shows that the open-closed map has an $S^{1}$-equivariant lift,
which leads to a strong smooth Calabi-Yau stricture $\tilde{\sigma}$ lifting $\sigma$,
but we will not use that here.)

By the general algebra machinery developed in \S\ref{section:res}, 
we get a chain map
\begin{equation}\label{winf res}
\r{res}: \r{CC}_{*}(\winf) \to \K[1-n],
\end{equation}
which induces a map on Hochschild homology
\begin{equation}
\r{res}: \r{HH}_{*}(\winf) \to \K[1-n].
\end{equation}
Using this we also define a pairing on the Rabinowitz Fukaya category as follows.
Consider the quasi-equivalence $\Phi: \rw \stackrel{\sim}\to \winf$ given by \eqref{rw to winf2} from Theorem \ref{thm: rw=winf}.
The induced chain map
\begin{equation}
\Phi_{*}: \r{CC}_{*}(\rw) \to \r{CC}_{*}(\winf)
\end{equation}
is a quasi-isomorphism,
where both Hochschild chain complexes are defined with their own diagonal coefficients.

\begin{defn}\label{defn: rw res}
The residue on the Rabinowitz Fukaya category is defined to be
\begin{equation}\label{rw res}
\r{res}_{\rw} = \r{res} \circ \Phi_{*}: \r{CC}_{*}(\rw) \to \K[1-n].
\end{equation}
\end{defn}

\begin{prop}
The map $\r{res}_{\rw}$ \eqref{rw res} is a chain map. 
The induced map only depends on the homology class $[\sigma] \in \r{HH}_{-n}(\w)$ of the weak smooth Calabi-Yau structure $\sigma$ on the wrapped Fukaya category $\w$.
\end{prop}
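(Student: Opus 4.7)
The first sentence is essentially formal. By Proposition \ref{prop: res cy chain map}, the residue map $\r{res}: \r{CC}_{*}(\winf) \to \K[1-n]$ of \eqref{winf res} is a chain map of degree $1-n$, built from the canonical pairing system $\pi_{\infty, X_0, X_1}$ of Lemma \ref{lem: cinf canonical pairing system} together with the capping quasi-isomorphism $-\cap \sigma$. The map $\Phi_{*}: \r{CC}_{*}(\rw) \to \r{CC}_{*}(\winf)$ is the chain map on Hochschild chain complexes (with diagonal coefficients) induced by the $\ainf$-functor $\Phi$ of Theorem \ref{thm: rw=winf}. Hence the composition $\r{res}_{\rw} = \r{res} \circ \Phi_{*}$ is a chain map of degree $1-n$.

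For the second statement, the plan is to isolate exactly where $\sigma$ enters the definition of $\r{res}$ and to verify that changing $\sigma$ by a Hochschild boundary produces a chain-homotopic map. Tracing through \S\ref{section:res}, the Calabi-Yau structure appears only through the quasi-isomorphism $-\cap \sigma$ in \eqref{cy for cc of Z}, applied to the bimodule $\cP = \cZ_{X_0}^{X_1} \otimes_{\w^{op}} \cZ_{X_1}^{X_0}$; every other ingredient in the definitions \eqref{defining pairing on cinf}, \eqref{res cy chain map}, and \eqref{rw res} is independent of $\sigma$. Suppose now that $\sigma' - \sigma = d_{\r{CC}_*} \tau$ for some $\tau \in \r{CC}_{1-n}(\w)$. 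By Proposition \ref{prop: outer cap product chain map}, the operation $\sqcap$ is a chain map in both variables, so on any $\phi \in \r{CC}^{*}(\w^{op}, \cP)$ one has
\begin{equation}
\phi \sqcap \sigma' - \phi \sqcap \sigma \;=\; \phi \sqcap d_{\r{CC}_*}\tau \;=\; \pm\bigl(d(\phi \sqcap \tau) - (d\phi)\sqcap \tau\bigr),
\end{equation}
which exhibits $h_\tau(\phi) := \phi \sqcap \tau$ as a chain homotopy between $-\sqcap \sigma$ and $-\sqcap \sigma'$, of the appropriate degree. Composing with the collapse map $\mu_{\D,\cP,*}$ and with the chain map $\pi_{*}^{\cZ}$ of Corollary \ref{cor: induced map on cc Z} preserves this homotopy, producing a chain homotopy between the two resulting pairings $\pi_{\infty, X_0, X_1}^{\sigma}$ and $\pi_{\infty, X_0, X_1}^{\sigma'}$ of \eqref{defining pairing on cinf}, and hence between $\r{res}^{\sigma}_{pre}$ and $\r{res}^{\sigma'}_{pre}$.

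To promote this to $\r{res}$ itself, I need to precompose with the map $\gamma_{*}$ of \eqref{inverse of collapse on cc}, noting that any two homotopy inverses of $\mu_{\D,(\cinf)_{\D}}$ are themselves chain-homotopic, and finally with $\Phi_{*}$. The resulting composition of a chain homotopy with chain maps is again a chain homotopy $\r{res}^{\sigma}_{\rw} \simeq \r{res}^{\sigma'}_{\rw}$, so the two maps agree on Hochschild homology $\r{HH}_{*}(\rw)$. The only delicate point I foresee is bookkeeping the Koszul signs when passing $\sqcap$-Leibniz across \eqref{sign for cap} and \eqref{pairing induced map formula}, but this is dictated by the formulas in \S\ref{section: ainf} and introduces no genuine obstruction; the argument is entirely formal once the chain-map property of $\sqcap$ in both slots (Proposition \ref{prop: outer cap product chain map}) is in hand.
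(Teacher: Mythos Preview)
Your argument is correct and follows the same line as the paper's proof, only more explicitly: the paper simply observes that the homology-level map induced by $-\cap\sigma$ depends only on $[\sigma]$ (since $\sqcap$ is a chain map), while you spell out the explicit homotopy $-\sqcap\tau$.

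One small point deserves more care. The word ``hence'' in passing from a chain homotopy between the pairings $\pi_{\infty}^{\sigma}$ and $\pi_{\infty}^{\sigma'}$ to a chain homotopy between $\r{res}^{\sigma}_{pre}$ and $\r{res}^{\sigma'}_{pre}$ is not automatic: the assignment $\pi \mapsto \pi_{*}$ of Proposition~\ref{prop: pairing induced map} uses the full canonical pairing system condition~\eqref{condition for pairing system}, not merely the fact that $\pi$ is a chain map in the $(0,0)$ slot. What makes it work here is that the \emph{higher} conditions in~\eqref{condition for pairing system} (those with $k+l>0$, i.e.\ \eqref{cinf pairing symmetric} and \eqref{cinf pairing symmetric 2}) are proved in Lemma~\ref{lem: cinf canonical pairing system} using only the associativity relation~\eqref{cup cap compatible} and Lemma~\ref{lem: Z pairing associative}, neither of which requires $\sigma$ to be closed. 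Thus the homotopy $H_{X_0,X_1} = \pi^{\cZ}_{*}\circ\mu_{\D,*}\circ((-\sqcup-)\sqcap\tau)$ still satisfies those higher conditions, and the induced map $H_{*}$ on $\r{CC}_{*}(\cinf,(\cinf)_{\D}\otimes_{\cinf}(\cinf)_{\D})$ (defined by~\eqref{pairing induced map formula} with $H$ in place of $\pi$) is then a genuine chain homotopy: on length-zero tensors it realises $\pi_{\infty}^{\sigma'}-\pi_{\infty}^{\sigma}$, and on higher-length tensors both sides of $H_{*}\circ d = \r{res}^{\sigma'}_{pre}-\r{res}^{\sigma}_{pre}$ vanish. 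With this remark your argument is complete.
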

\begin{proof}
By Proposition \ref{prop: res cy chain map}, $\r{res}$ \eqref{winf res} is a chain map,
so is $\r{res}_{\rw}$ \eqref{rw res} since $\Phi_{*}$ is one.
Independence of chain representatives of $[\sigma]$ follows from the fact that the induced map between Hochschild cohomology and Hochschild homology by the chain map $-\cap \sigma$ \eqref{cy for cc of Z},
which is a quasi-isomorphism by Lemma \ref{lem: capping with cy is iso},
 depends only on the homology class $[\sigma] \in \r{HH}_{-n}(\cc)$.
\end{proof}

\subsection{The induced pairing}

Define a pairing on $\rw$ by the following composition.
\begin{equation}\label{rw res pair}
\langle \cdot, \cdot \rangle_{\r{res}_{\rw}}: \rw^{*}(K, L) \otimes \rw^{n-1-*}(L, K) \stackrel{\mu^{2}_{\rw}}\to \rw^{n-1}(L, L) \stackrel{i_{L}} \to \r{CC}_{n-1}(\rw) \stackrel{\r{res}_{\rw}}\to \K.
\end{equation}
Call this the residue pairing on $\rw$.

\begin{lem}\label{lem: res pairing is taut pairing}
The pairing $\langle \cdot, \cdot \rangle_{\r{res}_{\rw}}$ \eqref{rw res pair} induces a nondegenerate pairing on cohomology groups
\begin{equation}\label{res pairing on cohomology}
\langle \cdot, \cdot \rangle_{\r{res}_{\rw}}: H^{*}(\rw)(K, L) \otimes H^{n-1-*}(\rw)(L, K) \to \K.
\end{equation}
\end{lem}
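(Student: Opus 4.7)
The plan is to reduce the nondegeneracy of $\langle \cdot, \cdot \rangle_{\r{res}_\rw}$ to the nondegeneracy of the tautological pairing $\langle \cdot, \cdot \rangle_{taut}$ of \S\ref{section: rw taut pairing}, which was already shown to be nondegenerate in Corollary \ref{cor: rw taut nondegenerate}. More precisely, I will argue that the pairing on cohomology induced by $\langle \cdot, \cdot \rangle_{\r{res}_\rw}$ coincides, up to the identifications provided by $\Phi$, with the one induced by $\langle \cdot, \cdot \rangle_{taut}$.

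First, since $\Phi: \rw \to \winf$ is a quasi-equivalence and induces a quasi-isomorphism of Hochschild chain complexes, Definition \ref{defn: rw res} and the definition \eqref{cinf res pairing} of the residue pairing on $\cinf$ together imply that on cohomology the pairing $\langle \cdot, \cdot \rangle_{\r{res}_\rw}$ is the pullback under $\Phi$ of the pairing $\langle \cdot, \cdot \rangle_{\r{res}}$ on $\winf$ provided by Proposition \ref{prop: res pairing nondegenerate}. Unpacking the construction of $\pi_{\infty, X, Y}$ via \eqref{defining pairing on cinf}, the pairing on $\winf$ is given, up to quasi-isomorphism, by capping with the weak smooth Calabi-Yau structure $\sigma$ on $\w$ and then applying the composition-trace pairing $\pi^{\cZ}_*$ of Corollary \ref{cor: induced map on cc Z} to the bimodules $\cZ_{X_0}^{X_1}$ appearing in the cone presentation \eqref{cinf as cone} of $\winf$.

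Next, I will match this pairing with the tautological pairing on $\rw$ by using the bimodule-level cone presentation of $\rw$ from Proposition \ref{prop: rw as cone of bimodule}. The quasi-equivalence $\Phi$ induces, as discussed in Lemma \ref{lem: comp phi iso} and Lemma \ref{lem: continuation to chern}, a quasi-isomorphism of cones
\begin{equation*}
\cone(\ck: \w_- \to \w^{op}_\D) \;\stackrel{\sim}\longrightarrow\; \cone\bigl((\w^{op})^! \otimes_{\w^{op}} (\w^{op})^\vee \to \w^{op}_\D\bigr),
\end{equation*}
where on the first summand the map is the composition of $\mathcal{I}: \w_- \to (\w^{op})^\vee[-n]$ of Proposition \ref{prop: bimodule PD} with the smooth Calabi-Yau identification $\w^{op}_\D[-n] \simeq (\w^{op})^!$ induced by $\sigma$. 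Under these identifications, the effect of capping with $\sigma$ is to convert the $(\w^{op})^!$-factor into a $\w^{op}_\D[-n]$-factor, and the pairing $\pi^{\cZ}_*$ restricted to length-zero tensors becomes precisely the evaluation pairing used to define $\mathcal{I}$, namely the Poincar\'{e} duality isomorphism $I$ of Lemma \ref{lem: PD for w-}. Tracing through the formulas of \S\ref{section: rw taut pairing}, this exactly reproduces the formula $\langle (x_-,x_+), (y_-,y_+) \rangle_{taut} = I(x_-)(y_+) + (-1)^{*} I(y_-)(x_+)$ for the tautological pairing.

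The conceptually delicate step, which I expect to be the main obstacle, is the last identification of pairings at the cochain level: $\Phi$ is only defined up to homotopy as a functor, and the cone decompositions of both $\rw$ and $\winf$ are natural only up to quasi-isomorphism, so the matching of the pairings can only be expected on cohomology. To handle this, I will work with the induced cohomology-level pairings directly and compare them component-wise on the cone summands $\w_-$ and $\w^{op}_\D$, using the naturality statement in Lemma \ref{lem: capping with cy is iso} together with Lemma \ref{lem: Z pairing symmetric} to keep track of signs. Once the cohomology-level identification $[\langle \cdot, \cdot \rangle_{\r{res}_\rw}] = [\langle \cdot, \cdot \rangle_{taut}]$ is established on every pair of Lagrangians, nondegeneracy of \eqref{res pairing on cohomology} follows immediately from Corollary \ref{cor: rw taut nondegenerate}.
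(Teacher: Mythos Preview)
Your first paragraph already contains the complete argument the paper uses, and you should stop there. Once you observe that on cohomology $\langle \cdot,\cdot\rangle_{\r{res}_{\rw}}$ agrees with the pullback via $\Phi$ of the residue pairing $\langle \cdot,\cdot\rangle_{\r{res}}$ on $\winf$, nondegeneracy follows immediately: Proposition~\ref{prop: res pairing nondegenerate} says the latter pairing is nondegenerate on $H^{*}(\winf)$, and since $\Phi$ is a quasi-equivalence the maps $\Phi^{1}$ induce isomorphisms on cohomology of morphism spaces, so the pulled-back pairing is nondegenerate on $H^{*}(\rw)$. The paper's proof is exactly this: it writes the square with $\mu^{2}_{\rw}$ on top and $\mu^{2}_{\winf}$ on the bottom, notes that $\Phi^{2}$ provides the required homotopy, and cites Proposition~\ref{prop: res pairing nondegenerate}.

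Everything from your second paragraph onward is an unnecessary detour. You are attempting to identify the residue pairing with the tautological pairing $\langle\cdot,\cdot\rangle_{taut}$ of \S\ref{section: rw taut pairing} and then invoke Corollary~\ref{cor: rw taut nondegenerate}. The paper explicitly avoids this route; the Remark immediately following the Lemma states that matching $\langle\cdot,\cdot\rangle_{\r{res}_{\rw}}$ with $\langle\cdot,\cdot\rangle_{taut}$ is ``a straightforward but somewhat tedious computation'' requiring the detailed construction of $\Phi$ from \cite{GGV}, and that ``we do not need this result in this paper.'' Your own proposal acknowledges this step as ``the main obstacle'' and does not actually carry it out; the component-wise comparison you sketch would require unpacking the higher terms of $\Phi$ and of the bimodule maps $\mathcal{I}$, $\ck$, which is precisely the tedium the paper sidesteps. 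So while the strategy is not wrong in principle, it is both harder and left incomplete, whereas the direct appeal to Proposition~\ref{prop: res pairing nondegenerate} closes the argument in one line.
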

\begin{proof}
Consider the diagram
\begin{equation}\label{cd for pairing}
\begin{tikzcd}
\rw^{*}(K, L) \otimes \rw^{n-1-*}(L, K) \arrow["\mu^{2}_{\rw}", r] \arrow["\Phi^{1} \otimes \Phi^{1}", d] & \rw^{n-1}(L, L) \arrow["\Phi^{1}", d] \\
\winf^{*}(K, L) \otimes \winf^{n-1-*}(L, K) \arrow["\mu^{2}_{\winf}", r] & \winf^{n-1}(L, L) \arrow["i_{L}", r] &\r{CC}_{n-1}(\winf) \stackrel{\r{res}}\to \K.
\end{tikzcd}
\end{equation}
Since $\Phi: \rw \stackrel{\sim}\to \winf$ \eqref{rw to winf2} is an $\ainf$-functor (which is a quasi-equivalence), 
the left square in \eqref{cd for pairing} commutes up to chain homotopy, 
where the chain homotopy is precisely given by $\Phi^{2}$.
It follows that the induced pairing \eqref{res pairing on cohomology} agrees with the pairing on $H^{*}(\winf)$ induced by the residue on $\winf$, 
which is nondegenerate by Proposition \ref{prop: res pairing nondegenerate}.
\end{proof}

\begin{rem}
It is a straightforward but somewhat tedious computation to show that the pairing \eqref{rw res pair} is compatible with the tautological pairing \eqref{taut pairing on rw} in an $\ainf$-homotopic sense.
This would require one to go through details of the construction of the functor $\Phi$ in \cite{GGV}. 
Since we do not need this result in this paper, we will not carry out this discussion.
\end{rem}

\begin{proof}[Proof of Theorem \ref{thm: rw cy}]
We already have a chain map $\r{res}_{\rw}: \r{CC}_{*}(\rw) \to \K[1-n]$ \eqref{rw res}.
By Lemma \ref{lem: res pairing is taut pairing}, its induced pairing $\langle \cdot, \cdot \rangle_{\r{res}_{\rw}}$ is nondegenerate on the cohomology category $H(\rw)$.
\end{proof}

\section{Closed-string counterparts}\label{section: closed-string rfc}

In this section, we recall the definition of symplectic cohomology and provide a definition of the closed-string Rabinowitz Floer cohomology in a manner that is compatible with the geometric/analytic framework we used for constructing the Rabinowitz Fukaya category in \S\ref{section: rabinowitz fukaya category}.

\subsection{Popsicle structures for closed strings}\label{section: closed string popsicle}

Consider genus zero Riemann surfaces without boundary with two punctures.
Each of such surfaces is biholomorphic to $\P^{1}$ minus two points,
and topologically they can all be identified with a cylinder.
However, we want to think of them as carrying additional data similar to popsicle structures when we consider pseudoholomorphic maps from them.

Let $\mathbf{p}_{c}: F \to \{1\}$ be a map from a finite set $F$ to the singleton set $\{1\}$.
Technically all finite sets are allowed;
however for the purpose of extracting Floer-theoretic operations we will always consider injective maps, 
so that $F$ is either the empty set $\varnothing$ or a singleton set.
Let $w_{0}, w_{1} \in \Z_{\le 0}$ be non-positive integers, called weights, and put $\mathbf{w}_{c} = (w_{0}, w_{1})$.
These should satisfy the following condition
\begin{equation}\label{weight condition 3}
w_{0} = w_{1} + |F|.
\end{equation}
We shall only be interested in the case where $w_{0}, w_{1} \in \{-1, 0\}$,
which together with \eqref{weight condition 3} implies that the are only three possibilities
\begin{enumerate}[label=(\roman*)]

\item $w_{0} = w_{1} = 0$, and $F = \varnothing$;

\item $w_{0} = w_{1} = -1$, and $F = \varnothing$;

\item $w_{0} = 0, w_{1} = -1$, and $F$ is a singleton set so that $\mathbf{p}_{c}$ is a bijection.

\end{enumerate}

Let $S$ be a genus zero Riemann surface without boundary with two punctures $\zeta_{0}, \zeta_{1}$,
where for $i=0, 1$ we assign the weight $w_{i}$ to $\zeta_{i}$.
These determine the positive/negative choices cylindrical ends at $\zeta_{0}, \zeta_{1}$ as follows:
\begin{equation}
\k_{i}^{\sk_{i}}: C^{\sk_{i}} \to S, i = 0, 1,
\end{equation}
where $C^{+} = [0, +\infty) \times \R/2\pi\Z$ is the positive cylinder, $C^{-} = (-\infty, +] \times \R/2\pi\Z$ is the negative cylinder,
and the symbols $\sk_{i} \in \{+, -\}$ are defined as in \eqref{sign symbol}.

A choice of a cylindrical end $\k$ at a puncture $\zeta$ determines a tangent ray at $\zeta$,
by taking the limit as $s \to \pm \infty$ of the tangent rays containing the tangent vectors $\k_{*}\p_{s}$ along the line $\k(s, 0)$.
In other words, one gets a framing at the puncture $\zeta$.
Since we will not be dealing with additional algebraic structures governed by the moduli spaces of framed genus zero Riemann surfaces,
there are no issues in, and we will be, fixing the choices of cylindrical ends (and therefore framings) such that the geodesics on $\P^{1} = \bar{S}$ starting from $\zeta_{i}$ in the direction of the tangent rays determined by the cylindrical ends are contained in the same geometric image.
For $w_{0} = w_{1}$ and so $F=\varnothing$, this can simply be replaced with the requirement that the two cylindrical ends agree with the global trivialization of the cylinder $S \cong (-\infty, +\infty) \times \R/2\pi\Z$.
With such choices of cylindrical ends,
the tangent rays at both punctures $\zeta_{0}, \zeta_{1}$ 
In the last case where $w_{0} = 0, w_{1} = -1$,

\begin{defn}
A popsicle structure on $S$ of flavor $\mathbf{p}$ is a choice of a preferred point along the geodesic determined by the choices of cylindrical ends as above.
\end{defn}

Although the above definition is meaningful and useful,
there are not enough stable popsicles to form a `moduli space'.
The only stable case is when $|F|=1$, in which the moduli space $\M^{2, \{1\}, (0, -1)}$ is a singleton set.
Despite the insufficient supply of elements in the moduli spaces of domains,
we find that using popsicles make it easy for us to keep track of the choices of cylindrical ends and to study limits of pseudoholomorphic maps from such domains with different choices of cylindrical ends.

\begin{rem}
For genus zero Riemann surfaces with multiple inputs and outputs,
the notion of a popsicle structure is problematic, 
so we will not use this model to discuss higher structures in Rabinowitz Floer theory.
\end{rem}

\subsection{Pseudoholomorphic maps from twice-punctured spheres}

Let $S$ be a genus zero Riemann surface without boundary with two punctures,
with a popsicle structure $\eta$ of flavor $\mathbf{p}_{c}$ and weights $\mathbf{w}_{c}$.

\begin{defn}\label{def: Floer data for closed strings}
A Floer datum for $(S, \eta)$ of flavor $\mathbf{p}_{c}$ and weights $\mathbf{w}_{c}$ consists of
\begin{enumerate}[label=(\roman*)]

\item Cylindrical ends $\k_{i}: C^{\sk_{i}} \to S, i=0, 1$.

\item A domain-dependent family of Hamiltonians $H_{S}: S \to \mathcal{H}(X)$, 
which is compatible with cylindrical ends: $\k_{i}^{*} H_{S} = H$.

\item A domain-dependent family of $\omega$-compatible almost complex structures $J_{S}: S \to \mathcal{J}_{1}(X)$ (see \eqref{h rescaled contact type}),
which is compatible with cylindrical ends: $\k_{i}^{*} J_{S} = J_{t}$.

\end{enumerate}
In the unstable case where $w_{0}=w_{1}$ (so that the popsicle structure is trivial), 
$H_{S}$ and $J_{S}$ must be domain independent.
\end{defn}

In Floer theory, we would like to extend the choices of Floer data across the moduli spaces.
Since the only non-empty moduli space is $\M^{2, \{1\}, (0, -1)}$ is a point,
and there are no rescaling factors as required by Definition \ref{def: Floer data for closed strings} as opposed to Definition \ref{def: Floer data},
so a {\it universal and conformally consistent choice of Floer data} $\mathbf{D}_{\M}$ for all closed strings,
which could have been defined in a way similar to Definition \ref{def: universal and conformally consistent choice of Floer data},
is indeed the same as a choice of a Floer datum on $(S, \eta)$ identified with the fiber of the universal curve over $\M^{2, \{1\}, (0, -1)}$,
along with choices of Floer data for trivial cylinders subject to the requirement by Definition \ref{def: Floer data for closed strings}.

To define the closed-string invariants, we must in addition break $S^{1}$-symmetry for non-trivial Hamiltonian orbits,
which can be done by introducing a perturbation term as below.

\begin{defn}\label{def: perturbation datum on cylinder}
A perturbation datum for $S$ is a smooth non-negative function
\begin{equation}
F: S^{1} \times X \to \R
\end{equation}
such that $F$ and $\lambda(X_{F})$ are uniformly absolutely bounded, 
and all time-one $H_{F}$-orbits are nondegenerate, 
where $H_{F}(t, p) = H(p) + F(t, p)$ is called the total Hamiltonian.
\end{defn}

Let $\o(H_{F})$ and $\o(-H_{F})$ be the sets of time-one periodic orbits of $H_{F}$ and $-H_{F}$ respectively.
As is the case with chords, there is a natural one-to-one correspondence between $\o(H_{F})$ and $\o(-H_{F})$,
given by negative reparametrization.
Consider maps $u: S \to X$ satisfying
\begin{equation}\label{CR for cylinders}
\begin{cases}
&(du - dt \otimes X_{H_{F}})^{0, 1} = 0, \\
&\lim\limits_{s \to \sk_{i}\infty} u \circ \k_{i}(s, \cdot) = y_{i}(\cdot) \text{ exponentially for some } y_{i} \in \o(H). 
\end{cases}
\end{equation}
When $w_{0}=0, w_{1}=-1$, both the cylindrical ends $\k_{0}, \k_{1}$ are negative cylindrical ends.
Since the cylindrical end $\k_{1}$ is negative as opposed to its usual choice being positive,
we want to think of the orbit $y_{1} \in \o(H_{F})$ as being the negatively parametrized orbit for a unique orbit $y_{1}^{-} \in \o(-H_{F})$.

Denote by 
\begin{equation}
\M^{2, \mathbf{p}_{c}, \mathbf{w}_{c}}(y_{0}, y_{1})
\end{equation}
the moduli space of maps satisfying \eqref{CR for cylinders}.
It has a natural Gromov compactification by adding broken cylinders,
where the popsicle structure can go to any smooth component:
\begin{equation}
\bar{\M}^{2, \mathbf{p}_{c}, \mathbf{w}_{c}}(y_{0}, y_{1}) = \coprod_{\substack{m \\ y'_{1}, \ldots, y'_{m}}} \M^{2}(y_{0}, y'_{1}) \times \cdots \times \M^{2, \mathbf{p}_{c}, \mathbf{w}_{c}}(y'_{i-1}, y'_{i}) \times \cdots \times \M^{2}(y'_{m}, y_{1}).
\end{equation}
For generic choices of Floer data, this moduli space is a compact smooth manifold with boundary and corners.

\subsection{Symplectic cohomology and Rabinowitz Floer cohomology}

The {\it symplectic cochain complex} is defined to be
\begin{equation}\label{symplectic cochains}
SC^{*}(X; H_{F}) = \bigoplus_{y \in \o(H)} |o_{y}|_{\K},
\end{equation}
where the grading is given by
\begin{equation}
|y| = n - CZ(y),
\end{equation}
where $CZ(y)$ stands for the Conley-Zehnder index of $y$.
The differential $d: SC^{*}(X; H_{F})  \to SC^{*}(X; H_{F}) [1]$ is defined by
\begin{equation}
d([y_{1}]) = \sum_{\substack{y_{0}\\ |y_{0}| = |y_{1}|+1}} \sum_{u \in \M^{2, \varnothing, (0, 0)}(y_{0}, y_{1})} o_{u}([y_{1}]).
\end{equation}

The {\it symplectic chain complex} is defined to be
\begin{equation}\label{symplectic chains}
SC^{*}(X; -H_{F})  = \prod_{y \in \o(-H)} |o_{y}|_{\K}.
\end{equation}
The differential $\p: SC^{*}(X; -H_{F}) \to SC^{*}(X; -H_{F})[1]$ is defined by
\begin{equation}
\p ([y_{1}^{-}]) = \prod_{\substack{y_{0} \\ |y_{1}| = |y_{0}|+1}} \sum_{u \in \M^{2, \varnothing, (-1, -1)}(y_{0}, y_{1})} o_{u}([y_{1}^{-}]).
\end{equation} 
Note that, despite its name, $SC^{*}(X; -H_{F})$ is carrying a cohomological grading and the differential $\p$ increases grading.

Similar to \eqref{PD for w-}, there is a Poincar\'{e} duality chain-level isomorphism
\begin{equation}\label{PD for sc-}
I: SC^{*}(X; -H_{F})  \stackrel{\cong}\to SC^{*}(X; H_{F}) ^{\vee}[-2n] = \hom_{\K}(SC^{2n-*}(X; H_{F}), \K),
\end{equation}
by negative reparametrization of orbits.

The above two closed-string invariants also come with a continuation map
\begin{equation}\label{closed-string continuation map}
c: SC^{*}(X; -H_{F})  \to SC^{*}(X; H_{F}) ,
\end{equation}
which is defined by counting rigid elements in the moduli space $\M^{2, \{1\}, (0, -1)}(y_{0}, y_{1})$, i.e.,
\begin{equation}\label{closed-string continuation map formula}
c([y_{1}^{-}]) = \sum_{\substack{y_{0}\\|y_{0}| = 2n - |y_{1}|}} \sum_{u \in \M^{2, \{1\}, (0, -1)(y_{0}, y_{1})}} \cR^{2, \{1\}, (0, -1)}_{u}([y_{1}^{-}]).
\end{equation}

We define the {\it closed-string Rabinowitz Floer cochain complex} to be the mapping cone
\begin{equation}\label{closed-string rfc}
RFC^{*}(X) = \cone(c: SC^{*}(X; -H_{F}) \to SC^{*}(X; H_{F}) ).
\end{equation}

Similar to the tautological pairing \eqref{taut pairing on rw}, 
we get from the tautological pairing \eqref{taut pairing} defined in \S\ref{section: taut pairing} a tautological pairing
\begin{equation}\label{taut pairing on rfc}
\langle \cdot, \cdot \rangle_{taut}: RFC^{*}(X) \otimes RFC^{2n-1-*}(X) \to \K,
\end{equation}
using the isomorphism \eqref{PD for sc-}.
Just like the pairing on $\rw$ (Corollary \ref{cor: rw taut nondegenerate}), this pairing is nondegenerate as well.

\section{Open-closed relationships}\label{section: open-closed}

In this section, we study relationships between the Rabinowitz Fukaya category and the closed-string Rabinowitz Floer cohomology.
This relies on a geometric construction of a map from the Hochschild homology of the wrapped Fukaya category with coefficients in the Rabinowitz Fukaya category.

\subsection{Open-closed strings}\label{section: popsicles on open-closed strings}

We first extend the definition of popsicles to domains of pseudoholomorphic maps that are commonly used to define the open-closed map.
Let $k \ge 1$ be a positive integer.
Let $S$ be a bordered Riemann surface isomorphic to a disk with $k$ boundary punctures $z_{1}, \ldots, z_{k}$ ordered cyclically counterclockwise along the boundary, treated as inputs,
as well as one interior puncture $\zeta$ equipped with a framing, treated as an output.
We want to think of $z_{k}$ as a special input, 
and $z_{1}, \ldots, z_{k-1}$ auxiliary inputs.

Such a surface will be a typical domain of a pseudoholomorphic map defining the open-closed map,
which we call an {\it open-closed string}.
Since we will not define an $S^{1}$-equivariant version of the open-closed map which would otherwise require one to consider all possible framings at the puncture $\zeta$,
we specify the choice of the framing at $\zeta$ to be pointing to the direction of the special input $z_{k}$.

Define a popsicle structure on an open-closed string in a way similar to that discussed in \S\ref{section: closed string popsicle}.
Let $\mathbf{p}_{oc, k}: F \to \{d\}$ be a map from a finite set $F$.
Let $w, w_{k} \in \Z_{\le 0}$ be a pair of non-positive integers satisfying the following conditions
\begin{equation}\label{oc weight 1}
w = w_{k} + |F|.
\end{equation}
We shall be only interested in the case where $w, w_{k} \in \{-1, 0\}$,
such that by \eqref{oc weight 1} there are only three possibilities:
\begin{enumerate}[label=(\roman*)]

\item $w=w_{k}=0$, $F = \varnothing$.

\item $w=w_{k}=-1$, $F=\varnothing$.

\item $w=0, w_{k}=-1$, and $F$ is a singleton set so that $\mathbf{p}_{oc, k}$ is a bijection.

\end{enumerate}
Define the extended weights
\begin{equation}\label{extended weight}
\mathbf{w}_{oc, k} = (w, \underbrace{0, \ldots, 0}_{k-1 \text{ times }}, w_{k}),
\end{equation} 
which means that we assign the weight $w$ to $\zeta$, $w_{k}$ to $z_{k}$, and $0$ to $z_{i}, i=1, \ldots, k$.

\begin{defn}\label{def: popsicle on open-closed string}
A popsicle structure of flavor $\mathbf{p}_{oc, k}$ on an open-closed string $S$ is a choice of a preferred point along the unqiue geodesic connecting $\zeta$ and $z_{k}$ determined by the chosen framing.
\end{defn}

Let 
\begin{equation}
\M_{\oc}^{k, \mathbf{p}_{oc, k}, \mathbf{w}_{oc, k}}
\end{equation}
be the moduli space of stable open-closed strings equipped with popsicle structures of flavor $\mathbf{p}_{oc, k}$ and weights $\mathbf{w}_{oc, k}$.
When $\mathbf{p}_{oc, k} = \varnothing$ and $\mathbf{w}_{oc, k} = \mathbf{0} = (0, \ldots, 0)$,
so we get the usual moduli spaces of open-closed strings, i.e., the ones without popsicle structures.
The other case where the popsicle structure is trivial is when $\mathbf{w}_{oc, k} = (-1, 0, \ldots, 0, -1)$.
In this case, we get a different moduli space, 
but abstractly it can be identified with a copy of $\M_{\oc}^{k, \mathbf{p}_{oc, k}, \mathbf{w}_{oc, k}}$ if we forget the weights (note that we have not made choices of strip-like and cylindrical ends yet).

The moduli space $\M_{\oc}^{k, \mathbf{p}_{oc, k}, \mathbf{w}_{oc, k}}$ has a natural stable map compactification
\begin{equation}
\bar{\M}_{\oc}^{k, \mathbf{p}_{oc, k}, \mathbf{w}_{oc, k}}
\end{equation}
by adding boundary strata which consist of {\it stable broken open-closed strings} with a popsicle structure,
appearing as limits of open-closed strings with popsicle structures.
Each stable broken open-closed string with a popsicle structure has a {\it main component}, 
which is itself an open-closed string with a popsicle structure.
The other components are either closed strings with popsicle structures introduced in \S\ref{section: closed string popsicle},
or the usual disks with popsicle structures introduced in \S\ref{section: popsicles}.
In particular, the codimension one boundary strata,
which consist of stable broken open-closed strings with popsicles structures with two smooth surface components,
 are covered the following types of moduli spaces
\begin{align}
& \M_{\oc}^{k, \mathbf{p}'_{oc, k}, \mathbf{w}'_{oc, k}} \times \M^{2, \mathbf{p}_{c}, \mathbf{w}_{c}}, \label{oc domain boundary 1} \\ 
& \cR^{j+1} {}_{0}\times_{i+1} \M_{\oc}^{k-j, \mathbf{p}'_{oc, k-j}, \mathbf{w}'_{oc, k-j}}, 1 \le i < k, i+j<k, \label{oc domain boundary 2}\\ 
& \cR^{j+1, \mathbf{p}'_{k, i, j}, \mathbf{w}'_{k, i, j}} {}_{0}\times_{i+1} \M_{\oc}^{k-j, \mathbf{p}''_{oc, k-j}, \mathbf{w}''_{oc, k-j}}, 1 \le i < k, 1 \le j < k, i+ > k. \label{oc domain boundary 3}
\end{align}

Some explanations about the boundary strata \eqref{oc domain boundary 1}-\eqref{oc domain boundary 3},
including the various notations for flavors and weights, are in order:
\begin{enumerate}[label=(\roman*)]

\item The boundary strata of type \eqref{oc domain boundary 1} consist of stable broken open-closed strings with a main component and the other component being a closed-string with popsicle structure introduced in \S\ref{section: closed string popsicle},
such that the closed-string with popsicle structure itself is stable, 
which implies that $\mathbf{p}_{c} = (0, -1)$ and $\mathbf{p}_{c}: F_{c} \to \{1\}$ is a bijection.
Consequently the flavor $\mathbf{p}'_{oc, k}$ is determined by $\mathbf{p}_{oc, k}$ as follows.
Since $\mathbf{p}_{oc, k}$ is at most a singleton set, 
it follows that $\mathbf{p}'_{oc, k} = \varnothing$.
The weights $\mathbf{w}'_{oc, k}$ and $\mathbf{w}_{c}$ are automatically inherited from $\mathbf{w}_{oc, k}$,
such that both \eqref{weight condition 3} and \eqref{oc weight 1} are satisfied. 

\item The boundary strata of type \eqref{oc domain boundary 2} consist of stable broken open-closed strings with a main component and another disk component,
such that the special input $z_{0}$ remains on the main component.
The notation ${}_{0} \times_{i}$ in \eqref{oc domain boundary 2} means that the $0$-th boundary puncture of the disk component is identified with the $i$-th boundary puncture of the main component.
Since popsicle structures on open-closed strings are choices of points on the geodesic connecting $\zeta$ and the special input $z_{k}$,
it follows that the other disk component must carry a trivial popsicle structure.
In addition, all the weights at the boundary punctures at the disk component must be zero,
which imply that that disk is an ordinary boundary-punctured disk.
The flavor and weight for the popsicle structure on the main component do not essentially change (except for having fewer boundary punctures).
That is, $\mathbf{p}'_{oc, k-j}: F \to \{k-j\}$ is the same map as $\mathbf{p}_{oc, k}$ where we identify the singleton sets $\{k\}$ with $\{k-j\}$,
and
\begin{equation}
\mathbf{w}'_{oc, k-j} = (w, \underbrace{0, \ldots, 0}_{k-j-1 \text{times }}, w_{k}),
\end{equation}
where $w, w_{k}$ remain unchanged when inherited from $\mathbf{w}_{oc, k}$.

\item The boundary strata of type \eqref{oc domain boundary 3} are similar to \eqref{oc domain boundary 2},
except that the special input $z_{k}$ goes to the disk component other than the main component.
Since the original popsicle structure has at most one sprinkle, i.e., $|F| \le 1$,
in this case exactly one component of a stable broken open-closed string can have a non-trivial popsicle structure.
The weights $\mathbf{w}'_{k, i, j}$ on the disk component are inherited from the weights from the original smooth open-closed string, 
which takes the form
\begin{equation}\label{weight on bubble off disk}
\mathbf{w}'_{k, i, j} = (w'_{0}, 0, \ldots, 0, w_{k}, 0, \ldots, 0),
\end{equation}
where the weight $w_{k}$ (the one coming from the original smooth open-closed string) is placed at the $(k-i)$-th entry in \eqref{weight on bubble off disk}, 
and $w'_{0}$ is the new weight at the $0$-th boundary puncture of the disk component,
which is glued to the main component at its $(k-j+1)$-th special input.
If the disk component carries the non-trivial popsicle structure,
then we must have $w'_{0}=0$ and $w_{k} = -1$;
otherwise the weights must satisfy $w'_{0}=w_{k}=0$ or $w'_{0}=w_{k}=-1$.

\end{enumerate}

\subsection{Pseudoholomorphic maps from open-closed strings}

The definition of a pseudoholomorphic map from an open-closed string with a popsicle structure is similar to those from popsicles on disks as well as cylinders, 
for which we need to introduce Floer data.
Fix $H \in \mathcal{H}(X), J \in \mathcal{J}_{1}(X)$ as well as a perturbation datum $F \in C^{\infty}(X \times \R/2\pi\Z, \R)$ introduced in Definition \ref{def: perturbation datum on cylinder}.

\begin{defn}\label{def: Floer data for open-closed string}
A Floer datum on a open-closed string with a popsicle structure $(S, \eta)$ of flavor $\mathbf{p}_{oc, k}$ and weights $\mathbf{w}_{oc, k}$ consists of the following data.
\begin{enumerate}[label=(\roman*)]

\item Choices of cylindrical and strip-like ends at the interior and boundary punctures
\begin{align}
\k^{\sk}: & C^{\sk} \to S, \\
\e^{+}_{i}: & Z^{+} \to S, i = 1, \ldots, k,\\
\e^{\sk_{0}}_{0}: & Z^{\sk_{0}} \to S,
\end{align}
such that the framing determined by the cylindrical end $\k^{\sk}$ should be pointing to the puncture $z_{0}$.

\item A collection of real numbers $\nu, \nu_{0}, \ldots, \nu_{k} \ge 1$, 
called {\it rescaling factors},
which satisfy
\begin{equation}
(-1)^{\d} \nu + (-1)^{\d_{0}} \nu_{0} + \sum_{i=1}^{k} \nu_{i} \le 0.
\end{equation}

\item A function $\rho_{S}: S \to [1, +\infty)$ which is locally constant over cylindrical and strip-like ends,
equal to $\nu$ over the cylindrical end $\k$, $\nu_{i}$ over the the strip-like ends $\e_{i}, i = 1, \ldots, k$,
and $\nu_{k}$ over $\e_{k}$.

\item A sub-closed one-form $\alpha_{S}$ with $d\alpha_{S} \le 0$ which is which is compatible with cylindrical end strip-like ends:
\begin{align}
\k^{*} \alpha_{S} & = \nu dt, \\
\e_{i}^{*} \alpha_{S} & = \nu_{i}dt, i =1,\ldots, k,\\
\e_{0}^{*} \alpha_{S} & = \nu_{0} dt.
\end{align}

\item A domain-dependent family of Hamiltonians $H_{S}: S \to \mathcal{H}(X)$ which is compatible with cylindrical and strip-like ends

\item A domain-dependent family of perturbation data $F_{S}: S \to C^{\infty}(X \times \R/2\pi\Z, \R)$ which is locally constant outside of the image of the cylindrical end $\k$,
and is compatible with the cylindrical end 
\begin{equation}
\k^{*} F_{S} = \frac{F \circ \psi^{\nu}}{\nu^{2}} + C,
\end{equation}
for some constant $C$ that depends only on the cylindrical end $\k$.

\item A domain-dependent family of almost complex structures $J_{S}: S \to \mathcal{J}(X)$ which is compatible with cylindrical and strip-like ends

\end{enumerate}

\end{defn}

There is a notion of a universal and conformally consistent choice of Floer data, similar to Definition \ref{def: universal and conformally consistent choice of Floer data}.

\begin{defn}\label{def: universal and conformally consistent choice of Floer data for oc}
Suppose we have made a universal and conformally consistent choice of Floer data $\mathbf{D}_{\cR}$ for all disks with popsicle structures,
as well as a universal and conformally consistent choice of Floer data $\mathbf{D}_{\M}$ for all closed strings with popsicle structures.
A universal and conformally consistent choice of Floer data $\mathbf{D}_{\oc}$ for open-closed strings with popsicle structures is a choice of Floer data,
one for each representative of element in $\bar{\M}_{\oc}^{k, \mathbf{p}_{oc, k}, \mathbf{w}_{oc, k}}$, 
being $\r{Aut}(\mathbf{p}_{oc, k})$-invariant and smoothly varying over $\M_{\oc}^{k, \mathbf{p}_{oc, k}, \mathbf{w}_{oc, k}}$, 
which at boundary strata covered by \eqref{oc domain boundary 1}-\eqref{oc domain boundary 3} agree with a product Floer data chosen for lower dimensional moduli spaces up to conformal equivalence,
parts of which come from $\mathbf{D}_{\cR}$ and $\mathbf{D}_{\M}$.
\end{defn}

Again, since all the choices involved in a Floer datum as in Definition \ref{def: Floer data for open-closed string} form a contractible space, 
a universal and conformally consistent choice of Floer data $\mathbf{D}_{\M}$ for all closed strings with popsicle structures exists.

Let $y \in \o(H_{F})$ and $x_{i} \in \chi(L_{i-1}, L_{i}; H)$ for $i = 1, \ldots, k$ and $x_{0} \in \chi(L_{k}, L_{0}; H)$.
The inhomogeneous Cauchy-Riemann equation for maps $u: S \to X$ defined with respect to a choice of a Lagrangian label $L_{0}, \ldots, L_{k}$, 
a choice of a Floer datum as in Definition \ref{def: Floer data for open-closed string},
as well as asymptotic conditions given by $y, x_{0}, x_{1}, \ldots, x_{k}$,
is the following
\begin{equation}\label{cauchy-riemann for oc}
\begin{cases}
& (du-X_{H_{S}} \otimes \alpha_{S})^{0, 1} = \frac{1}{2} [ (du-X_{H_{S}} \otimes \alpha_{S}) + J_{S} \circ (du-X_{H_{S}} \otimes \alpha_{S}) \circ j] = 0, \\
& u(z) \in (\psi^{\rho_{S}(z)})^{*} L_{i}, z \in \p S, i = 0, \ldots, k, \\
& \lim\limits_{s \to \sk \infty} u \circ \k(s, \cdot) = (\psi^{\nu})^{*} y \text{ exponentially},  \\
& \lim\limits_{s \to +\infty} u \circ \e_{i}(s, \cdot) = (\psi^{\nu_{i}})^{*} x_{i} \text{ exponentially}, i=1,\ldots, k-1, \\
& \lim\limits_{s \to \sk_{k} \infty} u \circ \e_{k}(s, \cdot) = (\psi^{\nu_{0}})^{*} x_{k} \text{ exponentially}.
\end{cases}
\end{equation}

Suppose we have made a universal and conformally consistent choice of Floer data $\mathbf{D}_{\M}$ for all closed strings with popsicle structures.
Here we denote by $\mathbf{x} = (x_{1}, \ldots, x_{k})$.
Let
\begin{equation}\label{moduli space for oc}
\M_{\oc}^{k, \mathbf{p}_{oc, k}, \mathbf{w}_{oc, k}}(y; \mathbf{x})
\end{equation}
be the moduli space of pseudoholomorphic maps from open-closed strings with popsicle structures of flavor $\mathbf{p}_{oc, k}$ and weights $\mathbf{w}_{oc, k}$, 
i.e., the space of equivalence classes of maps $u: S \to X$ satisfying the equation \eqref{cauchy-riemann for oc}.

\begin{lem}\label{lem: dim of oc moduli space}
The virtual dimension of the moduli space $\M_{\oc}^{k, \mathbf{p}_{oc, k}, \mathbf{w}_{oc, k}}(y; \mathbf{x})$ \eqref{moduli space for oc} is
\begin{equation}\label{dim of oc moduli space}
v-\dim \M_{\oc}^{k, \mathbf{p}_{oc, k}, \mathbf{w}_{oc, k}}(y; \mathbf{x}) = k + |F| + n(1+2\d+\sum_{i=1}^{k}\d_{i}) - (-1)^{\d}|y| - \sum_{i=1}^{k} (-1)^{\d_{i}}|x_{i}|,
\end{equation}
where the symbols $\d, \d_{i}$ are defined in \eqref{sign function}.
\end{lem}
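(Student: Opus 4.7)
The plan is to compute the virtual dimension by the standard decomposition into the dimension of the moduli space of stable open-closed domains with popsicle structure and the Fredholm index of the linearized Cauchy-Riemann operator at a solution, paralleling the proof of \cite[Lemma 3.5]{GGV} for disks with popsicle structures.

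For the domain piece, I would count as follows: the underlying stable moduli of disks with $k$ boundary punctures and one framed interior puncture, with the framing at $\zeta$ constrained to point toward $z_{k}$, has real dimension $k-1$ (the disk with $k$ boundary and $1$ interior puncture is $(k+2-3)$-dimensional, and pinning the framing to the geodesic direction determined by $z_{k}$ does not contribute additional freedom). Each sprinkle $f\in F$ contributes one additional real parameter along the distinguished geodesic from $\zeta$ to $z_{k}$, yielding $k-1+|F|$ in total for the domain moduli.

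For the Fredholm index, I would apply Riemann-Roch for Cauchy-Riemann operators on bordered Riemann surfaces with mixed cylindrical and strip-like ends, using the gradings of the Lagrangians $L_{i}$ and a trivialization of the canonical bundle along $y$. The rank-$n$ factor yields the Euler-characteristic-type contribution $1+n(1+2\delta+\sum_{i=1}^{k}\delta_{i})$: each positive (input) strip-like end contributes $0$ to the $n$-coefficient, each negative (output) strip-like end contributes $-1$, the cylindrical end at $\zeta$ contributes twice as much as a strip-like end of the same sign (a cylinder has two sides versus one for a half-strip), and the additive $+1$ reflects the extra real dimension carried by an interior puncture relative to a boundary puncture. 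The asymptotic contributions $-(-1)^{\delta_{i}}|x_{i}|$ and $-(-1)^{\delta}|y|$ follow the standard convention: a positive end contributes the Maslov/Conley-Zehnder degree negatively, while a negative end contributes it positively.

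Summing the domain and Fredholm contributions gives the stated formula. The only technical point requiring genuine care is that the Liouville rescaling factors $\nu,\nu_{0},\ldots,\nu_{k}$ prescribed in Definition \ref{def: Floer data for open-closed string} do not affect the count; since Maslov and Conley-Zehnder indices are invariant under Liouville rescaling, the rescaled asymptotic operators at each end have the same indices $|y|$ and $|x_{i}|$ as their unrescaled counterparts. Beyond this observation, everything is bookkeeping of signs, completely analogous to the disk case treated in \cite{abouzaid_gc, ganatra, GGV}, so I do not expect any essentially new obstacle.
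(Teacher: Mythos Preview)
Your proposal is correct and follows essentially the same standard index-theoretic route as the paper. The paper's own proof is terser: it cites Proposition~11.13 of \cite{seidel_book} for the boundary-puncture case and handles the interior puncture $\zeta$ by gluing on a once-punctured sphere carrying the negatively reparametrized orbit $\bar{y}$, thereby reducing to a problem with only boundary punctures; your direct Riemann--Roch count with mixed strip-like and cylindrical ends is an equivalent reorganization of the same computation.
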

\begin{proof}
This is a standard index calculation, e.g. following the argument of Proposition 11.13 of \cite{seidel_book}.
The only difference is there is an interior puncture $\zeta$ with asymptotic condition $y$, 
but we can follow the same idea of the argument by gluing a Cauchy-Riemann operator on a sphere with one puncture with the  asymptotic condition given by the negatively parametrized orbit $\bar{y}$.
Then \eqref{dim of oc moduli space} follows.
\end{proof}

The moduli space $\M_{\oc}^{k, \mathbf{p}_{oc, k}, \mathbf{w}_{oc, k}}(y; \mathbf{x})$ \eqref{moduli space for oc} has a natural Gromov compactification
\begin{equation}\label{compactified moduli space for oc}
\bar{\M}_{\oc}^{k, \mathbf{p}_{oc, k}, \mathbf{w}_{oc, k}}(y; \mathbf{x})
\end{equation}
by adding stable maps from broken open-closed strings with popsicle structures.
The underlying domain of a representative of an element in $\bar{\M}_{\oc}^{k, \mathbf{p}_{oc, k}, \mathbf{w}_{oc, k}}(y; \mathbf{x})$ \eqref{compactified moduli space for oc} is not necessarily stable;
and in addition to the stable maps coming from domain degenerations,
there are also stable maps coming from strip and cylinder breakings.
The codimension-one boundary strata of $\bar{\M}_{\oc}^{k, \mathbf{p}_{oc, k}, \mathbf{w}_{oc, k}}(y; \mathbf{x})$ are therefore covered by the following types of product moduli spaces:
\begin{align}
& \M_{\oc}^{k, \mathbf{p}'_{oc, k}, \mathbf{w}'_{oc, k}}(y'; \mathbf{x}) \times \M^{2, \mathbf{p}_{c}, \mathbf{w}_{c}}(y', y), \label{oc boundary 1} \\ 
& \cR^{j+1}(x', x_{i+1}, \ldots, x_{i+j}) {}_{0}\times_{i+1} \M_{\oc}^{k-j, \mathbf{p}'_{oc, k-j}, \mathbf{w}'_{oc, k-j}}(y; x_{1}, \ldots, x_{i}, x', x_{i+j+1}, \ldots, x_{k})  \label{oc boundary 2}\\ 
& \cR^{j+1, \mathbf{p}'_{k, i, j}, \mathbf{w}'_{k, i, j}}(x', x_{i+1}, \dots, x_{k}, x_{1}, \ldots, x_{i+j-k}) {}_{0}\times_{i+1} \M_{\oc}^{k-j, \mathbf{p}''_{oc, k-j}, \mathbf{w}''_{oc, k-j}}(y; x_{i+j-k+1}, \ldots, x_{i}, x'), \label{oc boundary 3}\\
& \M_{\oc}^{k, \mathbf{p}_{oc, k}, \mathbf{w}_{oc, k}}(y'; \mathbf{x}) \times \M^{2, \varnothing, (w, w)}(y', y) \label{oc boundary 4} \\
& \cR^{2, \varnothing, (0, 0)}(x'_{i}, x_{i})  {}_{0}\times_{i+1} \M_{\oc}^{k, \mathbf{p}_{oc, k}, \mathbf{w}_{oc, k}}(y; x_{0}, \ldots, x'_{i}, \ldots, x_{k})  \label{oc boundary 5}\\
& \cR^{2, \varnothing, (w_{k}, w_{k})}(x'_{k}, x_{k})  {}_{0}\times_{i+1} \M_{\oc}^{k, \mathbf{p}_{oc, k}, \mathbf{w}_{oc, k}}(y; x_{1}, \ldots, x'_{k}) \label{oc boundary 6}\\
\end{align}

In comparison to \eqref{oc domain boundary 1}-\eqref{oc domain boundary 3},
we give some addition explanations of the notations here:
\begin{enumerate}[label=(\roman*)]

\item The boundary strata \eqref{oc boundary 1} occur because of the domain degeneration corresponding to the boundary strata \eqref{oc domain boundary 1} in the compactified moduli space of domains.
Here $y'$ is a new orbit as the asymptotic condition at the interior puncture for a map in $\M_{\oc}^{k, \mathbf{p}_{oc, k}, \mathbf{w}_{oc, k}}(y'; \mathbf{x})$,
and the asymptotic condition at $\zeta_{2, 1}$ for a map in $\M^{2, \varnothing, (w, w)}(y', y)$.

\item The boundary strata \eqref{oc boundary 2} occur because of the domain degeneration corresponding to the boundary strata \eqref{oc domain boundary 2} in the compactified moduli space of domains.
Here $x'$ is a new chord as the asymptotic condition at the $0$-th boundary puncture (output) for a map in $\cR^{j+1}(x', x_{i+1}, \ldots, x_{i+j})$,
and the asymptotic condition at the $(i+1)$-th boundary puncture for a map in $\M_{\oc}^{k-j, \mathbf{p}'_{oc, k-j}, \mathbf{w}'_{oc, k-j}}$.

\item The boundary strata \eqref{oc boundary 3} occur because of the domain degeneration corresponding to the boundary strata \eqref{oc domain boundary 3} in the compactified moduli space of domains.
The role of the new chord $x'$ is similar to the previous case.

\item The boundary strata \eqref{oc boundary 4} occur because of cylinder breakings at the puncture $\zeta$,
where $y'$ is a new orbit as the output for a map in $\M_{\oc}^{k, \mathbf{p}_{oc, k}, \mathbf{w}_{oc, k}}(y'; \mathbf{x})$,
and the asymptotic condition at $\zeta_{2, 1}$ for a map in $\M^{2, \varnothing, (w, w)}(y', y)$.

\item The boundary strata \eqref{oc boundary 5} and \eqref{oc boundary 6} occur because of strip breakings at the puncture $z_{i}$ and $z_{0}$,
where $x'_{i}$ is a new chord as the asymptotic condition at $z_{1, 1}$ for a map in $\cR^{2, \varnothing, (0, 0)}(x'_{i}, x_{i})$ or $\cR^{2, \varnothing, (w_{0}, w_{0})}(x'_{0}, x_{0})$, 
and the asymptotic condition at $z_{2, i}$ for a map $\M_{\oc}^{k, \mathbf{p}_{oc, k}, \mathbf{w}_{oc, k}}(y; x_{0}, \ldots, x'_{i}, \ldots, x_{k})$.
Such strip breakings do not affect the popsicle structure on the main component.

\end{enumerate}

The following result for transversality and compactness of the moduli space $\bar{\M}_{\oc}^{k, \mathbf{p}_{oc, k}, \mathbf{w}_{oc, k}}(y; \mathbf{x})$ \eqref{compactified moduli space for oc} is similar to Proposition \ref{prop: transversality and compactness of popsicles},
where we do not need the relevant result for moduli spaces of dimension two.
The proof for transversality is a standard Sard-Smale argument (essentially following \cites{abouzaid_gc, seidel_book}),
which is possible since the Floer data are chosen to break $S^{1}$-symmetry over cylindrical ends,
and compactness follows from maximum principle plus Gromov compactness.

\begin{prop}\label{prop: transversality and compactness of oc moduli space}
There exists a universal and conformally consistent choice of Floer data $\mathbf{D}_{\oc}$ such that the following holds.
\begin{enumerate}[label=(\roman*)]

\item If $k + |F| + n(1+2\d+\sum_{i=1}^{k}\d_{i}) - (-1)^{\d}|y| - \sum_{i=1}^{k} (-1)^{\d_{i}}|x_{i}|=0$, 
the moduli space $\bar{\M}_{\oc}^{k, \mathbf{p}_{oc, k}, \mathbf{w}_{oc, k}}(y; \mathbf{x})$ \eqref{compactified moduli space for oc} is a compact smooth manifold of dimension zero.

\item If $k + |F| + n(1+2\d+\sum_{i=1}^{k}\d_{i}) - (-1)^{\d}|y| - \sum_{i=1}^{k} (-1)^{\d_{i}}|x_{i}|=1$, 
the moduli space $\bar{\M}_{\oc}^{k, \mathbf{p}_{oc, k}, \mathbf{w}_{oc, k}}(y; \mathbf{x})$ \eqref{compactified moduli space for oc} is a compact smooth manifold-with-boundary of dimension one.

\end{enumerate}
\end{prop}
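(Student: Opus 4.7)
The plan is to establish transversality and compactness in the standard manner for moduli spaces of Floer-theoretic maps with Hamiltonian perturbations, combining the inductive construction of consistent Floer data with the maximum principle argument adapted to quadratic Hamiltonians, and then to identify the codimension-one boundary strata using the decomposition given in \eqref{oc boundary 1}--\eqref{oc boundary 6}. First, I would proceed by induction on the dimension of $\bar{\M}_{\oc}^{k,\mathbf{p}_{oc,k},\mathbf{w}_{oc,k}}$ along with the previously fixed data $\mathbf{D}_{\cR}$ and $\mathbf{D}_{\M}$, assuming Floer data have already been chosen on all lower-dimensional strata to achieve transversality there. Over the open stratum $\M_{\oc}^{k,\mathbf{p}_{oc,k},\mathbf{w}_{oc,k}}$, I would consider the universal moduli space of pairs consisting of a domain and a map solving \eqref{cauchy-riemann for oc} with varying $H_S$, $F_S$ and $J_S$ in their respective Banach spaces of admissible perturbations; standard arguments as in \cite{seidel_book} and \cite{abouzaid_gc} show that the linearization of the associated universal section is surjective at somewhere-injective solutions. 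Somewhere-injectivity holds here because the interior puncture carries an $S^1$-symmetry-breaking perturbation $F_S$, so a Sard--Smale argument yields a Baire subset of Floer data for which the parameterized moduli spaces are smooth of the expected dimension given by Lemma \ref{lem: dim of oc moduli space}.

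The key point for compatibility at boundary strata is that the consistency condition in Definition \ref{def: universal and conformally consistent choice of Floer data for oc} only constrains the Floer data near the codimension-one boundary up to conformal equivalence, which by contractibility of the space of Floer data modulo conformal equivalence allows one to extend the already-chosen lower-dimensional Floer data (from $\mathbf{D}_{\cR}$, $\mathbf{D}_{\M}$ and the inductive hypothesis) to a consistent choice on the stratum in question, maintaining transversality. This inductive extension is carried out exactly as in \cite[\S 3]{abouzaid_gc} and is essentially formal, though care must be taken that the $\r{Aut}(\mathbf{p}_{oc,k})$-invariance does not obstruct transversality; since in our case $\mathbf{p}_{oc,k}$ is either empty or a bijection, the automorphism group is trivial and this is not an issue.

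For compactness, the main obstacle will be establishing the $C^0$ a priori bound for maps $u \colon S \to X$ satisfying \eqref{cauchy-riemann for oc}. Away from the cylindrical and strip-like ends, the domain is compact and the target $X$ has cylindrical structure, so the integrated maximum principle for $h$-rescaled contact type almost complex structures (using the condition \eqref{h rescaled contact type} combined with $d\alpha_S \le 0$ and the quadratic growth of $H$) confines the image of $u$ to a bounded subset of $X$, exactly following the argument of \cite[Lemma B.1]{abouzaid_gc} and its adaptation in \cite{GGV}; the rescaling factors $\nu, \nu_i$ built into the Floer datum are precisely what ensures this applies uniformly across the family. The energy bound follows from the action-energy identity together with the fact that the asymptotic orbits and chords are fixed. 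With $C^0$ and energy bounds in place, Gromov compactness yields the compactification $\bar{\M}_{\oc}^{k,\mathbf{p}_{oc,k},\mathbf{w}_{oc,k}}(y;\mathbf{x})$, whose limiting configurations decompose into the strata predicted by \eqref{oc domain boundary 1}--\eqref{oc domain boundary 3} together with strip/cylinder breakings, giving the six types \eqref{oc boundary 1}--\eqref{oc boundary 6}.

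Finally, for dimension reasons no sphere or disk bubbling can occur in dimension zero or one (using exactness of $X$ and of the Lagrangians), and the only codimension-one phenomena are the broken-string configurations above. When $v\text{-dim}=0$, transversality forces all such broken configurations to have negative virtual dimension, so the moduli space consists of finitely many points. When $v\text{-dim}=1$, the one-dimensional moduli space is a smooth manifold whose boundary is the union of products of zero-dimensional moduli spaces of the six types \eqref{oc boundary 1}--\eqref{oc boundary 6}, completing the proof. The main technical subtlety, as noted, lies in the $C^0$ bound at the cylindrical end, where the interaction between the closed-string perturbation datum $F_S$ and the quadratic Hamiltonian $H_S$ must be handled carefully so that the integrated maximum principle still applies; this is exactly where the condition that $F_S$ be locally constant outside the image of $\k$ and appropriately rescaled via $\k^* F_S = \frac{F \circ \psi^\nu}{\nu^2}+C$ plays its role.
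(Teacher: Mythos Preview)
Your proposal is correct and follows essentially the same approach as the paper, which itself offers only a brief sketch: transversality via a standard Sard--Smale argument (following \cite{abouzaid_gc, seidel_book}) enabled by the $S^{1}$-symmetry-breaking perturbation $F_{S}$ at the cylindrical end, and compactness via the maximum principle plus Gromov compactness. Your write-up supplies more detail than the paper's own treatment, but the underlying argument is identical.
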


\subsection{The open-closed map}

The diagonal bimodule $\rw_{\D}$ of $\rw$ can be regarded as a $\w$-bimodule via the pullback by the pair of functors $(j, j)$,
where $j: \w \to \rw$ is the canonical functor \eqref{w to rw}.
For simplicity of notation, we put
\begin{equation}
\r{CC}_{*}(\w, \rw) := \r{CC}_{*}(\w, (j, j)^{*}\rw_{\D}),
\end{equation}
and call it the Hochschild chain complex of $\w$ with coefficients in $\rw$.

Define the {\it Rabinowitz open-closed map}
\begin{equation}\label{wrw oc}
\oc_{R}: \r{CC}_{*-n}(\w, \rw) \to RFC^{*}(X)
\end{equation}
by counting rigid elements in the moduli spaces $\M_{\oc}^{k, \mathbf{p}_{oc, k}, \mathbf{w}_{oc, k}}(y; \mathbf{x})$.
That is, the map \eqref{wrw oc} is a direct sum of maps from $k$-fold tensor products
\begin{equation}
\oc_{R} = \bigoplus_{k \ge 0} \oc_{R}^{k},
\end{equation}
 where
 \begin{equation}
 \oc_{R}^{k}: \bigoplus_{L_{0}, \ldots, L_{k-1} \in \L} RC^{*}(L_{k-1}, L_{1}) \otimes CW^{*}(L_{k-2}, L_{k-1}; H) \otimes \cdots \otimes CW^{*}(L_{0}, L_{1}; H) \to RFC^{*}(X).
 \end{equation}
 This is further decomposed into components 
 \begin{equation}
 \oc_{R}^{k} = \bigoplus_{\mathbf{p}_{oc, k}, \mathbf{w}_{oc, k}} \oc_{R}^{k, \mathbf{p}_{oc, k}, \mathbf{w}_{oc, k}},
 \end{equation}
 where the sum is over all possible injective maps $\mathbf{p}_{oc, k}: F \to \{k\}$ and weights $\mathbf{w}_{oc, k}$ defined in \eqref{extended weight}, 
 satisfying \eqref{oc weight 1}, such that the component
 \begin{equation}
 \begin{split}
 \oc_{R}^{k, \mathbf{p}_{oc, k}, \mathbf{w}_{oc, k}}: \bigoplus_{L_{0}, \ldots, L_{k-1} \in \L} & CW^{*}(L_{k-1}, L_{1}; \sk_{k}H) \otimes CW^{*}(L_{k-2}, L_{k-1}; H)\\
 & \otimes \cdots \otimes CW^{*}(L_{0}, L_{1}; H) \to SC^{*}(X; \sk H_{F})
 \end{split}
 \end{equation}
 is defined on a basis of elements by
\begin{equation}\label{components of rw oc}
\begin{split}
&\oc_{R}^{k, \mathbf{p}_{oc, k}, \mathbf{w}_{oc, k}}([x_{k}^{\sk_{k}}] \otimes [x_{k-1}] \otimes \cdots \otimes [x_{1}])\\
= & \prod_{\substack{y\\ (-1)^{\d}|y| = k + |F| + n(1+2\d+\sum_{i=1}^{k}\d_{i}) - \sum_{i=1}^{k} (-1)^{\d_{i}}|x_{i}|}} \sum_{u \in \M_{\oc}^{k, \mathbf{p}_{oc, k}, \mathbf{w}_{oc, k}}(y; \mathbf{x})} \\
&(-1)^{*_{k, \mathbf{p}_{oc, k}, \mathbf{w}_{oc, k}} + \diamond_{k, \mathbf{p}_{oc, k}, \mathbf{w}_{oc, k}} + |x_{k}^{\sk_{k}}|} o_{u}([x_{k}^{\sk_{k}}] \otimes [x_{k-1}] \otimes \cdots \otimes [x_{1}]).
\end{split}
\end{equation}
Here the symbols $\sk, \sk_{i}$ are defined in \eqref{sign symbol},
and the signs are \eqref{popsicle sign 1}, \eqref{popsicle sign 2}.
In this case, since $w_{1} = \ldots = w_{k-1} = 0$, 
we have by \eqref{oc weight 1} that $\mathbf{p}_{oc, k}^{-1}(i) = \varnothing, i = 1, \ldots, k-1$.
It follows that the signs get simplified to
\begin{align}
*_{k, \mathbf{p}_{oc, k}, \mathbf{w}_{oc, k}} = & \sum_{i=1}^{k} i|x_{i}^{\sk_{i}}|, \\
\diamond_{k, \mathbf{p}_{oc, k}, \mathbf{w}_{oc, k}} = 0.
\end{align}
The formula \eqref{components of rw oc} can be rewritten as
\begin{equation}
\begin{split}
&\oc_{R}^{k, \mathbf{p}_{oc, k}, \mathbf{w}_{oc, k}}([x_{0}^{\sk_{0}}] \otimes [x_{k}] \otimes \cdots \otimes [x_{1}])\\
= & \prod_{\substack{y\\ (-1)^{\d}|y| = k + |F| + n(1+2\d+\sum_{i=1}^{k}\d_{i}) - \sum_{i=1}^{k} (-1)^{\d_{i}}|x_{i}|}} \sum_{u \in \M_{\oc}^{k, \mathbf{p}_{oc, k}, \mathbf{w}_{oc, k}}(y; \mathbf{x})} \\
&(-1)^{\sum_{i=1}^{k-1} i |x_{i}| + (k+1)|x_{k}^{\sk_{k}}|} o_{u}([x_{k}^{\sk_{k}}] \otimes [x_{k}] \otimes \cdots \otimes [x_{1}]).
\end{split}
\end{equation}

If we restrict the special input from $\rw$ to $\w$ via the functor $j: \w \to \rw$,
which, in terms of open-closed strings with popsicle structures introduced in \S\ref{section: popsicles on open-closed strings}, means that we require $w_{k}=0$,
then \eqref{oc weight 1} implies $w = 0$ and $F = \varnothing$ for the moduli spaces $\M_{\oc}^{k, \mathbf{p}_{oc, k}, \mathbf{w}_{oc, k}}(y; \mathbf{x})$ with $w_{k}=0$.
Therefore, all such moduli spaces are the usual moduli spaces of open-closed strings $\M_{\oc}^{k}(y; \mathbf{x})$
In this case, the Rabinowitz open-closed string restricts to the (usual) {\it open-closed map}
\begin{equation}\label{usual oc}
\oc: \r{CC}_{*-n}(\w, \w) \to SC^{*}(X; H_{F}).
\end{equation}

\begin{prop}\label{lem: wrw oc chain map}
The map $\oc_{R}$ \eqref{wrw oc}, whose components $\oc_{R}^{k}$ are defined in \eqref{components of rw oc}, is a chain map.
\end{prop}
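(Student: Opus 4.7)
The plan is the standard boundary-of-moduli-space argument: one analyzes the compactified one-dimensional moduli spaces $\bar{\M}_{\oc}^{k, \mathbf{p}_{oc, k}, \mathbf{w}_{oc, k}}(y; \mathbf{x})$, whose signed boundary count must vanish by Proposition \ref{prop: transversality and compactness of oc moduli space}, and matches the resulting identity term-by-term with the chain map equation $d_{RFC^{*}} \circ \oc_{R} = \oc_{R} \circ d_{\r{CC}_{*-n}(\w, \rw)}$. The six families of codimension-one strata \eqref{oc boundary 1}--\eqref{oc boundary 6} pair up with operations as follows: the cylinder breakings \eqref{oc boundary 4} with weights $(0,0)$ or $(-1,-1)$ produce the differentials $d$ on $SC^{*}(X; H_{F})$ and $\partial$ on $SC^{*}(X; -H_{F})$ and hence the ``diagonal'' pieces of $d_{RFC^{*}}$; the mixed-weight case \eqref{oc boundary 1} with $(w, w_{c}) = (0, -1)$ produces exactly the closed-string continuation map $c$ \eqref{closed-string continuation map}, which is the connecting map in the mapping cone $RFC^{*}(X) = \cone(c)$. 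Strip breakings \eqref{oc boundary 5} at the auxiliary inputs $z_{1}, \ldots, z_{k-1}$ give the $\mu^{1}_{\w}$-differentials on those tensor factors, while strip breakings \eqref{oc boundary 6} at the special input $z_{k}$, combined over $w_{k} = 0$, $w_{k} = -1$, and the $(0,-1)$ popsicle case, reassemble into the full $\mu^{1}_{\rw}$-differential on $RC^{*}(L_{k-1}, L_{0})$ viewed as the cone over $c$ \eqref{rc}.

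The disk-bubbling strata \eqref{oc domain boundary 2}--\eqref{oc domain boundary 3} then produce the higher-order terms of the Hochschild differential. Those of type \eqref{oc boundary 2}, in which the bubbled-off disk contains only auxiliary inputs and carries trivial popsicle data, contribute the operations that come from composing a $\mu^{j+1}_{\w}$ on consecutive auxiliary entries with the bimodule structure of $\rw$ over $\w$ --- i.e.\ the terms of $\mu^{k,l}_{\rw}$ in which the length on one side is zero, together with the $\mu^{j}_{\w}$ insertions in \eqref{hochschild chain differential}. The more subtle strata \eqref{oc boundary 3}, in which the bubbled-off disk contains the special input $z_{k}$ and can therefore carry a popsicle sprinkle inherited from the main component, match precisely the terms of $\mu^{k,l}_{\rw}$ with positive lengths on both sides: when the bubbled-off disk carries no sprinkle, its weights at the output are $w'_{0} = w_{k}$ and one sees a $\w_{-}$- or $\w^{op}_{\D}$-bimodule action; when it carries the unique sprinkle, the weights force $w'_{0} = 0, w_{k} = -1$ and one recovers precisely the continuation component of the cone defining $RC^{*}$. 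Summing over all configurations of weights, all sprinkle placements, and all Gromov limits, these two families assemble into the action of the bimodule structure maps $\mu^{k,l}_{(j,j)^{*}\rw_{\D}}$ on the Hochschild chain, which is the remaining term of $d_{\r{CC}_{*-n}(\w, \rw)}$ by \eqref{hochschild chain differential}.

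What makes the verification go through cleanly is the weight bookkeeping already implicit in \eqref{oc weight 1} and \eqref{extended weight}: because only $w, w_{k} \in \{-1, 0\}$ arise, and because the interior puncture inherits its weight rigidly from the main component, each codimension-one degeneration lies in exactly one of the six families above, and the sprinkle (when present) is forced into a unique bubble component. The only step requiring care is sign compatibility, which splits into three checks: the popsicle signs $*_{k, \mathbf{p}_{oc, k}, \mathbf{w}_{oc, k}}$ and $\diamond_{k, \mathbf{p}_{oc, k}, \mathbf{w}_{oc, k}}$ from \eqref{popsicle sign 1}--\eqref{popsicle sign 2} under edge-breaking, the extra $(-1)^{|x_{k}^{\sk_{k}}|}$ twist in \eqref{components of rw oc} under $\zeta$-degeneration, and the Koszul signs $\maltese_{1;i}$ and $\star_{i}^{j}$ in the Hochschild differential \eqref{hochschild chain differential}. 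The hard part is keeping these three sign systems aligned simultaneously with the Poincar\'e-duality convention \eqref{PD for w-} by which negative chords $x^{-}$ are identified with positive chords $\bar{x}$, since a sprinkle placed at the special input flips $\sk_{k}$ from $+$ to $-$ and thus trades the wrapped Floer cochain component for its linear dual. Once this bookkeeping is fixed --- conveniently by mirroring the sign choices already validated in \cite[Lemma 4.9]{GGV} for the $\ainf$-structure of $\rw$ itself --- the boundary identity delivers the chain-map equation factor-by-factor, and the proposition follows.
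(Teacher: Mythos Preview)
Your approach is exactly the paper's: the paper's own proof is a single sentence invoking the ``standard degeneration-gluing argument'' via the boundary strata \eqref{oc boundary 1}--\eqref{oc boundary 6} and Proposition \ref{prop: transversality and compactness of oc moduli space}, and you have simply expanded that argument in detail.

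One small bookkeeping correction is worth making. The continuation component of $\mu^{1}_{\rw}$ at the special input $z_{k}$ is \emph{not} produced by the strip-breaking strata \eqref{oc boundary 6}, since those are by definition $\cR^{2, \varnothing, (w_{k}, w_{k})}$ with trivial popsicle; it arises instead from the $j=1$ edge case of the domain-degeneration family \eqref{oc boundary 3}, where the bubbled two-punctured disk carries the unique sprinkle and hence has weights $(0,-1)$. Similarly, the one-sided bimodule terms $\mu^{i,0}_{(j,j)^{*}\rw_{\D}}$ and $\mu^{0,j}_{(j,j)^{*}\rw_{\D}}$ also live in \eqref{oc boundary 3} (the bubble containing $z_{k}$), not in \eqref{oc boundary 2}, which only contributes the pure $\mu^{j}_{\w}$ insertions in the second sum of \eqref{hochschild chain differential}. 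These are labeling issues only; once the strata are reattributed, your term-by-term match goes through and the argument is complete.
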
 
\begin{proof}
This is a standard degeneration-gluing argument,
which follows from the study of the codimension-one boundary strata of the moduli spaces $\bar{\M}_{\oc}^{k, \mathbf{p}_{oc, k}, \mathbf{w}_{oc, k}}(y; \mathbf{x})$,
where the boundary strata are described in \eqref{oc boundary 1} - \eqref{oc boundary 6}.
together with Proposition \ref{prop: transversality and compactness of oc moduli space}.
\end{proof}

\begin{prop}\label{prop: wrw oc iso}
Whenever the open-closed map $\oc$ is a quasi-isomorphism,
the Rabinowitz open-closed map \eqref{wrw oc} is also a quasi-isomorphism.
\end{prop}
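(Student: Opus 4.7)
The plan is to realize $\oc_R$ as a map between mapping cones and deduce it is a quasi-isomorphism by comparison with the usual open-closed map $\oc$ and a ``negative'' variant $\oc^{-}$, both of which can be related to each other via Poincar\'e duality.

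First I would decompose source and target as mapping cones. By Proposition \ref{prop: rw as cone of bimodule} the bimodule quasi-isomorphism $\iota: \cone(\ck: \w_{-} \to \w^{op}_{\D}) \stackrel{\sim}\to \rw$ induces a quasi-isomorphism
\begin{equation*}
\r{CC}_{*-n}(\w, \rw) \simeq \cone\bigl(\ck_{*}: \r{CC}_{*-n}(\w, \w_{-}) \to \r{CC}_{*-n}(\w, \w)\bigr),
\end{equation*}
while by definition $RFC^{*}(X) = \cone(c: SC^{*}(X; -H_F) \to SC^{*}(X; H_F))$. The three possible combinations of weights $(w, w_k) \in \{(0,0), (-1,-1), (0,-1)\}$ for the open-closed moduli spaces $\M_{\oc}^{k, \mathbf{p}_{oc,k}, \mathbf{w}_{oc,k}}(y;\mathbf{x})$ split the map $\oc_R$ into three pieces: the $(0,0)$-piece is precisely the usual open-closed map $\oc$, the $(-1,-1)$-piece defines a negative open-closed map
\begin{equation*}
\oc^{-}: \r{CC}_{*-n}(\w, \w_{-}) \to SC^{*}(X; -H_F),
\end{equation*}
and the $(0,-1)$-piece (with $|F|=1$) defines a chain homotopy witnessing that $c \circ \oc^{-} \simeq \oc \circ \ck_{*}$. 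The compatibility up to homotopy comes from studying the codimension-one boundary strata \eqref{oc boundary 1}--\eqref{oc boundary 6}; in particular the strata of type \eqref{oc boundary 1} with $\mathbf{p}_c$ a bijection match popsicle boundaries on the $(0,-1)$-moduli spaces with $c \circ \oc^{-}$, and strata of type \eqref{oc boundary 3} with a nontrivial popsicle flavor on the bubbled-off disk match with $\oc \circ \ck_{*}$. This gives a map of mapping cones, so that by the five lemma it suffices to prove $\oc^{-}$ is a quasi-isomorphism.

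The main obstacle is precisely this last step. My plan is to dualize: Proposition \ref{prop: bimodule PD} provides a quasi-isomorphism of $\w^{op}$-bimodules $\mathcal{I}: \w_{-} \stackrel{\sim}\to (\w^{op})^{\vee}[-n]$ whose $(0,0)$-term realizes the chain-level Poincar\'e duality $I$ of Lemma \ref{lem: PD for w-}, while the closed-string duality \eqref{PD for sc-} gives $I: SC^{*}(X; -H_F) \stackrel{\sim}\to SC^{*}(X; H_F)^{\vee}[-2n]$. Under these dualities, a standard adjunction identification together with the weak smooth Calabi-Yau structure $\sigma \in \r{CC}_{-n}(\w)$ (which, by Lemma \ref{lem: capping with cy is iso}, yields a quasi-isomorphism $-\cap\sigma: \r{CC}^{*}(\w,\cP) \stackrel{\sim}\to \r{CC}_{*-n}(\w,\cP)$ natural in the bimodule $\cP$) identifies
\begin{equation*}
\r{CC}_{*-n}(\w, \w_{-}) \simeq \r{CC}_{*-2n}(\w, (\w^{op})^{\vee}) \simeq \r{CC}^{*-n}(\w, \w)^{\vee}[-n] \simeq \r{CC}_{2n-*}(\w, \w)^{\vee},
\end{equation*}
where the second quasi-isomorphism uses the natural identification of Hochschild homology with $\cc^{\vee}$-coefficients as the linear dual of Hochschild cohomology. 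By choosing the Floer data for the popsicle moduli spaces defining $\oc^{-}$ compatibly with those for $\oc$ (via negative reparametrization of orbits and chords), the map $\oc^{-}$ fits into a diagram identifying it, up to these duality quasi-isomorphisms, with the linear dual $\oc^{\vee}$ of the usual open-closed map.

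With this identification in hand, the proof concludes quickly: the hypothesis that $\oc$ is a quasi-isomorphism implies $\oc^{\vee}$ is a quasi-isomorphism (here we use that $\K$ is a field), hence $\oc^{-}$ is a quasi-isomorphism. Combining this with the quasi-isomorphism property of $\oc$ itself and the cone decomposition above, the long exact sequences of the two mapping cones together with the five lemma imply that $\oc_R$ is a quasi-isomorphism, as desired. The main technical work is checking that the Floer data may indeed be arranged to make the duality identification of $\oc^{-}$ with $\oc^{\vee}$ hold on the chain level (up to coherent homotopy), which follows the template of the proof of Proposition \ref{prop: bimodule PD} in \cite{GGV}.
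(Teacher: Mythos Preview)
Your proposal is correct and follows essentially the same route as the paper's proof: decompose both sides as mapping cones via Proposition \ref{prop: rw as cone of bimodule}, split $\oc_R$ into the three pieces indexed by weights $(0,0)$, $(-1,-1)$, $(0,-1)$, and reduce via the five lemma to showing that the negative piece $\oc^{-}$ is a quasi-isomorphism by identifying it, through Poincar\'e duality (Proposition \ref{prop: bimodule PD} and \eqref{PD for sc-}) together with the weak smooth Calabi-Yau structure (Lemma \ref{lem: capping with cy is iso}) and the identification $\r{CC}^{*}(\w,\w^{\vee}) \cong \r{CC}_{*}(\w,\w)^{\vee}$, with the dual of $\oc$. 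Two small points of precision: the paper works with the transpose bimodule $\w_{-}^{\top}$ rather than $\w_{-}$ when passing from $\w^{op}$-bimodules to $\w$-bimodules, and under the paper's chain of identifications the composite involving $\oc_{-}$ agrees on homology with the \emph{inverse} of $\oc^{\vee}$ rather than $\oc^{\vee}$ itself---but neither affects the conclusion.
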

\begin{proof}
By Proposition \ref{prop: rw as cone of bimodule}, we have a quasi-isomorphism 
\[
\iota: \cone(\ck: \w_{-} \to (\w^{op})_{\D}) \to \rw
\]
of $\w^{op}$-bimodules.
This implies that the diagonal bimodule of $\rw$, viewed as a $\w$-bimodule via the pullback by the pair of functors $(j, j)$, comes with a quasi-isomorphism of $\w$-bimodules 
\begin{equation}\label{rw as w bimodule 2}
\iota: \cone(\ck: \w_{-}^{\top} \to \w_{\D}) \to (j, j)^{*}\rw_{\D},
\end{equation}
where $\w_{-}^{\top}$ is defined as the $\w$-bimodule that is the transpose of $\w_{-}$, 
whose underlying chain complex is
\begin{equation}
\w_{-}^{\top}(K, L) = \w_{-}(L, K) = CW^{*}(L, K; -H).
\end{equation}
The underlying map of $\iota$ is the canonical chain-level identity map on chain complexes underlying both sides of \eqref{rw as w bimodule 2}.
Thus we get a quasi-isomorphism of chain complexes
\begin{equation}\label{cc wrw as cone}
\iota_{*}: \r{CC}_{*}(\w, \cone(\w_{-}^{\top} \stackrel{\ck}\to \w_{\D})) = \cone(\r{CC}_{*}(\w, \w_{-}^{\top}) \stackrel{\ck_{*}}\to \r{CC}_{*}(\w, \w_{\D}))  \stackrel{\sim}\to \r{CC}_{*}(\w, (j, j)^{*}\rw_{\D}).
\end{equation}
The composition of \eqref{cc wrw as cone} with the open-closed map $\oc_{R}$ yields a chain map
\begin{equation}
\cone(\r{CC}_{*}(\w, \w_{-}^{\top}[-n]) \stackrel{\ck_{*}}\to \r{CC}_{*}(\w, \w_{\D}[-n])) \to RFC^{*}(X) = \cone(SC^{*}(X; -H_{F}) \stackrel{c}\to SC^{*}(X; H_{F})).
\end{equation}
To prove such a chain map on mappings cones is quasi-isomorphism, it suffices to prove that the filtered pieces
\begin{align}
\oc_{-}[1]: \r{CC}_{*}(\w, \w_{-}^{\top}[-n])[1] &\to SC^{*}(X; -H_{F})[1], \label{oc-} \\
\r{CC}_{*}(\w, \w_{\D}[-n]) &\to SC^{*}(X; H_{F}), \label{oc}
\end{align}
are both quasi-isomorphisms.
The second map \eqref{oc} is precisely the usual open-closed map $\oc$ \eqref{usual oc} (see the paragraph above \eqref{usual oc} on how it is defined).

The first map \eqref{oc-} is defined in a similar way, 
by counting rigid elements in the moduli space $\M_{\oc}^{k, \mathbf{p}_{oc, k}, \mathbf{w}_{oc, k}}(y; \mathbf{x})$ subject to the weight condition $w = w_{0}=-1$.
Note that by Propostion \ref{prop: bimodule PD}, Poincar\'{e} duality \eqref{PD for w-} extends to a quasi-isomorphism of $\w$-bimodules 
\begin{equation}\label{bimodule PD 1}
\mathcal{I}: \w_{-}^{\top} \stackrel{\sim}\to \w^{\vee}[-n].
\end{equation}
Since quasi-isomorphisms of $\ainf$-bimodules are invertible, we also get an inverse
\begin{equation}\label{bimodule PD inverse}
\mathcal{K}: \w^{\vee}[-n] \stackrel{\sim}\to \w_{-}^{\top},
\end{equation}
such that the $(0,0)$-th order terms of both \eqref{bimodule PD 1} and \eqref{bimodule PD inverse} are the identity map on the chain complexes.
Thus we get a quasi-isomorphism
\begin{equation}
\mathcal{I}_{*}: \r{CC}_{*}(\w, \w_{-}^{\top}) \stackrel{\sim}\to \r{CC}_{*}(\w, \w^{\vee}[-n])
\end{equation}
as well as its chain homotopy inverse
\begin{equation}\label{cc PD inverse}
\mathcal{K}_{*}: \r{CC}_{*}(\w, \w^{\vee}[-n]) \stackrel{\sim}\to \r{CC}_{*}(\w, \w_{-}^{\top}).
\end{equation}
Since $\w$ has a weak smooth Calabi-Yau structure $\sigma$,
we by Lemma \ref{lem: capping with cy is iso} a quasi-isomorphism
\begin{equation}\label{capping with cy for w dual}
-\cap \sigma: \r{CC}^{*}(\w, \w^{\vee}) \stackrel{\sim}\to \r{CC}_{*}(\w, \w^{\vee}[-n]).
\end{equation}
Note that we have a canonical evaluation pairing
\begin{equation}\label{evaluation on hochschild}
\r{CC}^{*}(\w, \w^{\vee}) \otimes \r{CC}_{*}(\w, \w) \to \K
\end{equation}
giving a chain-level identification 
\begin{equation}\label{hochschild homology dual}
\r{CC}^{*}(\w, \w^{\vee}) \cong \r{CC}_{*}(\w, \w)^{\vee}.
\end{equation}
Thus we get the following composition
\begin{equation}\label{oc dual}
\begin{split}
& \r{CC}_{*}(\w, \w)^{\vee}[-n] = \r{CC}^{*}(\w, \w^{\vee}[-n]) \stackrel{-\cap \sigma}\to \r{CC}_{*}(\w, \w^{\vee}[-2n])\\
& \stackrel{\mathcal{K}_{*}}\to \r{CC}_{*}(\w, \w_{-}^{\top}[-n]) \stackrel{\oc_{-}}\to SC^{*}(X; -H_{F}) \stackrel{I^{-1}}\to SC^{*}(X; H_{F})^{\vee}[-2n].
\end{split}
\end{equation}
Since $\oc$ is assumed to be a quasi-isomorphism, so is its linear dual.
On the level of homology groups, 
this composition \eqref{oc dual} induces the same map as the inverse of the linear dual of $\oc$ \eqref{usual oc},
because of the way we count rigid elements in $\M_{\oc}^{k, \mathbf{p}_{oc, k}, \mathbf{w}_{oc, k}}(y; \mathbf{x})$ when $w=w_{0}=-1$:
we think of $y^{-} \in \o(-H_{F})$ as an input that corresponds to the actual geometric output $y \in \o(H_{F})$,
and similarly for $x_{0}^{-}$.
Thus $\oc_{-}$ \eqref{oc-} is a quasi-isomorphism,
which implies that the Rabinowitz open-closed map $\oc_{R}$ \eqref{rw oc} is a quasi-isomorphism.
\end{proof}

Summarizing the Proof of Proposition \ref{prop: wrw oc iso},
we can regard the Rabinowitz open-closed map $\oc_{R}$ \eqref{wrw oc}, 
in view of the identification \eqref{cc wrw as cone},
as a chain map of mapping cones:
\begin{equation}
\cone(\r{CC}_{*}(\w, \w_{-}^{\top}[-n]) \stackrel{\ck_{*}}\to \r{CC}_{*}(\w, \w_{\D}[-n])) \to \cone(SC^{*}(X; -H_{F}) \stackrel{c}\to SC^{*}(X; H_{F})).
\end{equation}
Such a map has three terms, two of which are already seen as $\oc$ \eqref{oc} and $\oc_{-}$ \eqref{oc-},
defined by counting rigid elements in the moduli spaces $\M_{\oc}^{k, \mathbf{p}_{oc, k}, \mathbf{w}_{oc, k}}(y; \mathbf{x})$ when $w=w_{0}=0$ and when $w=w_{0}=-1$ respectively.
There is another term
\begin{equation}\label{oc-+}
\oc_{-,+}: \r{CC}_{*}(\w, \w_{-}^{\top}[-n])[1] \to SC^{*}(X; H_{F}),
\end{equation}
which is precisely given by counting rigid elements in the moduli space $\M_{\oc}^{k, \mathbf{p}_{oc, k}, \mathbf{w}_{oc, k}}(y; \mathbf{x})$ when $w=0, w_{0}=-1$ with non-trivial popsicle structures.
This is not a chain map, but rather a chain homotopy.
In other words, the way in which we define $\oc_{R}$ \eqref{wrw oc} is a way of packaging the three maps $\oc$ \eqref{oc}, $\oc_{-}$ \eqref{oc-} and $\oc_{-, +}$ \eqref{oc-+}.

Going one step beyond, we wonder what structures $\r{HH}_{*}(\rw, \rw)$ and $\r{HH}^{*}(\rw, \rw)$ themselves carry and how they are related to $RFH^{*}(X)$.
In an upcoming work, we shall show that the open-closed map factors through a map
\begin{equation} \label{rw oc}
\r{HH}_{*-n}(\rw(X), \rw(X)) \to RFH^{*}(X).
\end{equation}
The reader should be warned that the current method by using the moduli spaces of pseudoholomorphic disks considered in this paper is not appropriate for defining this map.
The map \eqref{rw oc} is always surjective, but not always an isomorphism.
However, it will play an important role in understanding the relation between the residue on $\rw$ and the trace on $RFH^{*}(X)$.

\subsection{Relating pairings}\label{section: relating pairings}

We close this section by establishing one last property of the Rabinowitz open-closed map $\oc_{R}$,
which has to do with pairings.
First, we shall construct a pairing of degree $1-2n$ on $\r{CC}_{*-n}(\w, \rw)$:
\begin{equation}\label{cc wrw pairing}
\langle \cdot, \cdot \rangle_{\sigma}: \r{CC}_{*-n}(\w, \rw) \otimes \r{CC}_{(2n-1-*)-n}(\w, \rw) \to \K[1-2n]
\end{equation}
which will depend on the weak smooth Calabi-Yau structure $\sigma$ on $\w$.
Using the identification \eqref{cc wrw as cone},
it suffices to construct a pairing of degree $1$
\begin{equation}\label{cc wrw pairing 1}
\cone(\r{CC}_{*}(\w, \w_{-}^{\top}) \to \r{CC}_{*}(\w, \w)) \otimes \cone(\r{CC}_{*}(\w, \w_{-}^{\top}) \to \r{CC}_{*}(\w, \w)) \to \K[1].
\end{equation}
Moreover, via the terms in the composition \eqref{oc dual} before $\oc_{-}$ (without degree shift by $-n$), 
i.e., via the following composition
\begin{equation}\label{w dual is w-}
\mathcal{K}_{*} \circ (-\cap \sigma): \r{CC}_{*}(\w, \w)^{\vee} = \r{CC}^{*}(\w, \w^{\vee}) \stackrel{-\cap \sigma}\to \r{CC}_{*}(\w, \w^{\vee}[-n]) \stackrel{\mathcal{K}_{*}}\to \r{CC}_{*}(\w, \w_{-}^{\top}),
\end{equation}
it suffices to construct a pairing
\begin{equation}\label{cc wrw pairing 2}
\cone(\r{CC}_{*}(\w, \w)^{\vee} \to \r{CC}_{*}(\w, \w)) \otimes \cone(\r{CC}_{*}(\w, \w)^{\vee} \to \r{CC}_{*}(\w, \w)) \to \K[1].
\end{equation}
We define this pairing to be the tautological pairing $\langle \cdot, \cdot \rangle_{taut}$ \eqref{taut pairing}.
Thus we obtain the pairing \eqref{cc wrw pairing}, 
which depends on the weak smooth Calabi-Yau structure in the way that the map \eqref{w dual is w-} does.

\begin{prop}\label{prop: oc intertwines pairing}
The map $\oc_{R}$ \eqref{wrw oc} respects pairings on the level of cohomology groups.
\end{prop}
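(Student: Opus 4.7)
The plan is to exploit the cone decompositions on both sides of $\oc_R$ and reduce the pairing identity, on cohomology, to a diagonal duality relation plus an off-diagonal antisymmetry. First, using the quasi-isomorphism $\iota_*$ arising from Proposition \ref{prop: rw as cone of bimodule}, I identify $\r{CC}_{*-n}(\w, \rw)$ with $\cone(\ck_*:\r{CC}_{*-n}(\w, \w_-^\top) \to \r{CC}_{*-n}(\w, \w))$. Together with the tautological cone form $RFC^*(X) = \cone(c: SC^*(X; -H_F) \to SC^*(X; H_F))$, the Rabinowitz open-closed map becomes an upper-triangular chain map of cones
\[
\oc_R = \begin{pmatrix} \oc_- & 0 \\ \oc_{-,+} & \oc \end{pmatrix},
\]
whose three entries count rigid maps from open-closed strings with weights at the interior puncture and at the distinguished boundary input equal to $(-1,-1)$, $(0,-1)$ (necessarily equipped with a single popsicle sprinkle), and $(0,0)$, respectively.

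By construction \eqref{cc wrw pairing 2}, after applying the Calabi-Yau quasi-isomorphism $\mathcal{K}_* \circ (-\cap\sigma): \r{CC}_*(\w,\w)^\vee \xrightarrow{\sim} \r{CC}_*(\w,\w_-^\top)$ of \eqref{w dual is w-}, the pairing $\langle\cdot,\cdot\rangle_\sigma$ becomes the tautological pairing on $\cone(\r{CC}_*(\w,\w)^\vee \to \r{CC}_*(\w,\w))$; the pairing on $RFC^*(X)$ is likewise the tautological pairing obtained through Poincar\'{e} duality $I$. Writing $\xi_i = (a_i, b_i)$ for representatives and expanding $\langle \oc_R(\xi_1), \oc_R(\xi_2) \rangle_{\r{taut}, RFC}$ via the matrix above, the desired equality with $\langle \xi_1, \xi_2\rangle_\sigma$ on cohomology reduces to two bilinear identities: (I) the diagonal duality $I(\oc_-(a))(\oc(b)) = (\mathcal{K}_* \circ (-\cap\sigma))^{-1}(a)(b)$; and (II) the off-diagonal antisymmetry $I(\oc_-(a_1))(\oc_{-,+}(a_2)) + (-1)^{\star} I(\oc_-(a_2))(\oc_{-,+}(a_1)) = 0$, with $(-1)^\star$ the Koszul sign appearing in the tautological pairing.

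Identity (I) is precisely the cohomological statement already extracted in the proof of Proposition \ref{prop: wrw oc iso}: the composition \eqref{oc dual} induces on cohomology the inverse of the linear dual of $\oc$, which is an equivalent reformulation of (I). Geometrically, it reflects the fact that the moduli spaces computing $\oc_-$ are those computing $\oc$ with the interior orbit $y$ and the distinguished chord $x_0$ reinterpreted via negative reparametrization, so that $I$ (together with its Lagrangian enhancement $\mathcal{I}$ hidden inside $\mathcal{K}_*$) converts one count into the linear dual of the other. The main obstacle is (II), which is an honest chain-level symmetry not built into the construction. I would prove it geometrically by gluing the moduli spaces for $\oc_-$ applied to $a_1$ and $\oc_{-,+}$ applied to $a_2$ at a matching pair of Hamiltonian orbits at their interior punctures via $I$: the glued configuration is an annular domain with boundary punctures distributed along its two boundary circles and a single popsicle sprinkle on the cylindrical neck. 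The reflection of the annulus swapping its two boundary circles extends to an involution of the relevant parametrized moduli space that exchanges the roles of $a_1$ and $a_2$; tracking orientations against the popsicle asymmetry shows that this involution produces exactly the sign in (II), so the antisymmetrization is an exact differential and vanishes on cohomology.
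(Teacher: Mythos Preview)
Your setup matches the paper's exactly: the cone decompositions on both sides, the upper-triangular matrix form of $\oc_R$ with entries $\oc_-$, $\oc_{-,+}$, $\oc$, and the identification of the ``diagonal duality'' with the statement proved inside Proposition~\ref{prop: wrw oc iso} (that the composition \eqref{oc dual} realizes the inverse of the linear dual of $\oc$ on homology). Where you diverge is at the final step. The paper does \emph{not} isolate an off-diagonal condition (II) or introduce any annular moduli spaces; it observes that $\tilde{\oc}_{-,+}$ is precisely the chain homotopy making the square \eqref{cd for oc} commute, records the resulting commutative square \eqref{cd for oc homology} on homology with the left vertical arrow equal to the inverse dual of the right one, and concludes from this alone that the tautological pairings are intertwined.

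Your separation into (I) and (II) raises a real question about that concluding implication, but as written it has two gaps. First, the decomposition is not well posed on cohomology: for a cocycle $(a,b)$ in the cone one has $da=0$ but $db=-\bar{\ck}_*(a)$, so $b$ is not a cocycle in $\r{CC}_*(\w,\w)$ and the number ``$(\mathcal{K}_*\circ(-\cap\sigma))^{-1}(a)(b)$'' in (I) does not descend to a pairing of cohomology classes of the pieces. Only the full expression $\langle\oc_R(\xi_1),\oc_R(\xi_2)\rangle_{taut}-\langle\xi_1,\xi_2\rangle_\sigma$ is a chain map to $\K$, so you cannot treat (I) and (II) as independent cohomological identities. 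Second, your proposed proof of (II) via gluing at matched orbits to form annuli and invoking a reflection involution is speculative: no such moduli spaces have been constructed here, you would need to specify Floer data, prove transversality and compactness, and carefully check that the involution interacts with the popsicle sprinkle, the weight decorations, and the orientation lines to produce exactly the Koszul sign $(-1)^\star$. That is a substantial new construction rather than a verification, and nothing of the sort appears in the paper.
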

\begin{proof}
Upon the identifications \eqref{w dual is w-} and \eqref{cc wrw as cone} as well as the Poincar\'{e} duality isomorphism \eqref{PD for sc-}, 
the map $\oc_{R}$ \eqref{wrw oc} is equivalent to a chain map between mapping cones 
\begin{equation}
\cone(\r{CC}_{*}(\w, \w)^{\vee}[-n] \to \r{CC}_{*}(\w, \w)[-n]) \to \cone(SC^{*}(X; H_{F})^{\vee}[-2n] \to SC^{*}(X; H_{F}))
\end{equation}
which is equivalent to a diagram
\begin{equation}\label{cd for oc}
\begin{tikzcd}
& \r{CC}_{*}(\w, \w)^{\vee}[-n] \arrow[r, "\bar{\ck}_{*}"] \arrow[d, "\tilde{\oc}_{-}"] \arrow[rd, "\tilde{\oc}_{-+}"] & \r{CC}_{*}(\w, \w)[-n] \arrow[d, "\oc"] \\
& SC^{*}(X; H_{F})^{\vee}[-2n] \arrow[r, "\bar{c}"] & SC^{*}(X; H_{F}).
\end{tikzcd}
\end{equation}
Here $\bar{\ck}_{*} = \ck_{*} \circ \mathcal{K}_{*} \circ (-\cap \sigma)$, 
 $\mathcal{K}_{*}$ is \eqref{cc PD inverse}, 
 $\mathcal{K}_{*} \circ (-\cap \sigma)$ is \eqref{w dual is w-}, $\ck: \w_{-}^{\top} \to \w_{\D}$ is the bimodule continuation map \eqref{bimodule continuation map};
$\tilde{\oc}_{-} = I^{-1} \circ \oc_{-} \circ \mathcal{K}_{*} \circ (-\cap \sigma)$ is the composition \eqref{oc dual},
 $\bar{c} = c \circ I^{-1}$,
 $c: SC_{*}(X) \to SC^{*}(X)$ is the continuation map and $I^{-1}$ is the inverse of \eqref{PD for sc-},
  $\tilde{\oc}_{-+} = \oc_{-+} \circ \mathcal{K}_{*} \circ (-\cap \sigma)$,
and $\oc_{-+}$ is \eqref{oc-+}.

The induced map on homology by $\tilde{\oc}_{-}$ is 
\begin{equation}
(\tilde{\oc}_{-})_{*}: \r{HH}_{*}(\w, \w)^{\vee}[-n] \to SH^{*}(X)^{\vee}[-2n],
\end{equation}
which is the inverse of the induced map $\oc^{\vee}_{*}$,
as argued in the Proof of Proposition \ref{prop: wrw oc iso},
specifically for the map \eqref{oc dual}.

The map 
\[
\tilde{\oc}_{-+}: \r{CC}_{*}(\w, \w)^{\vee}[-n] \to SC^{*}(X; H_{F})[1]
\]
has degree $1$ and is not a chain map, but rather a chain homotopy between the two sides of the square \eqref{cd for oc}.
Thus we get a commutative diagram of homology groups:
\begin{equation}\label{cd for oc homology}
\begin{tikzcd}
& \r{HH}_{*}(\w, \w)^{\vee}[-n] \arrow[r, "\bar{\ck}_{*}"] \arrow[d, "(\tilde{\oc}_{-})_{*}"]  & \r{HH}_{*}(\w, \w)[-n] \arrow[d, "(\oc)_{*}"] \\
& SH^{*}(X)^{\vee}[-2n] \arrow[r, "\bar{c}"] & SH^{*}(X),
\end{tikzcd}
\end{equation}
where both vertical arrows are isomorphisms,
and the left arrow is the inverse to the linear dual of the right arrow.

Both the pairing $\langle \cdot, \cdot \rangle_{\sigma}$ \eqref{cc wrw pairing} on $\r{CC}_{*-n}(\w, \rw)$ and the pairing $\langle \cdot, \cdot \rangle_{taut}$ \eqref{taut pairing on rfc} on $RFC^{*}(X)$ are defined in terms of the tautological pairing \eqref{taut pairing},
induced by the evaluation pairing $ev$ \eqref{evaluation on a}.
Thus, on the level of homology, commutativity of \eqref{cd for oc homology} along with the fact that the left arrow is the inverse to the linear dual of the right arrow,
implies that $(\oc_{R})_{*}$ respects pairings on homology groups.
\end{proof}

Now we are ready to prove Theorem \ref{thm: oc respects pairing}.

\begin{proof}[Proof of Theorem \ref{thm: oc respects pairing}]
The map $\oc_{R}$ is a chain map by Lemma \ref{lem: wrw oc chain map},
and moreover a quasi-isomorphism by Proposition \ref{prop: wrw oc iso}.
It respects pairings by Proposition \ref{prop: oc intertwines pairing}.
\end{proof}

One thing to notice is that the domain of the homology level open-closed map $\oc_{R}$ indeed carries a ring structure.
When $X$ is nondegenerate with $c_{1}(X)=0$, the existence of a weak smooth Calabi-Yau structure $\sigma$ on $\w$ implies that there is an isomorphism
\begin{equation}
\r{HH}_{*-n}(\w, \rw) \cong \r{HH}_{*-n}(\w, \winf) \cong \r{HH}^{*}(\w, \winf),
\end{equation}
where the first arrow is induced by $\Phi_{*}$ and the second arrow is induced by capping with $\sigma$.
Finally, notice that the Hochschild cohomology $\r{HH}^{*}(\w, \winf)$ carries a product from \eqref{extended cup product},
which is in fact graded commutative.
This answers the first part of Question 1.8 in \cite{GGV}.
For the second part of that question, 
general structural properties of Fukaya categories could be used to prove that $\oc_{R}$ is ring homomorphism ((e.g. \cite{perutz-sheridan}),
but  issues with duality (e.g. as address in \cite{CO_tate}) remain to be resolved for $\ainf$-categories enriched in topological vector spaces.

\bibliography{residue}
 
\end{document}